\newcounter{Results}[section] %Uniform counters for lemmas, theorems, propositions etc
\newcommand{\newreptheorem}[2]{\newtheorem*{rep@#1}{\rep@title}\newenvironment{rep#1}[1]{\def\rep@title{#2 \ref*{##1}}\begin{rep@#1}}{\end{rep@#1}}}
\newtheorem{theorem}{Theorem}[section]
\newtheorem{lemma}[theorem]{Lemma}
\newtheorem{definition}[theorem]{Definition}
\newtheorem{remark}[theorem]{Remark}
\newtheorem{rmk}[theorem]{Remark}
\newtheorem{proposition}[theorem]{Proposition}
\newtheorem{corollary}[theorem]{Corollary}
\numberwithin{equation}{section}
\crefname{equation}{Equation}{Equations}
\crefname{aligned}{Equation}{Equations}
\crefname{theorem}{Theorem}{Theorems}
\crefname{lemma}{Lemma}{Lemmas}
\crefname{proposition}{Proposition}{Propositions}
\crefname{corollary}{Corollary}{Corollaries}
\crefname{exercise}{Exercise}{Exercises}
\crefname{remark}{Remark}{Remark}
\crefname{definition}{Definition}{Definitions}
\crefname{section}{Section}{Sections}
\crefname{chapter}{Capitolo}{Capitoli}
\newcommand{\N}{\ensuremath{\mathbb N}}%Natural numbers
\newcommand{\Z}{\ensuremath{\mathbb Z}}%Integers
\newcommand{\R}{\ensuremath{\mathbb R}}%Real numbers
\newcommand{\Fcal}{\ensuremath{\mathcal{F}}}
\newcommand{\hausd}{\mathcal H} % Misura di Hausdorff
\newcommand\eps{\ensuremath{\epsilon}}
\let\div\undefined
\DeclareMathOperator{\div}{div}
\DeclareMathOperator*{\argmin}{argmin} % no space, limits underneath in displays
\renewcommand{\virgola}{\ensuremath{\, \text{, }}}
\newcommand{\virgola}{\ensuremath{\, \text{, }}}
\titleformat{\chapter}[display]
{\normalfont\centering}{\MakeUppercase{\chaptertitlename\ \thechapter }}{0pt}{\Large\bfseries\uppercase}[\vspace{2ex}\titlerule]
\newcommand{\Scal}[2]{\ensuremath{\langle #1 , #2 \rangle}} %Prodotto scalare
\newcommand{\sgn}{\ensuremath{\mathrm{sgn}}}%  
\newlength\tindent
\newcommand{%
	\tikzsetnextfilename{}%
	\input{}%
}[1]{%
	\tikzsetnextfilename{#1}%
	\input{#1}%
}
\newcommand{\insieme}[1]{\left\{ #1 \right\}}
\def\per{\ensuremath{\mathrm{Per}}}
\def\loc{\ensuremath{\mathrm{loc}}}
\def\peroned{\ensuremath{\mathrm{Per}^{\text{1D}}}}
\def\d {\ensuremath{\,\mathrm {d} }}
\def\dx{\ensuremath{\,\mathrm {d}x}}
\def\dz{\ensuremath{\,\mathrm {d}z}}
\def\ds{\ensuremath{\,\mathrm {d}s}}
\def\du{\ensuremath{\,\mathrm {d}u}}
\def\dv{\ensuremath{\,\mathrm {d}v}}
\def\dt{\ensuremath{\,\mathrm {d}t}}
\def\dy{\ensuremath{\,\mathrm {d}y}}
\def\dw{\ensuremath{\mathrm{d}w}}
\def\dist{\ensuremath{\mathrm{dist}}}
\DeclareRobustCommand{\etc}{%
	\@ifnextchar{.}%
	{etc}%
	{etc.\@\xspace}%
}
\DeclareRobustCommand{\eg}{e.g.\@\xspace}
\DeclareRobustCommand{\ie}{i.e.\@\xspace}
\DeclareRobustCommand{\withoutloss}{w.l.o.g.\@\xspace}
\DeclareRobustCommand{\withoutLoss}{w.l.o.g.\@\xspace}
\DeclareRobustCommand{\WithoutLoss}{W.l.o.g.\@\xspace}
\DeclareRobustCommand{\lhs}{l.h.s.\@\xspace}
\DeclareRobustCommand{\rhs}{r.h.s.\@\xspace}
\DeclareRobustCommand{\st}{s.t.\@\xspace}
\DeclareRobustCommand{\ae}{a.e.\@\xspace}
\def\BiBFile{bibliography/allbib.bib}
\bblForArxiv\undefined{}
\newcommand{\printbibliographyMio}{\printbibliography}
\newcommand{\printbibliographyMio}{\bibliography{\BiBFile}}
\def\d {\ensuremath{\,\mathrm {d} }}
\def\dx{\ensuremath{\,\mathrm {d}x}}
\def\dz{\ensuremath{\,\mathrm {d}z}}
\def\ds{\ensuremath{\,\mathrm {d}s}}
\def\du{\ensuremath{\,\mathrm {d}u}}
\def\dv{\ensuremath{\,\mathrm {d}v}}
\def\dt{\ensuremath{\,\mathrm {d}t}}
\def\dy{\ensuremath{\,\mathrm {d}y}}
\def\dw{\ensuremath{\mathrm{d}w}}
\def\S{\mathbb{S}}
\def\hd{\mathcal{H}^{d-1}}
\def\r{\mathbf{r}}
\author[1]{Sara Daneri\thanks{sara.daneri@gssi.it}}
\author[1,2]{Eris Runa\thanks{eris.runa@gssi.it}}
\affil[1]{Gran Sasso Science Institute, L'Aquila, Italy}
\affil[2]{Courant Institute, NYU, USA}
\title{A rigorous approach to pattern formation for isotropic isoperimetric problems with competing  nonlocal interactions}
\date{}
\newcommand{\etautheta}{\ensuremath{e_{\tau, \delta, \theta}}}
\newcommand{\etau}{\ensuremath{e_{\tau, \delta}}}
\tikzset{hatch distance/.store in=\hatchdistance,hatch distance=5pt,hatch thickness/.store in=\hatchthickness,hatch thickness=5pt}
\pgfpoint{\hatchdistance}{\hatchdistance}}%
\pgfpoint{\hatchdistance}{\hatchdistance}}%
\begin{document}
	\maketitle
	\vspace{-5mm}
	\begin{center}
		\textit{Dedicated to Pierangelo Marcati on the occasion of his 71\textsuperscript{st} birthday.% with utmost respect and admiration.
		}
	\end{center}
	\vspace{3mm}
	
	\begin{abstract}
		We introduce a rigorous approach to the study of the symmetry breaking and pattern formation phenomenon for  isotropic functionals with local/nonlocal interactions in competition.
		
		We consider a general class of nonlocal variational problems in dimension $d\geq 1$,  in which an isotropic  surface term favouring pure phases competes with an isotropic  nonlocal term with power law kernel favouring alternation between different phases.
		
		Close to the critical regime in which the two terms are of the same order, we give a rigorous proof of the conjectured structure of global minimizers, in the shape of domains with flat boundary (\eg stripes or lamellae).
		
		The natural framework in which our approach is set and developed is the one of calculus of variations and geometric measure theory.
		
		Among others,  we identify a nonlocal curvature-type quantity which is controlled by the energy functional and whose finiteness implies flatness for sufficiently regular boundaries.
		
		The power of decay of the considered kernels at infinity is $p\geq d+3$, and it is related to pattern formation in synthetic antiferromagnets.
		
	\end{abstract}
	
	\textbf{Keywords:} Nonlocal variational problems, geometric measure theory, pattern formation.

	\section{Introduction}\label{sec:intro}
	We consider the following class of functionals, in general dimension $d\geq1$: for $J>0$, $\Omega\subset\R^d$ open and bounded and $E\subset\R^d$, let
	\begin{equation}
		\label{eq:tildeFunctional}
		\tilde{\Fcal}_{J,p,d}(E,\Omega) =\frac{1}{|\Omega|}\Biggl[ J \per(E; \Omega) - \int_{\Omega}\int_{\R^d} \Big| \chi_E(x+\zeta) - \chi_E(x) \Big |  K(\zeta)   \dx \d\zeta\Biggr],
	\end{equation}
	where $\per(\cdot ,\Omega)$ is the classical isotropic perimeter functional relative to $\Omega$ (measuring for regular sets the surface measure of their  boundary inside $\Omega$)
	and  $K$ is  an isotropic  integrable  kernel with $p$-power law decay at infinity (for precise assumptions see~\eqref{eq:kass1}--\eqref{eq:kass4}).

	From the physical point of view, it is since long well-established  that such a type of energy competition (between short range interactions favouring pure phases and long range interactions favouring alternation between different phases) is at the base of spontaneous  pattern formation in nature. In particular, experiments and simulations in different physical systems suggest that in a suitable regime in which the competition is active (modulated in our case by the value of the constant $J$) the continuous symmetry of the functionals is broken, namely minimizers have fewer symmetries than the original functionals, and moreover they are organized,   up to boundary effects, into periodic or nearly periodic structures
	(see \eg~\cite{sa, bates1999block, thomas1988periodic, hubert1969stray, stradner2004equilibrium, andelman2009modulated, hubert2008magnetic,muratov2002theory} and references therein). To avoid boundary effects, one typically considers $\Omega=[0,L)^d$, $L>0$, and $[0,L)^d$-periodic sets $E\subset\R^d$.

	A long standing problem in the mathematical community is to understand rigorously the mechanisms that are at the base of energy-driven pattern formation. While in dimension $d=1$ periodicity of minimizers  is now  understood  under convexity (see \eg~\cite{alberti2001new,muller1993singular,chen2005periodicity,ren2003energy,giuliani2012striped}) or reflection positivity assumptions on the kernel (see \eg~\cite{giuliani2006ising,giuliani2009periodic}), in more than one space dimension the additional phenomenon of symmetry breaking makes the problem significantly more challenging.
	
	In general, suitably tuning the parameters modulating  the competition between the short range and the long range term, one expects to observe a variety of different patterns.
	
	A very common pattern is that given by small droplets centered at the vertices of a periodic lattice.
	This picture is observed for example  in a regime in which the surface term is dominant w.r.t. the nonlocal term and a volume constraint is imposed.
	The problem of periodic droplets formation, and the related study of the shape of minimizers in the  Gamow's liquid drop model, has received a lot of attention and has  been widely investigated, giving deep and  interesting results in low volume fraction regimes  (see \eg~\cite{choksi2010small, choksi2011small, knupfer2013isoperimetric, muratov2014isoperimetric, julin2014isoperimetric, bonacini2014local,cicalese13spadaro, lu2014nonexistence, frank2016nonexistence, knupfer2016low, goldman2013gamma, goldman2014gamma, muratov2010droplet, choksi2017old} and references therein).
	However, the exact shape of droplets and their arrangement in periodic structures still remains a major open problem in its full generality.
	
	In this paper we are interested in studying the emergence of another type of pattern ubiquitous in nature, namely the so-called  stripes/lamellae.
	By stripes we mean phases separated by flat and disjoint interfaces orthogonal  to a given direction.
	Such patterns are  the first one which are expected to emerge from uniform phases once symmetry is broken and are observed for example in a regime in which the surface term and the nonlocal term are of the same order.
	
	In this regime, in more than one space dimension, symmetry breaking and pattern formation have been recently proved in anisotropic settings. The anisotropy of the model was either due to the choice of a discrete lattice as a domain (in~\cite{gs_cmp}) or, in a continuous setting, by the choice of interactions with a discrete symmetry group (in~\cite{dr_arma,gr}).
	In the  discrete domain given by a square lattice,  for  kernels with power $p>2d$, striped pattern formation was first proved in~\cite{gs_cmp}.
	On  continuous domains  but  for anisotropic interactions (perimeter and kernel) symmetric with respect to the group of coordinate permutations, symmetry breaking was proved in~\cite{gr}, for powers $p>2d$.
	For the same class of interactions and domains, in~\cite{dr_arma} the authors proved exact striped pattern formation for exponents $p\geq d+2$ (see also~\cite{DRannSNS} and \cite{ker} for an extension to $p\geq d+2-\eps$, with $0<\eps\ll1$).
	Similar results were then obtained for screened Coulomb kernels~\cite{dr_siam}, for diffuse interface versions of the model~\cite{dkr_jfa, dr_therm} (proving exact one-dimensionality of minimizers in general dimension), and including a volume constraint~\cite{dr_vol}.
	
	Isotropic interactions are of particular interest given that, in the large majority of physical literature, models exhibiting spontaneous pattern formation have isotropic interactions.
	However, up to now, there are only partial results in this setting in the literature:
	for Coulomb  kernels in dimension $d\geq2$ we mention the seminal paper of Alberti, Choksi and Otto~\cite{ACO} in which they prove that minimizers  satisfy a uniform distribution of energy; in dimension $d=2$ exact pattern formation was proved on thin domains $\Omega=(0,\varepsilon)\times(0,1)$, with $\varepsilon\ll1$~(see~\cite{morini_sternberg}).
	
	More recently, Muratov and Simon~\cite{MS} proved that in dimension $d=2$ and  for kernels decaying at infinity  with power $p=d+3$, when considering the $\Gamma$-limit as the radius of the regularization of the kernel at the origin tends to $0$ in the critical regime, generalized minimizers exist and cannot be disks.
	Such a decay (in dimension $d=2$) is physically related to the energy of two identical, but oppositely oriented dipolar patches lying in parallel planes separated by a given positive distance.
	Such a model is, for example, relevant to synthetic antiferromagnets (see, \eg~\cite{moser2002magnetic}).
	Heuristically, on large scales the kernel decays faster than the dipolar kernel, thus
	reducing the long-range repulsion in the far field.
	
	\vskip 0.2 cm
	In this paper, we consider fully isotropic interactions and kernels with power law decay at infinity  $p\geq d+3$ in general dimension~$d$.
	
	If the kernel decays at infinity like a power $p>d+1$, it is well known that there is a critical constant $J_c>0$ such that for $J>J_c$, $\tilde\Fcal_{J,p,d}\geq 0$ and it is minimized by the trivial sets $\emptyset,\R^d$ (see~\cite{giuliani2006ising}), while for $J<J_c$ the trivial sets are not minimizers. In Theorem \ref{thm:jc} we show that the critical constant  $J_c$ is given by
	\begin{equation}
		\label{eq:Jc}
		J_c=\int_{\R^d}|\zeta_1|K(\zeta)\d\zeta,
	\end{equation}
	where $\zeta_1=\Scal{\zeta}{e_1}$.
	Such a constant is the counterpart of the one found in \cite{giuliani2011checkerboards} for  the discrete setting. 
	Symmetry breaking and striped pattern formation is conjectured for $J<J_c$, $|J-J_c|\ll1$.
	
	Our main result (see Theorem~\ref{thm:main} below) consists in proving such a conjecture. To state it precisely, we need to introduce some further notation and to suitably rescale the functional (without changing the structure of the minimizers).
	Set $\tau:=J_c-J>0$.
	Minimizing $\tilde{\Fcal}_{{J}_c-\tau,p,d}$ in the class of periodic unions of stripes, one can see that  for $0<\tau\ll1$ the stripes with optimal energy have width and distance of order  $\tau^{-1/p-d-1}$ and energy of order $\tau^{(p-d)/(p-d-1)}$.
	Therefore it is natural to  rescale the spatial variables and the functional  so that the optimal width and distance for unions of stripes is $O(1)$ and  the energy is $O(1)$ for $0<\tau\ll1$. Then, setting $\tau^{-1/(p-d-1)}\zeta'=\zeta$, $\tau^{-1/(p-d-1)}x'=x$ and $\tilde{ \Fcal}_{J_c-\tau,p,d}(E,[0,L)^d)=\tau^{(p-d)/(p-d-1)}\Fcal_{\tau,p,d}(E\tau^{1/(p-d-1)},[0,L\tau^{1/(p-d-1)})^d)$, and rescaling the kernel $K$ into $K_\tau$ satisfying assumptions~\eqref{eq:kass1}--\eqref{eq:kass4}, one ends up considering the functional
	\begin{equation}\label{eq:ftauintro}
		\Fcal_{\tau,p,d}(E,[0,L)^d)=\frac{1}{L^d}\Biggl[J_\tau\per(E,[0,L)^d)-\int_{[0,L)^d}\int_{\R^d}|\chi_E(x+\zeta)-\chi_E(x)| K_\tau(\zeta)\dx\d\zeta\Biggr],
	\end{equation}
	where
	\begin{equation}\label{eq:ktau}
		\frac{1}{C\max\{\tau^{1/(p-d-1)},\|\zeta\|\}^p}\leq K_\tau(\zeta) \leq \frac{C}{\max\{\tau^{1/(p-d-1)},\|\zeta\|\}^p}
	\end{equation}
	and
	\begin{equation}\label{eq:jtau}
		J_\tau=\int_{\{\|\zeta\|\leq 1\}}|\zeta_\theta|K_\tau(\zeta)\d\zeta\quad \text{ for any $\theta\in\S^{d-1}$, $\zeta_\theta=\Scal{\zeta}{\theta}$, $\zeta=\zeta_\theta\theta+\zeta_\theta^\perp$.}
	\end{equation}
	
	Our main result is the following
	
	\begin{theorem}
		\label{thm:main}
		Let $d\geq 1$, $p\geq d+3$,  $L>0$. Then, there exists $\hat\tau>0$ such that for  every $0<\tau < \hat\tau$ the $[0,L)^d$-periodic  minimizers $E_\tau$ of $\Fcal_{\tau,p,d}(\cdot, [0,L)^d)$ are, up to a rigid motion, of the form
		\begin{equation}
			E_\tau=\widehat E_\tau\times \R^{d-1},\quad\widehat E_\tau=\underset{k\in\N}{\bigcup}(2kh^*_{L}, (2k+1)h^*_{L}),
		\end{equation}
		for some $h^*_{L}>0$ such that $2kh^*_{L}=L$ for some $k\in\N$.
	\end{theorem}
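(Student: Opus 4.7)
The plan is to reduce the $d$-dimensional minimization to a one-dimensional problem via a slicing argument, and then to use an isotropic rigidity argument to promote one-dimensional optimality to full flatness of $\partial E_\tau$.

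First, exploiting the isotropy of the perimeter (Cauchy--Crofton) and of the kernel $K_\tau$ (polar decomposition $\zeta=r\theta$), I would rewrite
\[
\Fcal_{\tau,p,d}(E,[0,L)^d) \;=\; \int_{\S^{d-1}} \Phi_\theta(E) \,\d\mathcal{H}^{d-1}(\theta),
\]
where $\Phi_\theta(E)$ is, up to a universal constant depending on the geometry of the sphere, an average over $x^\perp\in\theta^\perp$ of a purely one-dimensional functional of the trace $E_{x^\perp,\theta}:=\{t\in\R:x^\perp+t\theta\in E\}$, of exactly the same form as $\Fcal_{\tau,p,d}$ but with $d=1$. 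The choice of $J_\tau$ in~\eqref{eq:jtau} is precisely the value which, after this slicing, reproduces the critical one-dimensional constant in each direction.

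Second, I would establish the one-dimensional theory. On the real line, for kernels satisfying~\eqref{eq:ktau}, a convexity/reflection-positivity argument in the spirit of \cite{chen2005periodicity, giuliani2006ising, muller1993singular} gives that, for $0<\tau\ll1$, the unique $L$-periodic minimizers of the reduced one-dimensional functional are periodic stripes of some explicit optimal width $h^*_L$, along with a quantitative stability estimate: a positive gap in the 1D energy controls both the deviation of the stripe widths from $h^*_L$ and an $L^1$-distance of the slice to the corresponding periodic stripe pattern. This step crucially uses that the rescaled kernel in~\eqref{eq:ktau} still produces a convex interaction energy between jump points.

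Third, combining the two previous steps, I would obtain the pointwise bound $\Phi_\theta(E)\geq e_0$ with equality if and only if a.e.\ slice of $E$ in direction $\theta$ is a periodic stripe of width $h^*_L/|\Scal{\theta}{\theta^*}|$ for a common direction $\theta^*$. This constant direction $\theta^*$ is the outcome of the rigidity statement hinted at in the abstract: I would introduce a nonlocal curvature-type quantity $C_\tau(E)$, defined through the variation of the one-dimensional trace $E_{x^\perp,\theta}$ as $x^\perp$ moves in $\theta^\perp$, and prove that for sufficiently regular $\partial E$ the vanishing of $C_\tau(E)$ forces $\partial E$ to be a disjoint union of hyperplanes normal to a common direction.

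The main obstacle is showing that the excess energy $\Fcal_{\tau,p,d}(E,[0,L)^d)-\inf\Fcal_{\tau,p,d}$ dominates $C_\tau(E)$. This is where the hypothesis $p\geq d+3$ enters: the decay of $K_\tau$ at infinity governs how much quadratic curvature information one can extract from the nonlocal term, and $p\geq d+3$ is precisely the threshold at which the second-order variation of the nonlocal part produces a coercive, integrable control on a curvature-type quantity in dimension $d$. With this control at hand, combined with the 1D stability from step two, any minimizer $E_\tau$ must have $C_\tau(E_\tau)=0$ and optimal one-dimensional slices, so by the rigidity step $E_\tau$ is of the form $\widehat E_\tau\times\R^{d-1}$ up to a rigid motion; the compatibility with the period $L$ then fixes $h^*_L$ such that $2k h^*_L=L$ for some $k\in\N$.
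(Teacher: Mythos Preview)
Your proposal has a structural gap at the interface between the rigidity step and the conclusion for positive $\tau$. The nonlocal curvature quantity you describe, and the rigidity statement ``$C_\tau(E)=0$ forces $\partial E$ flat'', is a $\tau\to 0$ (or $\tau=0$) phenomenon: it is the finiteness of the \emph{limiting} functional that forces flatness, because only then does the curvature integral diverge on non-flat boundaries. For fixed $\tau>0$ the regularized kernel is integrable at the origin, so bent sets have perfectly finite energy and there is no reason the excess $\Fcal_{\tau,p,d}(E_\tau)-\inf\Fcal_{\tau,p,d}$ should force $C_\tau(E_\tau)=0$. The paper accordingly uses the rigidity only to identify the $\Gamma$-limit and deduce that minimizers $E_\tau$ are $L^1$-close to a stripe pattern $S$ for small $\tau$ (Corollary~\ref{cor:gammaconv}); the passage from ``$L^1$-close to $S$'' to ``equal to $S$'' is a separate, genuinely $d$-dimensional argument (Section~\ref{sec:positive_tau}, in particular the stability estimate Proposition~\ref{prop:stability_bound} and the one-dimensional replacement/optimization Lemmas~\ref{lemma:1d_replacement} and~\ref{lemma:1D-optimization}) which your outline does not contain.

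There is also a more immediate problem with your Step~3. The pointwise bound $\Phi_\theta(E)\ge e_0$ cannot be tight simultaneously in all directions: for each fixed $\theta$ the one-dimensional minimizer of the sliced problem has an optimal width depending on $\theta$ (and on the length of the slice through $[0,L)^d$), and these sliced minimizers are mutually incompatible --- they do not arise as slices of a single $d$-dimensional set. Integrating the naive 1D lower bound over $\S^{d-1}$ therefore undershoots $\Fcal_{\tau,p,d}(S)$ strictly, and gives no information. This is precisely why the paper does not argue via a direct slicing lower bound but instead decomposes $\partial^*E$ into ``good'' and ``bad'' parts relative to the nearby stripe $S$, controls the bad part via Proposition~\ref{prop:stability_bound}, and handles the good part by comparing to periodic 1D configurations along each slice. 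Finally, note that your account of where $p\ge d+3$ enters misses a subtlety: the curvature bound and the regularity theory in the paper actually require $p>2d$, which $p\ge d+3$ only guarantees when $d=2$; the general case is obtained by a further slicing with two-dimensional planes (Section~\ref{subs:5.4}).
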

	
	Moreover,  for a general bounded domain $\Omega\subset\R^d$, we prove that every sequence $E_\tau\subset\R^d$ of sets of equibounded energies in $\Omega$, \ie satisfying $\sup_{\tau}\Fcal^{\mathrm{loc}}_{\tau, p, d}(E_\tau,\Omega)<+\infty$ (where $\Fcal^{\mathrm{loc}}_{\tau, p, d}$ is a meaningful restriction of $\Fcal_{\tau,p,d}$ to arbitrary domains $\Omega$ avoiding boundary effects, see \eqref{eq:floc}) converges in $L^1(\Omega)$ to a set $E_0$ with flat boundary. More precisely, one has the following:
	
	\begin{theorem}
		\label{thm:omega}
		Let $d\geq1$, $p\geq d+3$. Let $\Omega\subset\R^d$ be a bounded open set and $E_\tau\subset\R^d$ a sequence of sets such that
		\begin{equation}\label{eq:suptau}
			\sup_\tau\Fcal^{\mathrm{loc}}_{\tau, p, d}(E_\tau,\Omega)<+\infty.
		\end{equation}
		Then, up to subsequences, $E_\tau\cap\Omega$ converge in $L^1(\Omega)$ to $E_0\cap\Omega$ such that $\per(E_0,\Omega)<+\infty$ and $\partial E_0\cap\Omega$ is given by a disjoint union $\bigcup_{i=1,\dots,N}H_i\cap\Omega$ where each $H_i$ is an affine hyperplane in $\R^d$.
		
		If $\Omega=[0,L)^d$ and one restricts to $[0,L)^d$-periodic sets (thus, $\Fcal^{\mathrm{loc}}_{\tau,p,d}(E,[0,L)^d)=\Fcal_{\tau,p,d}(E,[0,L)^d)$), then the hyperplanes $\{H_i\}$ above are all parallel.
	\end{theorem}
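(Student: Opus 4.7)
The first step is a uniform $BV$ bound on $\{\chi_{E_\tau}\}$ on every set compactly contained in $\Omega$. Following the slicing paradigm already used by the authors in the anisotropic setting, one rewrites $\Fcal^{\mathrm{loc}}_{\tau, p, d}(E_\tau, \Omega)$ as an average, over directions $\theta \in \S^{d-1}$ and transverse base points $x' \in \theta^{\perp}$, of one-dimensional functionals on the slices $\{t \in \R : x' + t\theta \in E_\tau\}$. Each such slice functional has the form ``1D perimeter minus 1D power-law nonlocal term''; since $p \geq d+3$, integrating $K_\tau$ along the $d-1$ transverse directions yields a 1D kernel with sufficient decay for the standard 1D theory to apply, producing a sharp $\tau$-independent lower bound $-C_0$ attained by periodic unions of 1D stripes. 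The non-negative excess of each slice energy over $-C_0$ controls the number of jumps on that slice; integrating against the uniform measure on $\S^{d-1}$ and combining with $J_\tau \to J_c > 0$ then gives $\sup_\tau \per(E_\tau, \Omega') < +\infty$ for every $\Omega' \Subset \Omega$, and $BV$ compactness extracts $E_\tau \cap \Omega \to E_0 \cap \Omega$ in $L^1(\Omega)$ with $\per(E_0, \Omega) < +\infty$.

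\textbf{From the energy to flatness via nonlocal curvature.} The central tool is the nonlocal curvature-type quantity $\kappa_{\mathrm{NL}}$ alluded to in the introduction: a non-negative functional on sets of finite perimeter, controlled by the excess energy in the form
\begin{equation*}
\kappa_{\mathrm{NL}}(E_\tau, \Omega') \leq C\bigl[\Fcal^{\mathrm{loc}}_{\tau, p, d}(E_\tau, \Omega) + C_0\bigr],
\end{equation*}
and whose vanishing on sufficiently regular boundaries forces flatness. Heuristically $\kappa_{\mathrm{NL}}$ is a $K_\tau$-weighted mean oscillation of the unit outer normal $\nu_{E_\tau}$ along $\partial^* E_\tau$, and the inequality above is obtained by sharpening the slicing argument: on each line the 1D excess quantifies the deviation of $\partial^* E_\tau$ from being orthogonal to that line at the jump points, and integrating over lines yields an oscillation control on $\nu_{E_\tau}$. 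Passing to $\tau \to 0$ by lower semicontinuity of $\kappa_{\mathrm{NL}}$ under $L^1$-convergence of the characteristic functions, where $p \geq d+3$ ensures that the transverse moments of $K_\tau$ weighting $\kappa_{\mathrm{NL}}$ remain bounded in the limit, one obtains $\kappa_{\mathrm{NL}}^0(E_0, \Omega) = 0$. The flatness implication then gives that $\nu_{E_0}$ is locally constant $\hd$-a.e.\ on $\partial^* E_0 \cap \Omega$.

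\textbf{Hyperplane structure and parallelism.} Locally constant reduced-boundary normal together with finite perimeter implies, by standard results on sets of finite perimeter, that $\partial E_0 \cap \Omega$ decomposes as a relatively closed disjoint union $\bigcup_i H_i \cap \Omega$ with each $H_i$ an affine hyperplane. The uniform lower bound on inter-interface distance inherited from the 1D optimal-stripe spacing prevents accumulation inside $\Omega$, so the decomposition is finite. In the periodic case $\Omega = [0,L)^d$, the limit $E_0$ remains $[0,L)^d$-periodic, so each $H_i$ descends to a closed hyperplane on the torus $\R^d / L\Z^d$; two hyperplanes with non-parallel normals intersect inside the torus, contradicting disjointness of the $H_i$ inside $\partial E_0$. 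Therefore all normals coincide and the hyperplanes $\{H_i\}$ are mutually parallel.

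\textbf{Main obstacle.} The delicate step is the quantitative bound of the genuinely $d$-dimensional nonlocal curvature $\kappa_{\mathrm{NL}}$ by the one-dimensional slicing excess, uniformly in $\tau$, together with its $L^1$-lower semicontinuity in the $\tau \to 0$ limit: one must translate integrated 1D deviations into a $(d-1)$-dimensional oscillation of the unit normal without any a priori regularity beyond finite perimeter. The decay hypothesis $p \geq d+3$ enters precisely here, ensuring that the transverse moments of $K_\tau$ appearing as weights in $\kappa_{\mathrm{NL}}$ are integrable uniformly in $\tau$ and produce a meaningful limiting curvature whose vanishing characterizes flatness.
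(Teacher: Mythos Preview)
Your compactness step is essentially the paper's argument (Corollary~\ref{cor:fboundsp} plus the 1D compactness Lemma~\ref{lemma:comp1d}), and your identification of the parallelism in the periodic case is fine. The trouble is in the middle block.

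\textbf{The vanishing claim is unjustified.} You assert that lower semicontinuity yields $\kappa_{\mathrm{NL}}^0(E_0,\Omega)=0$. But what you have bounded along the sequence is $\kappa_{\mathrm{NL}}(E_\tau,\Omega')\leq C$, a uniform \emph{finite} bound; lower semicontinuity then gives only $\kappa_{\mathrm{NL}}^0(E_0,\Omega')<+\infty$, not zero. This is exactly what the paper obtains: $\overline\Fcal_{0,p,d}(E_0,\Omega)<+\infty$ (see~\eqref{eq:5.5}). The subsequent rigidity statement---\emph{finiteness} of the limiting nonlocal quantity forces flatness---is the entire content of Theorem~\ref{thm:rigidity}, and it is highly non-trivial.

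\textbf{The flatness implication needs regularity, and the exponent range is wrong.} You say ``the flatness implication then gives that $\nu_{E_0}$ is locally constant'', but this step hides two substantial ingredients you have not supplied. First, the Bourgain--Brezis--Mironescu mechanism (Lemma~\ref{lemma:d+3}) that turns finiteness of the double integral $\iint \|\nu_E(x)-\nu_E(y)\|^2/\|x-y\|^{p-2}$ into local constancy of $\nu_E$ requires $\partial E_0$ to be locally a Lipschitz graph; with only finite perimeter this fails. The paper devotes all of Section~\ref{subs:5.3} (uniform density bounds, excess decay, De Giorgi-type regularity) to establishing this Lipschitz structure. Second, both the curvature bound of Proposition~\ref{prop:brezis} and the regularity theorem require $p\geq 2d$ (respectively $p>2d$), which is \emph{strictly stronger} than the hypothesis $p\geq d+3$ once $d\geq 3$. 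The paper resolves this by proving rigidity first in $d=2$ (where $2d<d+3$), then slicing a general $E_0\subset\R^d$ by the Grassmannian of affine $2$-planes (Proposition~\ref{prop:slicing}, Proposition~\ref{prop:nusliced}) and lifting flatness from the $2$-dimensional slices back to $\R^d$ via Lemmas~\ref{lemma:pigeqeps}--\ref{lemma:graph}. Your sketch contains no trace of either the regularity argument or this dimensional reduction, and your ``Main obstacle'' paragraph correctly flags the difficulty without indicating how to overcome it.
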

	
	Notice that in the above theorem we do not need to impose any periodic boundary conditions to obtain flatness of the boundary of sets of finite energy in the critical regime. In particular, inside the domain $\Omega$ the minimizers of $\Fcal^{\mathrm{loc}}_{\tau, p, d}$ for $0<\tau\ll1$ are close to sets with flat boundary in $\Omega$, thus showing symmetry breaking.
	The $[0,L)^d$-periodicity in this case ensures only that the hyperplanes $H_i$ are all parallel, and thus $E_0$ is given by a union of stripes.

	To prove continuous symmetry breaking (namely, breaking of rotational symmetry) requires major steps forward with respect to discrete symmetry breaking (namely, breaking symmetry with respect to coordinate permutations). In both cases, the goal is to show that the normal to the boundary of the minimizers is locally constant. However, while in the anisotropic setting there is a discrete hence disconnected set of preferred directions $\{e_1,\dots,e_d\}$,
	in the isotropic setting every direction $\nu\in\S^{d-1}$ is equally favoured.
	Moreover, one of the difficulties in the isotropic setting is to control both small curvature deviations on large scales and large curvature deviations on small scales. In order to do so, we first identify an integral geometric formulation (see Proposition~\ref{prop:intgeom}) for the functional~\eqref{eq:ftauintro} which in the critical regime $\tau=0$ allows to control a nonlocal type of curvature of the boundary of sets of finite energy (see Proposition~\ref{prop:brezis}). Such a control  implies flatness of the boundaries of Lipschitz sets of finite energy (see Lemma~\ref{lemma:d+3}). Hence, to show flatness we proceed by showing regularity of the boundary of  sets of finite energy in the  critical regime (see Theorem~\ref{thm:regularity} and for more details on the analysis in the critical regime, see Section~\ref{sec:rigidity}). Once the symmetry is broken in the limit $\tau\to0$, for $0<\tau\ll1$ we proceed via a $d$-dimensional optimization argument.

	\subsection{Plan of the paper} In Section~\ref{sec:prel} we introduce some preliminary notation, classical geometric measure theory facts, and the main assumptions on the nonlocal kernel.  The proof of Theorem~\ref{thm:main} is organized as follows: finding a suitable decomposition of the functional bounding quantities with geometrical meaning; proving a rigidity estimate that shows that minimizers are close to stripes as $\tau\to0$; showing stability estimates for $0<\tau\ll1$. In Section~\ref{sec:int} we introduce an integral geometric formulation for the functional~\eqref{eq:ftauintro} which will be crucial in our analysis. Section~\ref{sec:1D} contains a series of one dimensional energy bounds and one dimensional optimization procedures which will be applied in the following sections to the one dimensional slices of the  minimizing sets.
	Section~\ref{sec:rigidity} contains the proof of Theorem~\ref{thm:omega}, \ie the main rigidity estimates in the critical regime and $\Gamma$-convergence results as $\tau\to0$ in the periodic setting. To guide the reader, in Section~\ref{sec:rigidity} we give an outline of the main steps of the proof. A part of the regularity estimates needed in Section~\ref{sec:rigidity}  is reported  in the \hyperref[sec:appendix]{Appendix}. Finally, in Section~\ref{sec:positive_tau} we complete the proof of Theorem~\ref{thm:main} by showing exact striped pattern formation for $0<\tau\ll1$.
	
	\section*{Acknowledgements}
	The second author is supported by the European Union's Horizon 2020 research and innovation programme under the Marie Skłodowska-Curie Grant Agreement No 101063588.
	All authors are members of the GNAMPA group in INDAM\@.
	The authors thank Guido De Philippis and Camillo De Lellis for the hospitality respectively at Courant Institute and Institute for Advanced Study.

	\section{Notation and preliminaries}\label{sec:prel}
	We denote by $\S^{d-1}$ the unit sphere in $\R^d$ and by $B_r(x)$ the Euclidean ball of radius $r>0$. We let  $\Scal{x}{y}$ be   the scalar product between $x,y\in\R^d$ and by $\|x\|$ the Euclidean norm of $x\in\R^d$.
	We let $\hausd^{d-1}(E)$ be the $(d-1)$-dimensional Hausdorff measure of a set $E\subset\R^d$ and by $|E|$ its Lebesgue measure.  When restricting to  $k$-dimensional affine subspaces of $\R^d$ and it is clear from the context, we will use the same notation $|\cdot|$ to denote the $k$-dimensional Hausdorff measure.
	We denote by $\omega_d$ the Lebesgue measure of the unit ball in $\R^d$.
	Given a Radon measure $\mu$ on $\R^d$, we denote by $|\mu|$ its total variation and by $\mathrm{spt}\mu$ its support.
	Given a bounded set $A\subset\R^d$ and a point $x\in\R^d$, we define $\mathrm{dist}(x,A)=\inf\{\|y-x\|:\,y\in A\}$.
	Moreover, given $\rho>0$, we define $(A)_\rho=\{z\in\R^d:\,\mathrm{dist}(z,A)<\rho\}$.
	We denote by $\#A$ the cardinality of the set $A$ and by $\bar A$ its closure. For sets $A,B\subset\R^d$, we let $A\Delta B=(A\setminus B)\cup(B\setminus A)$ be their symmetric difference.

	For every $\theta\in\S^{d-1}$, we let
	\begin{equation}
		\label{eq:notation1}
		\theta^{\perp} := \insieme{x\in\R^{d}:\ \Scal{x}{\theta} = 0}
	\end{equation}
	and given $\Omega\subset\R^d$ we denote by $(\Omega)_\theta^\perp$ the projection of $\Omega$ on the $(d-1)$-dimensional plane $\theta^\perp$.
	Given a vector $x\in\R^d$ we can decompose it as a sum of orthogonal vectors $x=x_\theta^\perp+x_\theta\theta$, where $\theta\in\S^{d-1}$, $x_\theta^\perp\in\theta^\perp$ and $x_\theta\in\R$.
	We define the one dimensional slice of $E$ in direction $\theta$ with reference point $x^\perp_\theta\in \theta^\perp$  as
	\begin{equation}
		E_{x^\perp_{\theta}} := \insieme{z\in \R^d:\ z\in E\cap  (x^\perp_{\theta} + \theta\R) }.
	\end{equation}
	With a slight abuse of notation we will also identify the points in $ E_{x^\perp_{\theta}}$ with the points $s\in\R$ such that $x^\perp_{\theta} + s\theta\in E$.

	Given $k=1,\dots,d-1$, we denote the Grassmanian of $k$-dimensional planes in $\R^d$, by $G(k,\R^d)$.
	We denote the elements of $G(k,\R^d)$ by $\pi_k$ and by $\pi_k^\perp$ the $(d-k)$-dimensional subspace orthogonal to $\pi_k$. We also decompose points $x\in\R^d$ as  $x=x_{\pi_k}+x_{\pi_k}^\perp$, where  $x_{\pi_k}\in\pi_k$ and $x_{\pi_k}^\perp\in\pi_k^\perp$ and denote by $ E_{x^\perp_{\pi_k}}$ the $k$-dimensional slice of $E$ with the affine $k$-dimensional plane parallel to $\pi_k$  and passing through the point $x^\perp_{\pi_k}$.
	We let $\S^{k-1}_{\pi_k}=\S^{d-1}\cap\pi_k$ and for $\theta\in\S^{k-1}_{\pi_k}$ we denote the points in $(\pi_k)_\theta^\perp$ by $x_{(\pi_k)_\theta^\perp}$.
	
	We denote by $\mu_{k,d}$ the invariant  finite Radon measure on $G(k,\R^d)$ such that for any measurable function $f:\S^{d-1}\to\R^+$
	\begin{equation}\label{eq:haar}
		\int_{G(k,\R^d)} \int_{\S^{k-1}_{\pi_k}}f(\theta)\d\theta\d\mu_{k,d}(\pi_k)=\int_{\S^{d-1}}f(\theta)\d\theta.
	\end{equation}

	For a set of locally finite perimeter $E\subset\R^d$, we denote by $D\chi_E$ the associated Radon measure, by $\partial ^*E$ the reduced boundary and  by $\nu_E$ the measure theoretic exterior normal at points of $\partial^*E$.
	We let $\partial E$ be  the topological boundary of $E$, where we consider  a representative $E$ such that
	\begin{equation}
		\mathrm{spt}D\chi_E=\{x\in\R^d:\,0<|E\cap B_r(x)|<\omega_d r^d,\quad\forall\,r>0\}=\partial E.\label{eq:topbdry}
	\end{equation}
	In general, $\partial^*E\subset\partial E$ and $\overline{\partial^*E}=\partial E$.

	We notice also that, by the blow-up properties of the reduced boundary, for every $z\in\partial^* E$ and for every $r>0$ it holds
	\begin{align}
		\Bigl|\{s\in [0,r]:\, z+s\theta\in E\}\Bigr|&>0,\quad\text{for \ae $\theta$ \st $\langle\nu_E(z)\cdot\theta\rangle<0$}\notag\\
		\Bigl|\{s\in [0,r]:\, z+s\theta\in \R^d\setminus E\}\Bigr|&>0,\quad\text{for \ae $\theta$ s.t. $\langle\nu_E(z)\cdot\theta\rangle>0$}\label{eq:stimeblowup}.
	\end{align}
	
	For $x\in\partial^*E$ we denote by $H_{\nu_E(x)}(x)$ the affine halfspace $H_{\nu_E(x)}(x):=\{y\in\R^d:\,\Scal{y-x}{\nu_E(x)}<0\}$.
	
	Given $\eta>0$, $\nu\in\S^{d-1}$, $x\in\R^d$, we define the cone with vertex at $x$, base plane $\nu^\perp$ and  opening $\eta$ as
	\[
	K_\eta(x,\nu)=\{y\in\R^d: \,|y_\nu-x_\nu|<\eta\}.
	\]
	Using the slicing properties of sets of finite perimeter, for any set $E$ of locally finite perimeter, for a.e. $\theta\in\S^{d-1}$, for a.e. $x_\theta^\perp\in\theta^\perp$, the set $E_{x_\theta^\perp}$ is a one dimensional set of locally finite perimeter. Hence, on any compact interval $[a,b]\subset\R$, $\#(\partial^*E_{x_\theta^\perp}\cap[a,b])<+\infty$.
	
	Given a point $x_\theta\in\partial^*E_{x_\theta^\perp}\cap\Omega_{x_\theta^\perp}$, we denote by $x_\theta^+$ and $x_\theta^-$ the points in $\partial^*E_{x_\theta^\perp}$ which are closest to $x_\theta$ and for which respectively $x_\theta^+-x_\theta>0$, $x_\theta^--x_\theta<0$. For later use, we introduce also the following notation
	\begin{equation}
		\r_\theta(x):=|x_\theta-x_{\theta}^+|.
	\end{equation}
	
	We will denote by $\peroned(A)$ the one dimensional perimeter of a set $A\subset\R$, and by $\peroned(A,\Omega)$ the one dimensional perimeter of $A$ relative to $\Omega\subset\R$. Moreover, with a slight abuse of notation, given $B\subset\R^d$ such that  $B = \insieme{\overline{x} + s \theta:\, s\in A}$  for some $\overline{x} \in \R^d$,  $\theta\in \S^{d-1}$ and $A\subset\R$, we define $\peroned(B) := \peroned(A)$. The same applies to the perimeter of a set $B$ as above relative to a set $\Omega\subset\R^d$.
	
	We recall (see \eg~\cite{afp,Maggi}) the following classical slicing formula: given $\Omega\subset\R^d$ and a measurable function $g:\Omega\times\S^{d-1}\to\R^+$ such that $g(\cdot, \theta)$ is $\mathcal H^{d-1}\llcorner(\partial^*E\cap \Omega)$-summable, one has that
	
	\begin{equation}
		\label{eq:slicing}
		\int_{\partial^* E\cap \Omega}|\langle \nu_E(x),\theta\rangle|g(x,\theta)\d\mathcal H^{d-1}(x)=\int_{\theta^\perp}\sum_{s\in \partial^* E_{x_\theta^\perp}\cap \Omega_{x_\theta^\perp}}g(x_\theta^\perp+s\theta, \theta)\dx_\theta^\perp.
	\end{equation}
	Moreover,
	\begin{equation}
		\label{eq:slicing_sign}
		\int_{\partial^* E\cap \Omega}\langle \nu_E(x),\theta\rangle g(x,\theta)\d\mathcal H^{d-1}(x)=\int_{\theta^\perp}\sum_{s\in\partial^* E_{x_\theta^\perp}\cap \Omega_{x_\theta^\perp}}\mathrm{sign}\bigl(\Scal{\nu_E(x_\theta^\perp+s\theta)}{\theta}\bigr)\, g(x_\theta^\perp+s\theta, \theta)\dx_\theta^\perp.
	\end{equation}
	
	Given a set of locally finite perimeter $E\subset\R^d$, and a point $x\in\R^d$, one  defines the (spherical) excess of $E$ in the ball of radius $r$ centered at $x$ as
	
	\begin{equation}\label{eq:exc}
		Exc(E,x,r)=\frac{1}{r^{d-1}}\Bigl[|D\chi_E|(B_r(x))-|D\chi_E(B_r(x))|\Bigr].
	\end{equation}
	
	One has that for all $x\in\partial^*E$,  $Exc(E,x,r)\to0$ as $r\to0$.  The converse does not necessarily hold. However, we will use in the proof the following sufficient condition, guaranteeing that $\partial E=\partial^*E$ (see~\cite{Tam}). Whenever  for  $x\in\partial E$ there exist $R, C_1,C_2,\alpha>0$ such that for all $0<r<R$, the following holds:
	\begin{align}
		{\per(E, B_r(x))}&\geq C_1r^{d-1}\label{eq:e1}\\
		Exc(E,x,r)&\leq C_2r^\alpha,\label{eq:e2}
	\end{align}
	then $x\in\partial^*E$. In particular, if~\eqref{eq:e1} and~\eqref{eq:e2} hold for every $x\in\partial E$, then $\partial E=\partial^*E$.
	
	Given a sequence of sets $\{E_n\}_{n\in\N}\subset\R^d$, we say that $E_n\to E$ in $L^1(\Omega)$ whenever $|(E_n\Delta E)\cap\Omega|\to0$ as $n\to\infty$, or equivalently if $\|\chi_{E_n}-\chi_E\|_{L^1(\Omega)}\to0$ as $n\to\infty$.

	For simplicity of exposition we fix the kernel $K$ in~\eqref{eq:tildeFunctional} to be
	\begin{equation}
		\label{eq:specific_kernel}
		K(\zeta) := \frac{1}{\max(1, \|\zeta\|^{p})}
	\end{equation}
	and thus the rescaled kernels (see Section~\ref{sec:intro}) are
	\begin{equation*}
		K_{\tau}(\zeta) := \frac{1}{\max(\tau^{1/(p-d-1)}, \|\zeta\|)^{p}}.
	\end{equation*}
	However, the specific form~\eqref{eq:specific_kernel} is not necessary for Theorem~\ref{thm:main} to hold.
	
	% Below we list all the required properties of the kernel $K_{1}$
	% We assume that  the kernel $K_1$ in~\eqref{eq:tildeFunctional} is such that the rescaled kernels $K_\tau$ satisfy the following assumptions
	More in general,  we will need the following properties, which are in particular satisfied by~\eqref{eq:specific_kernel} (for a proof of the last property  see Section~\ref{sec:1D}):
	
	\begin{align}
		&\exists\,C:\quad\frac{1}{C(\|\zeta\|+\tau^{1/(p-d-1)})^p}\leq K_\tau(\zeta)\leq C\frac{1}{(\|\zeta\|+\tau^{1/(p-d-1)})^p},\quad p\geq d+3,\label{eq:kass1}\\
		&K_\tau(\zeta) \text{ converges monotonically increasing for $\tau\downarrow 0$ to $\frac{1}{\|\zeta\|^p}$ }, \label{eq:kass2}\\
		&K_{\tau}\text{ is symmetric under rotations.} \label{eq:kass3}
	\end{align}
	
	Moreover, we need to assume an additional property on $K_\tau$. Define
	\begin{equation*}
		\hat{K}_{\tau}(t):= \int_{\R^{d-1}} K_{1}(t e_{1} + t_1^\perp) \dt_1^\perp , \quad t\in\R,
	\end{equation*}
	and the one dimensional  functional
	\begin{equation}
		\label{eq:1dFunc}
		\Fcal^{\mathrm{1D}}_{\tau, p,d}(E,[0,L)) := \frac{1}{L} \bigg(
		J_\tau \per^{\mathrm{1D}}(E; [0,L))
		-
		\int_{[0, L)} \int_{\R} |\chi_{E}(s+t) - \chi_{E}(s)| \hat{K}_{\tau}(t) \ds\dt.
		\bigg)
	\end{equation}
	where $E\subset \R$  is an $L$-periodic set and $J_\tau$ is defined in~\eqref{eq:jtau}. We assume that for every $L>0$ there exists $\bar \tau>0$ such that for every $0<\tau<\bar \tau$  and  for every $E_\tau \in \argmin \Fcal^{\mathrm{1D}}_{\tau,p,d}(\cdot,[0,L))$ there exists $h^*_{L}$ such that up to translations it holds
	\begin{equation}
		\label{eq:kass4}
		E_\tau = \bigcup_{k} (2kh^*_{L}, (2k+1)h^*_{L}).
	\end{equation}

	Thanks to condition~\eqref{eq:kass3}, we will also use the notation $K_\tau(t):=K_\tau(z)$ for $z\in\partial B_{|t|}(0)$, $t\in\R$.

	For simplicity of notation, for $t\in\R$ we will also denote $\overline K_{\tau}(t) := |t|^{d-1}K_{\tau}(t) $.

	Notice that for any $[0,L)^d$-periodic set which, up to a rigid motion, is of the form $E=\widehat E\times\R^{d-1}$, $\widehat E\subset\R$, then
	\[
	\Fcal_{\tau,p,d}(E,[0,L)^d)=\Fcal^{\mathrm{1D}}_{\tau,p,d}(\widehat E,[0,L)).
	\]
	
	In the following we will often use this notation: given $a,b\in \R^+$
	\begin{align*}
		a\lesssim b,\quad a\gtrsim b,
	\end{align*}
	to denote that there exists a geometric constant $C>0$ depending only on $p,d$ such that respectively it holds
	\begin{equation*}
		a\leq Cb,\quad a\geq Cb.
	\end{equation*}

	\section{An integral geometric formulation}\label{sec:int}
	
	In this section we provide an integral geometric formulation for the functional $\Fcal_{\tau,p,d}$ introduced in~\eqref{eq:ftauintro}, which will be fundamental in our analysis.
	
	We recall the definition of the functional $\Fcal_{\tau,p,d}$ relative to $[0,L)^d\subset\R^d$
	\begin{equation}\label{eq:ftau2}
		\Fcal_{\tau,p,d}(E,[0,L)^d)=\frac{1}{L^d}\Big(J_\tau\per(E,[0,L)^d)-\int_{[0,L)^d}\int_{\R^d}\bigl|\chi_E(x+\zeta)-\chi_E(x)\bigr| K_\tau(\zeta)\dx\d\zeta\Big)
	\end{equation}
	
	Our main result is the following
	
	\begin{proposition}\label{prop:intgeom} Let $\Fcal_{\tau,p,d}$ be the functional defined in~\eqref{eq:ftau2}. One has that
		\begin{align}
			\Fcal_{\tau,p,d}(E, [0,L)^d)=\frac{1}{L^d}\int_{\S^{d-1}}\int_{\theta^\perp}\sum_{s\in\partial^* E_{x_\theta^\perp}\cap [0,L)^d_{x_\theta^\perp}}r_{\tau}(E_{x_\theta^\perp},s)\dx_\theta^\perp\d\theta,\label{eq:forintgeom}
		\end{align}
		where we set
		\begin{align}
			r_{\tau}(E_{x_\theta^\perp},s)&:=\int_{-1}^{1}|\rho|\overline K_\tau(\rho)\d\rho-\int_{s^-}^s\int_0^{+\infty}\bigl|\chi_{E_{x_\theta^\perp}}(u)-\chi_{E_{x_\theta^\perp}}(u+\rho)\bigr|\overline K_\tau(\rho)\d\rho\du\notag\\
			&-\int_{s}^{s^+}\int_{-\infty}^0\bigl|\chi_{E_{x_\theta^\perp}}(u)-\chi_{E_{x_\theta^\perp}}(u+\rho)\bigr|\overline K_\tau(\rho)\d\rho\du.\label{eq:rme}
		\end{align}
		Moreover, for any open and bounded set $\Omega$
		\begin{equation}
			\per(E; \Omega)  = \frac{1}{C_{1,d}}\int_{\S^{d-1}} \int_{\pi^{\perp}_{\theta}} \per^{\mathrm{1D}}(E_{x^\perp_{\theta}}, \Omega_{x^\perp_{\theta}})\dx_{\theta}^{\perp}\d\theta,\label{eq:igper}
		\end{equation}
		where
		\begin{equation}
			C_{1,d} := \int_{\S^{d-1}} |\Scal{\theta}{e_1}| \d\theta.
			\label{eq:Cd}
		\end{equation}
		
	\end{proposition}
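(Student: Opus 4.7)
The plan is to decompose the proof into two parallel integral-geometric reformulations — one for the perimeter and one for the nonlocal term — and then combine them via a one-dimensional cell decomposition on each slice.

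For the Crofton-type identity \eqref{eq:igper}, I would first apply the slicing formula~\eqref{eq:slicing} with $g\equiv 1$, obtaining for every $\theta\in\S^{d-1}$
\[
\int_{\partial^*E\cap\Omega}|\langle\nu_E,\theta\rangle|\,\d\hausd^{d-1}=\int_{\theta^\perp}\per^{\mathrm{1D}}(E_{x_\theta^\perp},\Omega_{x_\theta^\perp})\,\dx_\theta^\perp.
\]
Integrating in $\theta$, swapping the order on the left by Fubini, and using that $\int_{\S^{d-1}}|\langle\nu,\theta\rangle|\,\d\theta$ is the constant $C_{1,d}$ for every unit $\nu$ (by rotational invariance of $\S^{d-1}$), the claim follows.

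For the nonlocal term, the key is to pass to polar coordinates $\zeta=\rho\theta$, $\rho>0$, $\theta\in\S^{d-1}$, with $\d\zeta=\rho^{d-1}\d\rho\d\theta$, producing the factor $\overline K_\tau(\rho)=\rho^{d-1}K_\tau(\rho)$, even in $\rho$ thanks to the radial symmetry~\eqref{eq:kass3}. For each fixed $\theta$, Fubini and the slicing $x=x_\theta^\perp+u\theta$ turn the $[0,L)^d\times\{\rho>0\}$ double integral into
\[
\int_{\theta^\perp}\int_{I_{x_\theta^\perp}}\int_0^\infty|\chi_A(u+\rho)-\chi_A(u)|\overline K_\tau(\rho)\,\d\rho\du\dx_\theta^\perp,
\]
with $A:=E_{x_\theta^\perp}$ and $I_{x_\theta^\perp}$ the 1D slice of $[0,L)^d$ along $\theta$. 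Combined with the Crofton formula applied to $\Omega=[0,L)^d$, and using the polar evaluation $J_\tau=C_{1,d}\int_0^1\rho\overline K_\tau(\rho)\d\rho$ (coming from the definition~\eqref{eq:jtau} and rotational invariance), this reduces the proposition to a one-dimensional identity that, for each slice $A$ and interval $I=I_{x_\theta^\perp}$, expresses $\tfrac{J_\tau}{C_{1,d}}\#(\partial^*A\cap I)$ minus the slice nonlocal energy as $\sum_{s\in\partial^*A\cap I}r_\tau(A,s)$, up to the normalization coming from the $\S^{d-1}$ measure.

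The 1D identity would be proved by a Voronoi-cell decomposition. Enumerating $\partial^*A\cap I$ as $s_1<\cdots<s_N$ with neighbours $s_0=s_1^-$ and $s_{N+1}=s_N^+$, the half-cells $(s_k^-,s_k)$ tile $(s_0,s_N)$ and the half-cells $(s_k,s_k^+)$ tile $(s_1,s_{N+1})$. Hence the two sums
\[
\sum_k\int_{s_k^-}^{s_k}\int_0^{\infty}|\chi_A(u)-\chi_A(u+\rho)|\overline K_\tau(\rho)\d\rho\du,\qquad \sum_k\int_{s_k}^{s_k^+}\int_{-\infty}^0|\chi_A(u)-\chi_A(u+\rho)|\overline K_\tau(\rho)\d\rho\du
\]
collapse to single integrals over intervals of total length one slice-period of $A$. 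The change of variables $\rho\mapsto-\rho$, $u\mapsto u+\rho$, combined with the evenness of $\overline K_\tau$ and the $[0,L)^d$-periodicity of $E$, identifies both sums with the slice nonlocal energy. The constant term $\int_{-1}^1|\rho|\overline K_\tau(\rho)\d\rho$ of $r_\tau$ matches, via the same polar evaluation of $J_\tau$, the per-boundary-point contribution in the Crofton decomposition of the perimeter. Summing over $s$ and integrating in $x_\theta^\perp,\theta$ then reassembles~\eqref{eq:forintgeom}.

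The main obstacle lies precisely in this last step. The Voronoi half-cells $(s^-,s),(s,s^+)$ tile cleanly only modulo the ambient $[0,L)^d$-periodicity of $E$, and the slice $I_{x_\theta^\perp}$ of the fundamental domain is a finite interval whose endpoints generically fall inside a cell rather than at boundary points of $A$. The reassembly of $\sum_s[b(s)+c(s)]$ into the slice nonlocal energy therefore has to be performed only after the outer integration in $x_\theta^\perp$, where translates of the slices cover each transverse position exactly once. Tracking these endpoint half-cells together with the $\rho\to-\rho$ symmetry is the delicate bookkeeping that lies at the heart of the argument.
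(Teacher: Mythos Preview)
Your approach is essentially the paper's: polar coordinates, Fubini, and slicing for both terms, followed by the cell decomposition $(s^-,s)$ for $\rho>0$ and $(s,s^+)$ for $\rho<0$. The only cosmetic difference is in the perimeter term: rather than proving Crofton first and then multiplying by $J_\tau/C_{1,d}=\int_0^1\rho\,\overline K_\tau\,\d\rho$, the paper uses rotational invariance to write $J_\tau=\int_{\|\zeta\|\le1}|\langle\zeta,\nu_E(x)\rangle|K_\tau\,\d\zeta$ directly under the $\partial^*E$ integral, then passes to polar coordinates and slices --- this is a one--step route to the same $\tfrac12\int_{\S^{d-1}}\int_{\theta^\perp}\sum_s\int_{-1}^1|\rho|\overline K_\tau$ you obtain in two steps.

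On the endpoint issue you flag: you are right that for a fixed slice the cells $(s^-,s)$ with $s\in\partial^*A\cap I$ do not tile $I$, so the identity cannot be read off slice-by-slice. The paper in fact writes the last equality of~\eqref{eq:nlintgeom} without comment; your observation that the reassembly only becomes exact after the $x_\theta^\perp$--integration (via $[0,L)^d$--periodicity of $E$, which lets one replace the fundamental domain $[0,L)^d$ by the set $\{x:\ \text{the next boundary point of }E_{x_\theta^\perp}\text{ after }x_\theta\text{ lies in }[0,L)^d\}$) is the correct justification, and is more explicit than what the paper records. The $\rho\mapsto-\rho$ substitution you mention is not the mechanism that fixes the endpoints; it is the change of fundamental domain that does the work.
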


	\begin{proof}
		Let us first consider the local term of $\Fcal_{\tau,p,d}$ on a generic bounded and open set $\Omega$. By the rotational invariance of the kernel, polar change of coordinates and Fubini Theorem, one has that
		\begin{align}
			J_\tau\per(E;\Omega)&=\int_{\partial ^*E\cap \Omega}\int_{\{\|\zeta\|\leq 1\}}|\zeta_\theta|K_\tau(\zeta)\d\zeta\d\mathcal{H}^{d-1}(x)\notag\\
			&=\int_{\partial ^*E\cap\Omega}\int_{\{\|\zeta\|\leq1\}}|\langle\zeta,\nu_E(x)\rangle |K_\tau(\zeta)\d\zeta\d\mathcal H^{d-1}(x)\notag\\
			&=\frac12\int_{\partial^* E\cap \Omega}\int_{\S^{d-1}}\int_{[-1,1]}|\rho|^d|\langle \theta,\nu_E(x)\rangle| K_\tau(\rho)\d\rho\d\theta \d\mathcal{H}^{d-1}(x)\notag\\
			&=\frac12\int_{\S^{d-1}}\int_{\partial ^*E\cap \Omega}|\langle \theta,\nu_E(x)\rangle|\int_{[-1,1]}|\rho|^d K_\tau(\rho)\d\rho\d\mathcal{H}^{d-1}(x)\d\theta\notag\\
			&=\frac12\int_{\S^{d-1}}\int_{\theta^\perp}\sum_{s\in\partial^* E_{x_\theta^\perp}\cap\Omega_{x_\theta^\perp}}\int_{-1}^{1}|\rho|\overline K_\tau(\rho)\d\rho\dx_\theta^\perp\d\theta,\label{eq:lintgeom}
		\end{align}
		where in the last equality we used the classical slicing formula for the perimeter in direction $\theta$ given in~\eqref{eq:slicing}.

		Let us now consider the nonlocal term. By polar change of coordinates, Fubini Theorem and~\eqref{eq:slicing}, one obtains
		
		\begin{align}
			\int_{[0,L)^d}\int_{\R^d}& \bigl| \chi_E(x+\zeta) - \chi_E(x) \bigr |  K_\tau(\zeta)   \dx \d\zeta=\notag\\
			&=\frac{1}{2}\int_{\S^{d-1}}\int_{-\infty}^{+\infty}\int_{[0,L)^d}\bigl|\chi_E(x)-\chi_E(x+\rho\theta)\bigr||\rho|^{d-1}K_\tau(\rho)\dx\d\rho\d\theta\notag\\
			&=\frac12\int_{\S^{d-1}}\int_{\theta^\perp}\int_{[0,L)^d_{x_\theta^\perp}}\int_{-\infty}^{+\infty}\bigl|\chi_{E_{x_\theta^\perp}}(u)-\chi_{E_{x_\theta^\perp}}(u+\rho)\bigr|\overline K_\tau(\rho)\d\rho\du\dx_\theta^\perp\d\theta\notag\\
			&=\frac12\int_{\S^{d-1}}\int_{\theta^\perp}\sum_{s\in\partial ^*E_{x_\theta^\perp}\cap [0,L)^d_{x_\theta^\perp}}\Bigl[\int_{s^-}^s\int_0^{+\infty}\bigl|\chi_{E_{x_\theta^\perp}}(u)-\chi_{E_{x_\theta^\perp}}(u+\rho)\bigr|\overline K_\tau(\rho)\d\rho\du\notag\\
			&+\int_{s}^{s^+}\int_{-\infty}^0\bigl|\chi_{E_{x_\theta^\perp}}(u)-\chi_{E_{x_\theta^\perp}}(u+\rho)\bigr|\overline K_\tau(\rho)\d\rho\du\Bigr]\dx_\theta^\perp\d\theta.\label{eq:nlintgeom}
		\end{align}
		
		Putting together~\eqref{eq:lintgeom} for $\Omega=[0,L)^d$ and~\eqref{eq:nlintgeom}, one obtains~\eqref{eq:forintgeom}.
		
		To show~\eqref{eq:igper}, which is a classical formula, one can  use  Fubini Theorem and the slicing formula~\eqref{eq:slicing} as follows
		\begin{align*}
			\per(E; \Omega)  &= \int_{\partial^* E\cap \Omega}\|\nu_E(x)\|\d\mathcal H^{d-1}(x)\\
			&=\frac{1}{C_{1,d}}\int_{\partial^* E\cap \Omega}\int_{\S^{d-1}}|\Scal{\nu_E(x)}{\theta}|\d\theta\d\mathcal H^{d-1}(x)\\
			&=\frac{1}{C_{1,d}}\int_{\S^{d-1}}\int_{\partial^* E\cap \Omega}|\Scal{\nu_E(x)}{\theta}|\d\mathcal H^{d-1}(x)\d\theta\\
			&=\frac{1}{C_{1,d}}\int_{\S^{d-1}} \int_{{\theta}^\perp} \per^{\mathrm{1D}}(E_{x^\perp_{\theta}}, \Omega_{x^\perp_{\theta}})\dx_{\theta}^{\perp}\d\theta.
		\end{align*}
		
	\end{proof}
	
	\begin{remark}\label{rmk:slicingJ} Let now $\tilde{\Fcal}_{J,p,d}$ be the functional defined in \eqref{eq:tildeFunctional} for $\Omega=[0,L)^d$ and $J>J_c$. Then, 
		\begin{equation}
			\tilde{\Fcal}_{J,p,d}(E,[0,L)^d)=\frac{(J-J_c)}{L^d} \per(E, [0,L)^d)+	\tilde{\Fcal}_{J_c,p,d}(E,[0,L)^d).
		\end{equation}
		By the same argument used in Proposition \ref{prop:intgeom}, where now $K_\tau$ is replaced by $K$ and $J_\tau$ is replaced by $J_c$ as in \eqref{eq:Jc}, one has the following  formula 
		\begin{align}
			\tilde{\Fcal}_{J,p,d}(E,[0,L)^d)=\frac{(J-J_c)}{L^d} \per(E, [0,L)^d)+\frac{1}{L^d}\int_{\S^{d-1}}\int_{\theta^\perp}\sum_{s\in\partial^*E_{x_\theta^\perp}\cap[0,L)^d_{x_\theta^\perp}}\tilde r(E_{x_\theta^\perp},s)\dx_\theta^\perp\d\theta,\label{eq:intgeomJ}
		\end{align}
		where we set
		\begin{align}
			\tilde{r}(E_{x_\theta^\perp},s)&:=\int_{-\infty}^{+\infty}|\rho|\overline K(\rho)\d\rho-\int_{s^-}^s\int_0^{+\infty}\bigl|\chi_{E_{x_\theta^\perp}}(u)-\chi_{E_{x_\theta^\perp}}(u+\rho)\bigr|\overline K(\rho)\d\rho\du\notag\\
			&-\int_{s}^{s^+}\int_{-\infty}^0\bigl|\chi_{E_{x_\theta^\perp}}(u)-\chi_{E_{x_\theta^\perp}}(u+\rho)\bigr|\overline K(\rho)\d\rho\du.\label{eq:rtilde}
		\end{align}
		
	\end{remark}
	
	\begin{remark}\label{rmk:floc}
		Given an open and bounded set $\Omega\subset\R^d$, by using the integral geometric formula \eqref{eq:forintgeom} one can define the following restriction of  $\Fcal_{\tau,p,d}$ to $\Omega$:
		
		\begin{align}\label{eq:floc}
			\Fcal^{\mathrm{loc}}_{\tau,p,d}(E,\Omega)=\frac{1}{|\Omega|}\int_{\S^{d-1}}\int_{\theta^\perp}\sum_{s\in\partial^* E_{x_\theta^\perp}\cap \Omega_{x_\theta^\perp}}r_{\tau}(E_{x_\theta^\perp},s)\dx_\theta^\perp\d\theta,
		\end{align} 
		
		Notice that, for $s\in\Omega_{x_\theta^\perp}\cap \partial^*E_{x_\theta^\perp}$, the nonlocal part of  $r_\tau(E_{x_\theta^\perp},s)$ measures interactions of $u\in(s^-,s)$ with $u+\rho$, $\rho\in(0,+\infty)$ and interactions of  $u\in(s,s^+)$ with $u+\rho$, $\rho\in(-\infty,0)$. Here $s^-$ and $s^+$ are not necessarily contained in $\Omega_{x_\theta^\perp}$.  Notice also that simply restricting $\Fcal_{\tau,p,d}$ to $\Omega$ by replacing $[0,L)^d$ with $\Omega$ in the definition of $\Fcal_{\tau,p,d}$ given in \eqref{eq:ftauintro} would possibly create sequences of sets with unbounded energy from below as $\tau\to0$ simply due to boundary effects (for example, if $\partial E\subset \R^d\setminus \Omega$). 
		
		Defining also the one-dimensional functionals
		\begin{equation}\label{eq:floc2}
			\bigl|\Omega_{x_\theta^\perp}\bigr|\overline F^{\mathrm{1D}}_{\tau,p,d}(E_{x_\theta^\perp}, \Omega_{x_\theta^\perp})=\sum_{s\in\partial E_{x_\theta^\perp}\cap \Omega_{x_\theta^\perp}}r_{\tau}(E_{x_\theta^\perp},s),
		\end{equation}
		formula~\eqref{eq:floc} can be rewritten as
		\begin{equation}\label{eq:fintgeom}
			\Fcal^{\mathrm{loc}}_{\tau,p,d}(E,\Omega)=\frac{1}{|\Omega|}\int_{\S^{d-1}}\int_{\theta^\perp}      |\Omega_{x_\theta^\perp}|\overline  F^{\mathrm{1D}}_{\tau,p,d}(E_{x_\theta^\perp},\Omega_{x_\theta^\perp} )\dx_\theta^\perp\d\theta.
		\end{equation}
		For every $x=x_\theta^\perp+x_\theta\theta\in\partial^* E$, let us also set for later convenience, with a slight abuse of notation,
		\begin{equation*}
			r_{\tau,\theta}(E,x):=r_{\tau}(E_{x_\theta^\perp},x_\theta).
		\end{equation*}
		Thus,
		\begin{equation*}
			\Fcal^{\mathrm{loc}}_{\tau,p,d}(E, \Omega)=\frac{1}{2|\Omega|}\int_{\S^{d-1}}\int_{\partial^* E\cap \Omega}|\langle \nu_E(x),\theta\rangle|r_{\tau,\theta}(E,x)\d\hd(x)\d\theta.
		\end{equation*}
		Notice that
		\[
		\overline F^{\mathrm{1D}}_{\tau,p,d}(E,[a,b))\neq\Fcal^{\mathrm{1D}}_{\tau,p,d}(E, [a,b)),
		\]
		where $\Fcal^{\mathrm{1D}}_{\tau,p,d}$ was defined in~\eqref{eq:1dFunc}, since the two kernels are different.
	\end{remark}
	
	\section{One dimensional estimates}\label{sec:1D}
	
	In this section we derive one dimensional estimates, namely estimates which depend only on one dimensional slices $E_{x_\theta^\perp}$ of $E$. We assume, also when not explicitly stated, that $p>d+1$.
	
	Recall the formula for $r_\tau$ given in~\eqref{eq:rme}
	\begin{align*}
		r_{\tau}(E_{x_\theta^\perp}, s)&:=\int_{-1}^1|\rho|\overline  K_\tau(\rho)\d\rho-\int_{s^-}^s\int_0^{+\infty}\bigl|\chi_{E_{x_\theta^\perp}}(u)-\chi_{E_{x_\theta^\perp}}(u+\rho)\bigr|\overline K_\tau(\rho)\d\rho\du\notag\\
		&-\int_{s}^{s^+}\int_{-\infty}^0\bigl|\chi_{E_{x_\theta^\perp}}(u)-\chi_{E_{x_\theta^\perp}}(u+\rho)\bigr|\overline  K_\tau(\rho)\d\rho\du.
	\end{align*}
	
	The following proposition contains one dimensional estimates analogous to those proved along the coordinate directions for the anisotropic functionals considered in~\cite{gr,dr_arma}.
	Also in this setting the proof is very similar, though we report it here for completeness and consistency of notation.
	
	\begin{proposition}\label{prop:1dbound}
		There exist $\gamma_0, \gamma_1>0$ depending only on  $p,d$ such that
		\begin{align}\label{eq:stima1d}
			r_\tau(E_{x_\theta^\perp},s)&\geq -\gamma_0+\gamma_1\min\bigl\{|s-s^-|^{-(p-d-1)},\tau^{-1}\bigr\}+\gamma_1\min\bigl\{|s-s^+|^{-(p-d-1)},\tau^{-1}\bigr\}.
		\end{align}
		In particular, whenever $p>d+1$  there exist $\tau_0,\eta_0>0$ such that whenever $\min\{|s-s^-|,|s-s^+|\}<\eta_0$ and $\tau<\tau_0$, then {$ r_{\tau}(E_{x_\theta^\perp},s)\geq \frac{\gamma_1}{2}\min\bigl\{|s-s^-|^{-(p-d-1)},|s-s^+|^{-(p-d-1)}, \tau^{-1}\bigr\}$.}
	\end{proposition}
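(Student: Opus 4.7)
The plan is to eliminate the set $E_{x_\theta^\perp}$ from the two nonlocal terms of $r_\tau$, keeping only the widths $h^\pm := |s - s^\pm|$. Since $s^-, s, s^+$ are consecutive points of $\partial^* E_{x_\theta^\perp}$, the set $E_{x_\theta^\perp}$ is constant on $(s^-, s)$ and on $(s, s^+)$; writing $u = s - v$ with $v \in (0, h^-)$, the integrand $|\chi_{E_{x_\theta^\perp}}(u) - \chi_{E_{x_\theta^\perp}}(u+\rho)|$ vanishes whenever $\rho < v$ and is bounded by $1$ otherwise. A Fubini swap then yields
\[
\int_{s^-}^s\!\int_0^{+\infty}\!\bigl|\chi_{E_{x_\theta^\perp}}(u) - \chi_{E_{x_\theta^\perp}}(u+\rho)\bigr|\overline K_\tau(\rho)\,\d\rho\,\du
\leq \int_0^{+\infty} \min(h^-, \rho)\,\overline K_\tau(\rho)\,\d\rho,
\]
and an analogous bound, using $\overline K_\tau(-\rho) = \overline K_\tau(\rho)$, treats the second nonlocal integral with $h^+$. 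Since $\int_{-1}^1|\rho|\overline K_\tau(\rho)\,\d\rho = 2\int_0^1 \rho\overline K_\tau(\rho)\,\d\rho$, the constant in $r_\tau$ splits evenly, and the proof reduces to a lower bound, for $h \in \{h^-, h^+\}$, on
\[
A(h) := \int_0^1 \rho\overline K_\tau(\rho)\,\d\rho - \int_0^{+\infty} \min(h,\rho)\,\overline K_\tau(\rho)\,\d\rho.
\]

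The estimation of $A(h)$ proceeds by case analysis. For $h \geq 1$, the two integrals cancel on $[0,1]$, leaving $A(h) \geq -\int_1^{+\infty}\rho\overline K_\tau(\rho)\,\d\rho \geq -C$ uniformly in $\tau$, since $\rho\overline K_\tau(\rho) \lesssim \rho^{d-p}$ is integrable at infinity by~\eqref{eq:kass1} and $p > d+1$. For $h < 1$ I would rewrite
\[
A(h) = \int_h^1 (\rho-h)\overline K_\tau(\rho)\,\d\rho - h\int_1^{+\infty}\overline K_\tau(\rho)\,\d\rho,
\]
where the second term is $\geq -Ch \geq -C$, while for the first I would split at $\rho_0 := \max(2h,\, 2\tau^{1/(p-d-1)})$, observing that on $[\rho_0, 1]$ one has $\rho - h \geq \rho/2$ and $\overline K_\tau(\rho) \gtrsim \rho^{d-1-p}$ by~\eqref{eq:kass1}. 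A direct computation of $\int_{\rho_0}^1 \rho^{d-p}\,\d\rho$ then yields $\gtrsim \rho_0^{-(p-d-1)} \asymp \min(h^{-(p-d-1)}, \tau^{-1})$, provided $h$ is small enough that $\rho_0 \leq 1/2$. Summing the contributions for $h = h^\pm$ and absorbing all universal constants into $\gamma_0, \gamma_1$ produces the first displayed inequality.

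For the ``in particular'' assertion, I would fix $\eta_0, \tau_0 > 0$ so small that $\gamma_1 \min(\eta_0^{-(p-d-1)}, \tau_0^{-1}) \geq 2\gamma_0$. Assuming without loss that $h^- = \min(h^-, h^+) < \eta_0$ and $\tau < \tau_0$, one then has $\gamma_1 \min((h^-)^{-(p-d-1)}, \tau^{-1}) \geq 2\gamma_0$, and feeding this into the main inequality gives
\[
r_\tau \geq \gamma_0 + \gamma_1 \min\bigl((h^+)^{-(p-d-1)}, \tau^{-1}\bigr) \geq \frac{\gamma_1}{2}\min\bigl(|s-s^-|^{-(p-d-1)}, |s-s^+|^{-(p-d-1)}, \tau^{-1}\bigr),
\]
where the final step uses that, since $h^- \leq h^+$, the triple minimum coincides with $\min((h^+)^{-(p-d-1)}, \tau^{-1})$. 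I expect the only delicate moment in the argument to be the transition zone $h \asymp \tau^{1/(p-d-1)}$ where the kernel~\eqref{eq:kass1} switches between its two regimes; the choice $\rho_0 = \max(2h, 2\tau^{1/(p-d-1)})$ is precisely tailored to merge both regimes and to produce both the polynomial tail $h^{-(p-d-1)}$ and the $\tau^{-1}$ saturation uniformly, avoiding any separate treatment.
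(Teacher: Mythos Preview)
Your proof is correct and follows essentially the same route as the paper's. Both arguments bound the two nonlocal integrals by $\int_0^{+\infty}\min(h^\pm,\rho)\overline K_\tau(\rho)\,\d\rho$ (the paper cites this as \eqref{eq:nl1}--\eqref{eq:nl2}), rewrite the resulting difference as $\int_h^1(\rho-h)\overline K_\tau(\rho)\,\d\rho - h\int_1^{+\infty}\overline K_\tau(\rho)\,\d\rho$, and then extract the $\min\{h^{-(p-d-1)},\tau^{-1}\}$ behaviour; your explicit splitting at $\rho_0=\max(2h,2\tau^{1/(p-d-1)})$ supplies the detail that the paper simply asserts in its final displayed inequality.
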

	
	\begin{proof}
		By the estimates
		\begin{align}
			\int_{s^{-}}^{s} \bigl|\chi_{E_{x_\theta^\perp}}(u+\rho) - \chi_{E_{x_\theta^\perp}}(u)\bigr| \du& \leq \min\{\rho, s- s^{-}\},\quad\forall\,\rho>0,\label{eq:nl1}\\
			\int_{s}^{s^+} \bigl|\chi_{E_{x_\theta^\perp}}(u+\rho) - \chi_{E_{x_\theta^\perp}}(u)\bigr| \du &\leq \min\{-\rho, s^{+} - s\}, \quad\forall\,\rho\leq0,\label{eq:nl2}
		\end{align}
		(see~\cite{gr, dr_arma}) one finds that
		
		\begin{align*}
			\int_{0}^{1} \rho \overline{K}_{\tau}(\rho) \d\rho &- \int_{s^{-}}^{s}\int_{0}^{+\infty }\bigl|\chi_{E_{x_\theta^\perp}}(u + \rho) - \chi_{E_{x_\theta^\perp}}(u)\bigr| \overline{K}_{\tau}(\rho) \d\rho\du \notag\\&\geq \int_{0}^{1}\rho \overline{K}_\tau(\rho)\d\rho - \int_{0}^{+\infty} \min\{|s-s^{-}|, \rho\} \overline{K}_{\tau}(\rho)\d\rho\notag\\
			& = \int_{|s-s^{-}|}^{1} \bigl(\rho-|s-s^{-}|\bigr) \overline{K}_{\tau}(\rho)\d\rho - \int_{1}^{+\infty } |s-s^{-}| \overline{K}_{\tau}(\rho)\d\rho
		\end{align*}
		and analogously
		\begin{align*}
			\int_{-1}^{0} |\rho| \overline{K}_{\tau}(\rho) \d\rho &- \int_{s}^{s^+}\int_{-\infty}^{0 }\bigl|\chi_{E_{x_\theta^\perp}}(u + \rho) - \chi_{E_{x_\theta^\perp}}(u)\bigr| \overline {K}_{\tau}(\rho) \d\rho\du \\&\geq\int_{|s-s^{+}|}^1 \bigl(\rho-|s-s^{+}|\bigr) \overline{K}_{\tau}(\rho)\d\rho - \int_{1}^{+\infty } |s-s^{+}| \overline{K}_{\tau}(\rho)\d\rho.
		\end{align*}
		
		In particular, by property~\eqref{eq:kass1} and $p>d+1$, implying  in particular the uniform integrability of  $\rho\overline K_\tau(\rho)$ on the interval $[1,+\infty)$, there exist $\gamma_0, \gamma_1>0$ depending only on  $p,d$ such that
		\begin{align*}
			r_\tau(E_{x_\theta^\perp},s)&\geq\int_{|s-s^{-}|}^{+\infty} \bigl(\rho-|s-s^{-}|\bigr) \overline{K}_{\tau}(\rho)\d\rho \notag\\
			&+\int_{|s-s^{+}|}^{+\infty} (\rho-|s-s^{+}|) \overline{K}_{\tau}(\rho)\d\rho - 2\int_{1}^{+\infty } \rho\overline {K}_{\tau}(\rho)\d\rho\notag\\
			&\geq -\gamma_0+\gamma_1\min\bigl\{|s-s^-|^{-(p-d-1)},\tau^{-1}\bigr\}+\gamma_1\min\bigl\{|s-s^+|^{-(p-d-1)},\tau^{-1}\bigr\}.
		\end{align*}
		
		The last statement in the proposition follows immediately from the fact that $p>d+1$.

	\end{proof}
	
	A first consequence of the estimates used in the proof of Proposition \ref{prop:1dbound} is the following 
	
	\begin{theorem}\label{thm:jc}
		Let $\tilde{\Fcal}_{J,p,d}$ be the functional defined in \eqref{eq:tildeFunctional}. For $J\geq J_c$ and $\Omega=[0,L)^d$, $\tilde{\Fcal}_{J,p,d}\geq0$ and $\tilde{\Fcal}_{J,p,d}(E,[0,L)^d)=0$ if and only if $E=\emptyset$ or $E=\R^d$. 
	\end{theorem}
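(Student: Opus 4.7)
The plan is to reduce to a slice-by-slice argument through the integral geometric formula and then pin down when the slice quantity can vanish.

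\textbf{Step 1: decomposition.} First I would invoke Remark \ref{rmk:slicingJ} to write
\[
\tilde{\Fcal}_{J,p,d}(E, [0,L)^d) = \frac{J - J_c}{L^d} \per(E, [0,L)^d) + \tilde{\Fcal}_{J_c, p, d}(E, [0,L)^d),
\]
and then use the integral geometric formula \eqref{eq:intgeomJ}, which expresses $\tilde{\Fcal}_{J_c, p, d}$ as the $\theta$- and $x_\theta^\perp$-average of the slice contributions $\tilde r(E_{x_\theta^\perp}, s)$ defined in \eqref{eq:rtilde}. For $J \geq J_c$ the perimeter term is nonnegative, so the entire question reduces to showing $\tilde r \geq 0$ pointwise.

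\textbf{Step 2: nonnegativity of $\tilde r$.} Here I would mimic the bounds \eqref{eq:nl1}--\eqref{eq:nl2} used inside the proof of Proposition \ref{prop:1dbound}, but with $\tau = 0$ and keeping $\rho$ over the full ray $(0, +\infty)$; the integrability $\int_0^\infty \rho\,\overline K(\rho)\d\rho < +\infty$ is exactly the condition $p > d+1$ that makes $J_c$ finite. The resulting estimate is
\[
\tilde r(E_{x_\theta^\perp}, s) \geq \int_{s-s^-}^{+\infty} \bigl(\rho - (s-s^-)\bigr) \overline K(\rho)\d\rho + \int_{s^+-s}^{+\infty} \bigl(\rho - (s^+-s)\bigr)\overline K(\rho)\d\rho,
\]
a sum of two nonnegative terms. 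Together with Step 1 this gives $\tilde{\Fcal}_{J,p,d} \geq 0$ whenever $J \geq J_c$.

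\textbf{Step 3: equality case.} For $J > J_c$, vanishing of the functional forces $\per(E, [0,L)^d) = 0$, which together with $[0,L)^d$-periodicity gives $E = \emptyset$ or $E = \R^d$ modulo Lebesgue null sets. The genuinely delicate case is $J = J_c$: equality forces $\tilde r(E_{x_\theta^\perp}, s) = 0$ for a.e.\ $(\theta, x_\theta^\perp)$ and every $s$ appearing in the sum. Since $\overline K > 0$ strictly on $(0, +\infty)$, the two integrals in Step 2 can vanish only when $s - s^- = +\infty$ and $s^+ - s = +\infty$, so such an $s$ would have to be the unique boundary point of $E_{x_\theta^\perp}$ on all of $\R$. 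I would then close the argument by comparing the perimeter of $E$ in a large ball $B_R$ computed via \eqref{eq:igper} with the one forced by $[0,L)^d$-periodicity: since each slice contributes at most one boundary point, the former is $\lesssim R^{d-1}$, while the latter is $\gtrsim R^d \per(E, [0,L)^d)$ by tiling $B_R$ with translates of $[0,L)^d$. Letting $R \to +\infty$ forces $\per(E, [0,L)^d) = 0$ and hence $E$ is trivial.

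\textbf{Main obstacle.} Steps 1 and 2 are an almost mechanical transcription of arguments already carried out for Proposition \ref{prop:1dbound}. The real content is in the $J = J_c$ part of Step 3: the pointwise rigidity $\tilde r = 0$ has to be converted into global triviality of $E$, which requires combining the strict positivity of $\overline K$ with the incompatibility between ``at most one boundary crossing on each line'' and the $R^d$-growth of the perimeter in $B_R$ forced by periodicity of any nontrivial $E$.
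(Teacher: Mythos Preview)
Your proof is correct and Steps 1--2 match the paper's argument exactly: both invoke the decomposition from Remark~\ref{rmk:slicingJ} and bound $\tilde r \geq 0$ via the one-dimensional estimates \eqref{eq:nl1}--\eqref{eq:nl2}. For the equality case the paper simply asserts that $\tilde{\Fcal}_{J,p,d}=0$ forces $\per(E,[0,L)^d)=0$, which is immediate for $J>J_c$ but is left to the reader for $J=J_c$. Your Step 3 supplies precisely the missing detail: the strict positivity of $\overline K$ forces each slice to have at most one boundary point on the whole line, and your growth comparison (at most $R^{d-1}$ from \eqref{eq:igper} versus $\gtrsim R^d\per(E,[0,L)^d)$ from periodicity) is a clean way to close the argument. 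So your proof follows the paper's route but is more complete on the borderline $J=J_c$ case.
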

	
	\begin{proof}
		Recall the formula \eqref{eq:intgeomJ}. By the estimates \eqref{eq:nl1} and \eqref{eq:nl2}, one has that 
		\begin{align}
			\tilde{r}(E_{x_\theta^\perp},s)&\geq \int_{0}^{+\infty}\rho\overline{K}(\rho)\d\rho-\int_0^{+\infty}\min\{|s-s^-|,\rho\}\overline{K}(\rho)\d\rho\notag\\
			&+\int_{-\infty}^{0}|\rho|\overline{K}(\rho)\d\rho-\int_{-\infty}^{0}\min\{|s-s^+|,\rho\}\overline{K}(\rho)\d\rho\notag\\
			&\geq 0,
		\end{align}
		thus implying that $\tilde{\Fcal}_{J,p,d}(\cdot,[0,L)^d)\geq0$ for all $J\geq J_c$. 
		Clearly, whenever $E=\emptyset$ or $E=\R^d$, then  $\tilde{\Fcal}_{J,p,d}(E,[0,L)^d)=0$. The converse holds as well, since $\tilde{\Fcal}_{J,p,d}(E,[0,L)^d)=0$ implies by the above that $\per(E,[0,L)^d)=0$. 
	\end{proof}

	As an immediate consequence of Proposition \ref{prop:1dbound}  we show as in~\cite{gr,dr_arma} that the equiboundedness of the function $r_\tau$ on a family of sets of locally finite perimeter in $\R$ and on the set of their boundary points which are contained in a fixed  open bounded set implies compactness of the sets in the $L^1$ topology and convergence to a set of locally finite perimeter.
	Moreover, a stronger type of convergence holds, namely the sets $\partial^*E_\tau$ converge in the Hausdorff topology.
	This will be a fundamental ingredient of the proof of the $\Gamma$-convergence {Theorem~\ref{thm:gammaconv}}.
	The lemma below is the analogue of~\cite[Lemma 7.5]{dr_arma} and we report the proof here for completeness.
	In the proof of {Theorem~\ref{thm:gammaconv}} it will be applied to the one dimensional slices of $E$ in direction $\theta\in\S^{d-1}$.
	\begin{lemma}[Compactness]\label{lemma:comp1d}
		Let $p>d+1$, let $\{E_\tau\}_{\tau>0}\subset\R$ be a family of sets of locally finite perimeter and let $I\subset \R$ be a bounded  open interval. If
		\begin{equation}\limsup_{\tau\to0}\sum_{s\in\partial^* E_{\tau}\cap I} r_{\tau}(E_{\tau},s)<+\infty,\label{eq:emls}
		\end{equation}
		then there exists $E_0\subset\R$ of finite perimeter in $I$, such that, up to subsequences, $E_{\tau}\to E_0$ in $L^1( I)$.  Moreover, if  $\{s_1^0,\dots,s_{m(0)}^0\}=\partial^* E_0\cap I$, then 
		\begin{equation}\label{eq:1dlowbound}
			\liminf_{\tau\downarrow 0}\sum_{s\in\partial^* E_\tau\cap I} r_\tau(E_\tau,s)\geq\sum_{i=1}^{m(0)}\Big(-\gamma_0+\gamma_1|s_{i}^0-(s_{i}^0)^+|^{-(p-d-1)}+\gamma_1|s_{i}^0-(s_{i}^0)^-|^{-(p-d-1)}\Big),
		\end{equation}
		where $\gamma_0$, $\gamma_1$ are constants  as in~\eqref{eq:stima1d}.
	\end{lemma}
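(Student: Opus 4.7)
The plan has three parts: first I derive a uniform upper bound on $N_\tau := \#(\partial^* E_\tau \cap I)$; then I use standard BV compactness to extract a subsequential $L^1(I)$-limit $E_0$; finally, I obtain \eqref{eq:1dlowbound} by pointwise comparison along suitably matched sequences of boundary points, exploiting that $\tau^{-1} \to +\infty$. For the cardinality bound, let $s_1^\tau < \cdots < s_{N_\tau}^\tau$ be the ordered boundary points of $E_\tau$ in $I$. Proposition~\ref{prop:1dbound} yields
\begin{equation*}
\sum_{i=1}^{N_\tau} r_\tau(E_\tau, s_i^\tau) \geq -N_\tau\gamma_0 + 2\gamma_1 \sum_{i=1}^{N_\tau - 1}\min\bigl\{(s_{i+1}^\tau - s_i^\tau)^{-(p-d-1)},\tau^{-1}\bigr\},
\end{equation*}
the factor $2$ arising because each interior gap appears both as the right-gap of $s_i^\tau$ and as the left-gap of $s_{i+1}^\tau$, while the end-contributions from the outermost boundary points are non-negative and can be discarded. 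Since the $N_\tau-1$ interior gaps sum to at most $|I|$, a pigeonhole argument gives that at least $N_\tau/2$ of them are bounded by $2|I|/N_\tau$. Distinguishing the regimes $N_\tau \leq 2|I|\tau^{-1/(p-d-1)}$ (where the min is realised by the power) and $N_\tau > 2|I|\tau^{-1/(p-d-1)}$ (where it eventually saturates at $\tau^{-1}$), one checks that in both cases the right-hand side grows super-linearly in $N_\tau$, since $p-d-1>0$; combined with \eqref{eq:emls}, this forces $N_\tau \leq N_0$ for some $N_0$ independent of $\tau$.

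From $N_\tau \leq N_0$ I deduce the uniform bound $\peroned(E_\tau, I) \leq N_0$, so by BV compactness on the bounded interval $I$ a subsequence satisfies $E_\tau \to E_0$ in $L^1(I)$ for some $E_0 \subset \R$ of finite perimeter in $I$. After a further extraction, $N_\tau \equiv N$ is constant and $s_i^\tau \to s_i^\star \in \overline{I}$ for each $i$. The $L^1$ convergence then identifies $\{s_1^0,\ldots,s_{m(0)}^0\} = \partial^* E_0 \cap I$ as the subset of $\{s_i^\star\}_i$ obtained after discarding limits lying on $\partial I$ and cancelling adjacent pairs $s_i^\star = s_{i+1}^\star$ whose intervening $E_\tau$-intervals shrink to measure zero.

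For each $s_j^0 \in \partial^* E_0 \cap I$, choose an index $i_\tau(j)$ with $s_{i_\tau(j)}^\tau \to s_j^0$. Its right-neighbour $(s_{i_\tau(j)}^\tau)^+$ in $\partial^* E_\tau$ either converges to $(s_j^0)^+$, in which case the gap converges to $|s_j^0 - (s_j^0)^+|$, or converges to a point strictly closer to $s_j^0$, in which case the gap tends to zero and the power blows up. Using $\tau^{-1} \to +\infty$, in both cases
\begin{equation*}
\liminf_{\tau\to 0}\,\gamma_1\min\bigl\{|s_{i_\tau(j)}^\tau - (s_{i_\tau(j)}^\tau)^+|^{-(p-d-1)},\tau^{-1}\bigr\} \geq \gamma_1|s_j^0 - (s_j^0)^+|^{-(p-d-1)},
\end{equation*}
and symmetrically for the left-neighbour. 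The indices discarded in the previous paragraph (collapsing pairs) have, by the second statement of Proposition~\ref{prop:1dbound}, non-negative $r_\tau$ for $\tau$ small enough, so dropping them preserves the inequality. Summing the matched contributions over $j=1,\ldots,m(0)$ yields \eqref{eq:1dlowbound}. The main obstacle is precisely the bookkeeping of collapsing pairs: one must verify that the "extra" boundary points of $E_\tau$ that disappear from $\partial^* E_0$ contribute only non-negative residues, which is exactly what the second part of Proposition~\ref{prop:1dbound} provides once their mutual gaps fall below the threshold $\eta_0$.
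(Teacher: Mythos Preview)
Your argument is essentially correct but takes a longer route than the paper's. The paper does not bound $N_\tau$ first and then deal with possible collapsing; instead it uses the last statement of Proposition~\ref{prop:1dbound} directly to show that, along a subsequence with $\tau<\tilde\tau$, \emph{every} gap satisfies $|s_{i+1}^\tau-s_i^\tau|>\tilde\eta$ for some fixed $\tilde\eta>0$ (otherwise the small-gap point already forces $\sum r_\tau\geq A$, since all other points with small neighbouring gaps contribute nonnegatively and the remaining ones are at most $|I|/\eta_0$ many). This single observation simultaneously gives the cardinality bound, rules out collapsing pairs, and upgrades $L^1$-convergence to Hausdorff convergence of the boundaries with $m(\tau)\equiv m(0)$; the liminf inequality then follows by passing to the limit termwise with no bookkeeping.

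Your pigeonhole route to the cardinality bound is valid, but the subsequent ``collapsing pair'' machinery is unnecessary and, as written, has a small gap: you assert that \emph{all} discarded indices have nonnegative $r_\tau$ by Proposition~\ref{prop:1dbound}, but this covers only collapsing pairs (those with a vanishing gap), not points $s_i^\tau$ that migrate singly to $\partial I$ with both neighbouring gaps bounded away from zero---for such points $r_\tau$ can be as low as $-\gamma_0$. The clean fix is to observe that once $N_\tau\leq N_0$, any gap tending to zero would make the corresponding $r_\tau$ blow up while the other $N_0-1$ terms stay $\geq -\gamma_0$, contradicting \eqref{eq:emls}; hence all gaps are uniformly bounded below, no collapsing or migration issues arise, and you recover exactly the paper's Hausdorff-convergence picture.
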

	
	\begin{remark}
		Before diving into the proof of Lemma \ref{lemma:comp1d}, let us be more precise on the meaning of $(s_1^0)^-$ and $(s_{m(0)}^0)^+$ in \eqref{eq:1dlowbound}. A more precise statement of the above lemma, that we give here in order to avoid too heavy notation, is the following.   Let $p>d+1$, let $\{E_\tau\}_{\tau>0}\subset\R$ be a family of sets of locally finite perimeter and let $I\subset \R$ be a bounded  open interval such that 
		\begin{equation}\limsup_{\tau\to0}\sum_{s\in\partial^* E_{\tau}\cap I} r_{\tau}(E_{\tau},s)<+\infty.\label{eq:emls2}
		\end{equation}
		Let $\{s^\tau_{1},\ldots,s^\tau_{m({\tau})}\} = \partial^* E_{\tau}\cap I$ and $s^\tau_0=(s^\tau_{1})^-,\,s^\tau_{m(\tau)+1}=(s^\tau_{m(\tau)})^+\in\partial^*E_\tau\setminus I$. Let $\tilde E_\tau\subset\R$ of locally finite perimeter such that $\partial^*\tilde E_\tau=\{s^\tau_0,\dots,s^\tau_{m(\tau)+1}\}$. Then, $\tilde E_\tau\to\tilde E_0$ in $L^1_{\mathrm{loc}}(\R)$, where $\tilde E_0$ is a set of locally finite perimeter with $\partial^*E_0=\{s^0_0,\dots,s^0_{m(0)+1}\}$ and 
		\begin{equation}\label{eq:1dlowbound2}
			\liminf_{\tau\downarrow 0}\sum_{s\in\partial^* E_\tau\cap I} r_\tau(E_\tau,s)\geq\sum_{i=0}^{m(0)}\Big(-\gamma_0+\gamma_1|s_{i}^0-s_{i+1}^0|^{-(p-d-1)}\Big).
		\end{equation}
	\end{remark}
	
	\begin{proof}
		
		Let us denote by  $\{s^\tau_{1},\ldots,s^\tau_{m({\tau})}\} = \partial^* E_{\tau}\cap I$.
		We will also denote by
		\begin{equation*}
			s^{\tau}_0 = \sup\bigl\{ s\in \partial^* E_\tau: s < s^\tau_1\bigr\} \qquad\text{and}\qquad
			s^\tau_{m(\tau)+1} = \inf\bigl\{ s\in \partial^* E_\tau: s > s^\tau_{m(\tau)}\bigr\} .
		\end{equation*}
		Denote by $A$ the \rhs of~\eqref{eq:emls}.
		From~\eqref{eq:stima1d}, one has that for $i=1,\dots,m(\tau)$ it holds $r_{\tau}(E_\tau,s^{\tau}_{i}) \geq -\gamma_0  + \gamma_1 \min\bigl\{|s^{\tau}_{i} - s^{\tau}_{i+1} |^{-(p-d-1)},\tau^{-1}\bigr\}  + \gamma_1 \min\bigl\{|s^{\tau}_{i} - s^{\tau}_{i-1} |^{-(p-d-1)},\tau^{-1}\bigr\}$.
		Thus, by the last statement of Proposition~\ref{prop:1dbound}, there exist $\tilde\eta$ and $\tilde{\tau}> 0$ such that for every $\tau <\tilde{\tau}$, whenever
		\begin{equation*}
			\begin{split}
				\min_{i\in \{1,\ldots,{m(\tau)}\}}\min\{|s^{\tau}_{i+1}- s^{\tau}_{i}|,|s^{\tau}_{i-1}- s^{\tau}_{i}| \}< \tilde \eta
			\end{split}
		\end{equation*}
		then
		\begin{equation*}
			\sum_{s\in \partial^* E_{\tau}\cap I}r_{\tau}(E_{\tau},s) \geq A.
		\end{equation*}
		Hence, assume there exists a subsequence $\tau_{k}$ such that $|s^{\tau_k}_{i+1}- s^{\tau_{k}}_{i}| >\tilde  \eta$  and $|s^{\tau_k}_{i-1}- s^{\tau_{k}}_{i}| >\tilde  \eta$ for all $i=1,\dots,m({\tau_k})$.
		Up to  relabelling, let us assume that it holds true  for the whole sequence of $E_\tau$.
		Let $\tilde E_\tau\subset\R$ of locally finite perimeter such that $\partial^*\tilde E_\tau=\{s^\tau_0,\dots,s^\tau_{m(\tau)+1}\}$. Then, by the above $\tilde E_\tau\to\tilde E_0$ in $L^1_{\mathrm{loc}}(\R)$, where $\tilde E_0$ is a set of locally finite perimeter with $\partial^*E_0=\{s^0_0,\dots,s^0_{m(0)+1}\}$. 
		Moreover, since $\min_{i} | s^{\tau}_{i+1} - s^\tau_{i} | > \tilde\eta$ and  $\min_{i} | s^{\tau}_{i-1} - s^\tau_{i} | > \tilde\eta$, the convergence $\tilde E_{\tau}\to \tilde E_{0}$ in  $L^1_{\mathrm{loc}}(\R)$ can be upgraded to the Hausdorff convergence of the reduced boundaries, namely one has that there exists a $\tilde{\tau}$ such that for $\tau<\tilde{\tau}$, it holds $\#\partial^* \tilde E_{\tau} = \#\partial^* \tilde E_{0}=m(0)+2$ and $s^{\tau}_{i} \to s^0_{i}$.
		
		Then because of the convergence of the boundaries, we have that
		\begin{align*}
			\liminf_{\tau\downarrow 0}\sum_{s\in \partial ^*E_{\tau}\cap I} r_{\tau}(E_{\tau},s) &\geq \liminf_{\tau\downarrow 0}\sum_{j=0}^{m(0)} \big( - \gamma_0+ \gamma_1\min\bigl\{|s^{\tau}_{i} - s^\tau_{i+1}|^{-(p-d-1)},\tau^{-1}\bigr\} \big)\notag
			\\ & \geq\sum_{j=0}^{m(0)} \big( - \gamma_0 + \gamma_1|s^{0}_{i} - s^0_{i+1}|^{-(p-d-1)}\big).  		\label{eq:gstr13}
		\end{align*}
		
	\end{proof}

	From the above one  can indeed show the following quantitative perimeter bounds.
	
	\begin{corollary}\label{cor:fboundsp}
		Let $p>d+1$. There exists $0<\tau_1\ll1$ such that the following holds.
		\begin{itemize}
			\item[(i)] Let $I=(a,b)\subset \R$ and let $E\subset\R$ be a set of locally finite perimeter such that
			\[
			\sum_{s\in\partial^*E\cap I}{r}_\tau(E,s)\leq A<+\infty,\quad\text{ for some $0<\tau<\tau_1$}.
			\] Then, there exists a constant $\eta_1>0$  such that
			\begin{equation}\label{eq:4.9}
				\per(E,I)\leq (1+\gamma_0)\frac{b-a}{\eta_1}+A,
			\end{equation}
			where $\gamma_0$ is as in the one-dimensional estimate~\eqref{eq:stima1d}.
			
			\item[(ii)] Let $E \subset \R^{d}$ be a set of locally finite perimeter.
			Assume that
			\begin{equation*}
				\Fcal^{\mathrm{loc}}_{\tau,p,d}(E, \Omega)<+\infty, \quad \text{ for some $0<\tau<\tau_1$}.
				%\int_{S^{d-1}}\int_{
					%\theta^\perp} \sum_{s\in\partial ^*E_{x_\theta^\perp}\cap (Q_r(z))_{x_\theta^\perp}} \overline  r_{\tau}(E_{x_\theta^\perp}, s)\dx_\theta^\perp\d\theta  < +\infty, \quad \text{ for some $0<\tau<\tau_1$}.
			\end{equation*}
			Then,
			\begin{equation}
				\per(E,\Omega) \lesssim |\Omega| \Bigl(1+\Fcal^{\mathrm{loc}}_{\tau,p,d}(E, \Omega)\Bigr).
			\end{equation}
		\end{itemize}
		
	\end{corollary}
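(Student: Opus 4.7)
For part (i), the plan is to use the one-dimensional lower bound of Proposition~\ref{prop:1dbound} in order to partition $\partial^*E\cap I$ into a small set of \emph{close} boundary points, each carrying a large amount of energy, and a sparse set of \emph{isolated} boundary points. Concretely, the last statement of Proposition~\ref{prop:1dbound} gives $r_\tau(E,s)+\gamma_0\geq (\gamma_1/2)\min\{|s-s^-|^{-(p-d-1)},|s-s^+|^{-(p-d-1)},\tau^{-1}\}$ whenever $\min(|s-s^-|,|s-s^+|)<\eta_0$ and $\tau<\tau_0$. I would pick $\eta_1\in(0,\eta_0]$ so that $(\gamma_1/2)\eta_1^{-(p-d-1)}\geq 1+\gamma_0$, and then $\tau_1\leq\min(\tau_0,\eta_1^{p-d-1})$ so that the truncation $\tau^{-1}$ never activates below scale $\eta_1$ for $\tau<\tau_1$. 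Call $s\in\partial^*E\cap I$ \emph{close} if $\min(|s-s^-|,|s-s^+|)<\eta_1$ and \emph{isolated} otherwise, and write $N_I$, $N_c$, $N_{\mathrm{iso}}$ for the respective cardinalities.

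By construction, $r_\tau(E,s)+\gamma_0\geq 1+\gamma_0$ at close points, while at isolated points only the general bound $r_\tau(E,s)+\gamma_0\geq 0$ from~\eqref{eq:stima1d} is available. Summing over $\partial^*E\cap I$ and using the hypothesis $\sum_s r_\tau(E,s)\leq A$ gives $A+\gamma_0 N_I\geq (1+\gamma_0)N_c$, and substituting $N_I=N_c+N_{\mathrm{iso}}$ yields $N_I\leq A+(1+\gamma_0)N_{\mathrm{iso}}$. By the very definition of ``isolated'', two consecutive isolated points (in the ordering induced by $\partial^*E$) are at distance at least $\eta_1$, so $N_{\mathrm{iso}}\leq (b-a)/\eta_1+O(1)$, where the additive $O(1)$ comes from boundary effects of the interval. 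This produces the claimed estimate $\per(E,I)=N_I\leq A+(1+\gamma_0)(b-a)/\eta_1$ (the unavoidable additive constant being absorbed by shrinking $\eta_1$ slightly and declaring it irrelevant for intervals that are not too short).

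For part (ii), the plan is to integrate the one-dimensional bound of (i) along slices via the integral geometric identities of Proposition~\ref{prop:intgeom}. For a.e. $\theta\in\S^{d-1}$ and a.e. $x_\theta^\perp\in\theta^\perp$, the section $\Omega_{x_\theta^\perp}\subset\R$ is an at most countable union of open intervals; applying (i) on each such interval (with $r_\tau(E_{x_\theta^\perp},s)$ replacing $r_\tau(E,s)$) and summing gives
\[
\per^{\mathrm{1D}}(E_{x_\theta^\perp},\Omega_{x_\theta^\perp})\leq (1+\gamma_0)\frac{|\Omega_{x_\theta^\perp}|}{\eta_1}+\sum_{s\in\partial^*E_{x_\theta^\perp}\cap\Omega_{x_\theta^\perp}}r_\tau(E_{x_\theta^\perp},s).
\]
Integrating in $x_\theta^\perp$ and $\theta$, using the Cavalieri identity $\int_{\theta^\perp}|\Omega_{x_\theta^\perp}|\,dx_\theta^\perp=|\Omega|$ together with the representations~\eqref{eq:igper} and~\eqref{eq:floc}, one obtains $\per(E,\Omega)\lesssim |\Omega|\bigl(1+\Fcal^{\mathrm{loc}}_{\tau,p,d}(E,\Omega)\bigr)$.

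The main technical nuisance is the combinatorial bookkeeping of close versus isolated points in (i), in particular the treatment of boundary points in $I$ whose nearest neighbor in $\partial^*E$ falls outside $I$; these produce only $O(1)$ corrections and do not affect the asymptotic form of the bound. Beyond this, the proof reduces to Fubini, Cavalieri, and the one-dimensional estimate of Proposition~\ref{prop:1dbound}.
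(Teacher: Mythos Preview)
Your proposal is correct and follows essentially the same approach as the paper's proof: in part (i) you split $\partial^*E\cap I$ into points with a neighbor closer than $\eta_1$ (where Proposition~\ref{prop:1dbound} forces $r_\tau\geq 1$) and the remaining $\eta_1$-separated points (counted by $(b-a)/\eta_1$), exactly as the paper does, and in part (ii) you integrate (i) along slices via~\eqref{eq:igper} and~\eqref{eq:floc}, again matching the paper. The only cosmetic difference is that the paper phrases the dichotomy as ``points where $r_\tau<1$'' versus the rest (the contrapositive of your close/isolated split), and writes the resulting inequality directly as $\sum r_\tau\geq -\gamma_0\frac{b-a}{\eta_1}+\max\{\per(E,I)-\frac{b-a}{\eta_1},0\}$ rather than going through $N_I\leq A+(1+\gamma_0)N_{\mathrm{iso}}$.
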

	
	\begin{proof}
		$(i)$.  From Proposition~\ref{prop:1dbound}, there exist $\eta_1,\tau_1>0$ such that, whenever $s\in\partial^*E\cap I$ satisfies $\min\bigl\{|s-s^-|, |s-s^+|\bigr\}<\eta_1$ and $0<\tau<\tau_1$, then $r_\tau(E,s)>1$. This implies that there are at most $\frac{b-a}{\eta_1}$ points $s\in\partial^* E\cap I$ where $r_\tau(E,s)<1$. Given that, by Proposition~\ref{prop:1dbound}, $ r_\tau(E,\cdot)\geq -\gamma_0$ for some universal constant $\gamma_0$ independent of $\tau$, it follows immediately that
		\begin{equation*}
			\sum_{s\in\partial^*E\cap I}r_\tau(E,s)\geq -\gamma_0\frac{b-a}{\eta_1}+\max\Bigl\{\per(E,I)-\frac{b-a}{\eta_1},0\Bigr\}.
		\end{equation*}
		Hence, as claimed,~\eqref{eq:4.9} holds.
		
		$(ii)$. For the proof of the second statement, we use~\eqref{eq:igper}, $(i)$ and the integral geometric formulation~\eqref{eq:forintgeom}. Indeed,  one has that for $\tau<\tau_1$
		\begin{align*}
			\per(E,\Omega)&=\frac{1}{C_{1,d}}\int_{\S^{d-1}}  \int_{\theta^\perp}\per^{\mathrm{1D}}(E_{x_\theta^\perp},\Omega_{x_\theta^\perp})\dx_\theta^\perp\d\theta \notag\\
			&\leq \frac{1}{C_{1,d}}\int_{\S^{d-1}} \int_{\theta^\perp}\frac{(1+\gamma_0)}{\eta_1}\Bigl[\bigl|\Omega_{x_\theta^\perp}\bigr|+ \sum_{s\in\partial ^*E_{x_\theta^\perp}\cap \Omega_{x_\theta^\perp}} r_{\tau}(E_{x_\theta^\perp}, s)\Bigr]\dx_\theta^\perp\d\theta\notag\\
			&\lesssim |\Omega|\Bigl(1+\Fcal^{\mathrm{loc}}_{\tau,p,d}(E, \Omega)\Bigr).
		\end{align*}

	\end{proof}
	
	Let us now consider the rescaled kernel
	\begin{equation*}
		% \label{eq:kernel_explicit}
		K_{\tau}(\zeta) := \frac{1}{\max(\tau^{p/(p-d-1)},\|\zeta\|^{p})}.
	\end{equation*}

	In the following we are concerned with the periodicity of optimal stripes for the corresponding functional $\Fcal_{\tau,p,d}$ introduced in~\eqref{eq:ftauintro}. This is a simple extension of the periodicity of one dimensional minimizers for functionals with reflection positive kernels, given in~\cite{giuliani2006ising,giuliani2009periodic}.

	\begin{definition}
		% \label{def:1}
		Given $\theta \in \S^{d-1}$, let us denote by $\mathscr{S}_{\theta,L}$ the family of $[0,L)^d$-periodic sets $E$ composed of finitely many stripes such that $\nu_{E}(x) \in \insieme{\pm \theta}$, for all $x\in\partial E$.
		We will say that $E$ is a simple periodic set if up to rotations and translations there exists $\widehat E \subset \R$, $h>0$  such that $\widehat E = \bigcup_{k\in\Z}[2kh, (2k+1)h)$
		and $E = \widehat E \times \R^{d-1}$.
	\end{definition}
	
	The following periodicity result holds.
	
	\begin{proposition}
		\label{prop:almost_reflection_positivity}
		For every $L>0$ and every $\theta\in\S^{d-1}$ such that $\mathscr{S}_{\theta,L}\neq\emptyset$, let us consider $E \in \argmin_{ \mathscr{S}_{\theta,L}}\Fcal_{\tau, p,d}(\cdot,[0,L)^d)$. Then $E$ is a simple periodic set.
	\end{proposition}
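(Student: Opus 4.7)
The plan is to reduce to a one-dimensional problem and then invoke the reflection positivity machinery of Giuliani–Lebowitz–Lieb.

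\emph{Step 1: Reduction to one dimension.} Any $E \in \mathscr{S}_{\theta,L}$ has, by definition, $\nu_E(x) \in \{\pm \theta\}$ on its boundary, so up to an orthogonal change of coordinates we may assume $\theta = e_1$ and write $E = \widehat E \times \R^{d-1}$ for some $L$-periodic set $\widehat E \subset \R$ with finitely many boundary points in $[0,L)$. Using Fubini on the $(d-1)$ transverse coordinates inside the nonlocal term of $\Fcal_{\tau,p,d}(E,[0,L)^d)$, the perpendicular directions can be integrated out: the slice $E_{x_\theta^\perp}$ does not depend on $x_\theta^\perp$ and coincides with $\widehat E$, so that after a rescaling argument one obtains
\begin{equation*}
\Fcal_{\tau,p,d}(E,[0,L)^d) = \Fcal^{\mathrm{1D}}_{\tau,p,d}(\widehat E,[0,L)),
\end{equation*}
where $\Fcal^{\mathrm{1D}}_{\tau,p,d}$ is the one-dimensional functional defined in~\eqref{eq:1dFunc} with effective kernel $\hat K_\tau(t) = \int_{\R^{d-1}} K_1(t e_1 + t_1^\perp)\dt_1^\perp$. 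Thus minimizing $\Fcal_{\tau,p,d}$ over $\mathscr{S}_{\theta,L}$ is equivalent to minimizing $\Fcal^{\mathrm{1D}}_{\tau,p,d}(\cdot,[0,L))$ over all $L$-periodic $\widehat E \subset \R$ with finitely many transitions per period.

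\emph{Step 2: Periodicity in one dimension via reflection positivity.} Let $\widehat E = \bigcup_{i=1}^{N} [a_i,b_i) \pmod L$ be a minimizer, with $\partial \widehat E \cap [0,L) = \{t_1 < \cdots < t_{2N}\}$. The strategy, following~\cite{giuliani2006ising,giuliani2009periodic}, is to show that $\widehat E$ is invariant under reflection through each of its transition points. Concretely, for each transition $t_j$ let $R_{t_j}$ denote reflection about $t_j$, and define the ``reflected'' competitor $\widehat E^{\pm}_{t_j}$ obtained by taking $\widehat E$ on one side of $t_j$ and its reflection on the other (adjusted to remain $L$-periodic, which is possible because $L$ is a full period and the reflection preserves the number of stripes). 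The effective kernel $\hat K_\tau$ is a positive, isotropic, continuous function of $|t|$ that is a superposition of shifted power laws; the key analytic input, which carries over from~\cite{giuliani2006ising,giuliani2009periodic}, is that such kernels are \emph{reflection positive}, i.e.\ they can be written as a positive-measure superposition of functions of the form $e^{-\lambda|t|}$ (after the truncation at the origin is incorporated), and reflection positivity of each exponential is a classical fact.

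\emph{Step 3: Chessboard/Gaussian-domination estimate.} Reflection positivity implies the chessboard-type bound
\begin{equation*}
\Fcal^{\mathrm{1D}}_{\tau,p,d}(\widehat E,[0,L)) \ \geq\ \tfrac12 \bigl(\Fcal^{\mathrm{1D}}_{\tau,p,d}(\widehat E^{+}_{t_j},[0,L)) + \Fcal^{\mathrm{1D}}_{\tau,p,d}(\widehat E^{-}_{t_j},[0,L))\bigr),
\end{equation*}
with strict inequality unless $\widehat E$ is already symmetric about $t_j$. Minimality of $\widehat E$ then forces symmetry about every transition $t_j$. Symmetry about two consecutive transitions $t_j,t_{j+1}$ forces the neighbouring stripes to have equal width; iterating around the period yields that all stripes in $[0,L)$ share a common width $h^*_L$, so that $\widehat E = \bigcup_k [2k h^*_L,(2k+1)h^*_L)$ up to translation, which is precisely the definition of a simple periodic set.

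\emph{Main obstacle.} The delicate part is the reflection-positivity step: one must verify that the truncated power-law kernel $\hat K_\tau$ (which, unlike the pure power, is not itself a completely monotone function) admits a representation suitable for the Giuliani–Lebowitz–Lieb reflection argument and that the combinatorics of cutting and gluing at a transition point respects the $L$-periodic boundary condition. This is handled by splitting $\hat K_\tau$ into a reflection-positive long-range tail plus a short-range correction that can be absorbed into the perimeter term, and by working only with transitions that are at distance $L$ apart so that the reflected configurations remain admissible competitors in $\mathscr{S}_{\theta,L}$.
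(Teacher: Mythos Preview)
Your reduction to the one-dimensional functional (Step~1) is correct and matches the paper. The gap is in how you deal with reflection positivity of the regularized kernel $\hat K_\tau$.

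You assert that $\hat K_\tau$ ``can be written as a positive-measure superposition of functions of the form $e^{-\lambda|t|}$ (after the truncation at the origin is incorporated)'', but this is precisely what is in doubt: the kernel $K_\tau(\zeta)=\max(\tau^{1/(p-d-1)},\|\zeta\|)^{-p}$ is constant near the origin and a power law outside, and after integrating out the transverse variables the resulting $\hat K_\tau$ is not obviously completely monotone. Your proposed fix---splitting $\hat K_\tau$ into a reflection-positive tail plus a short-range piece absorbed into the perimeter---is not carried out, and it is not clear that the short-range correction has a sign that allows it to be absorbed.

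The paper sidesteps this entirely with a different idea. It first uses the one-dimensional lower bound (Proposition~\ref{prop:1dbound}) to show that any \emph{minimizer} over $\mathscr{S}_{\theta,L}$ must have its boundary points separated by at least some $\eta_0>0$, uniformly in small $\tau$. Since $K_\tau(t)=K_0(t)$ whenever $|t|>\tau^{1/(p-d-1)}$, for configurations with $\min_{i\neq j}|x_i-x_j|>\eta_0$ one has $\Fcal^{\mathrm{1D}}_{\tau,p,d}(\widehat E,[0,L))=\Fcal^{\mathrm{1D}}_{0,p,d}(\widehat E,[0,L))$ once $\tau<\eta_0^{p-d-1}$. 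The pure power-law kernel $\widehat K_0$ \emph{is} reflection positive, so the classical Giuliani--Lebowitz--Lieb argument applies to the $\tau=0$ functional and yields that minimizers are simple periodic. This a~priori separation estimate is the missing ingredient in your argument; without it you are forced to verify reflection positivity of a kernel that need not have it.
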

	
	\begin{proof}
		\WithoutLoss, we can assume that $\theta = e_{d}$.
		Thus, there exists $\widehat E \subset  \R$ $(0, L)$-periodic, such that $E =  \R^{d-1}\times \widehat E$  and $\widehat E \cap[0,L)= \bigcup_{k\in\Z}(x_{2k}, x_{2k+1})$.
		Denote by
		\begin{equation*}
			\widehat{K}_{\tau}(t) := \int_{\R^{d-1}} K_\tau(\zeta^{\perp}_{d}+t e_{d}) \d\zeta^{\perp}_{d}.
		\end{equation*}
		For any $\alpha,\beta>0$, let
		\begin{align}
			\widehat J_{\tau,\alpha}&=\int_{\{|\zeta_d|\leq \alpha\}}|\zeta_d|K_\tau(\zeta)\d\zeta=\int_{-\alpha}^{\alpha}|t|\widehat K_\tau(t)\dt\notag\\
			J_{\tau,\beta}&=\int_{\{\|\zeta\|\leq \beta\}}|\zeta_d|K_\tau(\zeta)\d\zeta.
		\end{align}
		In particular, $J_{\tau,1}=J_\tau$, where $J_\tau$ was defined in~\eqref{eq:jtau} and $\widehat J_{\tau,\alpha}>J_{\tau,\alpha}$ for every $\alpha>0$.
		Moreover, by the integrability of the kernel and the fact that $K_\tau(\zeta)=\|\zeta\|^{-p}$ whenever $\tau^{1/(p-d-1)}<\|\zeta\|$, it is not difficult to see that there exists $\gamma>0$ such that $J_{\tau,1}=\widehat J_{\tau, 1-\gamma}$ for all $\tau$ sufficiently small.
		
		When restricting the functional to $\mathscr{S}_{e_{d},L}$, we have that
		\begin{equation*}
			\Fcal_{\tau, p,d}(E,[0,L)^d) =  \Fcal^{\mathrm{1D}}_{\tau,p,d}(\widehat E,[0,L))
		\end{equation*}
		with
		\begin{equation*}
			% \label{eq:ftau1d}
			\begin{split}
				L\Fcal^{\mathrm{1D}}_{\tau,p,d}(\widehat E,[0,L)) := \int_{-1+\gamma}^{1-\gamma} |t| \widehat{K}_{\tau}(t) \per^{\mathrm{1D}}(\widehat E; [0,L)) &- \int_{0}^{L}\int_{0}^{+\infty } |\chi_{\widehat E}(s + t)-\chi_{\widehat E}(s)|\widehat{K}_{\tau}(t) \dt\ds\\ &-
				\int_{0}^{L}\int_{-\infty }^{0} |\chi_{\widehat E}(s + t)-\chi_{\widehat E}(s)|\widehat{K}_{\tau}(t) \dt\ds.
			\end{split}
		\end{equation*}
		It is well-known that if $\widehat{K}_{\tau}$ is reflection positive (\ie it is the Laplace transform of a nonnegative function), any $\widehat E$ as above is simple periodic (see~\cite{giuliani2006ising,giuliani2009periodic}).
		Given that for $\tau=0$ the kernel $\widehat K_{\tau}$ is not integrable at the origin, it is convenient to write the functional as
		\begin{align*}
			L\Fcal^{\mathrm{1D}}_{\tau,p,d}(\widehat E,[0,L))& := \int_{-1+\gamma}^{1-\gamma} \bigg(
			|t| \per(\widehat E; [0,L))
			- \int_{0}^{L}|\chi_{\widehat E}(s + t)-\chi_{\widehat E}(s)| \ds
			\bigg)\widehat{K}_{\tau}(t) \dt\\
			&-
			\int_{0}^{L}\int_{\R \setminus (-1+\gamma,1-\gamma) } |\chi_{\widehat E}(s + t)-\chi_{\widehat E}(s)|\widehat{K}_{\tau}(t) \dt\ds
		\end{align*}

		If $E\in\mathscr{S}_{e_d,L}$ is a minimizer, applying  Proposition~\ref{prop:1dbound} to the analogous functional in which $\pm1\mp\gamma$ is substituted by $\pm1$, there exist $\tau_{0}, \eta_{0} >0$ such that whenever $0<\tau<\tau_{0}$, we have that
		\begin{equation}\label{eq:xmin}
			\min\{|x_{i}-x_{j}|:\ i\neq j, \,x_i,x_j\in\partial^* \widehat E\} > \eta_{0}.
		\end{equation}
		Moreover,
		\begin{equation*}
			\begin{split}
				\int_{0}^{L}\int_{0}^{+\infty} |\chi_{\widehat E}(s+t) - \chi_{\widehat E}(s)| \widehat K_{\tau}(t)\dt\ds  =&
				\sum_{k}\int_{x_{k}}^{x_{k+1}} \int_{x_{k+1}}^{+\infty} \widehat{K}_{\tau}(s-t)\ds\dt\\
				&-
				\sum_{k}\sum_{l \geq 1}\int_{x_{k}}^{x_{k+1}} \int_{x_{k+2l}}^{x_{k+2l +1}} \widehat{K}_{\tau}(s-t)\ds\dt.
			\end{split}
		\end{equation*}
		
		The first term on the \rhs of the above formula can be computed explicitly as
		\begin{equation*}
			\int_{x_{k}}^{x_{k+1}} \int_{x_{k+1}}^{+\infty} \widehat{K}_{\tau}(s-t)\ds\dt =\int_{0}^{+\infty }\min\bigl\{|x_{k} - x_{k+1}|, t\bigr\} \widehat K_{\tau}(t)\dt.
		\end{equation*}
		
		Given  that~\eqref{eq:xmin} holds and for $0<\tau < \eta_{0}<1$, it holds $\widehat K_{\tau}(t) = \widehat K_{0}(t)$ whenever $t > \eta_{0}$, we have that

		\begin{equation*}
			\begin{split}
				\int_{0}^{1-\gamma} t \widehat K_{\tau} - \int_{x_{k}}^{x_{k+1}} \int_{x_{k+1}}^{+\infty} \widehat{K}_{\tau}(s-t)\ds\dt &=
				\int_{0}^{1-\gamma}  \big(t - \min\bigl\{|x_{k} - x_{k+1}|, t\bigr\}\big)\widehat K_{\tau}(t)\dt\\
				&= \int_{0}^{1-\gamma}  \big(t - \min\bigl\{|x_{k} - x_{k+1}|, t\bigr\}\big) \widehat K_{0}(t)\dt\\
			\end{split}
		\end{equation*}
		and that
		\begin{equation*}
			\int_{x_{k}}^{x_{k+1}} \int_{x_{k+2l}}^{x_{k+2l +1}} \widehat{K}_{\tau}(s-t)\ds\dt =
			\int_{x_{k}}^{x_{k+1}} \int_{x_{k+2l}}^{x_{k+2l +1}} \widehat{K}_{0}(s-t)\ds\dt.
		\end{equation*}

		Thus, whenever $\widehat E$  is such that $\partial^* \widehat E = \bigcup_{k\in\Z}\{x_{k}\}$ with $\min\bigl\{|x_{i}-x_{j}|:\,i\neq j,\, x_i,x_j\in\partial ^*\widehat E\bigr\} > \eta_{0}$ we have that $\Fcal^{\mathrm{1D}}_{\tau,p,d}(E,[0,L)) = \Fcal^{\mathrm{1D}}_{0,p,d}(E,[0,L))$.
		To conclude, it is sufficient to notice that for $\tau =0$ the kernel $\widehat K_0$ is reflection positive, thus the only minimizers $\widehat E$ are simple periodic sets.
		
	\end{proof}
	
	By Proposition~\ref{prop:almost_reflection_positivity} one can define, for every $L>0$ and for $\tau$ sufficiently small,
	\begin{equation}\label{eq:hL}
		h^*_L=\min_{E \text{ $L$-periodic}}\Fcal_\tau^{\mathrm{1D}}(E, [0,L))
	\end{equation}
	and
	\begin{equation}
		h^*=\min_{L>0}\min_{E \text{ $L$-periodic}}\Fcal_\tau^{\mathrm{1D}}(E, [0,L)).
	\end{equation}

	The following one dimensional Lemma will be used in the proof of Theorem~\ref{thm:main} in Section~\ref{sec:positive_tau}.
	
	\begin{lemma}
		\label{lemma:1d_replacement}
		Let $\eta_0,\tau_0$ be as in Proposition~\ref{prop:1dbound}. There exists $M_0>0$ such that for all $0<\tau<\tau_0$, $h>0$, $E \subset \R$ set of locally finite perimeter,  $A_{0} \subset  \partial^* E \cap [0,h)$ and $A_{1} = \{s \in \partial^* E\cap [0,h):\,\min\{|s-s^-|, |s-s^+|\}<\eta_0\}$,
		whenever $F$ is a set of finite perimeter  such that $\partial^* F \cap[0,h)= \partial^* E\cap [0,h) \setminus (A_{0}\cup A_{1})$, then
		\begin{equation}\label{eq:feineq}
			\sum_{s \in \partial^* F \cap [0,h)} r_{\tau}(F, s) \leq \sum_{s \in \partial^* E \cap [0,h)} r_{\tau}(E, s) + M_0 \# (A_0\cup A_1).
		\end{equation}
	\end{lemma}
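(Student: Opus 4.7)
Set $A := A_0 \cup A_1$. The plan is to compare the two sums via
\begin{equation*}
\sum_{s \in \partial^* F \cap [0,h)} r_\tau(F, s) - \sum_{s \in \partial^* E \cap [0,h)} r_\tau(E, s) = -\sum_{s \in A} r_\tau(E, s) + \sum_{s \in \partial^* F \cap [0,h)} \bigl[r_\tau(F, s) - r_\tau(E, s)\bigr],
\end{equation*}
and estimate each of the two terms on the right. By Proposition~\ref{prop:1dbound}, $r_\tau(E,s) \geq -\gamma_0$ for every $s \in \partial^* E$, so the first sum is bounded above by $\gamma_0 \# A$.

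The essential structural observation for the second sum is that every surviving $s \in \partial^* F \cap [0, h)$ lies outside $A_1$ by construction, so $|s - s^\pm_E| \geq \eta_0$; since $\partial^* F \cap [0,h) \subset \partial^* E$, the gaps can only grow, giving $|s - s^\pm_F| \geq \eta_0$ as well. In particular, no point of $A$ lies within the $\eta_0$-neighborhood of $s$, and the symmetric difference $F \triangle E$, whose boundary is contained in $A$, stays outside that neighborhood. Hence $\chi_F$ and $\chi_E$ agree on $(s - \eta_0, s + \eta_0)$, up to the global choice of whether $\chi_F$ equals $\chi_E$ or $1 - \chi_E$ on each connected component of $\R \setminus A$; either choice leaves $|\chi_F(u) - \chi_F(u + \rho)|$ invariant when both $u$ and $u+\rho$ lie in the same component.

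I would then write $r_\tau(F, s) - r_\tau(E, s) = \mathrm{nonloc}_E(s) - \mathrm{nonloc}_F(s)$ and, after the change of variable $v = u + \rho$, bound the integrand difference pointwise by $\mathbf 1_{F \triangle E}(u) + \mathbf 1_{F \triangle E}(v)$. For $u$ within $\eta_0$ of $s$, a nonzero contribution requires $v$ at distance $\geq \eta_0$ from $s$, hence $|\rho|$ bounded below by a quantity of order $\eta_0$; and for $\tau < \tau_0$ small enough that $\tau^{1/(p-d-1)} < \eta_0$, one has $\overline K_\tau(\rho) = |\rho|^{d-1-p}$ on that range, so the contribution is controlled by $\int_{\eta_0}^{+\infty} |\rho|^{d-1-p} \d\rho$, which is finite for $p > d+1$. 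The symmetric estimate handles the portion of the integration with $v$ within $\eta_0$ of $s$ and $u$ outside. This yields a uniform bound $|r_\tau(F, s) - r_\tau(E, s)| \leq C(\eta_0, p, d)$. Moreover, this difference is nonzero only when at least one of $s^\pm_E$ lies in $A$; since each $t \in A$ has at most two neighbors in $\partial^* E$, the number of contributing $s$ is at most $2 \# A$. Combining, the second sum is bounded by $2 C(\eta_0, p, d) \# A$, and the lemma follows with $M_0 := \gamma_0 + 2 C(\eta_0, p, d)$.

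The main technical obstacle I anticipate is securing the uniform-in-$\tau$ bound on $|r_\tau(F, s) - r_\tau(E, s)|$: the kernel $\overline K_\tau$ blows up near $\rho = 0$ at a rate $\tau^{-p/(p-d-1)}$, and only the exact cancellation of $\chi_F$ and $\chi_E$ on the $\eta_0$-neighborhood of $s$ (guaranteed because $s \notin A_1$) can eliminate the near-origin contribution. The integrability of the tail beyond $\eta_0$, comfortably ensured by $p \geq d+3$, then converts the remaining cross-interaction into a $\tau$-independent constant.
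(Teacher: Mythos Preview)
Your decomposition and the first estimate (the $\gamma_0\#A$ bound for the removed points) match the paper exactly, and your uniform bound $|r_\tau(F,s)-r_\tau(E,s)|\le C(\eta_0,p,d)$ for each surviving $s$ is correct. The gap is in the counting step.

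You claim that $r_\tau(F,s)-r_\tau(E,s)$ vanishes whenever both neighbours $s^-_E,s^+_E$ lie outside $A$. This is false. When $s^\pm_E\notin A$ the integration domains $(s^-_E,s)$ and $(s,s^+_E)$ coincide for $E$ and $F$, but the integrand $|\chi_E(u)-\chi_E(u+\rho)|$ still probes $u+\rho$ arbitrarily far from $s$; as soon as $u+\rho$ crosses a point of $A$ into a different connected component of $\R\setminus A$, the values $\chi_E(u+\rho)$ and $\chi_F(u+\rho)$ need no longer agree. So every $s\in\partial^*F$ whose component of $\R\setminus A$ is bounded on at least one side contributes a nonzero (though individually bounded) difference. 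Summing your per-point bound over all of $\partial^*F\cap[0,h)$ therefore yields $C\cdot\#(\partial^*F\cap[0,h))$, which is not controlled by $\#A$.

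The paper repairs this by grouping $\partial^*F$ into maximal runs $C_1,\dots,C_N$ separated by points of $A$ (so $N\le\#A$) and bounding the \emph{aggregate} $\bigl|\sum_{s\in C_i}[r_\tau(E,s)-r_\tau(F,s)]\bigr|$ for each run. After telescoping, the nonlocal parts over $C_i=\{s_1,\dots,s_m\}$ collapse to a single cross-interaction between $(s_0,s_{m+1})$ and its complement, where $s_0,s_{m+1}$ are the nearest deleted neighbours; since $|s_1-s_0|,|s_m-s_{m+1}|\ge\eta_0$, this is controlled by $\int_{\eta_0}^{\infty}\int \overline K_\tau$, a constant $\bar C(\eta_0)$. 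Summing over the $N$ components then gives $\bar C\#A$. Your per-point tail estimate is the right ingredient, but it must be organised component-wise (so that the telescoped domains, not the individual $(s^-_E,s^+_E)$, absorb the far-field discrepancy) before the count by $\#A$ becomes legitimate.
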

	
	\begin{proof}
		By Proposition~\ref{prop:1dbound}, there exist $\tau_0>0$,  $\eta_{0}>0$ such that whenever $r_{\tau}(E, s) <0$ it holds $\min\bigl\{|s-s^{+}|, |s-s^{-}|\bigr\} > \eta_{0}$.
		We say that a set $C \subset \partial^* E$ is connected if there are  no points $s \in A_{0}\cup  A_{1}$  and $x, y \in C$ such that $x< s<y$.
		Let $C_{1}, C_{2}, \ldots, C_{N}$ be the connected components of $\partial^* F$.
		It is immediate to notice that $\partial ^*F = \bigcup_{i} C_{i}$.
		Consider $C_{1}:=\{s_{1}, \ldots, s_m \}$ and denote by $s_{m+1} = \inf\{s \in \partial ^*E: s > s_{m}\}$ and by $s_{0} :=\sup \{s \in \partial ^*E: s < s_{1}\}$.
		From the above we have that $|s_{0} - s_{1}|, |s_{m}- s_{m+1}| > \eta_{0}$.
		Moreover, for any $x,y \in (s_{0}, s_{m+1})$ we have that $|\chi_{E}(x) - \chi_{E}(y)| = |\chi_{F}(x) - \chi_{F}(y)|$.
		From the definition of $r_{\tau}$ we have that
		\begin{equation*}
			\Big|\sum_{s \in C_{1}} r_{\tau}(E, s) - r_{\tau} (F, s) \Big|\leq 2\int_{s_{0}}^{s_{m}}\int_{s_{m+1}}^{+\infty } K_{\tau}(x-y) \dx\dy
			+ 2\int_{s_{1}}^{s_{m+1}}\int_{-\infty }^{s_{0}} K_{\tau}(x-y)\dx\dy.
		\end{equation*}
		Because of the integrability of $K_{\tau}$, we have that the \rhs of the above is bounded by a constant $\bar{C}$ depending only  on $\eta_{0}$.
		Thus, by the fact that $r_\tau(E,s)\geq -\gamma_0$ for all $s\in\partial^*E$ (see Proposition~\ref{prop:1dbound}) and since $N\leq \#(A_0\cup A_1)$,  we have that
		\begin{align*}
			\sum_{s \in \partial ^*E\cap[0,h)}r_{\tau}(E,s) -
			\sum_{s \in \partial^* F\cap[0,h)}r_{\tau}(F,s)
			&\geq
			\sum_{i} \Biggl(\sum_{s \in C_{i}}r_{\tau}(E,s) -
			\sum_{s \in C_{i}}r_{\tau}(F,s) \Biggr)+ \sum_{s \in A_{0} \cup  A_{1}} r_{\tau}(E, s) \\
			&\geq -\bar CN -\gamma_0\#\big(A_{0}\cup A_{1}\big)\notag\\
			&\geq -(\bar C+\gamma_0)\#\big(A_{0}\cup A_{1}\big).
		\end{align*}
		Finally, to obtain  the desired claim~\eqref{eq:feineq}, it is sufficient to take $M_0 \geq \bar C+\gamma_0$.
	\end{proof}
	
	The next lemma is similar to~\cite[Lemma 7.7]{dr_arma}. It says that given a set of locally finite perimeter $F\subset \R$, and an interval $I\subset \R$ such that the minimal distance between points in $\partial ^*F\cap I$  is bounded from below, then the contribution  to the energy $\sum_{s\in \partial^* F\cap I}r_{\tau}(F,s)$, for $\tau$ sufficiently small, is comparable to the periodic case up to a constant $C$ depending only on the dimension and on the minimal distance between points in  $\partial^* F\cap I$.
	Among periodic sets $F$, the energy contribution on an interval $I$ is bigger than or equal to the contribution of periodic stripes with the same number of boundary points of $F$ in $I$.
	\begin{lemma}
		\label{lemma:1D-optimization}
		Let $\eta_0,\tau_0$ be as in Proposition~\ref{prop:1dbound}. There exists a constant $C=C(\eta_0)$ such that for all $0<\tau<\tau_0$ the following holds.
		Let $F$ of locally finite perimeter, $I\subset\R$ bounded open interval, $\{k_1,\dots,k_m\}=\partial^*F\cap I$. Assume that
		\begin{align}
			\label{eq:kij}
			&\inf_{i,j\in\{1,\dots,m\}}|k_i-k_j|>\eta_0,\\
			&\mathrm{dist}(k_1,\partial^*F\setminus I)>\eta_0,\label{eq:bark0}\\
			&\mathrm{dist}(k_m,\partial^*F\setminus I)>\eta_0,\label{eq:barkm}\\
		\end{align}
		and let $k_0,k_{m+1},k_{m+2}\in\R\setminus I$ such that $k_0<k_1<\dots<k_{m}<k_{m+1}<k_{m+2}$ and
		\begin{equation}
			\inf_{i,j\in\{0,\dots,m+2\}}|k_i-k_j|>\eta_0.
		\end{equation}
		Then, let $\tilde I=[k_0,k_{\mathrm{max}})$, with
		\begin{equation}
			k_{\mathrm{max}}=\left\{\begin{aligned}
				&k_{m+1} && &\text{if $m$ is odd}\\
				&k_{m+2} && &\text{if $m$ is even}
			\end{aligned}\right.
		\end{equation}
		and $\tilde F$ the $|\tilde I|$-periodic set of locally finite perimeter such that
		\begin{align}
			\tilde F\cap I&=F\cap I\\
			\partial^*\tilde F\cap \overline{\tilde I}&=\{k_0,k_1,\dots,k_{\mathrm{max}-1},k_{\mathrm{max}}\}.
		\end{align}
		Then,
		\begin{equation}\label{eq:stimafper}
			\sum_{s\in\partial^*F\cap I}r_{\tau}(F,s)\geq\sum_{s\in\partial^*\tilde F\cap \tilde I}r_{\tau}(\tilde F,s)-C.
		\end{equation}
		Moreover,
		\begin{equation}\label{eq:stimaSper}
			\sum_{s\in\partial^*\tilde F\cap\tilde I}r_\tau(\tilde F,s)\geq|\tilde I|\overline F^{\mathrm{1D}}_{\tau,p,d}(\tilde S,\tilde I),
		\end{equation}
		where $\tilde S$ is the simple periodic set of period $2\frac{|\tilde I|}{\#\partial^*\tilde F\cap\overline{\tilde I}}$.
	\end{lemma}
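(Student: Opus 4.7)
The lemma splits into two independent inequalities, \eqref{eq:stimafper} and \eqref{eq:stimaSper}, which I would treat in sequence.

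For \eqref{eq:stimafper}, the plan is to expand both sides using \eqref{eq:rme} and reorganize via Fubini in the spirit of Proposition~\ref{prop:intgeom}. Setting $C_0(\tau) := \int_{-1}^{1}|\rho|\overline K_\tau(\rho)\d\rho$ and telescoping the $u$-integrations, each side becomes the product of $C_0(\tau)$ with the number of boundary points in the respective interval, minus a double integral of $|\chi_E(u)-\chi_E(u+\rho)|\overline K_\tau(\rho)$ over the ``Voronoi'' region determined by $\partial^*E\cap J$. For the periodic side, the $u$-region collapses to one full period by $|\tilde I|$-periodicity of the integrand. Since $F\cap I = \tilde F\cap I$ by construction, pairs $(u,u+\rho)$ with both arguments in $I$ contribute identically to both sides and cancel. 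The residual mismatch consists of (i) the extra boundary-point contributions $r_\tau(\tilde F,k_0)$ and, if $m$ is even, $r_\tau(\tilde F,k_{m+1})$; and (ii) nonlocal interactions between $I$ and its exterior, where $F$ and $\tilde F$ may disagree.

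The technical heart of the proof is extracting cancellations among these residual terms. Since $C_0(\tau)$ blows up like $\tau^{-1}$ as $\tau\to 0$, one cannot simply bound each extra piece by $C_0(\tau)$ to obtain a uniform $C=C(\eta_0)$. Instead I would show that the divergent part of each extra contribution $r_\tau(\tilde F,k_0)$ is compensated by matching divergent tails in the nonlocal mismatch near $\partial I$, leaving only integrals of the form $\int_{|\rho|>\eta_0}|\rho|\overline K_\tau(\rho)\d\rho$, which are uniformly bounded for $p>d+1$ by \eqref{eq:kass1}. The $\eta_0$-separation assumption on all relevant boundary points---those in $I$, those in $\partial^*F\setminus I$ adjacent to $\partial I$, and the auxiliary $k_0,k_{m+1},k_{m+2}$---then controls every remaining term, yielding the uniform constant $C(\eta_0)$.

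For \eqref{eq:stimaSper}, I would adapt the reflection-positivity argument used in Proposition~\ref{prop:almost_reflection_positivity}. Both $\tilde F$ and $\tilde S$ are $|\tilde I|$-periodic with the same number of boundary points per period; since $\partial^*\tilde F$ is $\eta_0$-separated, for $\tau$ small enough the kernel $\overline K_\tau$ coincides with $\overline K_0(\rho)=|\rho|^{d-1-p}$ on the scales where $|\chi_{\tilde F}(u)-\chi_{\tilde F}(u+\rho)|$ is nonzero. The kernel $\overline K_0$ is reflection positive (being a positive power of $1/|\rho|$ it admits a Frullani representation as a superposition of exponentials), so the chessboard argument identifies the equally spaced set $\tilde S$ as the minimizer of $\overline F^{\mathrm{1D}}_{\tau,p,d}(\cdot,\tilde I)$ among $|\tilde I|$-periodic sets with the same boundary-point count per period. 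The main obstacle is the uniform-in-$\tau$ bound in \eqref{eq:stimafper}, as described above.
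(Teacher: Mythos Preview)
Your plan for \eqref{eq:stimaSper} matches the paper's: the $\eta_0$-separation forces $K_\tau=K_0$ on all relevant scales, and reflection positivity of $\overline K_0$ then gives the chessboard estimate (the paper cites \cite{giuliani2009modulated}).

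For \eqref{eq:stimafper} your strategy would eventually work, but it is more laborious than necessary and your description of the cancellation is slightly off. You frame the problem as needing a cross-cancellation between the extra-point contributions (i) and the nonlocal mismatch (ii). In fact no such cross-cancellation is needed: each of (i) and (ii) is \emph{separately} bounded by $C(\eta_0)$. The key observation you are missing is the upper-bound companion to Proposition~\ref{prop:1dbound}: whenever $\min\{|s-s^-|,|s-s^+|\}>\eta_0$, one has $r_\tau(E,s)\leq C(\eta_0)$ uniformly in $\tau$. (This follows by the same computation: the nonlocal part of $r_\tau$ is bounded below by $\int_0^{\eta_0}\int_v^{v+\eta_0}\overline K_\tau(\rho)\,d\rho\,dv$, which absorbs $C_0(\tau)$ up to a tail integral over $\{|\rho|>\eta_0\}$.) Once you have this, the extra terms $r_\tau(\tilde F,k_0)$, $r_\tau(\tilde F,k_{m+1})$ are each directly bounded --- there is no ``divergent part'' to compensate. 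For the shared points $k_1,\dots,k_m$, the paper observes that $F\Delta\tilde F\subset(-\infty,k_1-\eta_0)\cup(k_m+\eta_0,+\infty)$, so the difference $r_\tau(F,k_i)-r_\tau(\tilde F,k_i)$ involves only interactions across a gap of width $\geq\eta_0$; summing these and using that $\int_{\eta_0}^\infty \rho\,\overline K_\tau(\rho)\,d\rho<\infty$ gives a bound independent of $m$ and $\tau$.

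So the paper's proof is a two-line decomposition rather than a Fubini expansion: bound the (at most three) extra $r_\tau$ values by the pointwise upper bound, then bound the nonlocal mismatch at shared points by tail integrability. Your expansion would rediscover both facts after regrouping, but invoking the boundedness of $r_\tau$ at well-separated points from the start is cleaner.
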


	\begin{proof}
		Let us denote by $k_1< \ldots< k_m $ the  points of $\partial^*F\cap I$, and let $k_0,k_{m+1},k_{m+2}$, $\tilde I$, $\tilde F$ be as in the statement of the lemma.
		
		To prove~\eqref{eq:stimaSper}, observe that the boundary points of $\tilde F$ are, by assumption, at mutual distance larger that $\eta_0>2\tau^{1/(p-d-1)}$. Thus,
		$K_\tau (z)=\|z\|^{-p}$ behaves like a reflection positive functional (similarly to the proof of Proposition~\ref{prop:almost_reflection_positivity} ). Therefore, as shown in~\cite{giuliani2009modulated} for reflection positive functionals,~\eqref{eq:stimaSper} holds.

		To show~\eqref{eq:stimafper}, notice that the symmetric difference between $F$ and $\tilde F$ satisfies
		
		\begin{equation*}
			F \Delta \tilde F  \subset (- \infty, k_1 - \eta_0) \cup (k_m+\eta_0, + \infty).
		\end{equation*}
		Let us assume \withoutLoss that $k_{\mathrm{max}}=k_{m+2}$ and denote by
		\[
		\bar k_0=\sup\{s\in\partial^*F\setminus I: \, s<k_1\}, \quad 	\bar k_{m+1}=\inf\{s\in\partial^*F\setminus I:\,s>k_m\}.
		\]
		By assumptions~\eqref{eq:bark0} and~\eqref{eq:barkm}, $|\bar k_0-k_1|>\eta_0$, $|\bar k_{m+1}-k_m|>\eta_0$.
		Using the fact that $r_\tau(E,s)<C/6$ whenever $\min\{|s-s^-|,|s-s^+|\}>\eta_0$ (see Proposition~\ref{prop:1dbound}), we have that
		\begin{align}
			\Bigl|\sum_{i=1}^mr_\tau(F,k_i)-\sum_{i=0}^{m+2}r_\tau(\tilde F, k_i)\Bigl|&\leq \frac{C}{2}	+\Bigl|\sum_{i=1}^mr_\tau(F,k_i)-\sum_{i=1}^{m}r_\tau(\tilde F, k_i)\Bigl|\notag\\
			&=\frac{C}{2}+A+B,
		\end{align}
		where
		\begin{equation*}
			\begin{split}
				A =  \sum_{i=0}^{m-1}&  \int_{k_{i}}^{k_{i+1}}\int_{0}^{+\infty} (s - |\chi_{F}(s+u) - \chi_{F}(u)|)\widehat{K}_{\tau}(s) \ds \du
				\\  & - \sum_{i=0}^{m-1} \int_{k_{i}}^{k_{i+1}}\int_{0}^{+\infty} (s - |\chi_{\tilde F}(s+u) - \chi_{\tilde F}(u)|)\widehat{K}_\tau(s) \ds \du\\
				B =  \sum_{i=1}^{m} & \int_{k_{i}}^{k_{i+1}}\int_{-\infty}^{0} (s - |\chi_{F}(s+u) - \chi_{F}(u)|)\widehat{K}_\tau(s) \ds \du
				\\ &- \sum_{i=1}^{m} \int_{k_{i}}^{k_{i+1}}\int_{-\infty}^{0} (s - |\chi_{\tilde F}(s+u) - \chi_{\tilde F}(u)|)\widehat{K}_\tau (s)\ds \du.
			\end{split}
		\end{equation*}
		Thus, by using the integrability of $\widehat K_\tau$, we have that
		\begin{equation*}
			\begin{split}
				|A| \leq \int_{k_{0}} ^{k_{m}} \int_{0}^{+\infty} \chi_{F \Delta \tilde F}(u +s  ) \widehat{K}_{\tau}(s) \ds \du \leq\int_{k_0}^{k_m}\int_{k_{m} + \hat\eta}^{\infty} \widehat{K}_{\tau}(u-v) \dv\du \leq  \frac{C}{4},
			\end{split}
		\end{equation*}
		where $C$ is a constant depending only on $\eta_0$.  Similarly, $|B| \leq C/4$.

		Thus, we have that
		\begin{equation*}
			\Big|\sum_{i=1}^m r_\tau(F,k_i) - \sum_{i=0}^{m+2} r_\tau(\tilde F,k_i)\Big| \leq C
		\end{equation*}
		and the lemma is proved.
		
	\end{proof}

	\section{A rigidity result}\label{sec:rigidity}
	
	The main goal  of this section is to prove Theorem~\ref{thm:omega}.
	In particular, Theorem~\ref{thm:omega} will follow from Theorem~\ref{thm:rigidity} below and the one dimensional estimates of Section~\ref{sec:1D}. In order to state Theorem~\ref{thm:omega}, let us define the following functional:

	\begin{align}
		\overline \Fcal_{0,p,d}(E,\Omega):=\int_{\S^{d-1}}\int_{\partial^*E\cap \Omega}\frac{|\Scal{\nu_E(x)}{\theta}|}{\r_\theta(x)^{p-d-1}} \d\mathcal H^{d-1}(x)\d\theta, \label{eq:f0p}
	\end{align}
	where
	\begin{equation}\label{eq:f0pmin}
		\r_\theta(x)=|x_\theta-x_\theta^+|.
	\end{equation}

	We recall that $x=x_\theta^\perp+\theta x_\theta$, where $x_\theta^\perp\in\theta^\perp$ and $x_\theta\in\R$. The point $x_\theta^+\in\R$ is such that  $x_\theta^\perp+\theta x_\theta^+\in\partial^*E$ is the closest point to $x$ in $\partial^*E_{x_\theta^\perp}$ with $x_\theta^+-x_\theta>0$.
	
	Notice that, as in Proposition~\ref{prop:intgeom}, one can show the following integral geometric formulation for the functional $\overline \Fcal_{0,p,d}$:
	\begin{align}\label{eq:f0intgeom}
		\overline \Fcal_{0,p,d}(E,\Omega)=\int_{\S^{d-1}}\int_{\theta^\perp}\sum_{s\in\partial^*E_{x_\theta^\perp}\cap \Omega_{x_\theta^\perp}}\frac{1}{|s-s^+|^{p-d-1}}\dx_\theta^\perp\d\theta.
	\end{align}

	\begin{theorem}\label{thm:rigidity}
		Let $\Omega \subset  \R^{d}$ be an open bounded domain and let $E\subset\R^d$ be a set such that $\overline\Fcal_{0,p,d}(E,\Omega)<+\infty$ for some  $p\geq d+3$. Then $\partial^{*} E\cap \Omega$ is given by a disjoint union $\underset{i=1,\dots,N}{\cup} H_i\cap \Omega$, where each $H_i$ is an affine hyperplane in $\R^d$ and $H_{i}\cap H_{j}\cap \Omega = \emptyset$.

		In particular, when $\Omega = [0,L)^{d}$ and $E$ is a $[0,L)^{d}$-periodic set, then up to a rigid motion the set $E$ is  of the form
		\begin{equation}\label{eq:Estripes}
			E=\widehat E\times\R^{d-1},\quad \widehat E\subset\R,\quad\widehat E\cap [0,L)=\bigcup_{i=1}^{N_0}(s_i,t_i), \quad s_i<t_i<s_{i+1}<t_{i+1}.
		\end{equation}
	\end{theorem}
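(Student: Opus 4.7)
The plan is to exploit the integral geometric representation~\eqref{eq:f0intgeom} for $\overline{\Fcal}_{0,p,d}$, which says that for a.e.\ direction $\theta\in\S^{d-1}$ and a.e.\ transverse point $x_\theta^\perp$, the sum $\sum_{s}|s-s^+|^{-(p-d-1)}$ over consecutive boundary points of the slice $E_{x_\theta^\perp}$ is finite. Since $p-d-1\geq 2$, this is a strong quantitative statement: boundary points along almost every slice cannot cluster, and this uniform control can be averaged into a nonlocal curvature-type bound on $\partial^* E$. Following the strategy outlined in the introduction, I would prove flatness in three steps: (i)~upgrade $\partial E\cap\Omega$ to a regular (in particular Lipschitz) hypersurface via \cref{thm:regularity}; (ii)~use \cref{prop:brezis} to control a nonlocal curvature of $\partial E$ by $\overline{\Fcal}_{0,p,d}(E,\Omega)$; (iii)~invoke \cref{lemma:d+3} to conclude that a Lipschitz hypersurface with finite nonlocal curvature must be flat.

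Concretely, first I would show that finiteness of $\overline{\Fcal}_{0,p,d}(E,\Omega)$ excludes degenerate behavior on slices: by the arguments of \cref{prop:1dbound} and \cref{cor:fboundsp} applied in the limit $\tau\to0$, the one-dimensional energy controls the perimeter on slices, and density/excess-type bounds at boundary points follow. Combined with \cref{thm:regularity}, this yields that $\partial E\cap\Omega=\partial^* E\cap\Omega$ and is a $C^{1,\alpha}$ (or at minimum Lipschitz) hypersurface locally. The crucial output at this point is that the outer normal $\nu_E$ is well-defined and continuous on each connected component of $\partial^* E\cap\Omega$.

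Next, on such a regular boundary, I would apply \cref{prop:brezis} to express a nonlocal curvature integral of $\partial E$ in terms of $\overline{\Fcal}_{0,p,d}(E,\Omega)$. For a Lipschitz hypersurface, this nonlocal quantity is comparable to a Gagliardo-type seminorm of $\nu_E$, and the exponent condition $p\geq d+3$ is precisely what makes this seminorm, once finite, force $\nu_E$ to be locally constant by \cref{lemma:d+3}. Hence each connected component of $\partial^*E\cap\Omega$ is contained in an affine hyperplane $H_i$; disjointness of distinct $H_i\cap\Omega$ follows from the fact that two transversal hyperplanes would produce a non-smooth point of $\partial E\cap\Omega$ contradicting the regularity step, while two parallel coincident pieces would merge into one component.

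For the periodic case $\Omega=[0,L)^d$ with $E$ a $[0,L)^d$-periodic set, pass the conclusion to the whole $\R^d$: each connected component of $\partial E$ lies in an affine hyperplane, and periodicity together with disjointness inside every fundamental domain forces all these hyperplanes to share a common normal direction $\theta_0$. Up to a rigid motion we may assume $\theta_0=e_1$, and then $E=\widehat E\times\R^{d-1}$ for some $\widehat E\subset\R$ with $\partial\widehat E\cap[0,L)$ a finite set $\{s_1<t_1<s_2<t_2<\cdots\}$, yielding exactly the stripe decomposition in~\eqref{eq:Estripes}. The main obstacle I anticipate is the regularity step underpinning (i): translating the sliced finiteness of $\overline{\Fcal}_{0,p,d}$ into a uniform excess decay on $\partial^*E$ requires combining the one-dimensional bounds of \cref{sec:1D} with geometric-measure-theoretic arguments integrated over all directions $\theta\in\S^{d-1}$, and it is only after this regularity is in place that the nonlocal curvature control of \cref{prop:brezis}–\cref{lemma:d+3} can be legitimately applied.
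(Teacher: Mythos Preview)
Your outline captures the paper's overall architecture (regularity $\Rightarrow$ nonlocal curvature bound $\Rightarrow$ flatness via Bourgain--Brezis--Mironescu), but it has a genuine gap concerning the exponent hypotheses. Both \cref{prop:brezis} and \cref{thm:regularity} require $p\geq 2d$ (respectively $p>2d$), not merely $p\geq d+3$. When $d=2$ this is harmless since $d+3=5>4=2d$, but for $d\geq 3$ one has $2d\geq d+3$, so the assumption $p\geq d+3$ does \emph{not} allow you to invoke either result directly. Thus your steps (i)--(ii) are simply unavailable in dimension $d\geq 3$.

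The paper circumvents this via a dimensional reduction that you have not mentioned: it first proves the theorem for $d=2$ by exactly your route, and then for general $d$ uses an integral geometric formula over the Grassmannian $G(2,\R^d)$ (\cref{prop:slicing}) to conclude that almost every two-dimensional slice $E_{x_{\pi_2}^\perp}$ has finite two-dimensional energy and hence flat boundary (\cref{prop:nusliced}). Lifting flatness from almost every $2$-plane slice back to $\R^d$ is itself nontrivial and occupies Lemmas~\ref{lemma:pigeqeps}--\ref{lemma:graph} and Corollary~\ref{cor:continuity}: one must show that if $\nu_E$ were not locally constant, then a positive-measure set of $2$-plane slices would exhibit nonparallel boundary lines, contradicting the $2$D rigidity. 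In the periodic case this yields a single normal direction; for a general $\Omega$ one first proves continuity of $\nu_E$ and a Lipschitz graph structure directly from the sliced information, and only then applies \cref{lemma:d+3}. Your anticipated ``main obstacle'' (regularity) is thus not the only one: the slicing-and-lifting step is where most of the work lies once $d\geq 3$.
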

	
	Before diving into the proof of Theorem~\ref{thm:rigidity}, let us show how it leads to the proof of Theorem~\ref{thm:omega}.
	
	\begin{proof}[Proof of Theorem~\ref{thm:omega}: ]
		Let $\Omega\subset\R^d$ open and bounded and $E_\tau\subset\R^d$ such that \eqref{eq:suptau} holds.

		By  the integral geometric formula 
		\begin{equation*}
			\Fcal^{\mathrm{loc}}_{\tau, p, d}(E_{\tau},\Omega) = \frac{1}{|\Omega|}\int_{\S^{d-1}}\int_{\theta^{\perp}} \overline F_{\tau, p, d}^{\mathrm{1D}} ((E_{\tau})_{x^{\perp}_{\theta}}, \Omega_{x^{\perp}_{\theta}})\dx_{\theta}^\perp\d\theta,
		\end{equation*}
		the condition \eqref{eq:suptau} implies that for a.e. $(\theta,x_\theta^\perp)$ 
		\begin{equation*}
			\sup_\tau \overline F_{\tau, p, d}^{\mathrm{1D}} ((E_{\tau})_{x^{\perp}_{\theta}}, \Omega_{x^{\perp}_{\theta}})<+\infty. 
		\end{equation*}
		Moreover, using Fatou's Lemma, we have that 
		\begin{equation*}
			\sup_\tau \Fcal^{\mathrm{loc}}_{\tau, p, d}(E_{\tau},\Omega) \geq\liminf_{\tau\to 0} \Fcal^{\mathrm{loc}}_{\tau, p, d}(E_{\tau},\Omega) \geq \frac{1}{|\Omega|}\int_{\S^{d-1}}\int_{\theta^{\perp}} \liminf_{\tau\to 0} \overline F_{\tau, p, d}^{\mathrm{1D}} ((E_{\tau})_{x^{\perp}_{\theta}}, \Omega_{x^{\perp}_{\theta}})\dx_{\theta}^\perp\d\theta.
		\end{equation*}
		By the definition of $\overline F_{\tau, p, d}^{\mathrm{1D}} ((E_{\tau})_{x^{\perp}_{\theta}}, \Omega_{x^{\perp}_{\theta}})$ given in \eqref{eq:floc2} and Lemma \ref{lemma:comp1d},
		one has then that for a.e. $(\theta, x_\theta^\perp)$ there exists $(E_0)_{x_\theta^\perp}$ of finite perimeter in $\Omega_{x_\theta^\perp}$ such that, up to subsequences, $(E_{\tau})_{x^{\perp}_{\theta}}\to(E_0)_{x_\theta^\perp}$ in $L^1(\Omega_{x_\theta^\perp})$ and with respect to the Hausdorff convergence. Moreover, by  \eqref{eq:1dlowbound}, Corollary \ref{cor:fboundsp},  \eqref{eq:f0intgeom} and the above,
		\begin{equation}\label{eq:5.5}
			\liminf_{\tau\to 0} \Fcal^{\mathrm{loc}}_{\tau, p, d}(E_{\tau},\Omega)\gtrsim -\frac{1}{|\Omega|}\per(E_0,\Omega)+\frac{1}{|\Omega|}	\overline \Fcal_{0,p,d}(E_0,\Omega).
		\end{equation}
		
		Thus we can apply Theorem \ref{thm:rigidity} to $E_0$ and conclude the proof of the Theorem for a general domain $\Omega$.
		
		In the periodic setting with $\Omega=[0,L)^d$, one has a local control over the perimeter of the sets $E_\tau$ and $E_0$ in the whole $\R^d$. In particular, the above Hausdorff convergence of the slices of $E_\tau$ to the slices of $E_0$ is intended in the whole $\R^d$ and \eqref{eq:5.5} can be replaced by
		\[
		\Fcal_{\tau,p,d}(E_\tau,[0,L)^d)\gtrsim-\frac{1}{L^d}\per(E_0,[0,L)^d)+\frac{1}{L^d}\overline \Fcal_{0,p,d}(E,[0,L)^d).
		\]
		Thus, by the second part of Theorem~\ref{thm:rigidity} we can conclude the proof of the theorem in the periodic setting.
		
	\end{proof}
	
	An immediate consequence of the above proof is the precise identification of the $L^1$ $\Gamma$-limit of $\Fcal_{\tau,p,d}(\cdot, [0,L)^d)$ as $\tau\to0$ in the periodic setting. In order to give a precise statement, define $K_0(\zeta)=\|\zeta\|^{-p}$, $\overline K_0(\rho)=\rho^{d-1}K_0(\rho)$, and to avoid the problem of non-integrability of $K_0$ at the origin,
	\begin{align*}
		r_{0}(E_{x_\theta^\perp},s)&:=\int_{0}^{1}\Bigl(|\rho|-\int_{s^-}^s\bigl|\chi_{E_{x_\theta^\perp}}(u)-\chi_{E_{x_\theta^\perp}}(u+\rho)\du\bigr|\Bigr)\overline K_0(\rho)\d\rho\notag\\
		&+\int_{-1}^{0}\Bigl(|\rho|-\int_{s}^{s^+}\bigl|\chi_{E_{x_\theta^\perp}}(u)-\chi_{E_{x_\theta^\perp}}(u+\rho)\du\bigr|\Bigr)\overline K_0(\rho)\d\rho\notag\\
		&-\int_{s}^{s^+}\int_{-\infty}^{-1}\bigl|\chi_{E_{x_\theta^\perp}}(u)-\chi_{E_{x_\theta^\perp}}(u+\rho)\bigr|\overline K_0(\rho)\d\rho\du\notag\\
		&-\int_{s}^{s^+}\int_{-\infty}^{-1}\bigl|\chi_{E_{x_\theta^\perp}}(u)-\chi_{E_{x_\theta^\perp}}(u+\rho)\bigr|\overline K_0(\rho)\d\rho\du.
	\end{align*}
	Finally, let
	\begin{align}
		\Fcal_{0,p,d}(E, [0,L)^d)&:=\frac{1}{L^d}\int_{\S^{d-1}}\int_{\theta^\perp}\sum_{s\in\partial^* E_{x_\theta^\perp}\cap [0,L)^d_{x_\theta^\perp}}r_{0}(E_{x_\theta^\perp},s)\dx_\theta^\perp\d\theta\notag\\
		&=\frac{1}{L^d}\int_{\S^{d-1}}\int_{\theta^\perp}\overline F_{0,p,d}^{\mathrm{1D}}(E_{x_\theta^\perp}, [0,L)^d_{x_\theta^\perp})\dx_\theta^\perp\d\theta.\label{eq:forintgeom0}
	\end{align}
	One has that
	\begin{align*}
		L^d	\Fcal_{0,p,d}(E, [0,L)^d)&\gtrsim \int_{\S^{d-1}}\int_{\theta^\perp}\sum_{s\in\partial^* E_{x_\theta^\perp}\cap[0,L)^d_{x_\theta^\perp}} \Biggl[-1+\frac{1}{|s-s^+|^{p-d-1}}\Biggr]\dx_\theta^\perp\d\theta\notag\\
		&\gtrsim -\per (E,[0,L)^d)+L^d\overline\Fcal_{0,p,d}(E, [0,L)^d),
	\end{align*}
	where $\overline\Fcal_{0,p,d}$ was defined in~\eqref{eq:f0p}.

	One has the following
	
	\begin{theorem}
		\label{thm:gammaconv}
		Let $p>d+1$. The functionals $\Fcal_{\tau, p, d}(\cdot, [0,L)^d)$ $\Gamma-$converge as $\tau\to0$ with respect to the $L^1-$convergence to the functional  $\Fcal_{0, p, d}(\cdot, [0,L)^d)$ defined in~\eqref{eq:forintgeom0}.
	\end{theorem}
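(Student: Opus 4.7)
The plan is to prove the standard two-part Gamma-convergence statement: a $\liminf$ inequality along arbitrary $L^1$-converging sequences, and the construction of a recovery sequence achieving the limit.

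\textbf{Liminf inequality.} First I would assume $\liminf_\tau \Fcal_{\tau,p,d}(E_\tau,[0,L)^d) < +\infty$ (otherwise the inequality is trivial) and extract a subsequence realizing the $\liminf$ as a true limit. Corollary \ref{cor:fboundsp}(ii) then gives $\sup_\tau \per(E_\tau, [0,L)^d) < +\infty$, and combined with the lower bound $r_\tau \geq -\gamma_0$ from Proposition \ref{prop:1dbound} this supplies a $\tau$-uniform integrable lower bound for the slicewise integrand in \eqref{eq:fintgeom}. Fatou's lemma applied in the variables $(\theta, x_\theta^\perp)$ then reduces the problem to the slicewise inequality
\[
\liminf_{\tau \to 0}\,\overline F^{\mathrm{1D}}_{\tau,p,d}\bigl((E_\tau)_{x_\theta^\perp}, [0,L)^d_{x_\theta^\perp}\bigr) \;\geq\; \overline F^{\mathrm{1D}}_{0,p,d}\bigl((E_0)_{x_\theta^\perp}, [0,L)^d_{x_\theta^\perp}\bigr)
\]
for a.e.\ $(\theta, x_\theta^\perp)$.

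\textbf{Slicewise liminf.} On such a slice, I would invoke Lemma \ref{lemma:comp1d} (along a slice-dependent subsequence) to obtain $L^1_{\mathrm{loc}}$ convergence $(E_\tau)_{x_\theta^\perp} \to (E_0)_{x_\theta^\perp}$ together with Hausdorff convergence of the reduced boundaries; in particular, for each boundary point $s_0 \in \partial^* (E_0)_{x_\theta^\perp}$ the nearest-neighbour points satisfy $s^\pm_\tau \to s^\pm_0$. I would then rewrite $r_\tau(E,s)$ as the sum of two nonnegative ``near-field'' integrals on $\{|\rho| \leq 1\}$ (with integrands $(|\rho| - \int_{s^-}^s |\chi_E(u) - \chi_E(u+\rho)|\,\du)\, \overline K_\tau(\rho) \geq 0$) and two negative ``far-field'' integrals on $\{|\rho| > 1\}$. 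For the near-field terms the monotone convergence $\overline K_\tau \uparrow \overline K_0$, combined with $s^\pm_\tau \to s^\pm_0$ and Fatou in $\rho$, yields the $\liminf$ bound; for the far-field terms, $\overline K_0(\rho) = \rho^{d-1-p}$ is integrable on $\{|\rho| \geq 1\}$ since $p > d+1$, so dominated convergence identifies the exact limit. Summing over the finitely many boundary points of $(E_0)_{x_\theta^\perp}$ in the slice produces $\overline F^{\mathrm{1D}}_{0,p,d}$.

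\textbf{Limsup.} As recovery sequence I would take the constant choice $E_\tau := E_0$. The case $\Fcal_{0,p,d}(E_0) = +\infty$ is vacuous; otherwise $\per(E_0,[0,L)^d) < +\infty$, and the same near-/far-field decomposition, now with the set fixed and only the kernel moving, yields the pointwise slicewise convergence $\overline F^{\mathrm{1D}}_{\tau,p,d}((E_0)_{x_\theta^\perp}, \cdot) \to \overline F^{\mathrm{1D}}_{0,p,d}((E_0)_{x_\theta^\perp}, \cdot)$ by monotone (near-field) and dominated (far-field) convergence in $\rho$. An outer dominated convergence in $(\theta, x_\theta^\perp)$, with integrable majorant $\overline F^{\mathrm{1D}}_{0,p,d}((E_0)_{x_\theta^\perp}, \cdot) + C\per^{\mathrm{1D}}((E_0)_{x_\theta^\perp})$ (integrable thanks to the finiteness of $\Fcal_{0,p,d}(E_0)$ and $\per(E_0, [0,L)^d)$), then delivers $\Fcal_{\tau,p,d}(E_0) \to \Fcal_{0,p,d}(E_0)$.

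\textbf{Main obstacle.} The hardest step will be the slicewise $\liminf$: Lemma \ref{lemma:comp1d} requires the slice energy to be uniformly bounded in $\tau$, a condition which holds only for a.e.\ slice (by the Fatou reduction) and even there only along a slice-dependent subsequence. Since the $\liminf$ inequality is independent of the chosen subsequence this is sufficient, but one has to set up carefully the measurability and integrability used in the layered extraction, and to verify that the Hausdorff convergence of boundaries really does propagate to convergence of the nonlocal contributions $r_\tau(E_\tau,s_\tau)$ through both the near-field (Fatou) and far-field (dominated) parts.
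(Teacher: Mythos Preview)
Your proposal is correct and follows essentially the same route as the paper: the $\Gamma$-liminf via the integral-geometric slicing \eqref{eq:fintgeom}, Fatou in $(\theta,x_\theta^\perp)$, and the one-dimensional compactness Lemma~\ref{lemma:comp1d} (the paper simply points back to the proof of Theorem~\ref{thm:omega}), and the $\Gamma$-limsup via the constant recovery sequence $E_\tau\equiv E_0$ with $r_\tau\to r_0$. Your near-field/far-field splitting of $r_\tau$ is exactly the mechanism the paper leaves implicit when it asserts $r_\tau(E,s)\to r_0(E,s)$ and when it upgrades the coarse bound \eqref{eq:1dlowbound} to the precise limit $\overline F^{\mathrm{1D}}_{0,p,d}$; the Hausdorff convergence of slice boundaries from Lemma~\ref{lemma:comp1d} is what makes that splitting effective, just as you say.
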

	
	\begin{proof}
		The proof of the $\Gamma$-liminf inequality follows as in the proof of Theorem~\ref{thm:omega}, by slicing and the one-dimensional estimates of Section~\ref{sec:1D}.
		
		For the $\Gamma$-limsup inequality, for any set of $\Fcal_{0,p,d}$ finite energy $E$ it is sufficient to take $E_{\tau} = E$ for all $\tau>0$ and notice that $r_\tau(E,s)\to r_0(E,s)$, thus $\Fcal_{\tau,p,d}(E,[0,L)^d)\to\Fcal_{0,p,d}(E,[0,L)^d)$ as $\tau\to0$.
	\end{proof}
	
	Let us now move to the proof of Theorem~\ref{thm:rigidity}.

	The proof of Theorem~\ref{thm:rigidity} will follow from a series of preliminary lemmas and propositions of independent interest.
	
	In order to prove such a rigidity estimate, we develop a novel strategy. Let us now give a brief outline of the main steps of the proof, in order to facilitate the reader.

	\begin{itemize}
		\item [(i)] First, we show that whenever $p\geq 2d$ the functional $\overline\Fcal_{0,p,d}$ controls a nonlocal generalized version of the curvature of the relative boundary of the set $E$. More precisely, we show the following
		
		\begin{proposition}\label{prop:brezis}
			Let $d\geq2$, $p\geq 2d$, $M>0$.  Then, there exists $R_0>0, C(M)>0$ such that the following holds: for all $\Omega\subset\R^d$ bounded open sets and {for every $E\subset \R^d$ of locally finite perimeter} such that $\overline\Fcal_{0,p,d}(E,\Omega)\leq M$, for all $0<r<R_0$ and for all $z\in\Omega$ such that $\mathrm{dist}(z,\partial \Omega)>2r$, it holds
			\begin{equation}\label{eq:brezis}
				\int_{\partial^*E\cap B_r(z)}\int_{\partial ^*E\cap B_r(z)}\frac{\|\nu_E(x)-\nu_E(y)\|^2}{\|x-y\|^{p-2}}\d\mathcal H^{d-1}(x)\d\mathcal H^{d-1}(y)\lesssim C(M)\overline \Fcal_{0,p,d}(E,B_r(z)).
			\end{equation}
		\end{proposition}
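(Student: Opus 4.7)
My plan is to reduce the $(d-1)$-dimensional double integral in \eqref{eq:brezis} to 1D chord estimates via integral geometry, then exploit the alternation of the sign of $\nu^\theta:=\langle\nu_E,\theta\rangle$ along each slice $L_w=w+\R\theta$.

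First, using the vector identity $\|\nu_E(x)-\nu_E(y)\|^2=\frac{d}{|\S^{d-1}|}\int_{\S^{d-1}}(\nu^\theta(x)-\nu^\theta(y))^2\,d\theta$, I split the LHS of \eqref{eq:brezis} into a $\theta$-integral. I then change variables $(x,y)\mapsto(\theta,w,s_1,s_2)$ with $\theta=(y-x)/|y-x|$, $w=x_\theta^\perp=y_\theta^\perp\in\theta^\perp$, and $s_1=x_\theta$, $s_2=y_\theta$. A direct determinant computation yields $dx\,dy=|y-x|^{d-1}\,d\theta\,dw\,ds_1\,ds_2$ (on $\{s_1<s_2\}$). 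Combined with the slicing formula \eqref{eq:slicing}, this converts the surface double integral $d\hausd^{d-1}(x)\,d\hausd^{d-1}(y)$ to $d\theta\,dw\,\sum_{s_1,s_2\in\partial^* E_w}(\cdots)$ weighted by $(|\langle\nu_E(x),\theta\rangle|\,|\langle\nu_E(y),\theta\rangle|)^{-1}$, reducing the whole estimate to a family of 1D chord inequalities on slices.

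The core step is the 1D alternation estimate on each slice. Boundary points $\{s_i\}_{i\in\Z}\subset\partial^* E_w$ have alternating signs $\sigma_i:=\sgn\nu^\theta(w+s_i\theta)$, since traversing $(s_i,s_{i+1})$ moves between $E$ and $E^c$, forcing the outward normal's $\theta$-component to flip. Thus for consecutive pairs, $(\nu^\theta_i-\nu^\theta_{i+1})^2=(|\nu^\theta_i|+|\nu^\theta_{i+1}|)^2\geq 2|\nu^\theta_i|\,|\nu^\theta_{i+1}|$, which cancels the two Jacobian denominators, and the spacing $|s_i-s_{i+1}|$ equals $\r_\theta(w+s_i\theta)$, exactly matching the integrand of $\overline\Fcal_{0,p,d}$. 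For non-consecutive pairs with $|i-j|\geq 2$, the alternating signs force a telescoping $|\nu^\theta_i-\nu^\theta_j|\lesssim\sum_{k=i}^{j-1}(|\nu^\theta_k|+|\nu^\theta_{k+1}|)$ (odd gaps) or $|\nu^\theta_i-\nu^\theta_j|=||\nu^\theta_i|-|\nu^\theta_j||$ (even gaps), and the factor $|s_i-s_j|^{d-1-(p-2)}=|s_i-s_j|^{-(p-d-1)}$ from the Jacobian times $|x-y|^{-(p-2)}$ makes the tail summable — the hypothesis $p\geq 2d$ is precisely what ensures this. Integrating against $dw\,d\theta$ and invoking the integral-geometric form \eqref{eq:f0intgeom} of $\overline\Fcal_{0,p,d}$ then gives the conclusion.

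The main obstacle is controlling the $\theta$-perpendicular component $\|\nu_E(x)-\nu_E(y)\|^2-(\nu^\theta(x)-\nu^\theta(y))^2$ against the angular singularities $|\langle\nu_E(y),\theta\rangle|^{-1}$ left by the slicing: this requires a careful averaging in a cone around the tangent plane at $y$, absorbing the singularity via the complementary factor $|\langle\nu_E(x),\theta\rangle|$ together with the second-order vanishing of the perpendicular discrepancy in that regime. A secondary technical point is making the 1D chord estimate uniform in the slice; here Corollary~\ref{cor:fboundsp} bounds the total number of slice-boundary points in $B_r(z)$ by a constant depending on $M$, and the smallness $r<R_0$ places each slice in a regime where Proposition~\ref{prop:1dbound} gives a definite minimal gap, producing the constant $C(M)$ in \eqref{eq:brezis}.
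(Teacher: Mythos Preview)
Your integral-geometric change of variables is correct in form, but the ``cancellation'' step goes the wrong way. After the chord-direction change of variables the left-hand side of \eqref{eq:brezis} becomes (up to constants)
\[
\int_{\S^{d-1}}\int_{\theta^\perp}\sum_{i\neq j}\frac{\|\nu_E(x_i)-\nu_E(x_j)\|^2}{|\nu^\theta_i|\,|\nu^\theta_j|}\,|s_i-s_j|^{-(p-d-1)}\,\dx_\theta^\perp\,\d\theta,
\]
and you need to bound this \emph{from above} by $C(M)\overline\Fcal_{0,p,d}(E,B_r(z))$. For a consecutive pair the alternation gives $(\nu^\theta_i-\nu^\theta_{i+1})^2=(|\nu^\theta_i|+|\nu^\theta_{i+1}|)^2\geq 2|\nu^\theta_i|\,|\nu^\theta_{i+1}|$, which is a \emph{lower} bound on the numerator; the ratio
\[
\frac{(|\nu^\theta_i|+|\nu^\theta_{i+1}|)^2}{|\nu^\theta_i|\,|\nu^\theta_{i+1}|}=\frac{|\nu^\theta_i|}{|\nu^\theta_{i+1}|}+2+\frac{|\nu^\theta_{i+1}|}{|\nu^\theta_i|}
\]
is unbounded whenever the chord $\theta$ is nearly tangent to $\partial^*E$ at one endpoint. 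The perpendicular component $\|\nu_E^{\perp,\theta}(x_i)-\nu_E^{\perp,\theta}(x_j)\|^2/(|\nu^\theta_i||\nu^\theta_j|)$ is even worse, and ``averaging in a cone'' cannot help because $\theta$ is \emph{determined} by the pair $(x,y)$ --- there is no free angular variable left in the double integral over which to average away a singularity at a fixed pair. (Your first paragraph's identity introduces a genuinely different auxiliary direction $\theta'$, which does not interact with the chord direction $\theta$ in any useful way.)

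The paper's proof sidesteps the Jacobian singularity entirely. The key is Lemma~\ref{lemma:epointwisenu}, which shows the pointwise bound $e(x)+e(y)\gtrsim\|\nu_E(x)-\nu_E(y)\|^2/\|x-y\|^{p-d-1}$, where $e(x)=\int_{\S^{d-1}}|\Scal{\nu_E(x)}{\theta}|\r_\theta(x)^{-(p-d-1)}\,\d\theta$ already contains the $\theta$-average and hence carries no tangential blow-up. One then decomposes pairs dyadically by distance, uses the refinement $\|\nu_E(x)-\nu_E(y)\|\lesssim|\Omega_{k}(x)|+|\Omega_{k}(y)|$ from Lemma~\ref{lemma:est_omega_ex}, and closes the estimate with the uniform perimeter upper bound $\per(E,B_{2^{-k}}(x))\lesssim C_0(M)2^{-k(d-1)}$ of Lemma~\ref{lemma:upper_bound_perimeter}. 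The constant $C(M)$ enters through this perimeter bound, not through any gap estimate on slices.
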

		
		\item [(ii)] We prove the following regularity result.
		\begin{theorem}\label{thm:regularity}
			Let $d\geq 2$, $p>2d$, $\ell>0$, $M>0$. There exists $\bar R>0$ such that the following holds. Let $\Omega\subset\R^d$ open and bounded and  let $E\subset \R^d$ of locally finite perimeter such that $\overline\Fcal_{0,p,d}(E,\Omega)\leq M$. Then, for all $0<r<\bar R$, on $\Omega_{r }=\{z\in\Omega:\,\mathrm{dist}(z,\partial \Omega)>r\}$ the set $E$ enjoys the following regularity properties:
			\begin{enumerate}
				\item\label{item1_thm:regularity} $\partial^*E\cap \Omega_{r}=\partial E\cap\Omega_{r}$;
				\item\label{item2_thm:regularity} For every $x\in\partial E\cap \Omega_{r}$ the set $\partial E\cap B_r(x)$ is given by the graph of an $\ell$-Lipschitz function defined on a connected open subset of a  $(d-1)$-dimensional affine subspace of $\R^d$.
			\end{enumerate}
		\end{theorem}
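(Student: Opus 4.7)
The strategy is to combine the Brezis-type nonlocal control of the normal from Proposition~\ref{prop:brezis} with a fractional Sobolev embedding on the reduced boundary, and then fall back on the classical excess criterion \eqref{eq:e1}--\eqref{eq:e2}. Throughout, fix $z\in \Omega_r$ and work inside $B_r(z)$.

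\emph{Step 1: perimeter density bounds.} I would first establish that for every $x\in \partial E\cap \Omega_r$ and every $0<s<r$, one has Ahlfors $(d-1)$-regularity
\begin{equation*}
c_1 s^{d-1}\leq \per(E, B_s(x))\leq c_2 s^{d-1},
\end{equation*}
with constants depending only on $d,p,M$. The upper bound follows from the integral geometric formula \eqref{eq:f0intgeom} applied slicewise together with Corollary~\ref{cor:fboundsp}(ii) (rewritten for $\overline\Fcal_{0,p,d}$). The lower bound is the standard relative isoperimetric argument: a point of $\partial E$ has positive density on both sides, and the relative isoperimetric inequality forces $\per(E,B_s(x))\gtrsim s^{d-1}$.

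\emph{Step 2: fractional Sobolev regularity of $\nu_E$.} By Proposition~\ref{prop:brezis}, on $B_r(z)$ the Gagliardo-type seminorm
\begin{equation*}
[\nu_E]^{2}_{s,2}:=\int_{\partial^*E\cap B_r(z)}\int_{\partial^*E\cap B_r(z)}\frac{\|\nu_E(x)-\nu_E(y)\|^2}{\|x-y\|^{p-2}}\d\hd(x)\d\hd(y)
\end{equation*}
is bounded by $C(M)\overline\Fcal_{0,p,d}(E,B_r(z))$. Reading $(d-1)+2s=p-2$ yields $s=(p-d-1)/2$. Since $p>2d$ we have $s>(d-1)/2$, which, combined with the Ahlfors regularity of $\partial^*E\cap B_r(z)$ from Step~1, places us above the critical Sobolev exponent on a $(d-1)$-dimensional set. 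The Morrey-type embedding for Gagliardo spaces on Ahlfors regular sets then gives a Hölder-continuous representative of $\nu_E$ with exponent $\alpha = s-(d-1)/2>0$ and a quantitative modulus depending on $M$ only.

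\emph{Step 3: $\partial E=\partial^*E$ via excess decay.} The Hölder continuity of $\nu_E$ on $\partial^*E\cap B_r(z)$ produces a Campanato-type decay of the spherical excess: for every $x\in\partial^*E\cap\Omega_r$ and $0<s<r$,
\begin{equation*}
Exc(E,x,s)\lesssim C(M)\,s^{2\alpha}.
\end{equation*}
Together with the lower density bound of Step~1, the sufficient condition \eqref{eq:e1}--\eqref{eq:e2} recalled after \eqref{eq:exc} applies, so every topological boundary point in $\Omega_r$ is a reduced boundary point. This gives item~\ref{item1_thm:regularity}.

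\emph{Step 4: Lipschitz graph.} Given $\ell>0$, choose $\bar R$ so small that the modulus of continuity of $\nu_E$ from Step~2 yields $\|\nu_E(y)-\nu_E(x)\|\leq \ell/\sqrt{1+\ell^2}$ whenever $x,y\in\partial E\cap B_{\bar R}(z)$. Fix $x\in \partial E\cap \Omega_r$ and let $\nu:=\nu_E(x)$. By Step~3 the excess is arbitrarily small at scale $r<\bar R$, so the De Giorgi Lipschitz approximation theorem (combined with the uniform almost-tangency $\langle\nu_E(y),\nu\rangle\geq 1/\sqrt{1+\ell^2}$) yields that $\partial E\cap B_r(x)$ is the graph of an $\ell$-Lipschitz function over an open subset of $x+\nu^{\perp}$. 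Connectedness of the domain is automatic since any slice of $B_r(x)$ parallel to $\nu$ meets $\partial E$ in a single point (because the tangent cone everywhere makes an angle less than $\pi/2$ with $\nu$).

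\textbf{Main obstacle.} The delicate step is Step~2: transferring the isotropic bound of Proposition~\ref{prop:brezis} into a genuine fractional Sobolev embedding on the a priori rough set $\partial^*E$. This requires the Ahlfors regularity of Step~1 to hold with \emph{uniform} constants across all scales and all balls $B_s(x)\subset B_r(z)$, so that the embedding constant does not degenerate as one zooms in. Once this is in place — which ultimately forces $\bar R$ to be chosen depending on $d,p,M,\ell$ — the remaining steps follow by well-established arguments from geometric measure theory and De Giorgi's $\varepsilon$-regularity machinery.
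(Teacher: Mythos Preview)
Your Step~1 lower bound is the real gap. A point $x\in\partial E$ (with the representative \eqref{eq:topbdry}) only satisfies $0<|E\cap B_s(x)|<|B_s|$ for every $s$; nothing forces a \emph{uniform} lower density, so the relative isoperimetric inequality yields no uniform bound in $s$ or $x$. Ruling out cusps is precisely where the energy enters: this is Lemma~\ref{lemma:per_low_bound} in the paper, whose proof is a nontrivial iterative bootstrap (if $|E\cap B_r(x)|=c_1\delta^d r^d$ with $c_1,\delta$ small, the bound $\overline\Fcal_{0,p,d}(E,B_r(x))\ll1$ forces the density $\alpha(r)$ to shrink geometrically along a sequence of radii that stays bounded away from~$0$, contradicting $x\in\partial E$). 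Without this lemma your Step~2 also collapses: the Campanato/Morrey telescoping on $\partial^*E$ requires the \emph{lower} Ahlfors bound $\hd(\partial^*E\cap B_s(x))\gtrsim s^{d-1}$ both to control $\mu(B)^{-2}$ in the oscillation estimate and to get doubling; the upper bound from Lemma~\ref{lemma:upper_bound_perimeter} alone does not suffice. Your ``Main obstacle'' paragraph correctly flags that uniform Ahlfors constants are the crux, but then treats them as automatic once $\bar R$ is small --- they are not.

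If you import Lemma~\ref{lemma:per_low_bound}, your route via Proposition~\ref{prop:brezis} plus a Campanato argument on the now Ahlfors-regular set $\partial^*E$ becomes a legitimate alternative. The paper proceeds differently: it proves the power-law excess decay \eqref{eq:fexc} directly (Lemma~\ref{lemma:excf}) from the integral-geometric slicing, without first establishing H\"older continuity of $\nu_E$, and then feeds Lemmas~\ref{lemma:upper_bound_perimeter}, \ref{lemma:per_low_bound}, \ref{lemma:excf} into a self-contained De~Giorgi-type argument in the Appendix. Your approach would trade the hands-on slicing analysis of Lemma~\ref{lemma:excf} for a fractional-embedding step; both, however, rest on Lemma~\ref{lemma:per_low_bound} as the indispensable preliminary input that you have not supplied.
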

		
		\item [(iii)] We show that a uniform bound (with respect to $r\ll1$ and $x$) such as
		\begin{equation}\label{eq:brezisbis}
			\sup_{x\in\Omega_r, 0<r\leq\bar R}\int_{\partial^*E\cap B_r(z)}\int_{\partial ^*E\cap B_r(z)}\frac{\|\nu_E(x)-\nu_E(y)\|^2}{\|x-y\|^{p-2}}\leq C
		\end{equation}
		(as in~\eqref{eq:brezis}) together with regularity conditions on $\partial E\cap \Omega$ as properties~\ref{item1_thm:regularity} and ~\ref{item2_thm:regularity}  in Theorem~\ref{thm:regularity} imply when $p\geq d+3$ that $\partial E\cap \Omega$ is given by the disjoint union of the intersections of finitely many affine  hyperplanes with $\Omega$  (the fact that they are parallel follows if the domain is a cube $[0,L)^d$ and we impose $[0,L)^d$-periodicity).
		This result is the content of Lemma~\ref{lemma:d+3}.
		The proof of such Lemma uses a result of  Bourgain, Brezis  and Mironescu~\cite{bbm,brezis} implying that if $\partial E\cap\Omega_r$ can be locally uniformly parametrized by Lipschitz maps defined on $\R^{d-1}$ and~\eqref{eq:brezisbis} holds, then $\nu_E$ is uniformly locally constant on $\partial E\cap\Omega_r$ and then $\partial E\cap\Omega_r$ is flat.
		
		\item[(iv)] 
		
		Given Lemma~\ref{lemma:d+3}, in order to show that the boundary of sets of  $\overline\Fcal_{0,p,d}$-bounded energy is flat we would like then to combine Proposition~\ref{prop:brezis}, Theorem~\ref{thm:regularity} and Lemma~\ref{lemma:d+3}.
		However, notice that while for the validity of Lemma~\ref{lemma:d+3} it is sufficient (and indeed necessary, as a simple computation with a $C^2$ boundary would show) that $p\geq d+3$, for the validity of Propositions~\ref{prop:brezis} and~\ref{thm:regularity} we need to assume that $p>2d$,  which is a stronger condition as soon as $d\geq3$.
		
		Thus, the proof of Theorem~\ref{thm:rigidity} will be given first for $d=2$ (where $2d<d+3$) and then extended via a slicing argument with affine two-dimensional planes to the case of general dimension (see Proposition~\ref{prop:nusliced} and the proof of Theorem~\ref{thm:rigidity}).
		More precisely, we will apply a slicing formula of the functional $\overline\Fcal_{0,p,d}$ with respect to  the Grassmanian of $2$-planes in $\R^d$ (see Proposition~\ref{prop:slicing}), thus showing that \ae two-dimensional slice of a set of finite energy $E$ has flat boundary inside $\Omega$, which in turn will imply that $\partial E\cap \Omega$ is flat in $\R^d$.

	\end{itemize}
	In the next Sections~\ref{subs:5.1}--\ref{subs:5.4}, we proceed to the proof of (i)--(iv).

	\subsection{(i) A nonlocal curvature bound}\label{subs:5.1}
	
	For simplicity of notation, define
	\begin{equation}
		e(x):=\int_{\S^{d-1}}\frac{|\langle \nu_E(x),\theta\rangle|}{\r_\theta(x)^{p-d-1}}\d\theta, \qquad x\in\partial^*E.
	\end{equation}
	In particular, \[
	\overline\Fcal_{0,p,d}(E,\Omega)=\int_{\partial^*E\cap\Omega}e(x)\d\mathcal H^{d-1}(x).
	\]

	\begin{lemma} \label{lemma:epointwisenu}
		Let $E\subset\R^d$ be a set of locally finite perimeter.
		The following holds
		\begin{align}
			e(x)+e(y)\gtrsim\frac{\|\nu_E(x)-\nu_E(y)\|^2}{\|x-y\|^{p-d-1}}, \quad\forall\,x,y\in\partial^*E. \label{eq:exy}
		\end{align}
	\end{lemma}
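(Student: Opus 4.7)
We may assume $\nu_1 := \nu_E(x) \neq \nu_E(y) =: \nu_2$ (otherwise the right-hand side vanishes); set $r := \|x-y\|$ and $w := (y-x)/r$. The key one-dimensional observation is that the affine line $x + \R w$ coincides with the slice of $E$ in direction $w$ through $x$, and by the compatibility of slicing with the reduced boundary both $x,y \in \partial^* E_{x_w^\perp}$. Since $y$ is a boundary point of this slice at signed distance $+r$ from $x$, the successor of $x$ in direction $+w$ is no further than $y$, hence $\r_w(x) \leq r$; symmetrically $\r_{-w}(y) \leq r$.

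To promote this single-direction bound to an integral estimate, I would first rewrite $e(y)$ via the change of variables $\theta \mapsto -\theta$ as $\int_{\S^{d-1}} |\langle \nu_2, \theta\rangle|/\r_{-\theta}(y)^{p-d-1}\,\d\theta$, then combine both integrands through the elementary inequality $a/s^q + b/t^q \geq (a+b)/\max(s,t)^q$ for $q>0$, and restrict the resulting integration to a conical neighborhood $C_\alpha(w) \subset \S^{d-1}$ of $w$:
\[
e(x) + e(y) \geq \int_{C_\alpha(w)} \frac{|\langle \nu_1, \theta\rangle| + |\langle \nu_2, \theta\rangle|}{\max(\r_\theta(x), \r_{-\theta}(y))^{p-d-1}}\,\d\theta.
\]

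The hard part is a cone-stability estimate: propagating the bounds $\r_w(x),\r_{-w}(y) \leq r$ from the single direction $\theta=w$ to a subset of $C_\alpha(w)$ of measure $\gtrsim \alpha^{d-1}$. I would attempt this via the blow-up property~\eqref{eq:stimeblowup} at $y$ (and analogously at $x$): for $\theta$ sufficiently close to $w$ the ray from $x$ in direction $\theta$ passes near $y$ with small transverse displacement, and the half-space tangent approximation of $E$ at $y$ forces this ray to cross $\partial^* E$ at a distance from $x$ comparable to $r$, and symmetrically for the reversed ray issuing from $y$.

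Granting such a cone-stability bound, the triangle-type inequality $|\langle \nu_1, \theta\rangle| + |\langle \nu_2, \theta\rangle| \geq |\langle \nu_1 - \nu_2, \theta\rangle|$ combined with the direct computation $\int_{C_\alpha(w)} |\langle \nu_1 - \nu_2, \theta\rangle|\,\d\theta \gtrsim \|\nu_1 - \nu_2\|$ for a fixed opening $\alpha$ depending only on $d$ yields $e(x) + e(y) \gtrsim \|\nu_1 - \nu_2\|/r^{p-d-1}$. The claimed squared version of the estimate then follows immediately, since $\|\nu_1 - \nu_2\| \leq 2$ forces $\|\nu_1-\nu_2\|^2 \leq 2\|\nu_1-\nu_2\|$.
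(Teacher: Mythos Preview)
The plan has a genuine gap at the step you yourself flag as ``the hard part''. Your cone-stability claim asks for a subset of $C_\alpha(w)$ of measure $\gtrsim \alpha^{d-1}$, with $\alpha$ depending only on $d$, on which $\max(\r_\theta(x),\r_{-\theta}(y))\lesssim r$. The blow-up property~\eqref{eq:stimeblowup} you invoke is purely asymptotic: it guarantees that the ray from $x$ in direction $\theta$ crosses $\partial^*E$ near $y$ only when the transverse displacement $\|x+r\theta-y\|$ is small compared to a scale at which the half-space approximation at $y$ is accurate, and that scale depends on $E$, not on $d$. Concretely, even the tangent-plane heuristic gives a crossing at parameter $t=r\langle w,\nu_2\rangle/\langle\theta,\nu_2\rangle$, so the cone of $\theta$ with $t\lesssim r$ has opening governed by $|\langle w,\nu_2\rangle|$, which can be arbitrarily small (take $w$ nearly tangent to $\partial^*E$ at $y$). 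With $\alpha$ forced to shrink, your final integral $\int_{C_\alpha(w)}|\langle\nu_1-\nu_2,\theta\rangle|\,\d\theta$ no longer dominates $\|\nu_1-\nu_2\|$, and the argument collapses.

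The paper avoids this by not fixing a cone at all. It works with the \emph{measure} of the full set $\Omega(x,\|y-x\|)=\{\theta:\r_\theta(x)<2\|x-y\|\}$ and first shows $e(x)\gtrsim |\Omega(x,\|y-x\|)|^2/\|x-y\|^{p-d-1}$ (this is where the square appears, not from $\|\nu_1-\nu_2\|\le 2$). The core step is then the lower bound $|\Omega(x,\|y-x\|)|+|\Omega(y,\|y-x\|)|\gtrsim\|\nu_1-\nu_2\|$, proved by a volume contradiction: if both sets were small, the rays from $x$ (resp.\ $y$) would fill almost the entire inward half-ball with $E$ and the outward half-ball with $E^c$, and comparing these half-balls at the common scale $2\|x-y\|$ forces the overlap of $C^+(x)\cap C^-(y)$ to be smaller than the geometric lower bound $\gtrsim\|\nu_1-\nu_2\|\,\|x-y\|^d$. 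This argument uses~\eqref{eq:stimeblowup} only to say that rays \emph{outside} $\Omega$ lie entirely in $E$ or $E^c$ up to distance $2\|x-y\|$, which is a direct consequence of the definition of $\r_\theta$ and needs no quantitative blow-up.
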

	
	\begin{proof}
		For every $x=x_\theta^\perp+x_\theta\theta\in\partial^* E$, $y\in\partial^* E$, define the sets
		\begin{align*}
			\Omega(x,\|y-x\|)=\{\theta\in\S^{d-1}:\,\r_\theta(x)<2\|x-y\|\},\quad 	\Omega(y, \|y-x\|)=\{\theta\in\S^{d-1}:\,\r_\theta(y)<2\|x-y\|\}.
		\end{align*}
		\WithoutLoss, we can assume that $\max\bigl\{|\Omega(x,\|y-x\|)|,|\Omega(y,\|y-x\|)|\bigr\}\leq d\omega_d/4$.
		Indeed, if $|\Omega(x,\|y-x\|)|>d\omega_d/4$ one has that
		\begin{align*}
			e(x)&\gtrsim\int_{\Omega(x,\|y-x\|)\cap\{\theta:\,|\langle\nu_E(x),\theta\rangle|\geq1/10\}}\frac{|\langle\nu_E(x),\theta\rangle|}{\|x-y\|^{p-d-1}}\d\theta\notag\\
			&\gtrsim\frac{\|\nu_E(x)-\nu\|^2}{\|x-y\|^{p-d-1}},\quad\forall\,\nu\in\S^{d-1},
		\end{align*}
		where we used the fact that $\|\nu_E(x)-\nu\|\leq\sqrt{2}$ and $\Omega(x,\|y-x\|)\cap\{\theta:\,|\langle\nu_E(x),\theta\rangle|\geq1/10\}\gtrsim 1$ whenever $|\Omega(x,\|y-x\|)|>d\omega_d/4$.
		In particular, for points $x,y$ such that $\max\bigl\{|\Omega(x,\|y-x\|)|, |\Omega(y,\|y-x\|)|\bigr\}>d\omega_d/4$,~\eqref{eq:exy} is proved.

		Assuming $|\Omega(x,\|y-x\|)|\leq d\omega_d/4$, one has that
		\begin{align}
			e(x)&\gtrsim \int_{\Omega(x,\|y-x\|)\cap \{\theta:\,|\langle\nu_E(x),\theta\rangle|\geq|\Omega(x,\|y-x\|)|/(2d\omega_d)\} }\frac{|\langle\nu_E(x),\theta\rangle|}{\|x-y\|^{p-d-1}}\d\theta\notag\\
			&\gtrsim \frac{|\Omega(x,\|y-x\|)|^2}{\|x-y\|^{p-d-1}}.\label{eq:eomega}
		\end{align}
		The rest of the proof is thus devoted to show that
		\begin{align}\label{eq:bgtrsimnu}
			|\Omega(x,\|y-x\|)|+|\Omega(y,\|y-x\|)|\gtrsim\|\nu_E(x)-\nu_E(y)\|.
		\end{align}
		To this aim, define the cones
		\begin{align*}
			C^+(x,2\|y-x\|)&:=\{x+s\theta:\,\langle\nu_E(x),\theta\rangle>0,\,s\in(0,2\|y-x\|)\},\\
			C^-(x,2\|y-x\|)&:=\{x+s\theta:\,\langle\nu_E(x),\theta\rangle<0,\,s\in(0,2\|y-x\|)\},\\
			C^+(y,2\|y-x\|)&:=\{y+s\theta:\,\langle\nu_E(y),\theta\rangle>0,\,s\in(0,2\|y-x\|)\},\\
			C^-(y,2\|y-x\|)&:=\{y+s\theta:\,\langle\nu_E(y),\theta\rangle<0,\,s\in(0,2\|y-x\|)\}.
		\end{align*}
		Notice that, by simple geometric considerations (see Figure~\ref{fig:4}),
		{\begin{equation}
				|C^+(x,\|y-x\|)\cap C^-(y,\|y-x\|)|+|C^-(x,\|y-x\|)\cap C^+(y,\|y-x\|)| \geq \bar C\frac12d\omega_d\|\nu_E(x)-\nu_E(y)\|\|x-y\|^d,\label{eq:cmax}
		\end{equation}}
		for some dimensional constant $\bar C>0$.
		\begin{figure}
			\centering
			\begin{tikzpicture}[scale=3]
				\tikzset{arrow/.style={-latex}}
				\clip (-1.88,-1) rectangle (1.88,1);
				\begin{scope}[shift={(-1.0, 0.19323323583816937)},rotate=-22.466734847371534,opacity=0.7]
					\clip (-2,-2) rectangle (4,2);
					\draw[fill=black!50,opacity=0.5] (-100,-100) -- (-100,0) -- (100,0) -- (100,-100) --cycle;
					\draw[line width=1pt] (-100,-100) -- (-100,0) -- (100,0) -- (100,-100) --cycle;
					\draw[line width=1pt,arrow] (0,0) -- (0,.5);
					\draw[fill=black] (0,0) circle[radius=.4pt];
					\draw (0.1,0.1) node {$x$};
					\draw (0.1,0.65) node {$\nu_{E}(x)$};
				\end{scope}
				\begin{scope}[shift={(1, 0.19323323583816937)},rotate=22.466734847371534,opacity=0.7]
					\clip (-4,-2) rectangle (3,1);
					\draw[fill=black!50,opacity=0.5] (-100,-100) -- (-100,0) -- (100,0) -- (100,-100) --cycle;
					\draw[line width=1pt] (-100,-100) -- (-100,0) -- (100,0) -- (100,-100) --cycle;
					\draw[fill=black] (0,0) circle[radius=.4pt];
					\draw (-0.1,0.1) node {$y$};
					\draw[line width=1pt,arrow] (0,0) -- (0,.5);
					\draw (0.1,0.6) node {$\nu_{E}(y)$};
				\end{scope}
				\draw[fill=black] (0,-0.223) circle[radius=.4pt];
				\draw[line width=1pt] (-2,0.79908) -- (-1.98662,0.78841) -- (-1.97324,0.77783) -- (-1.95987,0.76735) -- (-1.94649,0.75697) -- (-1.93311,0.74668) -- (-1.91973,0.7365) -- (-1.90635,0.72641) -- (-1.89298,0.71642) -- (-1.8796,0.70651) -- (-1.86622,0.69669) -- (-1.85284,0.68694) -- (-1.83946,0.67727) -- (-1.82609,0.66767) -- (-1.81271,0.65812) -- (-1.79933,0.64863) -- (-1.78595,0.63919) -- (-1.77258,0.62979) -- (-1.7592,0.62043) -- (-1.74582,0.6111) -- (-1.73244,0.6018) -- (-1.71906,0.59252) -- (-1.70569,0.58327) -- (-1.69231,0.57405) -- (-1.67893,0.56485) -- (-1.66555,0.55568) -- (-1.65217,0.54655) -- (-1.6388,0.53746) -- (-1.62542,0.52841) -- (-1.61204,0.51941) -- (-1.59866,0.51048) -- (-1.58528,0.50162) -- (-1.57191,0.49284) -- (-1.55853,0.48416) -- (-1.54515,0.47558) -- (-1.53177,0.46711) -- (-1.51839,0.45877) -- (-1.50502,0.45056) -- (-1.49164,0.44248) -- (-1.47826,0.43455) -- (-1.46488,0.42676) -- (-1.45151,0.41912) -- (-1.43813,0.41163) -- (-1.42475,0.40429) -- (-1.41137,0.39708) -- (-1.39799,0.39001) -- (-1.38462,0.38305) -- (-1.37124,0.37621) -- (-1.35786,0.36946) -- (-1.34448,0.3628) -- (-1.3311,0.3562) -- (-1.31773,0.34964) -- (-1.30435,0.34312) -- (-1.29097,0.33661) -- (-1.27759,0.3301) -- (-1.26421,0.32357) -- (-1.25084,0.31702) -- (-1.23746,0.31042) -- (-1.22408,0.30377) -- (-1.2107,0.29707) -- (-1.19732,0.29031) -- (-1.18395,0.28351) -- (-1.17057,0.27665) -- (-1.15719,0.26976) -- (-1.14381,0.26285) -- (-1.13043,0.25593) -- (-1.11706,0.24903) -- (-1.10368,0.24217) -- (-1.0903,0.23537) -- (-1.07692,0.22866) -- (-1.06355,0.22207) -- (-1.05017,0.21563) -- (-1.03679,0.20936) -- (-1.02341,0.20329) -- (-1.01003,0.19745) -- (-0.99666,0.19186) -- (-0.98328,0.18652) -- (-0.9699,0.18146) -- (-0.95652,0.17668) -- (-0.94314,0.17218) -- (-0.92977,0.16795) -- (-0.91639,0.16398) -- (-0.90301,0.16025) -- (-0.88963,0.15675) -- (-0.87625,0.15343) -- (-0.86288,0.15026) -- (-0.8495,0.14721) -- (-0.83612,0.14423) -- (-0.82274,0.14127) -- (-0.80936,0.13829) -- (-0.79599,0.13524) -- (-0.78261,0.13208) -- (-0.76923,0.12877) -- (-0.75585,0.12526) -- (-0.74247,0.12153) -- (-0.7291,0.11755) -- (-0.71572,0.11331) -- (-0.70234,0.10879) -- (-0.68896,0.10401) -- (-0.67559,0.09897) -- (-0.66221,0.0937) -- (-0.64883,0.08823) -- (-0.63545,0.0826) -- (-0.62207,0.07687) -- (-0.6087,0.07109) -- (-0.59532,0.06534) -- (-0.58194,0.05969) -- (-0.56856,0.05421) -- (-0.55518,0.04898) -- (-0.54181,0.04407) -- (-0.52843,0.03957) -- (-0.51505,0.03553) -- (-0.50167,0.03201) -- (-0.48829,0.02906) -- (-0.47492,0.02672) -- (-0.46154,0.02501) -- (-0.44816,0.02394) -- (-0.43478,0.0235) -- (-0.4214,0.02366) -- (-0.40803,0.02438) -- (-0.39465,0.0256) -- (-0.38127,0.02724) -- (-0.36789,0.02921) -- (-0.35452,0.0314) -- (-0.34114,0.03371) -- (-0.32776,0.03601) -- (-0.31438,0.03817) -- (-0.301,0.04007) -- (-0.28763,0.04159) -- (-0.27425,0.0426) -- (-0.26087,0.04301) -- (-0.24749,0.04271) -- (-0.23411,0.04165) -- (-0.22074,0.03975) -- (-0.20736,0.037) -- (-0.19398,0.03338) -- (-0.1806,0.02893) -- (-0.16722,0.0237) -- (-0.15385,0.01777) -- (-0.14047,0.01124) -- (-0.12709,0.00425) -- (-0.11371,-0.00304) -- (-0.10033,-0.01046) -- (-0.08696,-0.01782) -- (-0.07358,-0.0249) -- (-0.0602,-0.03151) -- (-0.04682,-0.03744) -- (-0.03344,-0.04247) -- (-0.02007,-0.04643) -- (-0.00669,-0.04915) -- (0.00669,-0.04915) -- (0.02007,-0.04643) -- (0.03344,-0.04247) -- (0.04682,-0.03744) -- (0.0602,-0.03151) -- (0.07358,-0.0249) -- (0.08696,-0.01782) -- (0.10033,-0.01046) -- (0.11371,-0.00304) -- (0.12709,0.00425) -- (0.14047,0.01124) -- (0.15385,0.01777) -- (0.16722,0.0237) -- (0.1806,0.02893) -- (0.19398,0.03338) -- (0.20736,0.037) -- (0.22074,0.03975) -- (0.23411,0.04165) -- (0.24749,0.04271) -- (0.26087,0.04301) -- (0.27425,0.0426) -- (0.28763,0.04159) -- (0.301,0.04007) -- (0.31438,0.03817) -- (0.32776,0.03601) -- (0.34114,0.03371) -- (0.35452,0.0314) -- (0.36789,0.02921) -- (0.38127,0.02724) -- (0.39465,0.0256) -- (0.40803,0.02438) -- (0.4214,0.02366) -- (0.43478,0.0235) -- (0.44816,0.02394) -- (0.46154,0.02501) -- (0.47492,0.02672) -- (0.48829,0.02906) -- (0.50167,0.03201) -- (0.51505,0.03553) -- (0.52843,0.03957) -- (0.54181,0.04407) -- (0.55518,0.04898) -- (0.56856,0.05421) -- (0.58194,0.05969) -- (0.59532,0.06534) -- (0.6087,0.07109) -- (0.62207,0.07687) -- (0.63545,0.0826) -- (0.64883,0.08823) -- (0.66221,0.0937) -- (0.67559,0.09897) -- (0.68896,0.10401) -- (0.70234,0.10879) -- (0.71572,0.11331) -- (0.7291,0.11755) -- (0.74247,0.12153) -- (0.75585,0.12526) -- (0.76923,0.12877) -- (0.78261,0.13208) -- (0.79599,0.13524) -- (0.80936,0.13829) -- (0.82274,0.14127) -- (0.83612,0.14423) -- (0.8495,0.14721) -- (0.86288,0.15026) -- (0.87625,0.15343) -- (0.88963,0.15675) -- (0.90301,0.16025) -- (0.91639,0.16398) -- (0.92977,0.16795) -- (0.94314,0.17218) -- (0.95652,0.17668) -- (0.9699,0.18146) -- (0.98328,0.18652) -- (0.99666,0.19186) -- (1.01003,0.19745) -- (1.02341,0.20329) -- (1.03679,0.20936) -- (1.05017,0.21563) -- (1.06355,0.22207) -- (1.07692,0.22866) -- (1.0903,0.23537) -- (1.10368,0.24217) -- (1.11706,0.24903) -- (1.13043,0.25593) -- (1.14381,0.26285) -- (1.15719,0.26976) -- (1.17057,0.27665) -- (1.18395,0.28351) -- (1.19732,0.29031) -- (1.2107,0.29707) -- (1.22408,0.30377) -- (1.23746,0.31042) -- (1.25084,0.31702) -- (1.26421,0.32357) -- (1.27759,0.3301) -- (1.29097,0.33661) -- (1.30435,0.34312) -- (1.31773,0.34964) -- (1.3311,0.3562) -- (1.34448,0.3628) -- (1.35786,0.36946) -- (1.37124,0.37621) -- (1.38462,0.38305) -- (1.39799,0.39001) -- (1.41137,0.39708) -- (1.42475,0.40429) -- (1.43813,0.41163) -- (1.45151,0.41912) -- (1.46488,0.42676) -- (1.47826,0.43455) -- (1.49164,0.44248) -- (1.50502,0.45056) -- (1.51839,0.45877) -- (1.53177,0.46711) -- (1.54515,0.47558) -- (1.55853,0.48416) -- (1.57191,0.49284) -- (1.58528,0.50162) -- (1.59866,0.51048) -- (1.61204,0.51941) -- (1.62542,0.52841) -- (1.6388,0.53746) -- (1.65217,0.54655) -- (1.66555,0.55568) -- (1.67893,0.56485) -- (1.69231,0.57405) -- (1.70569,0.58327) -- (1.71906,0.59252) -- (1.73244,0.6018) -- (1.74582,0.6111) -- (1.7592,0.62043) -- (1.77258,0.62979) -- (1.78595,0.63919) -- (1.79933,0.64863) -- (1.81271,0.65812) -- (1.82609,0.66767) -- (1.83946,0.67727) -- (1.85284,0.68694) -- (1.86622,0.69669) -- (1.8796,0.70651) -- (1.89298,0.71642) -- (1.90635,0.72641) -- (1.91973,0.7365) -- (1.93311,0.74668) -- (1.94649,0.75697) -- (1.95987,0.76735) -- (1.97324,0.77783) -- (1.98662,0.78841) -- (2,0.79908);
			\end{tikzpicture}
			\caption{The light grey regions correspond to the sets $ C^+(x,\|y-x\|)\cap C^-(y,\|y-x\|)$, $C^-(x,\|y-x\|)\cap C^+(y,\|y-x\|)$.}
			\label{fig:4}
		\end{figure}
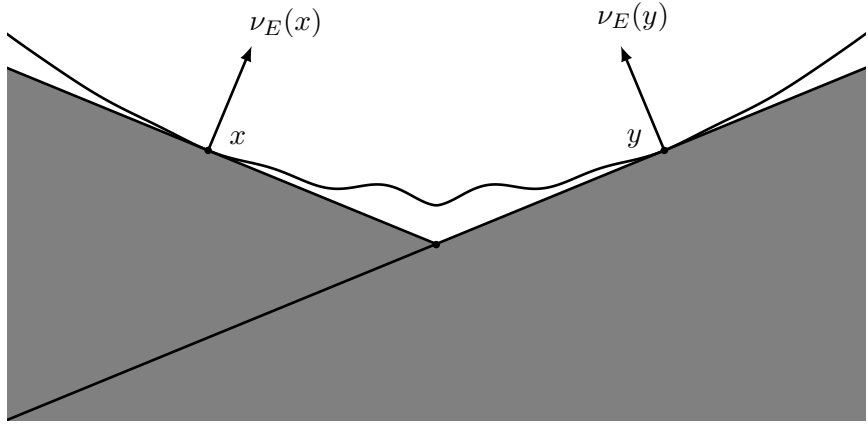
		
		We now claim that~\eqref{eq:bgtrsimnu} holds in the form (see Figure~\ref{fig:5})
		\begin{equation}
			|\Omega(x,\|y-x\|)|+|\Omega(y,\|y-x\|)|\geq \frac{\bar C}{20}d\omega_d\|\nu_E(x)-\nu_E(y)\|.\label{eq:bgtrsimnu2}
		\end{equation}
		Indeed, assume that~\eqref{eq:bgtrsimnu2} does not hold for some $x,y\in\partial^*E$
		and let
		\begin{align*}\delta(x,y)&:=\frac{\bar C}{20}\|\nu_E(x)-\nu_E(y)\|, \\
			A(x, \delta(x,y))&=\{\theta\in\S^{d-1}:\,|\langle\nu_E(x),\theta\rangle|<\delta(x,y)\},\\
			A(y, \delta(x,y))&=\{\theta\in\S^{d-1}:\,|\langle\nu_E(y),\theta\rangle|<\delta(x,y)\}.
		\end{align*}
		
		By~\eqref{eq:stimeblowup} and the fact that 	$|\Omega(x,\|y-x\|)|+|\Omega(y,\|y-x\|)|\leq \delta(x,y)d\omega_d$, we obtain
		\begin{align*}
			\bigl|\{\theta:\,\langle\nu_E(x),\theta\rangle<0 \text{ and }x_\theta+[0,2\|y-x\|]\theta\in E_{x_\theta^\perp}\}\bigr|&\geq\bigl|\{\theta:\,\langle\nu_E(x),\theta\rangle<0\}\setminus (\Omega(x,\|y-x\|)\cup A(x,\delta(x,y)))\bigr|\notag\\
			&\geq\bigl(1/2-2\delta(x,y)\bigr)d\omega_d\\
			\bigl |\{\theta:\,\langle\nu_E(y),\theta\rangle<0 \text{ and }y_\theta+[0,2\|y-x\|]\theta\in E_{y_\theta^\perp}\}\bigr|&\geq\bigl|\{\theta:\,\langle\nu_E(y),\theta\rangle<0\}\setminus (\Omega(y,\|y-x\|)\cup A(y,\delta(x,y)))\bigr|\notag\\
			&\geq\bigl(1/2-2\delta(x,y)\bigr)d\omega_d\\
			\bigl  |\{\theta:\,\langle\nu_E(x),\theta\rangle>0 \text{ and }x_\theta+[0,2\|y-x\|]\theta\in E^c_{x_\theta^\perp}\}\bigr|&\geq\bigl|\{\theta:\,\langle\nu_E(x),\theta\rangle>0\}\setminus (\Omega(x,|y-x|)\cup A(x,\delta(x,y)) )\bigr|\notag\\
			&\geq\bigl(1/2-2\delta(x,y)\bigr)d\omega_d\\
			\bigl  |\{\theta:\,\langle\nu_E(y),\theta\rangle>0 \text{ and }y_\theta+[0,2\|y-x\|]\theta\in E^c_{y_\theta^\perp}\}\bigr|&\geq\bigl|\{\theta:\,\langle\nu_E(y),\theta\rangle>0\}\setminus (\Omega(y,\|y-x\|)\cup A(y,\delta(x,y)))\bigr|\notag\\
			&\geq\bigl(1/2-2\delta(x,y)\bigr)d\omega_d     \end{align*}
		which respectively imply
		\begin{align*}
			|C^-(x,2\|x-y\|)\cap E|&\geq (1/2-2\delta(x,y))\omega_d\|x-y\|^d,\\
			|C^+(x,2\|x-y\|)\cap(\R^d\setminus E)|&\geq (1/2-2\delta(x,y))\omega_d\|x-y\|^d,\\
			|C^-(y,2\|x-y\|)\cap E|&\geq (1/2-2\delta(x,y))\omega_d\|x-y\|^d,\\
			|C^+(y,2\|x-y\|)\cap (\R^d\setminus E)|&\geq (1/2-2\delta(x,y))\omega_d\|x-y\|^d.
		\end{align*}
		By the above, one obtains
		\begin{align*}
			|C^+(x,2\|y-x\|)\cap C^-(y,2\|y-x\|)|&\leq 	|C^+(x,2\|y-x\|)\cap C^-(y,2\|y-x\|)\cap E|\notag\\
			&+	|C^+(x,2\|y-x\|)\cap C^-(y,2\|y-x\|)\cap (\R^d\setminus E)|\notag\\
			&\leq |C^+(x,2\|y-x\|)\cap E|\notag\\
			&+ |C^-(y,2\|y-x\|)\cap (\R^d\setminus E)|\notag\\
			&\leq 4\delta(x,y)\omega_d\|x-y\|^d,
		\end{align*}
		and analogously
		\begin{equation*}
			|C^-(x,2\|y-x\|)\cap C^+(y,2\|y-x\|)|\leq 4\delta(x,y)\omega_d\|x-y\|^d.
			% \label{eq:contr2}
		\end{equation*}
		This, given the definition of $\delta(x,y)$, contradicts~\eqref{eq:cmax}, thus proving~\eqref{eq:bgtrsimnu2}.
		
		\begin{figure}
			\centering
			\begin{tikzpicture}[scale=2]
				\clip (-3,-1.2) rectangle (3,2.2);
				\tikzset{arrow/.style={-latex}}
				\draw[line width=1pt,fill=black!10!white,smooth] (-3,3) -- (-2.92929,2.86025) -- (-2.85859,2.72384) -- (-2.78788,2.59076) -- (-2.71717,2.46101) -- (-2.64646,2.33459) -- (-2.57576,2.21151) -- (-2.50505,2.09176) -- (-2.43434,1.97534) -- (-2.36364,1.86226) -- (-2.29293,1.75251) -- (-2.22222,1.64609) -- (-2.15152,1.54301) -- (-2.08081,1.44325) -- (-2.0101,1.34684) -- (-1.93939,1.25375) -- (-1.86869,1.164) -- (-1.79798,1.07758) -- (-1.72727,0.99449) -- (-1.65657,0.91474) -- (-1.58586,0.83832) -- (-1.51515,0.76523) -- (-1.44444,0.69547) -- (-1.37374,0.62905) -- (-1.30303,0.56596) -- (-1.23232,0.50621) -- (-1.16162,0.44978) -- (-1.09091,0.39669) -- (-1.0202,0.34694) -- (-0.94949,0.30051) -- (-0.87879,0.25742) -- (-0.80808,0.21766) -- (-0.73737,0.18124) -- (-0.66667,0.14815) -- (-0.59596,0.11839) -- (-0.52525,0.09196) -- (-0.45455,0.06887) -- (-0.38384,0.04911) -- (-0.31313,0.03268) -- (-0.24242,0.01959) -- (-0.17172,0.00983) -- (-0.10101,0.0034) -- (-0.0303,0.00031) -- (0.0404,0.00054) -- (0.11111,0.00412) -- (0.18182,0.01102) -- (0.25253,0.02126) -- (0.32323,0.03483) -- (0.39394,0.05173) -- (0.46465,0.07197) -- (0.53535,0.09553) -- (0.60606,0.12244) -- (0.67677,0.15267) -- (0.74747,0.18624) -- (0.81818,0.22314) -- (0.88889,0.26337) -- (0.9596,0.30694) -- (1.0303,0.35384) -- (1.10101,0.40407) -- (1.17172,0.45764) -- (1.24242,0.51454) -- (1.31313,0.57477) -- (1.38384,0.63834) -- (1.45455,0.70523) -- (1.52525,0.77547) -- (1.59596,0.84903) -- (1.66667,0.92593) -- (1.73737,1.00616) -- (1.80808,1.08972) -- (1.87879,1.17661) -- (1.94949,1.26684) -- (2.0202,1.36041) -- (2.09091,1.4573) -- (2.16162,1.55753) -- (2.23232,1.66109) -- (2.30303,1.76798) -- (2.37374,1.87821) -- (2.44444,1.99177) -- (2.51515,2.10866) -- (2.58586,2.22889) -- (2.65657,2.35245) -- (2.72727,2.47934) -- (2.79798,2.60956) -- (2.86869,2.74312) -- (2.93939,2.88001) -- (3.0101,3.02024) -- (3.08081,3.16379) -- (3.15152,3.31068) -- (3.22222,3.46091) -- (3.29293,3.61446) -- (3.36364,3.77135) -- (3.43434,3.93157) -- (3.50505,4.09513) -- (3.57576,4.26201) -- (3.64646,4.43223) -- (3.71717,4.60579) -- (3.78788,4.78268) -- (3.85859,4.96289) -- (3.92929,5.14645) -- (4,5.33333);
				\begin{scope}[shift={(1.0, 0.3333333333333333)},rotate=213.69006752596877]
					\clip (-1,-0.2) rectangle (1,0.5);
					\draw[line width=1pt,pattern=north west hatch,hatch distance=6pt, hatch thickness=.5pt,opacity=0.5] (-2,-2) -- (-2,0) -- (2,0) -- (2,-2);
					\draw[line width=1pt] (-2,-2) -- (-2,0) -- (2,0) -- (2,-2);
				\end{scope}
				\begin{scope}[shift={(1.0, 0.3333333333333333)},rotate=303.69006752596874]
					\draw[line width=1pt,arrow] (0,0) -- (1,0);
				\end{scope}
				\begin{scope}[shift={(-1.0, 0.3333333333333333)},rotate=146.30993247403123]
					\clip (-1.01,-0.2) rectangle (1.01,0.5);
					\draw[line width=1pt,pattern=north east hatch,hatch distance=6pt, hatch thickness=.5pt,opacity=0.5] (-2,-2) -- (-2,0) -- (2,0) -- (2,-2);
					\draw[line width=1pt] (-2,-2) -- (-2,0) -- (2,0) -- (2,-2);
				\end{scope}
				\begin{scope}[shift={(-1.0, 0.3333333333333333)},rotate=236.30993247403123]
					\draw[line width=1pt,arrow] (0,0) -- (1,0);
				\end{scope}
				\draw[color=red,arrow] (1,0.33333) -- (-2.5,2.08333);
				\draw[color=red,arrow] (1,0.33333) -- (-2.11429,1.49007);
				\draw[color=red,arrow] (1,0.33333) -- (-1.72857,0.99599);
				\draw[color=red,arrow] (1,0.33333) -- (-1.34286,0.60109);
				\draw[color=red,arrow] (1,0.33333) -- (-0.95714,0.30537);
				\draw[color=red,arrow] (1,0.33333) -- (-0.57143,0.10884);
				\draw[color=red,arrow] (1,0.33333) -- (-0.18571,0.0115);
				\draw[color=red,arrow] (1,0.33333) -- (0.2,0.01333);
				\draw (0.95,0.53333) node {$x$};
				\draw (-0.95,0.03333) node {$y$};
				\draw (-0.4,1.33333) node[color=red] {$\theta$};
				\draw (1.5,-0.7) node {$\nu_{E}(x)$};
				\draw (-1.5,-0.7) node {$\nu_{E}(y)$};
				\draw[fill,color=blue!70!black] (1,0.33333) circle[radius=0.05];
				\draw[fill,color=blue!70!black] (-1,0.33333) circle[radius=0.05];
			\end{tikzpicture}
			\caption{By the blow-up properties~\eqref{eq:stimeblowup}, a difference between $\nu_E(x)$ and $\nu_E(y)$ controls from below  $\Omega(x,\|y-x\|)$ and $\Omega(y,\|y-x\|)$.}
			\label{fig:5}
		\end{figure}
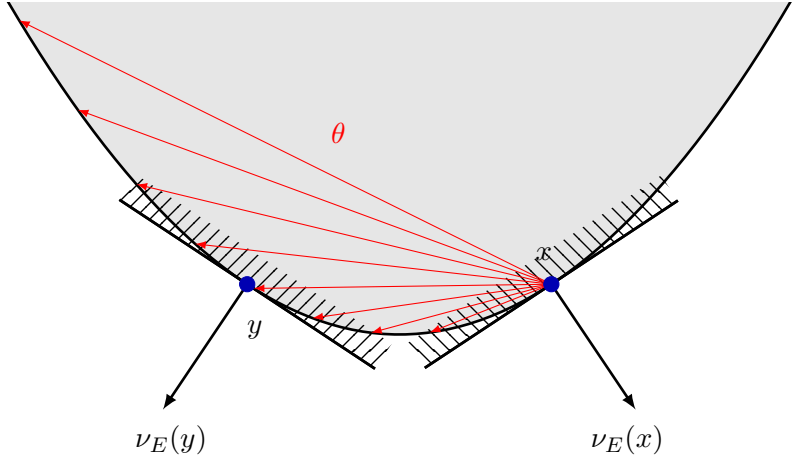
	\end{proof}

	Before stating the next lemma, we give the following definitions: for all $k\in\N\cup\{0\}$ let
	\begin{align}
		\tilde\Omega_k(x)&:=\bigl\{\theta\in\S^{d-1}:\,\r_\theta(x)\in(2^{-(k+1)}, 2^{-k}]\bigr\},\label{eq:tildeomegak}\\
		\Omega_k(x)&:=\bigl\{\theta\in\S^{d-1}:\,\r_\theta(x)\leq 2^{-k}\bigr\}.\label{eq:omegak}
	\end{align}
	Notice that
	\begin{equation}\label{eq:too}
		\Omega_k=\cup_{j\geq k}\tilde\Omega_j,
	\end{equation}
	where the union is disjoint.

	\begin{lemma}\label{lemma:est_omega_ex}
		For all $E\subset\R^d$ sets of locally finite perimeter and for all $x\in\partial^*E$,
		\begin{align}
			e(x)&\gtrsim\sum_{k\geq0}\frac{|\tilde \Omega_k(x)|^2}{2^{-k(p-d-1)}},\label{eq:etildeo}\\
			e(x)&\gtrsim\sum_{k\geq0}\frac{|\Omega_k(x)|^2}{2^{-k(p-d-1)}}.\label{eq:eo}
		\end{align}
	\end{lemma}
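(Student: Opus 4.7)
The plan is to prove the first estimate by an annular decomposition of the sphere, and then to deduce the second from the first through a weighted Cauchy--Schwarz argument.

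For the first estimate, I would split the integral defining $e(x)$ over the disjoint pieces $\tilde\Omega_k(x)$ (the remaining set $\{\theta:\r_\theta(x)>1\}$ gives a nonnegative contribution that can be discarded). Since $\r_\theta(x)\leq 2^{-k}$ on $\tilde\Omega_k(x)$, each piece is bounded below by $2^{k(p-d-1)}\int_{\tilde\Omega_k(x)}|\langle\nu_E(x),\theta\rangle|\d\theta$. It then remains to prove the second-moment estimate
\begin{equation*}
\int_{\tilde\Omega_k(x)} |\langle\nu_E(x),\theta\rangle|\d\theta \gtrsim |\tilde\Omega_k(x)|^2.
\end{equation*}
This follows from the elementary spherical estimate $|\{\theta\in\S^{d-1}:|\langle\nu,\theta\rangle|<c\}|\leq C_d\,c$ valid for every $\nu\in\S^{d-1}$ and every $c\in[0,1]$ with $C_d$ a dimensional constant (immediate by the coarea formula, bounding $(1-s^2)^{(d-3)/2}$ by $1$ for $d\geq 3$, and by direct computation for $d=2$). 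Choosing $c=|\tilde\Omega_k(x)|/(2C_d)$, which is at most $1/2$ since $|\tilde\Omega_k(x)|\leq|\S^{d-1}|$, the subset $A_k\subset\tilde\Omega_k(x)$ on which $|\langle\nu_E(x),\theta\rangle|\geq c$ has measure at least $|\tilde\Omega_k(x)|/2$, so that $\int_{\tilde\Omega_k(x)}|\langle\nu_E(x),\theta\rangle|\d\theta\geq c\,|A_k|\gtrsim|\tilde\Omega_k(x)|^2$. This is essentially the same thresholding trick already used in the proof of Lemma \ref{lemma:epointwisenu}.

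For the second estimate, I would use the disjoint union $\Omega_k(x)=\bigsqcup_{j\geq k}\tilde\Omega_j(x)$ from \eqref{eq:too} and apply Cauchy--Schwarz with a geometric weight. Fix any auxiliary exponent $0<\beta<p-d-1$ (possible since $p>d+1$). Then
\begin{equation*}
|\Omega_k(x)|^2=\Bigl(\sum_{j\geq k}|\tilde\Omega_j(x)|\Bigr)^{\!2} \leq \Bigl(\sum_{j\geq k}2^{-j\beta}\Bigr)\!\Bigl(\sum_{j\geq k}2^{j\beta}|\tilde\Omega_j(x)|^2\Bigr) \lesssim 2^{-k\beta}\sum_{j\geq k}2^{j\beta}|\tilde\Omega_j(x)|^2.
\end{equation*}
Multiplying by $2^{k(p-d-1)}$, summing over $k$ and swapping the order of summation on the region $k\leq j$ gives
\begin{equation*}
\sum_{k\geq 0}2^{k(p-d-1)}|\Omega_k(x)|^2 \lesssim \sum_{j\geq 0}2^{j\beta}|\tilde\Omega_j(x)|^2\sum_{k=0}^{j}2^{k(p-d-1-\beta)} \lesssim \sum_{j\geq 0}2^{j(p-d-1)}|\tilde\Omega_j(x)|^2,
\end{equation*}
where the inner geometric sum is controlled by its largest term since $p-d-1-\beta>0$. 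By \eqref{eq:etildeo} the right-hand side is $\lesssim e(x)$, which concludes the proof.

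I would not expect a serious obstacle here: the only care needed is to choose the threshold $c$ in the second-moment step small enough that the linear spherical bound applies (automatic from $|\tilde\Omega_k(x)|\leq|\S^{d-1}|$), and to pick the Cauchy--Schwarz exponent $\beta$ strictly between $0$ and $p-d-1$ to make both geometric series converge in the right direction.
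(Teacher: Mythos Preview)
Your proof is correct and follows essentially the same approach as the paper: the thresholding argument for \eqref{eq:etildeo} matches the paper's reference back to the proof of \eqref{eq:eomega} in Lemma~\ref{lemma:epointwisenu}, and your weighted Cauchy--Schwarz for \eqref{eq:eo} is equivalent to the paper's Jensen-inequality argument (the paper effectively fixes $\beta=(p-d-1)/2$ after the substitution $a_k=|\tilde\Omega_k(x)|\,2^{k(p-d-1)/2}$, whereas you allow any $\beta\in(0,p-d-1)$, but the mechanism is identical).
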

	
	\begin{proof} {The proof of~\eqref{eq:etildeo} follows from the same argument used to prove~\eqref{eq:eomega} in  Lemma~\ref{lemma:epointwisenu}.}
		Indeed, since the $\tilde\Omega_k(x)$ are disjoint,
		\[
		e(x)\gtrsim \sum_{k\geq 0}\int_{\tilde\Omega_k(x)\cap\bigl\{|\Scal{\nu_E(x)}{\theta}|\geq\min\{ 1/10,|\tilde \Omega_k|/2\}\bigr\}}\frac{|\Scal{\nu_E(x)}{\theta}|}{2^{-k(p-d-1)}}\d\theta\gtrsim \sum_{k\geq0}\frac{|\tilde \Omega_k(x)|^2}{2^{-k(p-d-1)}}.
		\]
		
		To prove~\eqref{eq:eo},  we use~\eqref{eq:etildeo}, the disjoint union~\eqref{eq:too} and the following fact: there  exists a constant $C=C(p,d)>0$ such that
		\begin{equation}\label{eq:estoo}
			\sum_{k\geq0}\bigl(\sum_{j\geq k}|\tilde\Omega_j(x)|\bigr)^{2}2^{k(p-d-1)}\leq C\sum_{k\geq0}|\tilde\Omega_k(x)|^{2}2^{k(p-d-1)}.
		\end{equation}
		
		To show~\eqref{eq:estoo}, we set for simplicity of notation $a_k:=|\tilde\Omega_k(x)|2^{k\frac{(p-d-1)}{2}}$.
		With this notation,~\eqref{eq:estoo} reads as
		\begin{equation}
			\label{eq:estooak}
			\sum_{k\geq0}\bigl(\sum_{j\geq k}a_j2^{(k-j)\frac{(p-d-1)}{2}}\bigr)^2\leq C\sum_{k\geq0}a_k^2.
		\end{equation}
		Using the fact that $\sum_{j\geq k}2^{(k-j)\frac{(p-d-1)}{2}}=\frac{1}{1-2^{-\frac{(p-d-1)}{2}}}$ and Jensen's inequality, one has that
		\begin{align}
			\sum_{k\geq0}\bigl(\sum_{j\geq k}a_j2^{(k-j)\frac{(p-d-1)}{2}}\bigr)^2&\leq \frac{1}{1-2^{-\frac{(p-d-1)}{2}}}\sum_{k\geq 0}\sum_{j\geq k}a_{j}^{2} 2^{(k-j)\frac{(p-d-1)}{2}}\notag\\
			&=\frac{1}{1-2^{-\frac{(p-d-1)}{2}}}\sum_{j\geq 0}\sum_{k=0}^{j}a_{j}^{2} 2^{(k-j)\frac{(p-d-1)}{2}}\notag\\
			&=\Biggl(\frac{1}{1-2^{-\frac{(p-d-1)}{2}}}\Biggr)^2\sum_{j\geq 0}a_j^2\Bigl(1-2^{-(j+1)\frac{(p-d-1)}{2}}\Bigr)\notag\\
			&\leq\Biggl(\frac{1}{1-2^{-\frac{(p-d-1)}{2}}}\Biggr)^2\sum_{j\geq 0}a_j^2,\notag
		\end{align}
		corresponding to~\eqref{eq:estooak}.
	\end{proof}
	
	In the next lemma we show an upper bound for the perimeter of sets of finite energy  inside a ball, which is uniform with respect to the centres of the balls and with respect to the family of sets of equibounded energy.
	
	\begin{lemma} \label{lemma:upper_bound_perimeter}Let $d\geq2$, $p\geq 2d$, $M>0$.
		Then,  there exist $R_0>0$, $C_0(M)>0$ such that the following holds: for all $\Omega\subset\R^d$ bounded open sets,  $E\subset \R^d$ of locally finite perimeter such that $\overline \Fcal_{0,p,d}(E,\Omega)\leq M$, for  all $0<r<R_0$ and all $x\in \Omega$ such that $\mathrm{dist}(x,\partial \Omega)>r$,
		\begin{equation}
			\per (E,B_r(x))\leq C_0(M) r^{d-1}.
		\end{equation}
	\end{lemma}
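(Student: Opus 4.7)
The plan is to combine the slicing formula for the perimeter with a Jensen-type estimate in each one-dimensional slice, then use a second Jensen inequality to average out the slice-wise bound against the total energy constraint.

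\textbf{Step 1: a one-dimensional count via Jensen.} Fix $\theta\in\S^{d-1}$ and $y_\theta^\perp\in\theta^\perp$ such that $(B_r(x))_{y_\theta^\perp}$ is a nonempty interval, which has length at most $2r$. Let $s_1<\cdots<s_N$ be the points of $\partial^*E_{y_\theta^\perp}\cap(B_r(x))_{y_\theta^\perp}$ and set
\[
E_{\mathrm{slice}}(\theta,y_\theta^\perp):=\sum_{s\in\partial^*E_{y_\theta^\perp}\cap(B_r(x))_{y_\theta^\perp}}\frac{1}{|s-s^+|^{p-d-1}}.
\]
Dropping the last term and applying Jensen's inequality to the convex function $t\mapsto t^{-(p-d-1)}$ over the $N-1$ gaps, which together sum to at most $2r$,
\[
E_{\mathrm{slice}}(\theta,y_\theta^\perp)\;\geq\;\sum_{i=1}^{N-1}\frac{1}{|s_i-s_{i+1}|^{p-d-1}}\;\geq\;\frac{(N-1)^{p-d}}{(2r)^{p-d-1}},
\]
hence $N\leq 1+(2r)^{(p-d-1)/(p-d)}\,E_{\mathrm{slice}}(\theta,y_\theta^\perp)^{1/(p-d)}$.

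\textbf{Step 2: integration and the global Jensen.} By the perimeter slicing formula~\eqref{eq:igper} and Step~1,
\[
\per(E,B_r(x))\;\lesssim\;\int_{\S^{d-1}}\!\!\int_{(B_r(x))_\theta^\perp}\!\!\bigl(1+(2r)^{(p-d-1)/(p-d)}E_{\mathrm{slice}}^{1/(p-d)}\bigr)\,dy_\theta^\perp\,d\theta.
\]
Let $V_r:=\int_{\S^{d-1}}|(B_r(x))_\theta^\perp|\,d\theta\lesssim r^{d-1}$. Applying Jensen to the concave function $t\mapsto t^{1/(p-d)}$ with respect to the probability measure $V_r^{-1}\,dy_\theta^\perp\,d\theta$ on the domain of integration, together with the integral geometric representation~\eqref{eq:f0intgeom} which gives $\int\!\int E_{\mathrm{slice}}\,dy_\theta^\perp d\theta=\overline\Fcal_{0,p,d}(E,B_r(x))\leq M$,
\[
\int_{\S^{d-1}}\!\int_{\theta^\perp}E_{\mathrm{slice}}^{1/(p-d)}\,dy_\theta^\perp\,d\theta\;\leq\;V_r^{(p-d-1)/(p-d)}M^{1/(p-d)}\;\lesssim\;r^{(d-1)(p-d-1)/(p-d)}M^{1/(p-d)}.
\]
Combining the two estimates,
\[
\per(E,B_r(x))\;\lesssim\;r^{d-1}+M^{1/(p-d)}\,r^{d(p-d-1)/(p-d)}.
\]

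\textbf{Step 3: the threshold $p\geq 2d$ and conclusion.} A direct computation yields $\tfrac{d(p-d-1)}{p-d}-(d-1)=\tfrac{p-2d}{p-d}$, which is nonnegative precisely when $p\geq 2d$. Thus, fixing any $R_0\leq 1$ and taking $r<R_0$, one has $r^{d(p-d-1)/(p-d)}\leq r^{d-1}$, so
\[
\per(E,B_r(x))\;\leq\;C\bigl(1+M^{1/(p-d)}\bigr)\,r^{d-1}\;=:\;C_0(M)\,r^{d-1},
\]
as required. The subtle point (and the reason $p\geq 2d$ enters) is the balance between the two Jensen applications: the convexity of $t^{-(p-d-1)}$ controls the local density of boundary points in a single slice, while the concavity of $t^{1/(p-d)}$ upgrades this slice-wise estimate to a global bound via the integrated energy $M$, and the required scaling $r^{d-1}$ is recovered exactly when the combined exponent satisfies $\tfrac{p-2d}{p-d}\geq 0$.
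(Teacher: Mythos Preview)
Your proof is correct and follows essentially the same approach as the paper: both use Jensen's inequality for the convex function $t\mapsto t^{-(p-d-1)}$ in each slice to obtain $(N-1)^{p-d}\lesssim r^{p-d-1}E_{\mathrm{slice}}$, and then a second Jensen-type averaging over $(\theta,y_\theta^\perp)$ to pass to the global bound. The only cosmetic difference is the order of operations: the paper first applies Jensen to $t\mapsto t^{p-d}$ to bound $\bigl(\tfrac{\per(E,B_r(x))}{\omega_{d-1}r^{d-1}}-1\bigr)^{p-d}$ by the average of $(N-1)^{p-d}$ and then bounds that average by $r^{p-2d}\overline\Fcal_{0,p,d}(E,B_r(x))$, whereas you take the $(p-d)$-th root slice-wise first and use the concave Jensen for $t\mapsto t^{1/(p-d)}$ afterward; the resulting estimate $\per(E,B_r(x))\lesssim r^{d-1}\bigl(1+M^{1/(p-d)}r^{(p-2d)/(p-d)}\bigr)$ is identical.
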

	
	\begin{proof} The claim follows immediately from the following bound:
		\begin{equation*}
			\Biggl(\frac{\per(E,B_r(x))}{\omega_{d-1} r^{d-1}}-1\Biggr)^{p-d}\lesssim{r^{p-2d}}\overline \Fcal_{0,p,d}(E,B_r(x)),
			% \label{eq:stima0}
		\end{equation*}
		which will be an immediate consequence of the following two estimates:
		\begin{align}
			&	\Biggl(\frac{\per(E,B_r(x))}{\omega_{d-1} r^{d-1}}-1\Biggr)^{p-d}\lesssim\fint_{\S^{d-1}}\fint_{(B_r(x))_\theta^\perp}\bigl(\per^{\mathrm{1D}}(E_{z_\theta^\perp}, (B_r(x))_{z_\theta^\perp})-1\bigr)^{p-d}\dz_\theta^\perp\d\theta,\label{eq:stima1}\\
			&\fint_{\S^{d-1}}\fint_{(B_r(x))_\theta^\perp}\bigl(\per^{\mathrm{1D}}(E_{z_\theta^\perp}, (B_r(x))_{z_\theta^\perp})-1\bigr)^{p-d}\dz_\theta^\perp\d\theta\lesssim {r^{p-2d}}\overline \Fcal_{0,p,d}(E,B_r(x)).\notag
			%\label{eq:stima2}
		\end{align}
		
		The first estimate~\eqref{eq:stima1} is a direct consequence of the slicing formula~\eqref{eq:igper} and Jensen's inequality applied to the convex function $t\mapsto t^{p-d}$.
		
		The second estimate can be deduced as follows, using the integral geometric formulation~\eqref{eq:f0intgeom}, the convexity of the function $t\mapsto t^{-(p-d-1)}$ and the fact that $(d-1)-(p-d-1)=2d-p$
		\begin{align}
			\overline \Fcal_{0,p,d}(E,B_r(x))&=\int_{\S^{d-1}}\int_{\theta^\perp}\sum_{s\in\partial^*E_{z_\theta^\perp}\cap (B_r(x))_{z_\theta^\perp}}\frac{1}{|s-s^+|^{p-d-1}}\dz_\theta^\perp\d\theta\notag\\
			&\geq\int_{\S^{d-1}}\int_{\theta^\perp}\bigl(\per^{\mathrm{1D}}(E_{z_\theta^\perp},(B_r(x))_{z_\theta^\perp})-1\bigr)\cdot\notag\\
			&\cdot \sum_{s, s^+\in\partial^*E_{z_\theta^\perp}\cap (B_r(x))_{z_\theta^\perp}}\frac{1}{\bigl(\per^{\mathrm{1D}}(E_{z_\theta^\perp},(B_r(x))_{z_\theta^\perp})-1\bigr)}\frac{1}{|s-s^+|^{p-d-1}}\dz_\theta^\perp\d\theta\notag\\
			&\geq \int_{\S^{d-1}}\int_{\theta^\perp}\bigl(\per^{\mathrm{1D}}(E_{z_\theta^\perp},(B_r(x))_{z_\theta^\perp})-1\bigr)^{p-d}r^{-(p-d-1)}\dz_\theta^\perp\d\theta \notag\\
			&\gtrsim r^{2d-p}\fint_{\S^{d-1}}\fint_{(B_r(x))_\theta^\perp}\bigl(\per^{\mathrm{1D}}(E_{z_\theta^\perp},(B_r(x))_{z_\theta^\perp})-1\bigr)^{p-d}\dz_\theta^\perp\d\theta.\label{eq:5.23}
		\end{align}
		
	\end{proof}

	We can now give a proof of Proposition~\ref{prop:brezis}.
	
	\begin{proof}[Proof of Proposition~\ref{prop:brezis}]
		Let $2\rho<R_0$, where $R_0$ is chosen  as in Lemma~\ref{lemma:upper_bound_perimeter}.
		% \achtung{
			We will show that, for all $z\in\Omega$ such that $\mathrm{dist}(z,\partial\Omega)>2\rho$,
			\begin{equation*}
				\int_{\partial^*E\cap B_\rho(z)}\int_{\partial ^*E\cap B_\rho(z)}\frac{\|\nu_E(x)-\nu_E(y)\|^2}{\|x-y\|^{p-2}}\d\mathcal H^{d-1}(x)\d\mathcal H^{d-1}(y)\lesssim C(M) \overline \Fcal_{0,p,d}(E, B_\rho(z)),
			\end{equation*}
			thus proving~\eqref{eq:brezis}.
			% }
		\WithoutLoss, we assume that $2\rho=1$.
		Define, for all $k\geq 0$, and for all $x\in\partial^* E$
		\begin{equation*}
			\tilde A_k(x)=\{y\in\partial^*E: \,2^{-k-1}<\|y-x\|\leq2^{-k}\}.
		\end{equation*}
		Notice that
		\begin{equation}\label{eq:ak_omegak}
			y\in \tilde A_k(x)\quad\Leftrightarrow \quad \frac{y-x}{\|y-x\|}\in\tilde \Omega_k(x),
		\end{equation}
		where $\tilde \Omega_k(x)$ was defined in~\eqref{eq:tildeomegak}.
		Then, using the equality $p-2=(p-d-1)+(d-1)$, the equivalence~\eqref{eq:ak_omegak} and the bound (see~\eqref{eq:bgtrsimnu})
		\begin{equation*}
			\|\nu_E(x)-\nu_E(y)\|\lesssim|\Omega_{k-1}(x)|+|\Omega_{k-1}(y)|,\quad \text{ for $|y-x|\leq 2^{-k}$},
		\end{equation*}
		where $\Omega_k(x)$ was defined in~\eqref{eq:omegak},
		we obtain
		\begin{align*}
			&\int_{\partial^*E\cap B_\rho(z)}\int_{\partial ^*E\cap B_\rho(z)}\frac{\|\nu_E(x)-\nu_E(y)\|^2}{\|x-y\|^{p-2}}\d\mathcal H^{d-1}(y)\d\mathcal H^{d-1}(x)\leq	\notag\\
			&\leq\int_{\partial^*E\cap B_\rho(z)}\sum_{k\geq 0}	\int_{\partial ^*E\cap \tilde A_k(x)\cap B_\rho(x)}\frac{\|\nu_E(x)-\nu_E(y)\|^2}{\|x-y\|^{p-2}}\d\mathcal H^{d-1}(y)\d\mathcal H^{d-1}(x)\notag\\
			&\leq\int_{\partial^*E\cap B_\rho(z)}\sum_{k\geq 0}2^{(k+1)(p-d-1)}\int_{\partial ^*E\cap \tilde A_k(x)\cap B_\rho(z)}\frac{\|\nu_E(x)-\nu_E(y)\|^2}{2^{-(k+1)(d-1)}}\d\mathcal H^{d-1}(y)\d\mathcal H^{d-1}(x)\notag\\
			&\lesssim\int_{\partial^*E\cap B_\rho(z)}\sum_{k\geq 0}2^{(k+1)(p-d-1)}\int_{\partial ^*E\cap\tilde  A_k(x)\cap B_\rho(z)}\frac{|\Omega_{k-1}(x)|^2+|\Omega_{k-1}(y)|^2}{2^{-(k+1)(d-1)}}\d\mathcal H^{d-1}(y)\d\mathcal H^{d-1}(x)\notag\\
			&\lesssim\int_{\partial^*E\cap B_\rho(z)}\sum_{k\geq 0}2^{(k+1)(p-d-1)}\bigl(|\Omega_{k-1}(x)|^2+|\Omega_{k-1}(y)|^2\bigr)\frac{\per (E,B_{2^{-k}}(x))}{2^{-(k+1)(d-1)}}\d\mathcal H^{d-1}(x).
		\end{align*}
		Now we recall  Lemma~\ref{lemma:est_omega_ex} and the fact that for $2^{-k}\leq 1< R_0$  Lemma~\ref{lemma:upper_bound_perimeter} holds, thus getting
		\begin{align*}
			&\int_{\partial^*E\cap B_\rho(z)}\int_{\partial ^*E\cap B_\rho(z)}\frac{\|\nu_E(x)-\nu_E(y)\|^2}{\|x-y\|^{p-2}}\d\mathcal H^{d-1}(y)\d\mathcal H^{d-1}(x)\lesssim	\notag\\
			&\lesssim\int_{\partial^*E\cap B_\rho(z)}\sum_{k\geq 0}2^{(k+1)(p-d-1)}\bigl(|\Omega_{k-1}(x)|^2+\Omega_{k-1}(y)|^2\bigr)\frac{\per (E,B_{2^{-k}}(x))}{2^{-(k+1)(d-1)}}\d\mathcal H^{d-1}(x)\notag\\
			&\lesssim C_0(M)\int_{\partial^*E\cap B_\rho(z)} \bigl(e(x)+e(y)\bigr)\d\mathcal H^{d-1}(x)\notag\\
			&\lesssim	C_0(M)\overline \Fcal_{0,p,d}(E, B_\rho(z) ).
		\end{align*}
	\end{proof}
	
	\subsection{(iii) Regularity and  nonlocal curvature bounds imply flatness}\label{subs:5.2}
	
	In the next lemma, we show that the finiteness of the nonlocal curvature quantity
	\[
	\int_{\partial^*E\cap B_\rho(z)}\int_{\partial ^*E\cap B_\rho(z)}\frac{\|\nu_E(x)-\nu_E(y)\|^2}{\|x-y\|^{p-2}}\d\mathcal H^{d-1}(y)\d\mathcal H^{d-1}(x),
	\] proved in Proposition~\ref{prop:brezis}, together with a regularity assumption on the reduced boundary of $E$, implies when $p\geq d+3$ flatness of the boundary of $E$.
	Together with $[0,L)^d$-periodicity this implies then that $E$ has boundaries given by affine hyperplanes orthogonal to a fixed direction, namely it is a union of stripes.

	\begin{lemma}\label{lemma:d+3}
		Let $d\geq 2$, $p\geq d+3$, $\ell>0$, $\bar R>0$.
		Let $\Omega\subset\R^d$ open and bounded and for all $0<r<\bar R$ define $\Omega_{2r}= \{x\in\Omega:\, \mathrm{dist}(x,\partial\Omega)>2r\}$. Let $E\subset \R^d$ of finite perimeter in $\Omega$ which enjoys the following regularity properties:
		\begin{enumerate}
			\item $\partial^*E\cap \Omega=\partial E\cap\Omega$;
			\item For all $0<r<\bar R$ and for every $x\in\partial E\cap \Omega_{2r}$ the set $\partial E\cap B_r(x)$ is given by the graph of an $\ell$-Lipschitz function defined on a connected open subset of a  $(d-1)$-dimensional affine subspace of $\R^d$.
		\end{enumerate}
		If moreover the following holds
		\begin{equation*}
			% \label{eq:brezis2}
			\sup_{x\in\Omega_{2r},0<r\leq \bar R}\int_{\partial E\cap B_r(x)}\int_{\partial E\cap B_r(x)}\frac{\|\nu_E(z)-\nu_E(y)\|^2}{\|z-y\|^{p-2}}\d\mathcal H^{d-1}(z)\d\mathcal H^{d-1}(y)<+\infty,
		\end{equation*}
		then $\partial E\cap\Omega$ is given by the  disjoint union of the intersections of finitely many affine hyperplanes of $\R^d$ with $\Omega$.
		
		Moreover, if  $\Omega=[0,L)^d$ and in addition $E$ is $[0,L)^d$-periodic, up to a rigid motion the set $E$ satisfying the above is  of the form
		\begin{equation*}
			% \label{eq:Estripes2}
			E=\widehat E\times\R^{d-1},\quad \widehat E\subset\R,\quad\widehat E\cap [0,L)=\bigcup_{i=1}^{N_0}(s_i,t_i), \quad s_i<t_i<s_{i+1}<t_{i+1}.
		\end{equation*}
	\end{lemma}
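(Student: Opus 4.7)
The plan is to apply a Bourgain--Brezis--Mironescu/Brezis style characterization of constant functions to $\nu_E$ viewed on local Lipschitz charts of $\partial E$. First, I would use assumption $(2)$ to cover $\partial E\cap \Omega_{2r}$ by balls $B_r(x)$ in which $\partial E$ is the graph of an $\ell$-Lipschitz function $u$ over an open connected set $U$ of a $(d-1)$-dimensional affine subspace $P$. Writing $\phi:U\to \partial E\cap B_r(x)$ for the graph map, the surface measure $\mathcal H^{d-1}\llcorner \partial E$ is comparable to $\mathcal L^{d-1}$ on $U$, and the chord distance $\|\phi(z)-\phi(y)\|$ is comparable to $\|z-y\|_P$, with constants depending only on $\ell$. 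Hence the assumed uniform bound on
\[
\int_{\partial E\cap B_r(x)}\int_{\partial E\cap B_r(x)}\frac{\|\nu_E(z)-\nu_E(y)\|^{2}}{\|z-y\|^{p-2}}\, d\mathcal H^{d-1}(z)\, d\mathcal H^{d-1}(y)
\]
translates, via $\phi$, into a bound on the Gagliardo seminorm
\[
\int_U\int_U \frac{\|\tilde\nu(z)-\tilde\nu(y)\|^{2}}{\|z-y\|^{(d-1)+2s}}\, dz\, dy,\qquad \tilde\nu:=\nu_E\circ\phi,\quad s=\tfrac{p-d-1}{2}.
\]

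Next I would invoke the Brezis characterization of constant functions: since $p\geq d+3$ gives $s\geq 1$, a measurable function on a connected open subset of $\R^{d-1}$ whose Gagliardo seminorm of order $s\geq 1$ is finite must be constant a.e.\ (this is the key statement from \cite{brezis,bbm}; at the endpoint $s=1$ finiteness of the non-renormalized seminorm already forces constancy, as shown by testing against affine functions which produce a logarithmic divergence). Therefore $\tilde\nu$ is constant on $U$, i.e. $\partial E\cap B_r(x)$ is contained in a single affine hyperplane.

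Third, I would globalize: since $\nu_E$ is locally constant on $\partial E\cap \Omega$ (via overlapping charts) and $\partial E\cap \Omega=\partial^*E\cap \Omega$ by $(1)$, each maximal connected component $\Gamma_i$ of $\partial E\cap \Omega$ is contained in a single affine hyperplane $H_i$. Two distinct components cannot meet inside $\Omega$: at a junction point the Lipschitz graph condition at any small scale would be violated. Uniformity of the Lipschitz graph condition at scale $\bar R$ also precludes accumulation of sheets; combined with the finiteness of $\per(E,\Omega)$ and boundedness of $\Omega$, only finitely many components occur, yielding the claimed disjoint union $\bigcup_{i=1}^N H_i\cap \Omega$.

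For the periodic case $\Omega=[0,L)^{d}$, the main obstacle is to upgrade ``finitely many hyperplanes'' to ``parallel''. I would extend $E$ periodically to $\R^d$: assumptions $(1)$--$(2)$ transfer to the whole $\R^d$ by $L\Z^d$-invariance, so $\partial E$ is globally a disjoint union of pieces of complete affine hyperplanes and is everywhere locally a Lipschitz graph. In dimension $d\geq 2$ any two non-parallel hyperplanes intersect in a $(d-2)$-dimensional affine subspace; near such an intersection $\partial E$ cannot be a Lipschitz graph, so all $H_i$ must share a common normal $\nu$. Then $E=\widehat E\times \nu^\perp$ with $\widehat E\subset\R\nu$, and the $[0,L)^d$-periodicity (together with the finite-perimeter hypothesis and a suitable rigid motion rotating $\nu$ to $e_1$) produces the stripe structure in~\eqref{eq:Estripes}. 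The delicate point I expect to spend the most care on is the endpoint $s=1$ (i.e.\ $p=d+3$) in the Brezis step, together with the geometric argument that rules out intersections of extended hyperplanes in the periodic setting.
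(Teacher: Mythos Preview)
Your proposal is correct and follows essentially the same route as the paper: pull back the nonlocal curvature integral to a Lipschitz chart via the Area Formula, apply the Bourgain--Brezis--Mironescu characterization of constant functions (with exponent $p-2\geq d+1$ on a domain of dimension $d-1$) to conclude that $\nu_E$ is locally constant, and then globalize using the uniform radii and periodicity. The paper phrases the chart step with the Jacobian weight $g=J_{\mathrm{graph}\phi}\in[c,C]$ rather than your comparability of measures, and it quotes the BBM result directly at the critical exponent $d+1$ (the case $p>d+3$ reducing to it by monotonicity on small balls), but these are cosmetic differences.
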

	
	\begin{proof}
		The statement of the lemma follows immediately from the following two technical tools.
		The first is the Area Formula applied to the bi-Lipschitz parametrization of  $\partial E\cap B_r(x)$ given by the graph of an $\ell$-Lipschitz function.
		The second is   the following result due to Brezis, Bourgain and Mironescu~\cite{bbm,brezis}: given $D\subset\R^{d-1}$ open and connected, $g\in L^\infty(D)$ such that $g\geq c>0$ on $D$ and  $f:D\to\R^{d}$ measurable such that
		\begin{equation*}
			\int_{D}\int_{D}\frac{\|f(x)-f(y)\|^2}{\|x-y\|^{d+1}}g(x)g(y)\dx\dy<+\infty,
		\end{equation*}
		there exists a constant $c\in\R^{d}$ such that $f=c$ \ae on $D$.
		
		Indeed, if $\phi:D\subset \pi_{d-1}\to\R$ is the $\ell$-Lipschitz function such that
		$\partial E\cap B_r(x)=\{(z',\phi(z')):\,z'\in D\}$, by the Area Formula
		\begin{align*}
			&\int_{\partial E\cap B_r(x)}\int_{\partial E\cap B_r(x)}\frac{\|\nu_E(z)-\nu_E(y)\|^2}{\|z-y\|^{p-2}}\d\mathcal H^{d-1}(z)\d\mathcal H^{d-1}(y)\gtrsim\notag\\
			&\gtrsim	\int_{D}\int_{D}\frac{\|\nu_E((z',\phi(z')))-\nu_E((y',\phi(y')))\|^2}{\|z'-y'\|^{p-2}}J_{\mathrm{graph}\phi}(z')J_{\mathrm{graph}\phi}(y')\,dz'\,dy',
		\end{align*}
		where $J_{\mathrm{graph}\phi}$ is the Jacobian associated to  the bi-Lipschitz map $\mathrm{graph}\phi$. Notice that $J_{\mathrm{graph}\phi}\geq c>0$ due to the fact that the graph of $\phi$ is a bi-Lipschitz map.
		
		Applying then the result of Brezis, Bourgain and Mironescu recalled above to
		$f(z)=\nu_E((z,\phi(z)))$ and $g(z)=J_{\mathrm{graph}\phi}(z)$,   one has  that $\nu_E(x)=\nu$ $\mathcal H^{d-1}\llcorner \partial^*E$-\ae inside $B_r(x)$, $\nu\in\S^{d-1}$.
		Then, by the standard characterization of hyperplanes in geometric measure theory and the fact that the radii of the balls on which such result holds  are uniform with respect to  $x\in \partial E\cap \Omega_{2r}$,  for all $0<r<\bar R$, gives the first statement of the Lemma.
		
		The fact that the connected components of the boundary are all orthogonal to a single  direction, and thus $E$ is a union of stripes, is a consequence of the $[0,L)^d$-periodicity of $E$.
	\end{proof}
	
	\subsection{(ii) Regularity of sets of finite energy}\label{subs:5.3}
	The aim of this section is to give a proof of the regularity Theorem~\ref{thm:regularity}
	for sets of finite energy.   The proof of Theorem~\ref{thm:regularity} goes through a series of preliminary lemmas.
	As classical in regularity theory, we   look for uniform lower and upper density bounds on perimeter and volume of sets of equibounded energy at boundary points  and for  power law decay of the excess.
	
	In the next lemma, we show uniform  lower bounds on perimeter and volume at
	points of the topological boundary of $E$.
	\begin{lemma}\label{lemma:per_low_bound}
		Let $d\geq2$, $p\geq 2d$, $M>0$.
		Then, there exist $C_1,\bar C_1, R_1>0$ such that the following holds.
		For all $\Omega\subset\R^d$ open and bounded, for all $E\subset\R^d$ such that $\overline \Fcal_{0,p,d}(E,\Omega)\leq M$,  for all $0<r<R_1$ and for all $x\in\partial E \cap \Omega_{r}$ (where $\Omega_{r}=\{z\in\Omega:\,\mathrm{dist}(z,\partial \Omega)>r\}$)
		\begin{align}
			\min\bigl\{|E\cap B_r(x)|,|B_r(x)\setminus E|\bigr\}\geq \bar C_1 r^d,\label{eq:vol_low_bound_0}\\
			\per(E, B_r(x))\geq C_1 r^{d-1}.\label{eq:per_low_bound}
		\end{align}
	\end{lemma}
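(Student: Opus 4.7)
The plan is to derive the perimeter lower bound~\eqref{eq:per_low_bound} as an immediate consequence of the volume lower bound~\eqref{eq:vol_low_bound_0} via the relative isoperimetric inequality in balls,
\begin{equation*}
\per(E, B_r(x))\;\geq\; c(d)\min\{|E\cap B_r(x)|,|B_r(x)\setminus E|\}^{(d-1)/d}\;\geq\; c(d)\,\bar C_1^{(d-1)/d}\,r^{d-1},
\end{equation*}
so the task reduces to proving~\eqref{eq:vol_low_bound_0}. For this I would exploit the scaling identity $\overline\Fcal_{0,p,d}((E-x)/r, B_1(0)) = r^{p-2d}\,\overline\Fcal_{0,p,d}(E, B_r(x))$ together with $p\geq 2d$ (the strict inequality $p>2d$ is used to send the rescaled energy to zero; the borderline $p=2d$ can be handled analogously with energy uniformly bounded by $M$). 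Choosing $R_1$ so small that $R_1^{p-2d}M\leq\epsilon_0$ for some small $\epsilon_0$ to be fixed below, the problem reduces to proving: every $F\subset\R^d$ with $0\in\partial F$ and $\overline\Fcal_{0,p,d}(F,B_1(0))\leq\epsilon_0$ satisfies $\min\{|F\cap B_1|,|B_1\setminus F|\}\geq\bar C_1$.

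I would prove this reduced statement by contradiction and blow-up. Suppose there exists a sequence $F_n\subset \R^d$ with $0\in\partial F_n$, $\overline\Fcal_{0,p,d}(F_n,B_1)\to 0$, and (possibly after swapping $F_n\leftrightarrow F_n^c$) $|F_n\cap B_1|\to 0$. By Lemma~\ref{lemma:upper_bound_perimeter} the perimeters $\per(F_n,B_1)$ are uniformly bounded, so along a subsequence $F_n\to F_\infty$ in $L^1(B_1)$ with $|F_\infty\cap B_1|=0$. Proposition~\ref{prop:brezis} applied on $B_{1/2}(0)$ gives
\begin{equation*}
\iint_{(\partial^* F_n\cap B_{1/2})^2}\frac{\|\nu_{F_n}(y)-\nu_{F_n}(y')\|^2}{\|y-y'\|^{p-2}}\d\hd(y)\d\hd(y')\;\longrightarrow\;0,
\end{equation*}
and by lower semicontinuity of this double integral $\nu_{F_\infty}$ is $\hd$-a.e.\ constant on $\partial^* F_\infty\cap B_{1/2}$, so $F_\infty\cap B_{1/2}$ must be (up to null sets) either empty, all of $B_{1/2}$, or $B_{1/2}\cap H$ for an affine halfspace $H$. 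The assumption $|F_\infty\cap B_1|=0$ rules out the last two, forcing $F_\infty\cap B_{1/2}=\emptyset$. To close the argument I would show that the topological condition $0\in\partial F_n$ is incompatible with $F_n\to\emptyset$ in $L^1(B_{1/2})$ together with vanishing energy: for each large $n$ I select the smallest radius $\rho_n\in(0,1/2]$ at which $|F_n\cap B_{\rho_n}(0)|=\omega_d\rho_n^d/2$, which exists by continuity of $\rho\mapsto|F_n\cap B_\rho(0)|$ and by the choice of representative at $\partial F_n$; repeating the blow-up and flatness argument at scale $\rho_n$, the rescaled limit is a halfspace through the origin, and a concentric-ball density comparison then propagates $|F_n\cap B_{\rho_n}|=\omega_d\rho_n^d/2$ to $|F_n\cap B_{1/2}|\gtrsim\rho_n^d$, which combined with $|F_n\cap B_1|\to 0$ forces $\rho_n\to 0$, contradicting the uniform lower bound on the rescaled flatness functional at scale $\rho_n$.

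The hard part will be precisely this last step: the topological condition $0\in\mathrm{spt}\,D\chi_{F_n}$ is not quantitative and is easily lost in $L^1$-limits, so it has to be upgraded into a scale-quantitative statement by selecting the intermediate radius $\rho_n$ and coupling the nonlocal curvature bound of Proposition~\ref{prop:brezis} with a careful monotonicity comparison between concentric balls. The interplay between the self-improving flatness and this scale selection is the central technical point and motivates the regularity machinery developed in the subsequent lemmas.
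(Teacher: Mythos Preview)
Your reduction of~\eqref{eq:per_low_bound} to~\eqref{eq:vol_low_bound_0} via the relative isoperimetric inequality is correct and coincides with the paper's first line. The scaling identity $\overline\Fcal_{0,p,d}((E-x)/r,B_1)=r^{p-2d}\overline\Fcal_{0,p,d}(E,B_r(x))$ is also correct, and there is no circularity in invoking Proposition~\ref{prop:brezis} or Lemma~\ref{lemma:upper_bound_perimeter}, since neither uses the present lemma.

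However, the compactness/blow-up route you propose has a genuine gap at precisely the step you flag as hard, and the fix you sketch does not close it. The existence of $\rho_n$ with $|F_n\cap B_{\rho_n}|=\tfrac12\omega_d\rho_n^d$ is \emph{not} guaranteed by the representative convention~\eqref{eq:topbdry}: that convention only gives $0<|F_n\cap B_\rho|<\omega_d\rho^d$ for every $\rho$, so the density ratio may remain strictly below $\tfrac12$ at every scale (this is exactly the scenario when $0\in\partial F_n\setminus\partial^*F_n$, which the lemma must cover). Even if $\rho_n$ exists, the clause ``contradicting the uniform lower bound on the rescaled flatness functional at scale $\rho_n$'' does not name an actual contradiction: after rescaling by $\rho_n$ you obtain $G_n$ with $|G_n\cap B_1|=\omega_d/2$ and small energy, hence close to a half-space through the origin; this is perfectly consistent with all your hypotheses and with $\rho_n\to0$. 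Finally, the borderline $p=2d$ is not cosmetic: the rescaled energy does not vanish, so the flatness of the limit via Proposition~\ref{prop:brezis} gives only a bounded nonlocal curvature, not rigidity, and your contradiction evaporates.

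The paper's proof avoids all of this by working \emph{directly} with the functional rather than by compactness. Assuming $|E\cap B_r(x)|=c_1\delta^d r^d$ with $c_1<\omega_d/4$ and $\delta<1/2$, one shows that for each $y\in\partial^*E\cap B_{r(1-\delta)}(x)$ the set of directions $\theta$ along which the ray from $y$ exits $E$ again inside $B_r(x)$ has measure bounded below by a constant $\bar\omega_1$. Feeding this into the definition of $e(y)$, Jensen's inequality, and the isoperimetric inequality yields
\begin{equation*}
\overline\Fcal_{0,p,d}(E,B_r(x))\;\gtrsim\;\Bigl(\tfrac{\alpha(r(1-\delta))}{\alpha(r)}\Bigr)^{(d-1)/d},\qquad \alpha(\rho):=\frac{|E\cap B_\rho(x)|}{\rho^d}.
\end{equation*}
Since the measure $\mu=\overline\Fcal_{0,p,d}(E,\cdot)$ is absolutely continuous with respect to $\hd\llcorner\partial^*E$ and the latter satisfies the upper perimeter bound of Lemma~\ref{lemma:upper_bound_perimeter}, $\mu(B_r(x))$ is uniformly small for $r<R_1$, forcing $\alpha(r(1-\delta))\le 2^{-d}\alpha(r)$. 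Iterating with $\delta_i=(\alpha_i/c_1)^{1/d}$ gives $\alpha_i\to0$ while $r_i\ge\bar r>0$ (because $\sum_i\delta_i<\infty$), contradicting $x\in\partial E$. This argument is uniform in $p\ge 2d$, works for every $x\in\partial E$ without distinguishing $\partial^*E$, and does not appeal to Proposition~\ref{prop:brezis}. If you want to salvage your blow-up strategy, the missing ingredient is precisely a scale-decay statement of this type, so you would end up reproducing the paper's iteration inside your contradiction loop.
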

	
	\begin{proof}
		By the isoperimetric inequality, the proof of~\eqref{eq:per_low_bound}  follows immediately from~\eqref{eq:vol_low_bound_0}.

		In order to prove~\eqref{eq:vol_low_bound_0}, let us consider \withoutLoss a point $x\in\partial E$ such that  $	\min\bigl\{|E\cap B_r(x)|, |B_r(x)\setminus E|\bigr\}=|E\cap B_r(x)|$ and
		\begin{equation}\label{eq:density_b}
			|E\cap B_r(x)|=c_1\delta^{d} r^{d},
		\end{equation}
		for some $\delta<1/2$, $r<R_1$ and $R_1$,$c_1$ sufficiently small to be fixed later.

		Then define, for $y\in\partial^*E\cap B_r(x)$, the sets
		\begin{align*}
			\Theta(y)&:=\bigl\{\theta\in\S^{d-1}:\,\langle \nu_E(y),\theta\rangle<0, \,|y_\theta-y_\theta^+|<\dist(y, \partial B_r(x))\bigr\},\notag\\
			V(y)&:=\bigl\{z\in\R^d:\,z=y+s\theta, \theta\in\Theta(y) , s\in(0,|y_\theta-y_\theta^+|)\bigr\}.
		\end{align*}
		Notice that
		\begin{equation}\label{eq:vincl}
			V(y)\subset E\cap B_r(x).
		\end{equation}
		
		Moreover, if $c_1$ in~\eqref{eq:density_b} is sufficiently small, namely
		\begin{equation}\label{eq:c1cond}
			c_1<\frac{\omega_d}{4},
		\end{equation}then for all $0<\bar \omega_1\ll1$
		\begin{equation}\label{eq:thetab}
			|\Theta(y)|\geq\bar  \omega_1\text{ for all $y\in\partial^*E\cap B_{r(1-\delta)}(x)$}.
		\end{equation}
		Indeed, by the properties~\eqref{eq:stimeblowup} of the reduced boundary, for all $\theta\in \S^{d-1}$ such that $\langle\nu_E(y),\theta\rangle<0$ the segment $[y, y_\theta^\perp+y_\theta^+\theta]$ is contained in $E$.
		Thus, if~\eqref{eq:thetab} does not hold,   there exists  $y\in \partial^*E\cap B_{r(1-\delta)}(x)$ and  $|y_\theta-y_\theta^+|\geq\dist(y, \partial B_r(x))\geq \delta r$ for a set of $\theta\in \{\langle \nu_E(y),\theta\rangle<0\} \setminus \Theta(y)$ of measure greater than $\frac12\mathcal H^{d-1}(\S^{d-1})-\bar \omega_1\geq \frac14\mathcal H^{d-1}(\S^{d-1})\geq \omega_d/2$, thus implying by~\eqref{eq:c1cond}
		\begin{align*}
			\bigl |\bigl\{z\in\R^d:\,z=y+s\theta, \theta\in \{\langle \nu_E(y),\theta\rangle<0\}\setminus \Theta(y),  s\leq\delta r\bigr\}\bigr|&\geq|\S^{d-1}\setminus \Theta(y)| (\delta r)^d\notag\\
			&\geq\frac{\omega_d}{2}(\delta r)^d\notag\\
			&\geq 2c_1\delta^{d}r^d,
		\end{align*}
		which, by the fact that  for $y\in\partial^*E\cap B_{r(1-\delta)}(x)$
		\begin{equation*}
			\bigl\{z\in\R^d:\,z=y+s\theta, \theta\in \{\langle \nu_E(y),\theta\rangle<0\}\setminus \Theta(y),  s\leq\delta r\bigr\}\subset E\cap B_r(x),
		\end{equation*} contradicts~\eqref{eq:density_b}.
		
		In particular, by~\eqref{eq:thetab}, one has that
		\begin{equation}
			\label{eq:vyb}
			|V(y)|>0\quad\forall\,y\in\partial^*E\cap B_{r(1-\delta)}(x).
		\end{equation}
		
		Using~\eqref{eq:thetab}, Jensen's inequality for the convex function $t\mapsto t^{-(p-d-1)/d}$, the inequality $|\langle \nu_E(y),\theta\rangle|\leq~1$ and~\eqref{eq:vyb},  we get for some constant $C(\bar\omega_1)$
		
		\begin{align*}
			\overline \Fcal_{0,p,d}(E, B_r(x))&\geq \int_{\partial^*E\cap B_r(x)}\int_{\Theta(y)}\frac{|\langle \nu_E(y), \theta\rangle|}{(\r_\theta(y))^{p-d-1}}\d\theta\d\mathcal H^{d-1}(y)\notag\\
			&\geq C(\bar \omega_1)\int_{\partial^*E\cap B_{r(1-\delta)}(x)\cap \{y:|V(y)|>0\}}\Bigr(\int_{\Theta(y)}(\r_\theta(y))^d\d\theta\Bigl)^{-\frac{p-d-1}{d}}\d\mathcal H^{d-1}(y)\notag\\
			&\geq C(\bar \omega_1)\int_{\partial^*E\cap B_{r(1-\delta)}(x)}|V(y)|^{-\frac{p-d-1}{d}}\d\mathcal H^{d-1}(y).
		\end{align*}
		Now we use the following facts: first, since  $x\in\partial E$, $\partial^*E\cap B_{r(1-\delta)}(x)\neq\emptyset$ for all $r>0$, then as noticed in~\eqref{eq:vincl} $V(y)\subset E\cap B_r(x)$ and finally the isoperimetric inequality holds.
		Thus, we  obtain
		\begin{align}
			\overline \Fcal_{0,p,d}(E, B_r(x))&\geq C(\bar \omega_1)\frac{\per (E, B_{r(1-\delta)}(x))}{|E\cap B_r(x)|^{(p-d-1)/d}}\notag\\
			&\geq C(\bar \omega_1)\frac{\bigl(\min\bigl\{|E\cap B_{r(1-\delta)}(x)|, |B_{r(1-\delta)}(x)\setminus E|\bigr\}\bigr)^{(d-1)/d}}{|E\cap B_r(x)|^{(p-d-1)/d}}.\label{eq:fisoper}
		\end{align}
		Now we claim that, under the assumptions~\eqref{eq:density_b},~\eqref{eq:c1cond} and $\delta <1/2$,   it holds
		\begin{equation}
			\label{eq:emin}
			\min\bigl\{|E\cap B_{r(1-\delta)}(x)|, | B_{r(1-\delta)}(x)\setminus E|\bigr\}=     |E\cap B_{r(1-\delta)}(x)|.
		\end{equation}
		Indeed, let us assume that~\eqref{eq:emin} does not hold and define
		\begin{align*}
			% \label{eq:alphageq}
			\alpha(r)&:=\frac{|E\cap B_r(x)|}{r^d},\notag\\
			A_{r,\delta}&=B_r(x)\setminus B_{r(1-\delta)}(x).
		\end{align*}
		On the one hand, one has that (since 	$|E\cap B_{r(1-\delta)}(x)|>|B_{r(1-\delta)}(x)\setminus E|$)
		\begin{equation}\label{eq:alphalow}
			\alpha(r(1-\delta))>\frac{\omega_d}{2}.
		\end{equation}
		On the other hand, by assumptions~\eqref{eq:density_b} and~\eqref{eq:c1cond},								\begin{equation*}
			\alpha(r)=c_1\delta^d<\frac{\delta^d\omega_d}{4} \end{equation*}
		and thus since $\delta<1/2$
		\begin{align*}
			% \label{eq:ass1}
			\alpha(r(1-\delta))&=\frac{|E\cap B_{r(1-\delta)}(x)|}{(1-\delta)^{d} r^d}
			\leq \frac{|E\cap B_{r}(x)|}{(1-\delta)^{d} r^d}=\frac{\alpha(r)}{(1-\delta)^{d}}\leq \frac{c_1\delta^d}{(1-\delta)^d}\leq\frac{\omega_d}{16} ,
		\end{align*}
		which contradicts~\eqref{eq:alphalow}.
		
		Hence, by~\eqref{eq:emin} and the fact that $p\geq 2d$, the lower bound~\eqref{eq:fisoper} becomes
		\begin{align}
			\overline \Fcal_{0,p,d}(E, B_r(x))&\geq C(\bar \omega_1)\Bigl(\frac{|E\cap B_{r(1-\delta)}(x)|}{|E\cap B_r(x)|}\Bigr)^{(d-1)/d}.\label{eq:fisoper2}
		\end{align}
		
		Let us  now define
		\begin{align*}
			\mu(A)&:=\overline \Fcal_{0,p,d}(E, A),\quad\text{ $A\subset\Omega$ Borel. }
			% \label{eq:mubrx}
		\end{align*}
		
		Then,~\eqref{eq:fisoper2} can be rewritten as
		\begin{align*}
			\mu(B_r(x))&\geq C(\bar \omega_1)\Bigl(\frac{\alpha(r(1-\delta))}{\alpha(r)}\Bigr)^{(d-1)/d}(1-\delta)^{(d-1)}.
			% \label{eq:mudef}
		\end{align*}
		Since $\mu\ll\mathcal H^{d-1}\llcorner(\partial^*E\cap \Omega)$, $\mu(\Omega)\leq M<+\infty$, and by Lemma~\ref{lemma:upper_bound_perimeter} $\mathcal H^{d-1}(\partial^*E\cap B_\rho(x))\leq C_0\rho^{d-1}$ for all $\rho\leq r\leq R_0$ and for all $x\in\partial ^*E\cap \Omega_{\rho}$, one has  that for  every $0<\gamma\ll1$ there exists $0< \bar R(\gamma)<R_0$ such that  for all $r<\bar R(\gamma)$ and for all $x\in \Omega_{r}$ it holds
		\begin{align}\label{eq:mubound}
			\mu(B_r(x))\leq \gamma C(\bar\omega_1).
		\end{align}
		In particular, if $\gamma$ is smaller than a dimensional constant,
		\begin{equation*}
			% \label{eq:alpha12}
			\alpha(r(1-\delta))\leq\alpha(r)\Bigl(\frac{\gamma}{(1-\delta)^{d-1}}\Bigr)^{d/(d-1)}\leq\frac{1}{2^d}\alpha(r).
		\end{equation*}
		Let us choose $R_1=\bar R(\gamma)$ such that~\eqref{eq:mubound} holds for all $r<\bar R(\gamma)$, where $\gamma$ is such that for all $\delta<1/2$
		\[
		\Bigl(\frac{\gamma}{(1-\delta)^{d-1}}\Bigr)^{d/(d-1)}\leq\frac{1}{2^d}.
		\]
		
		Let us then rename $r_0:=r<R_1$, $\delta_0:=\delta$, $\alpha_0:=\alpha(r_0)$, $r_1:=r_0(1-\delta_0)$, $\alpha_1:=\alpha(r_1)$.
		Observe that, by assumption~\eqref{eq:density_b}
		\[
		\frac{\alpha_0}{c_1}=\delta_0^d.
		\]
		Using the assumption~\eqref{eq:c1cond} on $c_1$ and the fact that $\delta_0<1/2$, we were just able to show that
		\begin{equation}\label{eq:alpha10}
			\alpha_1\leq\frac{1}{2^d}\alpha_0.
		\end{equation}
		Now define iteratively, for $i\in\N$,
		\begin{align}
			\delta_i&= \Bigl(\frac{\alpha_i}{c_1}\Bigr)^{1/d}\notag\\
			r_{i+1}&=r_{i}(1-\delta_{i}), \notag\\
			\alpha_{i+1}&=\alpha(r_{i+1}).
		\end{align}
		Notice that, by~\eqref{eq:alpha10}, $\delta_i\leq\frac12\delta_{i-1}\leq \frac12$ and thus one can reason as before getting the analogue of~\eqref{eq:alpha10} for all $i\in\N$, namely
		\begin{equation*}
			% \label{eq:alphai12}
			\alpha_{i+1}\leq\frac{1}{2^d}\alpha_i.
		\end{equation*}
		In particular,
		\begin{equation}\label{eq:limai}
			\lim_{i\to\infty}	\alpha_i=0, \lim_{i\to+\infty}\delta_i=0.
		\end{equation}
		We now claim that
		\begin{equation}\label{eq:barr2}
			\exists\,\bar r>0:\quad r_i\geq \bar r>0\quad\text{ for all $i\in\N$.}
		\end{equation}
		Once~\eqref{eq:barr2} is proved, we can easily conclude since by~\eqref{eq:limai} one would have that $\alpha(\bar r)=0$, thus contradicting the fact that $x\in\partial E$ and thus by~\eqref{eq:topbdry} that $\alpha(r)>0$ for all $r$.
		In order to prove~\eqref{eq:barr2}, observe that by definition of $r_i$,  and by the fact that $\delta_i\leq \Bigl(\frac12\Bigr)^{1/d}\delta_{i-1}<\frac12$, one has that there exists $C>0$ such that
		\begin{align*}
			\ln r_i=\ln r_0+\sum_{k=0}^{i-1}\ln(1-\delta_k)\geq\ln r_0+\sum_{i=0}^\infty\delta_i-2\sum_{i=0}^{\infty}\delta_i^2\geq -C>-\infty.
		\end{align*}
		Thus, we reached a contradiction to the assumption~\eqref{eq:density_b} and thus the Lemma is proved by~\eqref{eq:density_b} and~\eqref{eq:c1cond} choosing any \[
		0<\bar C_1<\frac{\omega_d}{4}(1/2)^{d}
		\] and $R_1$ as above.

	\end{proof}
	
	Before stating the next lemma, we recall the definition of  (spherical) excess given in~\eqref{eq:exc}, namely
	\begin{equation*}
		% \label{eq:exc2}
		Exc(E,x,r)=\frac{1}{r^{d-1}}\bigl[|D\chi_E|(B_r(x))-|D\chi_E(B_r(x))|\bigr].
	\end{equation*}

	In order to show regularity, we will need power law decay of the excess, with uniform constants which are independent of the point $x$.
	More precisely, we prove the following.

	\begin{lemma}
		\label{lemma:excf}
		Let $d\geq 2$, $p>2d$, $M>0$.
		There exist $R_2,C_2(M)>0$ such that for all $\Omega\subset\R^d$ bounded and open  and $E$ of locally finite perimeter such that $\overline \Fcal_{0,p,d}(E,\Omega)\leq M$, for all $0<r<R_2$ and for all $x\in\Omega_r$,
		\begin{equation}\label{eq:fexc}
			Exc(E,x,r)\leq C_2(M)r^{(p-2d)/\max\{p-d,8d\}}.
		\end{equation}
	\end{lemma}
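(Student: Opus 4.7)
The plan is to bound the spherical excess $Exc(E,x,r)$ by the nonlocal curvature integral controlled by Proposition~\ref{prop:brezis}, after reducing it to an unweighted $L^2$-variance of $\nu_E$ over $\partial^*E\cap B_r(x)$. The starting observation is the identity, for $\mu := \mathcal{H}^{d-1}(\partial^*E\cap B_r(x))$ and $V := D\chi_E(B_r(x))$,
\begin{equation*}
\mu^2 - |V|^2 \;=\; \frac{1}{2}\int_{\partial^*E\cap B_r(x)}\int_{\partial^*E\cap B_r(x)} \|\nu_E(y)-\nu_E(z)\|^2 \,d\mathcal{H}^{d-1}(y)\,d\mathcal{H}^{d-1}(z),
\end{equation*}
which follows from $\langle v,w\rangle = 1 - \tfrac12\|v-w\|^2$ for unit vectors and Fubini. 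Combined with $\mu-|V| \leq (\mu^2-|V|^2)/\mu$, this gives
\begin{equation*}
Exc(E,x,r) \;\leq\; \frac{1}{2\mu\, r^{d-1}}\int\int \|\nu_E(y)-\nu_E(z)\|^2 \,d\mathcal{H}^{d-1}(y)\,d\mathcal{H}^{d-1}(z).
\end{equation*}

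Next, since $\|y-z\| \leq 2r$ for $y,z \in B_r(x)$, one inserts the weight $\|y-z\|^{p-2}$:
\begin{equation*}
\int\int \|\nu_E(y)-\nu_E(z)\|^2 \;\leq\; (2r)^{p-2}\int\int \frac{\|\nu_E(y)-\nu_E(z)\|^2}{\|y-z\|^{p-2}} \;\lesssim\; C(M)\,(2r)^{p-2}\,\overline{\mathcal{F}}_{0,p,d}(E,B_r(x))
\end{equation*}
by Proposition~\ref{prop:brezis}, valid for $r$ below a threshold coming from $R_0$ and for $x \in \Omega_{2r}$. Substituting,
\begin{equation*}
Exc(E,x,r) \;\leq\; \frac{C(M)\,M\,(2r)^{p-2}}{2\mu\, r^{d-1}}.
\end{equation*}

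The final ingredient is a lower bound on $\mu$. When $x \in \partial E$ (the case that matters for the subsequent Tamanini-type application in Theorem~\ref{thm:regularity}), Lemma~\ref{lemma:per_low_bound} supplies $\mu \geq C_1 r^{d-1}$ for $r < R_1$, whereupon
\begin{equation*}
Exc(E,x,r) \;\leq\; C'(M)\,r^{p-2d},
\end{equation*}
which already decays as a positive power of $r$ because $p>2d$. For general $x \in \Omega_r$, if $\mathrm{dist}(x,\partial E) \leq r/2$ pick $x_0 \in \partial E \cap B_{r/2}(x)$; since $B_{r/2}(x_0) \subset B_r(x)$ one still has $\mu \geq C_1(r/2)^{d-1}$ and the previous conclusion survives up to constants. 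In the remaining regime $\mathrm{dist}(x,\partial E) > r/2$, where the sharp lower bound on $\mu$ is unavailable at scale $r$, one covers $\partial^*E\cap B_r(x)$ by a Vitali family of boundary-centered balls $B_s(y_i)$ at an intermediate scale $s = r^\beta \in (0,r)$ (in each of which the argument above applies directly) and balances the per-ball contribution $\sim s^{p-2d}$ against the covering cardinality $\sim (r/s)^{d-1}$ together with the upper perimeter bound $\mu \leq C_0 r^{d-1}$ from Lemma~\ref{lemma:upper_bound_perimeter}; optimising $\beta$ produces the stated exponent $(p-2d)/\max\{p-d,8d\}$, the $8d$ absorbing the Vitali multiplicity and the $M$-dependence.

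The main obstacle is precisely the last, ``annular'' sub-case: when $x$ sits at positive distance from $\partial E$ yet $B_r(x)$ clips $\partial E$ near its boundary sphere, the direct perimeter lower bound is unavailable and one must rely on the covering/rescaling procedure above. Threading the constants through this optimization in a way that gives a uniform, $M$-dependent prefactor and a concrete positive exponent is the only quantitatively delicate point; any positive exponent suffices for the downstream regularity argument in Theorem~\ref{thm:regularity}.
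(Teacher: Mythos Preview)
Your variance identity $\mu^2-|V|^2=\tfrac12\iint\|\nu_E(y)-\nu_E(z)\|^2$ and the reduction to Proposition~\ref{prop:brezis} are correct, and this route is genuinely different from --- and considerably shorter than --- the paper's proof, which works directly via integral-geometric slicing in every direction $\theta$, decomposing $\theta^\perp$ according to whether the one-dimensional slice has $0$, $1$, or $\geq 2$ boundary points in $(B_r)_{z_\theta^\perp}$, and then handling the sign cancellations in the $\Omega_1$ part by a separate two-case argument (comparing the high-energy set $\Gamma_r$ with $|\Omega_1^-|$). For $x\in\partial E$ your bound gives exponent $p-2d$, strictly better than the paper's.

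There is however a genuine gap in your ``annular'' sub-case $r/2<\mathrm{dist}(x,\partial E)<r$. The quantity $\mu-|V|$ is \emph{not} subadditive over a Vitali covering by boundary-centred balls $B_s(y_i)$: with $V_i=D\chi_E(B_s(y_i))$ and $\mu_i=|D\chi_E|(B_s(y_i))$ one has
\[
\mu-|V|\;=\;\sum_i(\mu_i-|V_i|)\;+\;\Bigl(\textstyle\sum_i|V_i|-\bigl|\sum_i V_i\bigr|\Bigr),
\]
and the second term --- the misalignment of the per-ball averaged normals --- is precisely what you are trying to control. Your sketch ignores it, and the optimisation you describe (balancing $s^{p-2d}$ against $(r/s)^{d-1}$) does not produce the denominator $\max\{p-d,8d\}$; that balance just gives $s^{p-2d}$ again, and nothing in your outline generates the $8d$.

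The gap is repaired without any covering: since $\mu-|V|\leq\sqrt{\mu^2-|V|^2}$, your variance bound plus Proposition~\ref{prop:brezis} give directly, for \emph{all} $x\in\Omega_{2r}$,
\[
Exc(E,x,r)\;\leq\;r^{-(d-1)}\sqrt{\mu^2-|V|^2}\;\lesssim\;r^{-(d-1)}\sqrt{C(M)\,(2r)^{p-2}\,M}\;\lesssim\;C'(M)\,r^{(p-2d)/2},
\]
still stronger than the stated exponent and requiring no lower bound on $\mu$. (The hypothesis of Proposition~\ref{prop:brezis} forces $x\in\Omega_{2r}$ rather than $\Omega_r$, a harmless restriction for the downstream use in Theorem~\ref{thm:regularity}, where only points of $\partial E$ matter anyway.)
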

	
	\begin{proof}
		\WithoutLoss we assume that $x=0$ and we denote by $B_r=B_r(0)$.
		
		We first claim that in general dimension $d$  the following integral geometric formula holds
		\begin{align*}
			r^{d-1}Exc(E,0,r)&=\frac{1}{C_{1,d}}\int_{\S^{d-1}}\int_{\partial^*E\cap B_r}|\langle\nu_E(y), \theta\rangle|\d\mathcal H^{d-1}(y)\d\theta\notag\\
			&-\frac{1}{C_{1,d}}\int_{\S^{d-1}}\Biggl\|\int_{\partial^*E\cap B_r}\langle\nu_E(y), \theta\rangle\d\mathcal H^{d-1}(y)\Biggr\|\d\theta,
		\end{align*}
		where $C_{1,d}$ is the constant defined in~\eqref{eq:Cd}.
		
		One can deduce the above directly from the definition of the excess,  from the identity
		\begin{equation*}
			\|z\|=\frac{1}{C_{1,d}}\int_{\S^{d-1}}|\langle z,\theta\rangle|\d\theta, \quad z\in\R^d
		\end{equation*}
		applied to $z=\int_{\partial^*E\cap B_r}\nu_E(y)\d\mathcal H^{d-1}(y)$ and the formula~\eqref{eq:igper}.
		In particular, by the classical slicing formulas~\eqref{eq:slicing} and~\eqref{eq:slicing_sign} one has that
		\begin{align}
			r^{d-1}Exc(E,0,r)&=\frac{1}{C_{1,d}}\int_{\S^{d-1}}\int_{\theta^\perp}\sum_{s\in\partial^*E_{z_\theta^\perp}\cap (B_r)_{z_\theta^\perp}}1\dz_\theta^\perp\d\theta\notag\\
			&-\frac{1}{C_{1,d}}\int_{\S^{d-1}}\Biggl|\int_{\theta^\perp}\sum_{s\in\partial^*E_{z_\theta^\perp}\cap (B_r)_{z_\theta^\perp}}\mathrm{sign}(\langle\nu_E(z_\theta^\perp+s\theta),\theta\rangle)\dz_\theta^\perp\Biggr|\d\theta.\label{eq:146}
		\end{align}
		
		In order to show~\eqref{eq:fexc}, we define
		\begin{align*}
			\Omega_1(\theta^\perp)=\{z_\theta^\perp\in\theta^\perp:\per^{\mathrm{1D}}(E_{z_\theta^\perp},(B_r)_{z_\theta^\perp})=1\},\notag\\
			\Omega_2(\theta^\perp)=\{z_\theta^\perp\in\theta^\perp:\per^{\mathrm{1D}}(E_{z_\theta^\perp},(B_r)_{z_\theta^\perp})\geq 2\}.
		\end{align*}
		
		Using the triangle inequality we bound~\eqref{eq:146} in the following way
		\begin{align}
			r^{d-1}Exc(E,0,r)&\leq\frac{1}{C_{1,d}}\int_{\S^{d-1}}\Biggl[\int_{\Omega_1(\theta^\perp)}1\dz_\theta^\perp-\Biggl|\int_{\Omega_1(\theta^\perp)}\mathrm{sign}(\langle\nu_E(z_\theta^\perp+z_\theta\theta),\theta\rangle)\dz_\theta^\perp\Biggr|\Biggr]\d\theta\notag\\
			&+\frac{1}{C_{1,d}}\int_{\S^{d-1}}\int_{\Omega_2(\theta^\perp)}2\per^{\mathrm{1D}}(E_{z_\theta^\perp}, (B_r)_{z_\theta^\perp})\dz_\theta^\perp\d\theta.\label{eq:omega12}
		\end{align}
		
		Now observe that, as in~\eqref{eq:5.23},  using the formula~\eqref{eq:f0intgeom}, Jensen's inequality  and the fact that whenever $z_\theta^\perp\in\Omega_2(\theta^\perp)$ then $\exists\,s,s^+\in\partial^*E_{z_\theta^\perp}\cap (B_r)_{z_\theta^\perp}$ and thus  $|s-s^+|\leq r$, one obtains the lower bound
		\begin{align*}
			\overline\Fcal_{0,p,d}(E,B_r)&\gtrsim\int_{\S^{d-1}}\int_{(B_r)_\theta^\perp}\frac{(\per^{\mathrm{1D}}(E_{z_\theta^\perp}, (B_r)_{z_\theta^\perp})-1)^{p-d}}{r^{p-d-1}}\dz_\theta^\perp\d\theta\notag\\
			&\gtrsim r^{2d-p}r^{(1-d)(p-d)}\Biggl(\int_{\S^{d-1}}\int_{(B_r)_\theta^\perp}[\per^{\mathrm{1D}}(E_{z_\theta^\perp}, (B_r)_{z_\theta^\perp})-1]\dz_{\theta}^\perp\d\theta\Biggr)^{p-d}.
		\end{align*}
		Hence, the part of the excess relative to the second term in~\eqref{eq:omega12} can be bounded in the following way:
		\begin{align}
			\frac{1}{r^{d-1}}	\int_{\S^{d-1}}\int_{\Omega_2(\theta^\perp)}\per^{\mathrm{1D}}(E_{z_\theta^\perp}, (B_r)_{z_\theta^\perp})\dz_\theta^\perp\d\theta&\lesssim\frac{1}{r^{d-1}}\int_{\S^{d-1}}\int_{\Omega_2(\theta^\perp)}{\bigl[\per^{\mathrm{1D}}(E_{z_\theta^\perp}, (B_r)_{z_\theta^\perp})-1\bigr]}\dz_\theta^\perp\d\theta\notag\\
			&\lesssim r^{(p-2d)/(p-d)}\overline\Fcal_{0,p,d}(E, B_r)^{1/(p-d)},
		\end{align}
		implying in particular (by the boundedness of the functional on $\Omega$) the decay for the excess given in~\eqref{eq:fexc}.
		
		The estimate of the first term in~\eqref{eq:omega12} is instead more involved, due to the necessity of more precise estimates in the case of cancellations inside the second integral.
		
		We denote by
		\begin{align*}
			I&=\int_{\S^{d-1}}\Biggl[\int_{\Omega_1(\theta^\perp)}1\dz_\theta^\perp-\Biggl|\int_{\Omega_1(\theta^\perp)}\mathrm{sign}(\langle\nu_E(z_\theta^\perp+z_\theta\theta),\theta\rangle)\dz_\theta^\perp\Biggr|\Biggr]\d\theta,\notag\\
			I_\theta&=\int_{\Omega_1(\theta^\perp)}1\dz_\theta^\perp-\Biggl|\int_{\Omega_1(\theta^\perp)}\mathrm{sign}(\langle\nu_E(z_\theta^\perp+z_\theta\theta),\theta\rangle)\dz_\theta^\perp\Biggr|,\quad\theta\in\S^{d-1}
		\end{align*}
		and choose $\bar\theta\in\S^{d-1}$ an angle such that $\bar\theta \in \mathrm{argmax}_\theta I_{\theta}$.
		Such a maximum point always exists as the map $\theta \to I_{\theta}$  is continuous and, by Lemma~\ref{lemma:upper_bound_perimeter},
		% \[
		%   I_{\bar\theta}\geq\frac12\sup_{\theta}I_\theta.
		% \]
		% Notice that  an angle $\bar\theta$ satisfying the above condition exists due to the fact that for $r<R_0$ (see Lemma~\ref{lemma:per_upper_bound})
		\begin{align*}
			I_\theta\lesssim\per(E,B_r)\lesssim r^{d-1}.
		\end{align*}
		Then, decompose $\Omega_1(\bar\theta^\perp)$ as follows:
		\begin{align*}
			\Omega_1(\bar\theta^\perp)&=\Omega_1^+\cup\Omega_1^-,\notag\\
			\Omega_1^+&=\{z_{\bar\theta}^\perp\in\Omega_1(\bar\theta^\perp):\,\langle \nu_E(z_{\bar\theta}^\perp+s(z_{\bar\theta}^\perp)\bar\theta), \bar\theta\rangle>0\},\notag\\
			\Omega_1^-&=\{z_{\bar\theta}^\perp\in\Omega_1(\bar\theta^\perp):\,\langle \nu_E(z_{\bar\theta}^\perp+s(z_{\bar\theta}^\perp)\bar\theta), \bar \theta\rangle<0\}.
		\end{align*}
		whereby $s(z_{\bar \theta}^\perp)$ we denote the point such that
		$\partial^*E_{z_{\bar\theta}^\perp}\cap (B_r)_{z_\theta^\perp}=\{z_{\bar\theta}^\perp+s(z_{\bar\theta}^\perp)\bar \theta\}$.

		Assume \withoutLoss that $|\Omega_1^+|\geq|\Omega_1^-|$ (the other case can be treated analogously).
		Notice that, under this non-restrictive assumption,
		\begin{equation}
			I_{\bar\theta}=2|\Omega_1^-|.
		\end{equation}
		The proof of the Lemma reduces to show that $|\Omega_1^-|\lesssim r^{d-1+\alpha}$, where $\alpha=(p-2d)/(8d)$.
		
		Assume on the contrary that
		\begin{equation}\label{eq:o1-alpha}
			|\Omega_{1}^{-}|\geq Cr^{d-1 + \alpha}
		\end{equation}
		and denote by $\beta = p-2d$.
		
		Notice that
		\begin{align}
			e(z)&:= \int_{\S^{d-1}} \frac{|\Scal{\nu_{E}(z)}{\theta}|}{\r_\theta(z)^{d-1+\beta}} \d\theta\notag\\
			&\geq\int_{\insieme{\theta \in \S^{d-1}:\ \r_\theta(z) < r}}\frac{|\Scal{\nu_E(z)}{\theta}|}{r^{d-1+\beta}}\d\theta\notag\\
			&\geq \hat C\frac{	\Big|\insieme{\theta \in \S^{d-1}:\ \r_\theta(z) < r}\Big|^2}{r^{d-1+\beta}}. \label{eq:etheta}
		\end{align}
		
		Let now $H_{\nu_E(z)}(z)$ be the affine halfspace given by $\{y:\,\Scal{y-z}{\nu_E(z)}<0\}$.
		By the blow up properties of the reduced boundary~\eqref{eq:stimeblowup}, whenever $\Scal{\nu_E(z)}{\theta}>0$, then $(z, z+\r_\theta(z)\theta)\subset \R^d\setminus E$ and whenever $\Scal{\nu_E(z)}{\theta}<0$, then $(z, z+\r_\theta(z)\theta)\subset E$.
		By~\eqref{eq:etheta},  we have that whenever $z\in\partial^*E$ is such that $e(z) < \bar C_1\hat C r^{-d+1-\beta/2}/4$, then  $|\insieme{\theta \in \S^{d-1}:\ \r_\theta(z) < r}|<\bar C_1r^{\beta/4}/4$, where $\bar C_1$ is the constant of Lemma~\ref{lemma:per_low_bound}.
		In particular, by the above consequences of the blow up properties at points of the reduced boundary, there exists a cone with vertex in $z$ and of angles of total measure  $r^{\beta/4}$, that we denote by $\tilde C:=\tilde C(z, \nu_{E}(z), r^{\beta/4})$ such that (see Figure~\ref{fig:3})
		\begin{equation}\label{eq:cone}
			\|\chi_{H_{\nu_E(z)}(z)}-\chi_{E}\|_{L^1(B_r)}\leq \|\chi_{\tilde C}\|_{L^1(B_r)}\leq\frac{\bar C_1}{4} r^{ d + \beta/4}.
		\end{equation}
		
		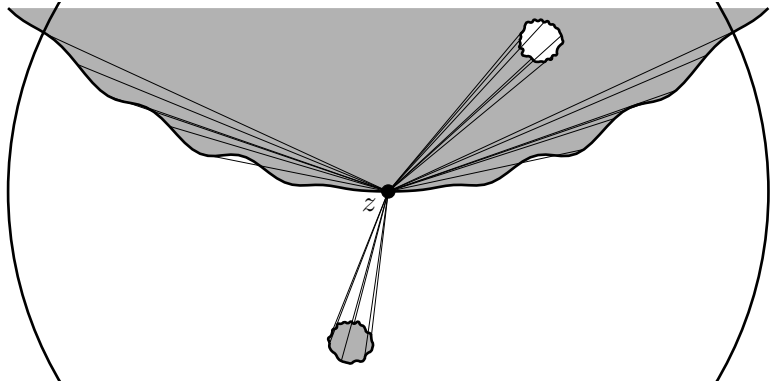
\begin{figure}
			\centering
			\begin{tikzpicture}
				\begin{scope}[scale=2.5]
					\tikzset{arrow/.style={-latex}}
					\clip (-2.1,-1) rectangle (2.1,1);
					\draw[line width=1pt,fill=black!30] (-2,0.97264) -- (-1.9596,0.93536) -- (-1.91919,0.90551) -- (-1.87879,0.88104) -- (-1.83838,0.85858) -- (-1.79798,0.83426) -- (-1.75758,0.80467) -- (-1.71717,0.7678) -- (-1.67677,0.72368) -- (-1.63636,0.67443) -- (-1.59596,0.62388) -- (-1.55556,0.5766) -- (-1.51515,0.53674) -- (-1.47475,0.50693) -- (-1.43434,0.4875) -- (-1.39394,0.47625) -- (-1.35354,0.46898) -- (-1.31313,0.46045) -- (-1.27273,0.44579) -- (-1.23232,0.42177) -- (-1.19192,0.38776) -- (-1.15152,0.34596) -- (-1.11111,0.30089) -- (-1.07071,0.25829) -- (-1.0303,0.22354) -- (-0.9899,0.20023) -- (-0.94949,0.18916) -- (-0.90909,0.18804) -- (-0.86869,0.19216) -- (-0.82828,0.19563) -- (-0.78788,0.19306) -- (-0.74747,0.18108) -- (-0.70707,0.15927) -- (-0.66667,0.1302) -- (-0.62626,0.09862) -- (-0.58586,0.06991) -- (-0.54545,0.0484) -- (-0.50505,0.03605) -- (-0.46465,0.03199) -- (-0.42424,0.03302) -- (-0.38384,0.03497) -- (-0.34343,0.03445) -- (-0.30303,0.03009) -- (-0.26263,0.02286) -- (-0.22222,0.01513) -- (-0.18182,0.00901) -- (-0.14141,0.00505) -- (-0.10101,0.00255) -- (-0.06061,0.00092) -- (-0.0202,0.0001) -- (0.0202,0.0001) -- (0.06061,0.00092) -- (0.10101,0.00255) -- (0.14141,0.00505) -- (0.18182,0.00901) -- (0.22222,0.01513) -- (0.26263,0.02286) -- (0.30303,0.03009) -- (0.34343,0.03445) -- (0.38384,0.03497) -- (0.42424,0.03302) -- (0.46465,0.03199) -- (0.50505,0.03605) -- (0.54545,0.0484) -- (0.58586,0.06991) -- (0.62626,0.09862) -- (0.66667,0.1302) -- (0.70707,0.15927) -- (0.74747,0.18108) -- (0.78788,0.19306) -- (0.82828,0.19563) -- (0.86869,0.19216) -- (0.90909,0.18804) -- (0.94949,0.18916) -- (0.9899,0.20023) -- (1.0303,0.22354) -- (1.07071,0.25829) -- (1.11111,0.30089) -- (1.15152,0.34596) -- (1.19192,0.38776) -- (1.23232,0.42177) -- (1.27273,0.44579) -- (1.31313,0.46045) -- (1.35354,0.46898) -- (1.39394,0.47625) -- (1.43434,0.4875) -- (1.47475,0.50693) -- (1.51515,0.53674) -- (1.55556,0.5766) -- (1.59596,0.62388) -- (1.63636,0.67443) -- (1.67677,0.72368) -- (1.71717,0.7678) -- (1.75758,0.80467) -- (1.79798,0.83426) -- (1.83838,0.85858) -- (1.87879,0.88104) -- (1.91919,0.90551) -- (1.9596,0.93536) -- (2,0.97264);
					\draw[line width=1pt,smooth,tension=1,fill=white] plot coordinates {(0.68897,0.8) (0.69503,0.78294) (0.69051,0.76345) (0.70945,0.75247) (0.70409,0.72793) (0.72147,0.71824) (0.74233,0.71646) (0.75526,0.70571) (0.76853,0.69137) (0.78633,0.68742) (0.80445,0.6896) (0.8229,0.68784) (0.83713,0.7021) (0.85752,0.70037) (0.865,0.72039) (0.88497,0.72472) (0.88733,0.74478) (0.89241,0.76063) (0.90326,0.77455) (0.9185,0.79043) (0.91477,0.80927) (0.91098,0.82735) (0.90234,0.8436) (0.89507,0.86012) (0.87925,0.87021) (0.86903,0.88455) (0.85055,0.88756) (0.83896,0.90274) (0.82001,0.89801) (0.80446,0.91076) (0.78735,0.90416) (0.76794,0.91069) (0.75405,0.89684) (0.73232,0.89806) (0.72022,0.88306) (0.71606,0.86308) (0.70822,0.84817) (0.69675,0.83447) (0.69796,0.81658) (0.68897,0.8)};
					\draw[line width=1pt,smooth,tension=1,fill=black!30] plot coordinates {(-0.3066,-0.8) (-0.31021,-0.81791) (-0.305,-0.83505) (-0.29399,-0.84933) (-0.28002,-0.86013) (-0.27097,-0.87389) (-0.26138,-0.88893) (-0.24701,-0.89908) (-0.23029,-0.90458) (-0.21323,-0.90898) (-0.1955,-0.91162) (-0.17927,-0.90156) (-0.1629,-0.89783) (-0.14128,-0.90171) (-0.12624,-0.89034) (-0.11995,-0.87092) (-0.10851,-0.85785) (-0.09083,-0.84651) (-0.08352,-0.82871) (-0.08065,-0.80964) (-0.0873,-0.7909) (-0.08959,-0.77279) (-0.09388,-0.75479) (-0.11383,-0.74551) (-0.12113,-0.73013) (-0.13027,-0.71459) (-0.14641,-0.70719) (-0.15872,-0.69115) (-0.17981,-0.70109) (-0.19559,-0.69055) (-0.21235,-0.69829) (-0.23091,-0.6933) (-0.2468,-0.70137) (-0.26646,-0.70372) (-0.27263,-0.72439) (-0.2909,-0.73169) (-0.29557,-0.74984) (-0.3022,-0.76588) (-0.31585,-0.78117) (-0.3066,-0.8)};
					\draw[fill=black] (0,0.00003) circle[radius=1pt];
					\draw[opacity=0.8,line width=.1pt] (0,0.00003) -- (0.68897,0.8);
					\draw[opacity=0.8,line width=.1pt] (0,0.00003) -- (0.75526,0.70571);
					\draw[opacity=0.8,line width=.1pt] (0,0.00003) -- (0.865,0.72039);
					\draw[opacity=0.8,line width=.1pt] (0,0.00003) -- (0.91098,0.82735);
					\draw[opacity=0.8,line width=.1pt] (0,0.00003) -- (0.82001,0.89801);
					\draw[opacity=0.8,line width=.1pt] (0,0.00003) -- (0.71606,0.86308);
					\draw[opacity=0.8,line width=.1pt] (0,0.00003) -- (-0.3066,-0.8);
					\draw[opacity=0.8,line width=.1pt] (0,0.00003) -- (-0.24701,-0.89908);
					\draw[opacity=0.8,line width=.1pt] (0,0.00003) -- (-0.12624,-0.89034);
					\draw[opacity=0.8,line width=.1pt] (0,0.00003) -- (-0.08959,-0.77279);
					\draw[opacity=0.8,line width=.1pt] (0,0.00003) -- (-0.17981,-0.70109);
					\draw[opacity=0.8,line width=.1pt] (0,0.00003) -- (-0.2909,-0.73169);
					\draw[line width=.1pt,opacity=0.8,color=black] (0,0.00003) -- (1.0303,0.22354);
					\draw[line width=.1pt,opacity=0.8,color=black] (0,0.00003) -- (1.19192,0.38776);
					\draw[line width=.1pt,opacity=0.8,color=black] (0,0.00003) -- (1.35354,0.46898);
					\draw[line width=.1pt,opacity=0.8,color=black] (0,0.00003) -- (1.51515,0.53674);
					\draw[line width=.1pt,opacity=0.8,color=black] (0,0.00003) -- (1.67677,0.72368);
					\draw[line width=.1pt,opacity=0.8,color=black] (0,0.00003) -- (1.83838,0.85858);
					\draw[line width=.1pt,opacity=0.8,color=black] (0,0.00003) -- (-0.9899,0.20023);
					\draw[line width=.1pt,opacity=0.8,color=black] (0,0.00003) -- (-1.15152,0.34596);
					\draw[line width=.1pt,opacity=0.8,color=black] (0,0.00003) -- (-1.31313,0.46045);
					\draw[line width=.1pt,opacity=0.8,color=black] (0,0.00003) -- (-1.47475,0.50693);
					\draw[line width=.1pt,opacity=0.8,color=black] (0,0.00003) -- (-1.63636,0.67443);
					\draw[line width=.1pt,opacity=0.8,color=black] (0,0.00003) -- (-1.79798,0.83426);
					\draw[line width=1pt] (0,0.00003) circle[radius=2];
					\draw (-0.1,-0.06998) node {$z$};
				\end{scope}
			\end{tikzpicture}
			\caption{By the blow-up properties~\eqref{eq:stimeblowup}, if the set $\{\theta\in\S^{d-1}:\,\r_\theta(z)<r\}$ has small measure, then the set $E\cap B_r(z)$ is close to $H_{\nu_E(z)}(z)\cap B_r(z)$}
			\label{fig:3}
		\end{figure}

		On the other hand, setting  $\Gamma_r = \insieme{z \in \partial ^{*} E \cap  B_r:\, e(z) \geq \bar C_1\hat C r^{-d+1-\beta/2}}$, it is immediate to notice that
		\begin{equation}
			\label{eq:4}
			M\geq \overline\Fcal_{0,p,d}(E, B_r)=\int _{\partial^{*} E \cap B_{r}} e(z)\d\mathcal H^{d-1}(z) \gtrsim |\Gamma_r| r^{-d+1-\beta/2}.
		\end{equation}
		We then consider the following two cases:
		\begin{itemize}
			\item [\textbf{Case 1}] Assume $|\Gamma_r| > |\Omega_{1}^{-}|/2$.
			In this case we have, by~\eqref{eq:o1-alpha} and~\eqref{eq:4} that
			\begin{equation*}
				% \label{eq:6}
				Cr^{\alpha -\beta/2} \leq |\Omega_1^{-}| r^{-d+1-\beta/2} \lesssim 2|\Gamma_r|  r^{-d+1-\beta/2} \leq 2M.
			\end{equation*}
			Thus, if $\alpha<\beta/2=(p-2d)/2$ this leads to a contradiction for $r$ smaller than some uniform $R\ll1$ depending only on $M$, $\alpha-\beta/2$.
			\item [\textbf{Case 2}] Assume $|\Gamma_r| \leq |\Omega_{1}^{-}|/2$. In particular, there exist $x\in \partial^*E\cap P_{\bar \theta^\perp}^{-1}(\Omega_1^+)\cap B_r\setminus\Gamma_r$ and $y\in  \partial^*E\cap P_{\bar \theta^\perp}^{-1}(\Omega_1^-)\cap B_r\setminus \Gamma_r$, where $P_{\bar \theta^\perp}^{-1}:\R^d\to\bar \theta^\perp$ is the orthogonal projection map on $\bar \theta^\perp$.

			In general, by Lemma~\ref{lemma:epointwisenu} and the definition of $\Gamma_r$, whenever  $x,y \in \partial^{*}E\cap B_r \setminus \Gamma_r$, then
			\begin{equation}\label{eq:nuxy}
				\|\nu_{E}(x) - \nu_{E}(y)\| \lesssim r^{\beta/4}.
			\end{equation} %Indeed, because of the cone condition~\eqref{eq:cone}, we have that  inside $B_r$ both  $\chi_{H_{\nu_E(x)}(x)}$ and $\chi_{H_{\nu_{E}(y)}(y)}$ coincide with the function  $\chi_E$ up to a set of measure $r^{d+\beta/4}$.
			%Moreover, by simple geometric considerations,  $|H_{\nu_E(x)}(x) \Delta H_{\nu_E(y)}(y)| \sim r^{d} |\nu_{E}(x) - \nu_{E}(y)|$.
			
			Moreover, for  $x,y$ as above such that $x_{\bar\theta^\perp}\in \Omega_{1}^{+}$ and $y_{\bar\theta^\perp} \in \Omega_{1}^{-}$, it holds $\Scal{\nu_{E}(x)}{\bar\theta}>0$ and $\Scal{\nu_{E}(y)}{\bar \theta}<0$, thus combining with~\eqref{eq:nuxy} one has that $\max\{|\Scal{\nu_E(x)}{\bar \theta}|, |\Scal{\nu_E(y)}{\bar \theta}|\} \lesssim r^{\beta/4}$.

			For $x \in \partial^*E\cap B_r\setminus \Gamma_r$, assume now that there exists $y \in \partial^*E\cap B_r$ such that $\dist\bigl(y-x, \partial H_{\nu_E(x)}(x)-x\bigr)> 2 r^{1+\beta/(8d)}$.
			Because of the volume density estimate~\eqref{eq:vol_low_bound_0} of Lemma~\ref{lemma:per_low_bound} we have that there exists $\bar C_1$ such that $|B_{r^{1+\beta/(8d)}}(y) \cap E| > \bar C_1r^{d+\beta/8}$ and $|B_{r^{1+\beta/(8d)}}(y) \setminus E| > \bar C_1r^{d+\beta/8}$, and by the fact that $\dist\bigl(y-x, \partial H_{\nu_E(x)}(x)-x\bigr)> 2 r^{1+\beta/(8d)}$ either $B_{r^{1+\beta/(8d)}}(y)\subset \{z:\,\Scal{\nu_E(x)}{z-x}>0\}$ or $B_{r^{1+\beta/(8d)}}(y)\subset \{z:\,\Scal{\nu_E(x)}{z-x}<0\}$.
			This,  by the blow up properties of the reduced boundary~\eqref{eq:stimeblowup}, would contradict the cone condition~\eqref{eq:cone} for $H_{\nu_E(x)}(x)$ and the cone $\tilde C$ centered at $x$, provided $r$ is sufficiently small (being $\beta/8<\beta/4$).
			Thus, we have that $\sup_{y\in\partial ^{*}E\cap B_r}\dist\bigl( y-x , \partial H_{\nu_{E}(x)}(x)-x\bigr) \leq 2 r^{1+\beta/(8d)}$ and then  $\partial ^{*}E \cap B_{r} \subset (\partial H_{\nu_{E}(x)}(x))_{2r^{1+\beta/(8d)}}$, where  $(\partial H_{\nu_{E}(x)}(x))_{2r^{1+\beta/(8d)}}$ is the $2r^{1+\beta/(8d)}$-neighbourhood of $\partial H_{\nu_{E}(x)}(x)$ defined in Section~\ref{sec:prel}.
			
			To conclude it is sufficient to notice that whenever $x\in \partial^*E\cap P_{\bar \theta^\perp}^{-1}(\Omega_1^+)\cap B_r\setminus\Gamma_r$, $|\Scal{\nu_E(x)}{\bar \theta}|\lesssim r^{\beta/4}$ and thus
			\begin{equation*}
				P_{\bar \theta^\perp}((\partial H_{\nu_{E}(x)}(x))_{r^{1+\beta/(8d)}}) \lesssim r^{d-1+\beta/8},
			\end{equation*}
			and thus
			\begin{equation*}
				r^{d-1+\alpha}|\Omega_{1}^{-}| \leq
				P_{\bar\theta^\perp}((\partial H_{\nu_{E}(x)}(x))_{2r^{1+\beta/(8d)}}) \lesssim r^{d-1+\beta/(8d)},
			\end{equation*}
			which yields a contradiction for $\alpha < \beta/(8d)=(p-2d)/(8d)$.
			
		\end{itemize}

		\begin{figure}
			\centering
			\begin{tikzpicture}[scale=3]
				\tikzset{arrow/.style={-latex}}
				\clip (-1.8,-1.11111) rectangle (1.85,1.11111);
				\draw[line width=1pt] (-2,0) -- (0.8,0);
				\begin{scope}[shift={(-.5, 0)},rotate=15]
					\draw[line width=1pt,pattern=north west hatch,hatch distance=6pt, hatch thickness=.3pt,opacity=0.3] (0.3,-5) -- (0.3,5) -- (-0.3,5) -- (-0.3,-5) --cycle;
					\draw[line width=1pt] (0.3,-5) -- (0.3,5) -- (-0.3,5) -- (-0.3,-5) --cycle;
					\draw[line width=1pt,color=brown] (0,-5) -- (0,5);
					\draw (1.1,-0.85) node[fill=white,inner sep=.5pt,text=brown] {$\partial H_{\nu_E(x)}$};
					\draw[arrow,line width=1pt,color=brown] (0,-0.85) .. controls (0.2,-1) and (0.8,-1) .. (1,-0.85);
				\end{scope}
				\draw[line width=1pt,arrow] (-1.5,0) -- (-1.5,0.5);
				\draw (-1.6,0.6) node {$\bar{\theta}$};
				\draw[pattern=north west hatch,hatch distance=6pt, hatch thickness=.3pt,opacity=0.3] (0.3,0.8) rectangle (0.8,0.9);
				\draw (0.3,0.8) rectangle (0.8,0.9);
				\draw (1.33,0.85) node {$\big(\partial H_{\nu_E(x)}\big)_{2r^{1+\beta/(8d)}}$};
				\draw (-0.12,0.2) node[fill=white,text=blue,inner sep=0.01pt] {\small$ P_{\bar \theta^\perp}\big((\partial H_{\nu_{E}(x)}(x))_{r^{1+\beta/(8d)}}\big)$};
				\draw[line width=1pt,dashed,dash pattern=on 3.24pt off 2pt] (-1.12,0) -- (-1.12,1.11111);
				\draw[line width=1pt,dashed,dash pattern=on 3.24pt off 2pt] (0.12,0) -- (0.12,-1.11111);
				\draw[decorate,decoration={brace,amplitude=5pt,mirror,raise=1pt},line width=1pt,color=blue] (0.12,0) -- (-1.12,0);
				
			\end{tikzpicture}
			\caption{A picture for Case 2 of Lemma~\ref{lemma:excf}, at a point $x$ such that $|\Scal{\nu_E(x)}{\bar\theta}|\ll1$.}
			\label{fig:x2}
		\end{figure}

	\end{proof}

	\begin{proof}  [Proof of Theorem~\ref{thm:regularity}: ]
		In the \hyperref[sec:appendix]{Appendix} we give a self-contained proof of the fact that  uniform upper and lower bounds on perimeter and volume, together with uniform  power law decay of the excess as in Lemmas~\ref{lemma:upper_bound_perimeter},~\ref{lemma:per_low_bound} and~\ref{lemma:excf}, imply the Lipschitz regularity of the boundary given in Theorem~\ref{thm:regularity}. In particular, the boundary of $E$ is of class $C^{1,\alpha}$, with $\alpha=\frac{p-2d}{2\max\{p-d,8d\}}$. Though this general strategy is the same used to prove regularity of quasi-minimizers, we provide a self-contained proof of this fact not exploiting directly the quasi-minimality property.
	\end{proof}

	\subsection{(iv) Rigidity: from $d=2$ to arbitrary dimension}\label{subs:5.4}
	
	In dimension $d=2$, whenever $p\geq d+3$ one has that Proposition~\ref{prop:brezis}, Theorem~\ref{thm:regularity} and Lemma~\ref{lemma:d+3} hold. Indeed, in this case $2d=4<5=d+3$, so  $p>2d$ whenever $p\geq d+3$. This gives the rigidity Theorem~\ref{thm:rigidity} in dimension $d=2$.
	
	In general dimension $d\geq 3$, to overcome the fact that $2d\geq d+3$ and thus $p\geq d+3$ does not imply $p>2d$, we exploit another integral geometric formulation for the functional $\overline \Fcal_{0,p,d}$ on two-dimensional affine planes and we recover the rigidity result from two-dimensional rigidity on such planes.
	
	First of all, we prove  an analogue of the integral geometric formulation~\eqref{eq:fintgeom}, by decomposing the functional $\overline \Fcal_{0,p,d}$ along the Grassmanian of two-dimensional linear subspaces in $\R^d$ instead of one dimensional linear subspaces. For the notation, see Section~\ref{sec:prel}.
	
	Let us preliminarily recall some basic facts about slicing of sets of finite perimeter with $k$-dimensional affine planes.
	
	\begin{remark}
		\label{rmk:normalSlice}
		Let $E\subset \R^{d}$ be a set of locally finite perimeter and $\pi_k$ be a $k$-dimensional plane in $\R^{d}$.
		Denote by $P_{\pi_k}$ the orthogonal projection on $\pi_k$. Then for almost any $x^{\perp}_{\pi_k}$ in $\pi_k^{\perp}$, the set $E_{x^{\perp}_{\pi_k}}$ is a set of finite perimeter. Moreover, for $\hausd^{d-k}$-almost  every $x^{\perp}_{\pi_k}$ and for $\hausd^{k}$-almost every $x_{\pi_k} \in \partial ^{*} E_{x^{\perp}_{\pi_k}} $ one has that
		\begin{equation}
			\label{eq:normalSlice}
			\nu_{E_{x^{\perp}_{\pi_k}}}(x_{\pi_k}) = \frac{P_{\pi_k}\big(\nu_{E}(x^{\perp}_{\pi_k}, x_{\pi_k})\big)}{\|P_{\pi_k}\big(\nu_{E}(x^{\perp}_{\pi_k}, x_{\pi_k})\big)\|}.
		\end{equation}
	\end{remark}

	\begin{proposition}\label{prop:slicing}
		Let $\overline\Fcal_{0,p,d}$ be the functional defined in~\eqref{eq:f0p}, $\Omega\subset\R^d$ bounded and open.  One has that
		\begin{align}
			\overline \Fcal_{0,p,d}(E, \Omega)&=\frac{1}{2}\int_{G(2,\R^d)}\int_{\pi_2^\perp}\int_{\S^1_{\pi_2}}\int_{\partial^*E_{x_{\pi_2}^\perp}\cap \Omega}\frac{|\langle\nu_{E_{x_{\pi_2}^\perp}}(x_{\pi_2}),\theta\rangle|}{\r_\theta(x_{\pi_2})^{p-d-1}}\d\mathcal H^1(x_{\pi_2})\d\theta\dx_{\pi_2}^\perp\d\mu_{2,d}(\pi_2)\notag\\
			&=\frac{1}{2}\int_{G(2,\R^d)}\int_{\pi_2^\perp}\overline  F_{0,p,{\pi_2}}^{\mathrm{2D}}(E_{x_{\pi_2}^\perp}, \Omega_{x_{\pi_2}^\perp})\dx_{\pi_2}^\perp\d\mu_{2,d}(\pi_2),
		\end{align}
		where
		\begin{equation}\label{eq:2dformula}
			\overline  F_{0,p,{\pi_2}}^{\mathrm{2D}}(E_{x_{\pi_2}^\perp}, \Omega_{x_{\pi_2}^\perp})=\int_{\S^1_{\pi_2}}\int_{\partial^*E_{x_{\pi_2}^\perp}\cap\Omega_{x_{\pi_2}^\perp}}\frac{|\langle\nu_{E_{x_{\pi_2}^\perp}}(x_{\pi_2}),\theta\rangle|}{\r_\theta(x_{\pi_2})^{p-d-1}}\d\mathcal H^1(x_{\pi_2})\d\theta.
		\end{equation}
		
	\end{proposition}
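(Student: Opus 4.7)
The plan is to combine two classical tools: the Haar-type decomposition \eqref{eq:haar} with $k=2$, and the coarea-type slicing formula for sets of finite perimeter along a two-dimensional affine plane family.

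First I would use \eqref{eq:haar} to rewrite the outer $\S^{d-1}$-integration in the definition \eqref{eq:f0p} of $\overline{\Fcal}_{0,p,d}$ as an iterated integral over $\mu_{2,d}$ on $G(2,\R^d)$ and $\d\theta$ on $\S^1_{\pi_2}$. This reduces the problem to showing, for fixed $\pi_2$ and $\theta\in\S^1_{\pi_2}$, that
\begin{equation*}
\int_{\partial^*E\cap\Omega}\frac{|\langle\nu_E(x),\theta\rangle|}{\r_\theta(x)^{p-d-1}}\,\d\hausd^{d-1}(x)=\int_{\pi_2^\perp}\int_{\partial^*E_y\cap\Omega_y}\frac{|\langle\nu_{E_y}(z),\theta\rangle|}{\r_\theta(z)^{p-d-1}}\,\d\hausd^1(z)\,\dy.
\end{equation*}

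To obtain this slice identity, I would apply the classical slicing formula for finite-perimeter sets along the 2-plane family $\{y+\pi_2\}_{y\in\pi_2^\perp}$,
\begin{equation*}
\int_{\partial^*E\cap\Omega}g(x)\,\|P_{\pi_2}(\nu_E(x))\|\,\d\hausd^{d-1}(x)=\int_{\pi_2^\perp}\int_{\partial^*E_y\cap\Omega_y}g\,\d\hausd^1\,\dy,
\end{equation*}
where $P_{\pi_2}$ is the orthogonal projection onto $\pi_2$. Two observations let me fit the integrand into this formula: since $\theta\in\pi_2$, Remark~\ref{rmk:normalSlice} gives $|\langle\nu_E(x),\theta\rangle|=\|P_{\pi_2}(\nu_E(x))\|\,|\langle\nu_{E_y}(z),\theta\rangle|$ at a.e. point $x=y+z$ of $\partial^*E$ with $z\in\partial^*E_y$; and the line $x+\R\theta$ lies entirely in the affine 2-plane $y+\pi_2$, so $\r_\theta(x)$ computed in $\R^d$ coincides with $\r_\theta(z)$ computed inside the 2D slice $E_y$. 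Substituting $g(x)=|\langle\nu_{E_y}(z),\theta\rangle|\,\r_\theta(z)^{-(p-d-1)}$ in the slicing formula then yields the slice identity.

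Combining these two steps and rearranging with Fubini produces the claim, with the inner $(\theta,z)$-double integral identified as $\overline{F}^{\mathrm{2D}}_{0,p,\pi_2}(E_y,\Omega_y)$ of \eqref{eq:2dformula}. The measure-theoretic subtlety is verifying that for $\mu_{2,d}$-a.e.\ $\pi_2$ the sliced set $E_y$ has finite perimeter in $y+\pi_2$ for a.e.\ $y\in\pi_2^\perp$ and that \eqref{eq:normalSlice} holds on its reduced boundary, which follows from the standard Fubini-type results in geometric measure theory. I expect the main bookkeeping obstacle to be reconciling the factor $\frac{1}{2}$ in the statement with the normalization of $\mu_{2,d}$ in \eqref{eq:haar}; this traces to the $\theta\leftrightarrow-\theta$ symmetry on $\S^1_{\pi_2}$ and requires a careful normalization check between the definition \eqref{eq:f0p} and the Haar identity.
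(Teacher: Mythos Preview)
Your approach is correct and genuinely different from the paper's. The paper does not slice from $d$ dimensions down to $2$ dimensions; instead it passes through the one-dimensional formulation \eqref{eq:f0intgeom} on both sides. Concretely, the paper applies the one-dimensional slicing formula \eqref{eq:slicing} inside each two-dimensional slice $E_{x_{\pi_2}^\perp}$ along $\theta\in\S^1_{\pi_2}$, then uses Fubini to merge $\pi_2^\perp\times(\pi_2)_\theta^\perp$ into $\theta^\perp$, applies \eqref{eq:haar}, and finally uses \eqref{eq:slicing} once more in reverse to recover the boundary integral defining $\overline\Fcal_{0,p,d}$. This route never invokes Remark~\ref{rmk:normalSlice} or a codimension-$(d-2)$ coarea formula; it only uses the one-dimensional slicing identity, which is stated explicitly in the paper. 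Your route is more direct conceptually and makes transparent why the two-dimensional normal and $\r_\theta$ agree with their ambient counterparts, at the cost of invoking the $2$-plane coarea formula and \eqref{eq:normalSlice}. Regarding the factor $\tfrac12$: the paper's own computation produces the identity \emph{without} it, so your instinct that this is a normalization artifact is right; the chain of equalities in the paper's proof matches \eqref{eq:haar} exactly and yields no extra factor.
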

	
	\begin{proof}
		By the classical slicing formulas for the perimeter~\eqref{eq:slicing}, Fubini Theorem and~\eqref{eq:haar}, one has that
		\begin{align*}
			&\int_{G(2,\R^d)}\int_{\pi_2^\perp}\int_{\S^1_{\pi_2}}\int_{\partial^*E_{x_{\pi_2}^\perp}\cap \Omega_{x_{\pi_2}^\perp}}\frac{|\langle\nu_{E_{x_{\pi_2}^\perp}}(x_{\pi_2}),\theta\rangle|}{\r_\theta(x_{\pi_2})^{p-d-1}}\d\mathcal H^1(x_{\pi_2})\d\theta\dx_{\pi_2}^\perp\d\mu_{2,d}(\pi_2)\notag\\
			&=\int_{G(2,\R^d)}\int_{\pi_2^\perp}\int_{\S^1_{\pi_2}}\int_{(\pi_2)_\theta^\perp}\sum_{x_{\theta}\in\partial^*E_{x_{(\pi_2)_\theta^\perp}}\cap \Omega_{x_{(\pi_2)_\theta^\perp}}}\frac{1}{\r_\theta(x_\theta)^{p-d-1}}\dx_{(\pi_2)_\theta^\perp}\d\theta\dx_{\pi_2}^\perp\d\mu_{2,d}(\pi_2)\notag\\
			&=\int_{G(2,\R^d)}\int_{\S^1_{\pi_2}}\int_{\theta^\perp}\sum_{x_\theta\in\partial^*E_{x_\theta^\perp}\cap \Omega_{x_\theta^\perp}}\frac{1}{\r_\theta(x_\theta)^{p-d-1}}\dx_\theta^\perp\d\theta\d\mu_{2,d}(\pi_2)\notag\\
			&=\int_{\S^{d-1}}\int_{\theta^\perp}\sum_{x_\theta\in\partial^*E_{x_\theta^\perp}\cap \Omega_{x_\theta^\perp}}\frac{1}{\r_\theta(x_\theta)^{p-d-1}}\dx_\theta^\perp\d\theta\notag\\
			&=\int_{\S^{d-1}}\int_{\partial^*E\cap\Omega}\frac{|\langle \nu_E(x),\theta\rangle|}{\r_\theta(x)^{p-d-1}}\d\mathcal H^{d-1}(x)\d\theta.
		\end{align*}
	\end{proof}
	
	As a consequence of the above two-dimensional slicing formula, one has that sets of $\overline\Fcal_{0,p,d}(\cdot, \Omega)$-finite energy  have, inside almost all the intersections of $\Omega$ with two-dimensional affine planes,  flat topological boundary (and $[0,L)^d$-periodic sets of $\overline\Fcal_{0,p,d}(\cdot, [0,L)^d)$-finite energy are, on almost all two-dimensional affine planes,  periodic stripes).
	
	\begin{proposition}\label{prop:nusliced}
		Let $p\geq d+3$, $d\geq 2$, $\Omega \subset  \R^{d}$ an open bounded domain and let $E\subset\R^d$ be a set such that $\overline\Fcal_{0,p,d}(E,\Omega)<+\infty$. Then, for $\mu_{2,d}$-\ae two-dimensional plane $\pi_2\in G(2,\R^d)$  and for $\mathcal H^{d-2}$-\ae $x_{\pi_2}^\perp\in \pi_2^\perp$, the boundary of the two-dimensional slice of $E\cap\Omega$ given by $E_{x_{\pi_2}^\perp}\cap\Omega_{x_{\pi_2}^\perp}$ is given by the disjoint union  of the intersections of finitely many lines in $x_{\pi_2}^\perp+\pi_2$ with $\Omega_{x_{\pi_2}^\perp}$.
		% More precisely,
		%for $\mathcal H^{d-2}$-a.e. $x_{\pi_2}^\perp$ there exists $\bar R(x_{\pi_2}^\perp)>0$ such that, for all $0<r<\bar R(x_{\pi_2}^\perp)$, $\nu_E$ is constant on balls of radius $r>0$ centered at points of $\partial^*E\cap\Omega_r$, where $\Omega_r=\{x\in\Omega:\,\mathrm{dist}(x,\partial\Omega)>r\}$.

		If $\Omega=[0,L)^d$ and $E$ is $[0,L)^d$-periodic, then for $\mu_{2,d}$-\ae two-dimensional plane $\pi_2\in G(2,\R^d)$  and for $\mathcal H^{d-2}$-\ae $x_{\pi_2}^\perp\in \pi_2^\perp$, the boundary of the two-dimensional slice of $E\cap[0,L)^d$ given by $E_{x_{\pi_2}^\perp}\cap[0,L)^d_{x_{\pi_2}^\perp}$ is given by the disjoint union  of the intersections of finitely many parallel lines in $x_{\pi_2}^\perp+\pi_2$ with $[0,L)^d_{x_{\pi_2}^\perp}$.  More precisely,
		for $\mathcal H^{d-2}$-a.e. $x_{\pi_2}^\perp$ there exists $\nu_{x_{\pi_2}^\perp}\in\S^{d-1}$ such that
		\begin{equation}
			\nu_{E_{x_{\pi_2}^\perp}}(y)=\pm\nu_{x_{\pi_2}^\perp}\quad\text{for $\mathcal H^1$-\ae $y\in \partial^*E_{x_{\pi_2}^\perp}$.}
		\end{equation}
	\end{proposition}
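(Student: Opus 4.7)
The plan is to reduce to the two-dimensional case of Theorem~\ref{thm:rigidity}, which by the preceding development (Proposition~\ref{prop:brezis}, Theorem~\ref{thm:regularity}, Lemma~\ref{lemma:d+3}) is already established. First I would apply the Grassmannian slicing formula of Proposition~\ref{prop:slicing}: since $\overline\Fcal_{0,p,d}(E,\Omega)<+\infty$ by hypothesis, Fubini's theorem implies that for $\mu_{2,d}$-a.e.\ $\pi_2 \in G(2,\R^d)$ and $\mathcal H^{d-2}$-a.e.\ $x_{\pi_2}^\perp\in \pi_2^\perp$, the two-dimensional sliced energy $\overline F_{0,p,\pi_2}^{\mathrm{2D}}(E_{x_{\pi_2}^\perp},\Omega_{x_{\pi_2}^\perp})$ is finite.

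Next I would identify, via the isometric identification $\pi_2\cong\R^2$, the functional $\overline F_{0,p,\pi_2}^{\mathrm{2D}}$ with the intrinsic two-dimensional functional $\overline\Fcal_{0,q,2}$, where $q$ is chosen so that the kernel exponents match, namely $q-3 = p-d-1$, i.e.\ $q=p-d+2$. The hypothesis $p\geq d+3$ translates to $q\geq 5 = 2+3$, and moreover $q> 4=2\cdot 2$, so the two-dimensional case of Theorem~\ref{thm:rigidity} applies on the slice and yields the first conclusion: $\partial^* E_{x_{\pi_2}^\perp}\cap \Omega_{x_{\pi_2}^\perp}$ is the disjoint union of the intersections of finitely many affine lines in $x_{\pi_2}^\perp+\pi_2$ with $\Omega_{x_{\pi_2}^\perp}$. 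Remark~\ref{rmk:normalSlice} guarantees that the slicing operation is compatible with the reduced-boundary/normal structure required for this application.

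For the $[0,L)^d$-periodic statement, I would exploit periodicity to globalize the above. By $L\Z^d$-invariance of $E$, for every bounded open $U\subset\R^d$ one has $\overline\Fcal_{0,p,d}(E,U)\lesssim (|U|/L^d)\,\overline\Fcal_{0,p,d}(E,[0,L)^d)<+\infty$, so the first part applies with $U$ in place of $\Omega$. Taking $U=B_R(0)$ for every $R>0$, for $\mu_{2,d}$-a.e.\ $\pi_2$ and $\mathcal H^{d-2}$-a.e.\ $x_{\pi_2}^\perp$ the set $\partial^* E_{x_{\pi_2}^\perp}\cap B_R(0)_{x_{\pi_2}^\perp}$ is a finite disjoint union of affine line segments for each $R$. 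Since each such segment, being part of the reduced boundary of a planar set of locally finite perimeter, must extend as a straight line and cannot terminate in the interior of the plane, one concludes that $\partial^* E_{x_{\pi_2}^\perp}$ is a locally finite union of pairwise disjoint full lines in the two-plane $x_{\pi_2}^\perp+\pi_2$.

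Finally, to obtain a single common direction $\nu_{x_{\pi_2}^\perp}$, I would argue by contradiction: if two of the lines $\ell_1,\ell_2$ were not mutually parallel, they would meet at a unique point $p\in \pi_2$, and choosing $R$ large enough so that $p\in B_R(0)_{x_{\pi_2}^\perp}$ would force the two segments $\ell_i\cap B_R(0)_{x_{\pi_2}^\perp}$ to intersect at $p$, contradicting the disjointness delivered by the first part on $U=B_R(0)$. Hence all boundary lines are mutually parallel and share a common normal $\pm\nu_{x_{\pi_2}^\perp}\in\S^{d-1}\cap\pi_2$, which is the last claim. The main delicate point I expect is step two: the precise identification between the sliced functional $\overline F_{0,p,\pi_2}^{\mathrm{2D}}$ and the intrinsic 2D functional $\overline\Fcal_{0,q,2}$ with the correct shift $q=p-d+2$, together with the verification, via Remark~\ref{rmk:normalSlice} and the slicing formulas of Section~\ref{sec:prel}, that all the ingredients used in the 2D case of Theorem~\ref{thm:rigidity} transfer through the isometric identification $\pi_2\cong\R^2$.
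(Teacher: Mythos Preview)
Your proposal is correct and follows essentially the same route as the paper: slice via Proposition~\ref{prop:slicing}, apply the already-established two-dimensional rigidity on a.e.\ slice, and for the periodic case enlarge the domain so that any two non-parallel boundary lines would have to meet inside it. Your exponent bookkeeping is in fact more precise than the paper's: the sliced functional $\overline F_{0,p,\pi_2}^{\mathrm{2D}}$ has denominator exponent $p-d-1$, so under $\pi_2\cong\R^2$ it equals $\overline\Fcal_{0,q,2}$ with $q=p-d+2$, whereas the paper writes $\overline\Fcal_{0,p,2}$ (a slight abuse of notation that only literally matches when $d=2$); either way $q\ge 5>4$ and the 2D machinery applies.
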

	
	\begin{proof}
		Thanks to Proposition~\ref{prop:slicing}, if $\overline{\Fcal}_{0,p,d}(E, \Omega)<+\infty$, then for $\mu_{2,d}$-\ae two-dimensional plane $\pi_2\in G(2,\R^d)$ and for $\mathcal H^{d-2}$-\ae $x_{\pi_2}^\perp\in \pi_2^\perp$ one has that
		\begin{equation}
			\overline  F_{0,p,{\pi_2}}^{\mathrm{2D}}(E_{x_{\pi_2}^\perp}, \Omega_{x_{\pi_2}^\perp})<+\infty.
		\end{equation}
		Recalling the formula~\eqref{eq:2dformula}, this means that if we identify the plane $x_{\pi_2}^\perp+\pi_2$ with $\R^2$, then
		\begin{equation}\label{eq:2dfinite}
			\overline{\Fcal}_{0,p,2}(E_{x_{\pi_2}^\perp}, \Omega_{x_{\pi_2}^\perp})<+\infty.
		\end{equation}
		Now we observe that for $d=2$ it holds $p\geq d+3=5>4=2d$. Hence, Proposition~\ref{prop:brezis}, Theorem~\ref{thm:regularity} and Lemma~\ref{lemma:d+3} can be applied, implying that there exists $\bar R(x_{\pi_2}^\perp)>0$  such that, for all $0<r<\bar R(x_{\pi_2}^\perp)$, $\nu_E$ is constant on balls of radius $r>0$ centered at points of $\partial^*E\cap\Omega_r$, where $\Omega_r=\{x\in\Omega:\,\mathrm{dist}(x,\partial\Omega)>r\}$. In particular,  the boundary of the two-dimensional slice of $E\cap\Omega$ given by $E_{x_{\pi_2}^\perp}\cap\Omega_{x_{\pi_2}^\perp}$ is given by the disjoint union  of the intersections of finitely many lines in $x_{\pi_2}^\perp+\pi_2$ with $\Omega_{x_{\pi_2}^\perp}$.
		
		If $\Omega=[0,L)^d$ and $E$ is $[0,L)^d$-periodic,
		\begin{equation}
			\frac{1}{L^d}\overline\Fcal_{0,p,d}(E, [0,L)^d)=\frac{1}{(kL)^d}	\overline\Fcal_{0,p,d}(E, [-kL,kL)^d) \quad\text{for $k\gg1$,}
		\end{equation}
		thus for $\mathcal H^{d-2}$-a.e. $x_{\pi_2}^\perp$ there exists $\nu_{x_{\pi_2}^\perp}\in\S^{d-1}$ such that
		\begin{equation}\label{eq:557}
			\nu_{E_{x_{\pi_2}^\perp}}(y)=\pm\nu_{x_{\pi_2}^\perp}\quad\text{for $\mathcal H^1$-\ae $y\in \partial^*E_{x_{\pi_2}^\perp}$, }
		\end{equation}
		namely the connected components of the boundary of $E_{x_{\pi_2}^\perp}$ are all flat and parallel (otherwise they would intersect in a sufficiently large cube giving an infinite two-dimensional energy).
	\end{proof}
	
	In order to  complete the proof of the rigidity Theorem~\ref{thm:rigidity},
	the main idea is to show that flatness of the boundary of a set of finite energy inside a.e. two-dimensional slice (\ie Proposition~\ref{prop:nusliced}) implies flatness  of the boundary in $\R^d$.
	
	If $\Omega=[0,L)^d$ and $E$ is $[0,L)^d$-periodic, our goal is to show that there exists $\nu\in\S^{d-1}$ such that $\nu_E(x)=\pm\nu$ for $\mathcal H^{d-1}\llcorner\partial^*E$-a.e. $x$. We proceed by contradiction assuming that there exist Lebesgue points $x,y$ of $\nu_E$ w.r.t. $\mathcal H^{d-1}\llcorner\partial ^*E$ such that $\|\nu_E(x)-\nu_E(y)\|>\eps$ and $\|\nu_E(x)+\nu_E(y)\|>\eps$ for some $\eps>0$. Then, we show that there exists an nonempty open subset of $\pi\in G(2,\R^d)$ and $\delta>0$ such that the following hold:
	\begin{align}
		\Bigl\|\frac{P_{\pi}(\nu_1)}{\|P_{\pi}(\nu_1)\|}-\frac{P_{\pi}(\nu_2)}{\|P_{\pi}(\nu_2)\|}\Bigr\|> c_0(\eps)>0,\quad	\Bigl\|\frac{P_{\pi}(\nu_1)}{\|P_{\pi}(\nu_1)\|}+\frac{P_{\pi}(\nu_2)}{\|P_{\pi}(\nu_2)\|}\Bigr\|> c_0(\eps)>0\label{eq:proj}
	\end{align}
	for all $\nu_1,\nu_2\in\S^{d-1}$ s.t. $\|\nu_1-\nu_E(x)\|<\delta$ and $\|\nu_2-\nu_E(y)\|<\delta$ and moreover
	\begin{align}
		\mathcal H^{d-2}\Bigl(\Bigl\{x_\pi^\perp:\,&\mathcal H^1((\partial^*E\cap\{\|\nu_E(\cdot)-\nu_E(x)\|<\delta\})_{x_\pi^\perp})>0,\notag\\
		&\mathcal H^1((\partial^*E\cap\{\|\nu_E(\cdot)-\nu_E(y)\|<\delta\})_{x_\pi^\perp})>0 \Bigr\}\Bigr)>0\label{eq:2slices1}
	\end{align}
	In particular, recalling that by Remark~\ref{rmk:normalSlice}
	\begin{equation*}
		\nu_{E_{x_\pi^\perp}}(z_\pi)=\frac{P_\pi(\nu_E(x_\pi^\perp+z_\pi))}{\|P_\pi(\nu_E(x_\pi^\perp+z_\pi))\|},
	\end{equation*}
	for a set of two dimensional slices of positive measure parametrized by $(\pi,x_\pi^\perp)$ as above it holds that there exists a set of positive $\mathcal H^1\llcorner\partial^*E_{x_\pi^\perp}$-measure of $z_\pi$, $w_\pi$ such that
	\begin{equation*}
		\|\nu_{E_{x_\pi^\perp}}(z_\pi)-	\nu_{E_{x_\pi^\perp}}(w_\pi)\|>c_0(\eps),\quad 	\|\nu_{E_{x_\pi^\perp}}(z_\pi)+\nu_{E_{x_\pi^\perp}}(w_\pi)\|>c_0(\eps),
	\end{equation*}
	thus contradicting \eqref{eq:557}.
	
	In what follows,
	Lemma~\ref{lemma:pigeqeps} and Lemma~\ref{lemma:measpos} contain respectively the proofs of \eqref{eq:proj} and \eqref{eq:2slices1}.
	
	For a general $\Omega$ instead the proof is more involved, due to the fact that in two dimensions the connected components of the boundary of sets of finite energy are not necessarily parallel. Using \eqref{eq:proj} and \eqref{eq:2slices1}, we will then show that for sets of finite energy
	\begin{enumerate}
		\item\label{itemPoint1} $\nu_E$ is continuous
		\item\label{itemPoint2} $\partial E$ is locally parametrized by a Lipschitz graph,
	\end{enumerate}
	which will imply by Lemma~\ref{lemma:d+3} that $\nu_E$ is locally constant. Points~\ref{itemPoint1} and~\ref{itemPoint2} above will be the content of  Corollary~\ref{cor:continuity} and  Lemma~\ref{lemma:graph}.
	
	We can now state the first two preliminary lemmas.

	\begin{lemma}\label{lemma:pigeqeps}
		Let $\eps>0$. There exist $c_0(\eps), c_1(\eps)>0$ such that $\forall\,\nu_1,\nu_2\in\S^{d-1}$ s.t. $\|\nu_1-\nu_2\|>\eps$ and  $\|\nu_1+\nu_2\|>\eps$, the set
		\begin{align*}
			G(\nu_1,\nu_2,\eps):=\Bigl\{\pi\in G(2,\R^d):\,&\Bigl\|\frac{P_{\pi}(\nu_1)}{\|P_{\pi}(\nu_1)\|}-\frac{P_{\pi}(\nu_2)}{\|P_{\pi}(\nu_2)\|}\Bigr\|> c_0(\eps),\notag\\
			&\Bigl\|\frac{P_{\pi}(\nu_1)}{\|P_{\pi}(\nu_1)\|}+\frac{P_{\pi}(\nu_2)}{\|P_{\pi}(\nu_2)\|}\Bigr\|> c_0(\eps),\notag\\
			&\bigl\|P_\pi(\nu_1)\bigr\|> c_1(\eps),\,\bigl\|P_\pi(\nu_2)\bigr\|> c_1(\eps)\Bigr\}
		\end{align*}
		is nonempty and  for every $v\in\S^{d-1}$ there exists $\pi\in G(\nu_1,\nu_2,\eps)$ such that $v\in\pi$.
		Moreover, the set
		\begin{equation}\label{eq:geps}
			G_\eps:=\bigl\{(\nu_1,\nu_2,\pi):\,\|\nu_1-\nu_2\|>\eps,\,  \|\nu_1+\nu_2\|>\eps,\,\pi\in G(\nu_1,\nu_2,\eps)\bigr\}
		\end{equation}
		is an open subset of $\S^{d-1}\times\S^{d-1}\times G(2,\R^d)$.
	\end{lemma}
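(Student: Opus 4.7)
The statement consists of three independent claims — (a) nonemptiness of $G(\nu_1,\nu_2,\eps)$, (b) existence, for each $v\in\S^{d-1}$, of some $\pi\in G(\nu_1,\nu_2,\eps)$ containing $v$, and (c) openness of $G_\eps$. The analysis in (b) carries the technical content; the rest is essentially tautological.

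For (a), the assumptions $\|\nu_1\pm\nu_2\|>\eps$ force the linear independence of $\nu_1,\nu_2$, so the 2-plane $\pi_*:=\mathrm{span}(\nu_1,\nu_2)$ belongs to $G(2,\R^d)$ and satisfies $P_{\pi_*}(\nu_i)=\nu_i$. In particular $\|P_{\pi_*}(\nu_i)\|=1$ and
\[
\Bigl\|\tfrac{P_{\pi_*}(\nu_1)}{\|P_{\pi_*}(\nu_1)\|}\pm\tfrac{P_{\pi_*}(\nu_2)}{\|P_{\pi_*}(\nu_2)\|}\Bigr\|=\|\nu_1\pm\nu_2\|>\eps,
\]
so with, e.g., $c_0(\eps):=\eps/2$ and $c_1(\eps):=1/2$ we obtain $\pi_*\in G(\nu_1,\nu_2,\eps)$.

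For (b), if $v\in\mathrm{span}(\nu_1,\nu_2)$ then already $v\in\pi_*$. Otherwise $W:=\mathrm{span}(\nu_1,\nu_2,v)$ is 3-dimensional and the 2-planes in $W$ that contain $v$ form a 1-parameter family parametrized by the unit circle $S^1:=v^\perp\cap W\cap\S^{d-1}$ via $\pi(w):=\mathrm{span}(v,w)$. Writing $a_i:=\langle\nu_i,v\rangle$, $b_i:=(1-a_i^2)^{1/2}$ and, when $b_i>0$, $\hat\nu_i':=(\nu_i-a_iv)/b_i\in S^1$, a direct computation gives
\[
P_{\pi(w)}(\nu_i)=a_iv+b_i\langle\hat\nu_i',w\rangle\,w,\qquad \|P_{\pi(w)}(\nu_i)\|^2=a_i^2+b_i^2\langle\hat\nu_i',w\rangle^2.
\]
Introducing the angular coordinate $w(\theta)=\cos\theta\,\hat\nu_1'+\sin\theta\,e_\perp$ on $S^1$ (with $e_\perp\perp\hat\nu_1'$ in $v^\perp\cap W$) the four defining inequalities of $G(\nu_1,\nu_2,\eps)$ become explicit trigonometric conditions: the norm lower bounds $\|P_{\pi(w)}(\nu_i)\|>c_1$ fail only on short arcs where $\cos\theta$ or $\cos(\theta-\alpha)$ is small (with $\alpha$ the angle between $\hat\nu_1'$ and $\hat\nu_2'$), while the normalisation conditions fail only where $a_2b_1\cos\theta=\pm a_1b_2\cos(\theta-\alpha)$, an equation with only finitely many zeros on the circle. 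Selecting $\theta$ outside this exceptional set produces the desired $\pi=\pi(w(\theta))$, with constants $c_0,c_1$ depending only on $\eps$ through the initial separation $\|\nu_1\pm\nu_2\|>\eps$. The main subtlety — and the point where the argument requires genuine care — is the degenerate regime $a_1,a_2\ll 1$, i.e.\ $v$ nearly orthogonal to both $\nu_i$: there the normalisation equation degenerates and one must exploit the remaining angular separation between $\hat\nu_1'$ and $\hat\nu_2'$ inside $v^\perp\cap W$ to keep the two projected directions transverse.

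For (c), the orthogonal projection $(\nu,\pi)\mapsto P_\pi(\nu)$ is jointly continuous on $\S^{d-1}\times G(2,\R^d)$, and its normalisation $P_\pi(\nu)/\|P_\pi(\nu)\|$ is continuous on the open locus $\{P_\pi(\nu)\neq 0\}$. All the defining inequalities of $G_\eps$ are strict, hence $G_\eps$ is open in $\S^{d-1}\times\S^{d-1}\times G(2,\R^d)$.
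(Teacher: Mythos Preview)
Your arguments for (a) and (c) are correct and more direct than the paper's, which never isolates the plane $\mathrm{span}(\nu_1,\nu_2)$ and handles openness by the same continuity remark you make. For (b) the paper proceeds differently: rather than parametrizing all 2-planes through $v$, it writes $\nu_i=\mathring\nu_i+\bar\nu_i$ with $\mathring\nu_i=\langle\nu_i,v\rangle v$, splits into the cases $\|\bar\nu_1-\bar\nu_2\|\gtrsim\eps$ and $\|\mathring\nu_1-\mathring\nu_2\|\gtrsim\eps$, makes one explicit choice of $\pi$ in each case, and then extracts the uniform constants by a compactness/contradiction argument.

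Your sketch for (b) is genuinely incomplete at precisely the point you flag, and the gap cannot be closed as the statement stands. If $a_1=a_2=0$, i.e.\ $v\perp\nu_1$ and $v\perp\nu_2$, then for every 2-plane $\pi=\mathrm{span}\{v,w\}$ through $v$ one has $P_\pi(\nu_i)=\langle\nu_i,w\rangle\,w$, so both normalized projections equal $\pm w$ and are collinear; one of the two $\pm$-conditions in the definition of $G(\nu_1,\nu_2,\eps)$ therefore fails for every $c_0>0$. Concretely, in $\R^3$ with $\nu_1=e_1$, $\nu_2=e_2$, $v=e_3$ (so $\|\nu_1\pm\nu_2\|=\sqrt2$) no 2-plane through $e_3$ lies in $G(\nu_1,\nu_2,\eps)$. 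The ``remaining angular separation between $\hat\nu_1'$ and $\hat\nu_2'$'' does not help: it does not prevent collinearity of the projections. The paper's own argument shares this gap --- in its first case it dismisses the possibility $P_\pi(\nu_1)/\|P_\pi(\nu_1)\|=-P_\pi(\nu_2)/\|P_\pi(\nu_2)\|$ by ``a similar reasoning'', but tracing that reasoning through yields only $\nu_1+\nu_2\in\pi^\perp$, which is realised in the example above and does not contradict $\|\nu_1+\nu_2\|>\eps$. The claim in (b) thus fails on the codimension-two set $v\in\nu_1^\perp\cap\nu_2^\perp$; the downstream applications use only the specific direction $v=(x-y)/\|x-y\|$, and a version of the lemma restricted to $v\notin\nu_1^\perp\cap\nu_2^\perp$ (equivalently, to a.e.\ $v$) is likely all that is needed there.
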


	\begin{proof}
		Let $v\in\S^{d-1}$. We will show that $G(\nu_1,\nu_2,\eps)$ contains a plane $\pi$ such that $v\in\pi$, thus it is nonempty. The fact that the set $G_\eps$ is open follows from the continuity of the projection map.
		
		Define
		\begin{align*}
			\bar \nu_i&=\nu_i-\langle\nu_i,v\rangle v, \quad i=1,2\\
			\mathring \nu_i&=\langle\nu_i,v\rangle v, \quad i=1,2.
		\end{align*}
		Then, for $c\in(0,1)$, either
		\begin{equation}
			\label{eq:case11}
			\|\bar \nu_1-\bar \nu_2\|\geq c\eps,
		\end{equation}
		or
		\begin{equation}
			\label{eq:case21}
			\|\mathring \nu_1-\mathring\nu_2\|\geq\sqrt{1-c^2}\eps.
		\end{equation}

		Depending on which of the above two conditions holds, we will choose different  $\pi=\pi(\nu_1,\nu_2,v)$.
		
		Assume first \eqref{eq:case11} holds.
		Let
		\[
		\theta=\frac{\bar\nu_1-\bar\nu_2}{\|\bar\nu_1-\bar\nu_2\|},\quad\pi=\mathrm{span}\{v,\theta\}.
		\]
		Assume that for the set of such $\pi$ the statement of the lemma does not hold. Then $\exists\,\eps,c>0$ s.t. $\forall\,\delta>0$ there exist $\nu_1^\delta, \nu_2^\delta, v^\delta\in\S^{d-1}$ as in \eqref{eq:case11}  and $\pi^\delta$ as above s.t.  either
		\begin{equation}\label{eq:ppiddeltadir}
			\Biggl\|\frac{P_{\pi^\delta}(\nu_1^\delta)}{\|P_{\pi^\delta}(\nu_1^\delta)\|}-\frac{P_{\pi^\delta}(\nu_2^\delta)}{\|P_{\pi^\delta}(\nu_2^\delta)\|}\Biggr\|\leq\delta,
		\end{equation}
		or
		\begin{equation}\label{eq:ppiddeltadir+}
			\Biggl\|\frac{P_{\pi^\delta}(\nu_1^\delta)}{\|P_{\pi^\delta}(\nu_1^\delta)\|}+\frac{P_{\pi^\delta}(\nu_2^\delta)}{\|P_{\pi^\delta}(\nu_2^\delta)\|}\Biggr\|\leq\delta,
		\end{equation}
		or
		\begin{equation}\label{eq:ppidelta1}
			\bigl\|P_{\pi^\delta}(\nu_1^\delta)\bigr\|\leq\delta,
		\end{equation}
		or
		\begin{equation}\label{eq:ppidelta2}
			\bigl\|P_{\pi^\delta}(\nu_2^\delta)\bigr\|\leq\delta.
		\end{equation}
		By compactness, we can extract a subsequence $\nu_1^\delta\to\nu_1$, $\nu_2^\delta\to\nu_2$, $v^\delta\to v$, with $\nu_1,\nu_2,v\in\S^{d-1}$.
		
		Let us first show that neither \eqref{eq:ppidelta1} nor \eqref{eq:ppidelta2} can hold as $\delta\to0$. Indeed, if \eg \eqref{eq:ppidelta1} holds, then
		\begin{equation*}
			0=P_\pi(\nu_1)=\mathring\nu_1+\langle\bar\nu_1,\frac{\bar\nu_1-\bar\nu_2}{\|\bar\nu_1-\bar\nu_2\|}\rangle\theta,
		\end{equation*}
		hence $\nu_1=\bar \nu_1$ and
		\begin{equation*}
			1=\|\bar\nu_1\|^2=\langle\bar\nu_1,\bar\nu_1\rangle=\langle\bar\nu_1,\bar\nu_2\rangle,
		\end{equation*}
		which implies since $\|\bar\nu_2\|\leq1$ that $\bar\nu_2=\bar\nu_1$, thus contradicting \eqref{eq:case11}. The same reasoning gives the validity of condition \eqref{eq:ppi2}. Assume now that \eqref{eq:ppiddeltadir} holds.
		Thanks to the validity of
		\begin{equation}
			\bigl\|P_\pi(\nu_1)\bigr\|\geq c_1(\eps) \label{eq:ppi1}
		\end{equation}
		and
		\begin{equation}
			\bigl\|P_\pi(\nu_2)\bigr\|\geq c_1(\eps)\label{eq:ppi2}
		\end{equation} one can pass to the limit in \eqref{eq:ppiddeltadir} and get
		\begin{equation}\label{eq:ppidir0}
			\Biggl\|\frac{P_{\pi}(\nu_1)}{\|P_{\pi}(\nu_1)\|}-\frac{P_{\pi}(\nu_2)}{\|P_{\pi}(\nu_2)\|}\Biggr\|=0.
		\end{equation}
		Let $w=\frac{P_{\pi}(\nu_1)}{\|P_{\pi}(\nu_1)\|}=\frac{P_{\pi}(\nu_2)}{\|P_{\pi}(\nu_2)\|}$. By \eqref{eq:ppidir0}
		\begin{equation*}
			\nu_1=\lambda w+a_1,\quad\nu_2=\mu w+a_2,\quad a_1,a_2\in\pi^\perp.
		\end{equation*}
		In particular, $\nu_1-\nu_2=(\lambda-\mu) w+(a_1-a_2)$, where both $\nu_1-\nu_2$ and $w$ belong to $\pi$. Hence $a_1=a_2$, and since $\|\nu_1\|=\|\nu_2\|=1$ it holds $\lambda=\mu$ giving $\nu_1=\nu_2$ which again contradicts \eqref{eq:case11}. A similar reasoning can be applied to exclude \eqref{eq:ppiddeltadir+}.  Thus we proved that in case \eqref{eq:case11} holds, then the lemma holds true.
		
		Assume now the validity of \eqref{eq:case21}.
		
		Let us now choose $\pi\in G(2,\R^d)$ as follows. If
		\begin{equation}\label{eq:geq1}
			\|\mathring \nu_1\|>\|\mathring\nu_2\|+\sqrt{1-c^2}\eps,		\end{equation}
		choose
		\begin{equation*}
			\pi=\mathrm{span}\Bigl\{v,\frac{\bar\nu_2}{\|\bar \nu_2\|}\Bigr\},
		\end{equation*}
		if else
		\begin{equation}\label{eq:geq2}
			\|\mathring \nu_2\|>\|\mathring\nu_1\|+\sqrt{1-c^2}\eps,		\end{equation}
		choose
		\begin{equation*}
			\pi=\mathrm{span}\Bigl\{v,\frac{\bar\nu_1}{\|\bar \nu_1\|}\Bigr\}.
		\end{equation*}
		Assume that the statement of the lemma does not hold for such choice of $\pi$. Then, there exist $\nu_1^\delta, \nu_2^\delta, v^\delta\in\S^{d-1}$ as in \eqref{eq:case21} and $\pi^\delta$ as above s.t.  either
		\eqref{eq:ppiddeltadir} or \eqref{eq:ppiddeltadir+} or \eqref{eq:ppidelta1} or \eqref{eq:ppidelta2} hold.
		By compactness, we can extract a subsequence $\nu_1^\delta\to\nu_1$, $\nu_2^\delta\to\nu_2$, $v^\delta\to v$, $\pi^\delta\to\pi$ with $\nu_1,\nu_2,v\in\S^{d-1}$, $\pi\in G(2,\R^d)$.
		We first show that \eqref{eq:ppidelta1} or \eqref{eq:ppidelta2} cannot hold in the limit as $\delta\to0$. Assume w.l.o.g. that \eqref{eq:geq1} holds and then   $\pi=\mathrm{span}\{v,\frac{\bar\nu_2}{\|\bar\nu_2\|}\}$. In particular, $P_\pi(\nu_2)=\nu_2$, hence \eqref{eq:ppidelta2} cannot hold along the chosen subsequence. Moreover, if
		\begin{equation*}
			0=P_\pi(\nu_1)=\mathring\nu_1+\langle\nu_1,\frac{\bar\nu_2}{\|\bar\nu_2\|}\rangle\frac{\bar\nu_2}{\|\bar\nu_2\|},
		\end{equation*}
		one has that $\mathring \nu_1=0$, thus contradicting \eqref{eq:geq1}.
		Thus, \eqref{eq:ppi1} and \eqref{eq:ppi2} hold also for the vectors such that \eqref{eq:case21} holds.
		We now show that also  \eqref{eq:ppiddeltadir}  cannot hold as $\delta\to0$. In case \eqref{eq:geq1} holds (the case \eqref{eq:geq2} can be treated analogously), one has that
		\begin{equation*}
			\frac{P_\pi(\nu_1)}{\|P_\pi(\nu_1)\|}=\nu_2=\frac{P_\pi(\nu_2)}{\|P_\pi(\nu_2)\|}.
		\end{equation*}
		Hence, $\nu_1=\nu_2+a_1$, with $a_1\in\pi^\perp$ and then $\mathring \nu_1=\mathring \nu_2$ which again violates \eqref{eq:geq1}. One can reason in a similar way in order to exclude \eqref{eq:ppiddeltadir+}.

		Thus we concluded the proof of the lemma.
		
	\end{proof}

	\begin{lemma}\label{lemma:measpos}
		Let $x,y\in\partial^*E$ be Lebesgue points of $\nu_E$ w.r.t. $\mathcal H^{d-1}\llcorner\partial^*E$ such that $\nu_E(x)\neq\pm\nu_E(y)$ and let
		\begin{equation}\label{eq:nu1nu2}
			\nu_1=\nu_E(x),\quad\nu_2=\nu_E(y),\quad\eps<\min\{\|\nu_1-\nu_2\|,\|\nu_1+\nu_2\|\},\quad\delta\leq\eps,\quad\rho>0,
		\end{equation}
		\begin{align*}
			\Gamma_{1,\rho,\delta}&=\{z\in\partial^*E\cap B_{\rho}(x):\,\|\nu_E(z)-\nu_1\|<\delta\},\\	\Gamma_{2,\rho,\delta}&=\{z\in\partial^*E\cap B_{\rho}(y):\,\|\nu_E(z)-\nu_2\|<\delta\}.
		\end{align*}
		Then,  the set
		\begin{align}\label{eq:measpos}
			G_{\eps,\rho,\delta}=\Bigl\{\pi\in G(\nu_1,\nu_2,\eps):\,\mathcal H^{d-2}\Bigl(\Bigl\{x_\pi^\perp\in\pi^\perp:\,\mathcal H^{1}((	\Gamma_{1,\rho,\delta})_{x_\pi^\perp})>0,\,\mathcal H^{1}(( 	\Gamma_{2,\rho,\delta})_{x_\pi^\perp})>0\Bigr\}\Bigr)>0 \Bigr\}
		\end{align}
		is a nonempty open subset of $G(2,\R^d)$. Moreover, the set $G_{\eps,\rho,\delta}$ contains every $\pi\in G(\nu_1,\nu_2,\eps)$ such that $v=\frac{x-y}{\|x-y\|}\in \pi$.
	\end{lemma}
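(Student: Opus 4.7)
Fix $\pi\in G(\nu_1,\nu_2,\epsilon)$ with $v\in\pi$ and set $x_0:=P_{\pi^\perp}(x)$, noting that $v\in\pi$ forces $P_{\pi^\perp}(y)=x_0$ as well. My plan is to combine De~Giorgi's blow-up at $x$ and $y$ with two Fubini reductions to show that, for a positive $\mathcal{H}^{d-2}$-measure set of $x_\pi^\perp\in\pi^\perp$ near $x_0$, both $\mathcal{H}^1((\Gamma_{1,\rho,\delta})_{x_\pi^\perp})$ and $\mathcal{H}^1((\Gamma_{2,\rho,\delta})_{x_\pi^\perp})$ are positive; openness of $G_{\epsilon,\rho,\delta}$ then follows from the continuous dependence on $\pi$ of all the data and from the openness of $G(\nu_1,\nu_2,\epsilon)$ established in Lemma~\ref{lemma:pigeqeps}.

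The first Fubini reduces the problem to a 2D statement on 2-slices. Since $\nu_E(x)=\nu_1$ and $x\in\partial^*E$, De~Giorgi's blow-up gives $|(E\Delta H_{\nu_1}(x))\cap B_\rho(x)|=o(\rho^d)$ as $\rho\to 0$, where $H_{\nu_1}(x)$ is the halfspace through $x$ with outward normal $\nu_1$. Slicing by affine 2-planes parallel to $\pi$ yields
\[
\int_{B_\rho(x_0)\cap\pi^\perp}\bigl|(E_{x_\pi^\perp}\Delta (H_{\nu_1}(x))_{x_\pi^\perp})\cap B_\rho(x)\bigr|\d\mathcal{H}^{d-2}(x_\pi^\perp)=o(\rho^d),
\]
and Chebyshev produces an exceptional set $N_1\subset\pi^\perp$ with $\mathcal{H}^{d-2}(N_1)=o(\rho^{d-2})$ outside of which the 2D symmetric difference is $o(\rho^2)$. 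The nondegeneracy $\|P_\pi(\nu_1)\|\geq c_1(\epsilon)$ built into the definition of $G(\nu_1,\nu_2,\epsilon)$ guarantees that the halfplane trace $(H_{\nu_1}(x))_{x_\pi^\perp}$ cuts the 2-disk $(B_\rho(x))_{x_\pi^\perp}$ into two parts of comparable positive area for $x_\pi^\perp$ in an open slab $S_1\ni x_0$ of $\mathcal{H}^{d-2}$-measure $\gtrsim\rho^{d-2}$, so for $x_\pi^\perp\in S_1\setminus N_1$ the $L^1$-closeness combined with the 2D isoperimetric inequality gives $\per(E_{x_\pi^\perp},(B_\rho(x))_{x_\pi^\perp})\geq c(\epsilon)\rho$.

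The second Fubini transfers this perimeter bound to $\Gamma_{1,\rho,\delta}$. By the standard slicing identity for sets of finite perimeter (compatible with Remark~\ref{rmk:normalSlice}), $\per(E_{x_\pi^\perp},(B_\rho(x))_{x_\pi^\perp})=\mathcal{H}^1((\partial^*E)_{x_\pi^\perp}\cap B_\rho(x))$, while the Lebesgue-point property of $\nu_E$ at $x$ gives $\mathcal{H}^{d-1}((\partial^*E\cap B_\rho(x))\setminus\Gamma_{1,\rho,\delta})=o(\rho^{d-1})$. Applying the slicing formula for $(d-1)$-rectifiable sets projected onto $\pi^\perp$ to this ``bad set'', Chebyshev produces a further exceptional set $N_1'$ with $\mathcal{H}^{d-2}(N_1')=o(\rho^{d-2})$ outside of which its $\mathcal{H}^1$-slice measure is $o(\rho)$. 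Hence $\mathcal{H}^1((\Gamma_{1,\rho,\delta})_{x_\pi^\perp})>0$ for $x_\pi^\perp\in S_1\setminus(N_1\cup N_1')$. Running the identical argument at $y$ produces an open slab $S_2\ni x_0$ and a negligible set $N_2\cup N_2'$; since both slabs are open and contain the common centre $x_0$---this is exactly where the hypothesis $v\in\pi$ is used---their intersection has $\mathcal{H}^{d-2}$-measure $\gtrsim\rho^{d-2}$, and removing the negligible sets leaves a positive $\mathcal{H}^{d-2}$-measure set of $x_\pi^\perp$ on which both slice measures are positive, proving $\pi\in G_{\epsilon,\rho,\delta}$.

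For openness, the same argument applies to a generic $\pi$ close to some $\pi_0\in G_{\epsilon,\rho,\delta}$ with $v\in\pi_0$: the projected points $x_0:=P_{\pi^\perp}(x)$ and $y_0:=P_{\pi^\perp}(y)$ satisfy $\|x_0-y_0\|\lesssim \|x-y\|\cdot\dist_{G(2,\R^d)}(\pi,\pi_0)$, so for $\pi$ sufficiently close to $\pi_0$ the slabs $S_1\ni x_0$ and $S_2\ni y_0$ still overlap on a set of positive $\mathcal{H}^{d-2}$-measure, while openness of $G(\nu_1,\nu_2,\epsilon)$ from Lemma~\ref{lemma:pigeqeps} supplies the remaining nondegeneracy $\|P_\pi(\nu_i)\|\geq c_1(\epsilon)/2$. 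The main technical obstacle is the transfer from $L^1$-blow-up convergence to a quantitative positive-measure statement on 2-slices, achieved via the two Fubini reductions, the 2D isoperimetric inequality, and the transversality $\|P_\pi(\nu_i)\|\geq c_1(\epsilon)>0$.
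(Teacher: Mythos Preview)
Your proof is correct and follows essentially the same strategy as the paper's: De~Giorgi blow-up at $x$ and $y$, slicing by affine $2$-planes parallel to $\pi$, the transversality bound $\|P_\pi(\nu_i)\|\ge c_1(\eps)$ to get a lower bound on the perimeter of the $2$-slices, the Lebesgue-point property to pass from $\partial^*E$ to $\Gamma_{i,\rho,\delta}$, and finally the key observation that $v\in\pi$ forces $P_{\pi^\perp}(x)=P_{\pi^\perp}(y)$ so the two good regions in $\pi^\perp$ overlap near a common centre. The only stylistic difference is that you make the transfer from $L^1$-closeness to a perimeter lower bound via the $2$D relative isoperimetric inequality and explicit Chebyshev exceptional sets, whereas the paper phrases the same step through lower semicontinuity of the slice perimeter under the blow-up convergence $\chi_{E_{x,\rho}}\to\chi_{H_{\nu_1}}$ and a direct comparison of the functions $f_{x,\rho}$ and $g_{x,\rho}$; the two arguments are equivalent. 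Your openness argument, like the paper's, actually establishes that $G_{\eps,\rho,\delta}$ contains an open neighbourhood of the set of $\pi\in G(\nu_1,\nu_2,\eps)$ with $v\in\pi$, which is precisely what is needed in the applications.
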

	
	\begin{proof}
		Let $v=\frac{x-y}{\|x-y\|}$ and, given $\nu_1,\nu_2$ as in \eqref{eq:nu1nu2}, let $\pi\in G(\nu_1,\nu_2,\eps)$ such that $v\in\pi$ as in Lemma~\ref{lemma:pigeqeps}.  To prove the lemma we first show that (as a consequence of a more quantitative estimate)
		\begin{equation}\label{eq:pimeaspos}
			\mathcal H^{d-2}\Bigl(\Bigl\{x_\pi^\perp\in\pi^\perp:\,\mathcal H^{1}((	\Gamma_{1,\rho,\delta})_{x_\pi^\perp})>0,\,\mathcal H^{1}((	\Gamma_{2,\rho,\delta})_{x_\pi^\perp})>0\Bigr\}\Bigr)>0.
		\end{equation}
		Then we show that there exists $\sigma>0$ such that if $\|\pi'-\pi\|\leq \sigma$ then \eqref{eq:pimeaspos} holds for $\pi'$ as well. This, by the fact that the set $G(\nu_1,\nu_2,\eps)$ is an open subset of $G(2,\R^d)$, concludes the proof of the lemma.
		
		Let $B^2$, $B^{d-2}$ be the unit balls centered at the origin and contained respectively in $\pi$ and $\pi^\perp$. For a set $A\subset\R^d$, $z\in\R^d$ and $\rho>0$, let $A_{z,\rho}=\frac{A-z}{\rho}$. Assume w.l.o.g. that $\pi^\perp=\mathrm{span}\{e_1,\dots,e_{d-2}\}$, $\pi=\mathrm{span}\{e_{d-1},e_d\}$.
		
		By De Giorgi's structure theorem, as $\rho\to0$
		\begin{align*}
			&\chi_{E_{x,\rho}}\to\chi_{H_{\nu_1}}\quad\text{in $L^1(B_{d-2}\times B_2)$},\\
			&\chi_{(E_{x,\rho})_{x_\pi^\perp}}\to\chi_{(H_{\nu_1})_{x_\pi^\perp}}\quad\text{in $L^1(\{x_\pi^\perp\}\times B^2)$, for $\mathcal H^{d-2}$-a.e. $x_\pi^\perp\in B^{d-2}$.}
		\end{align*}
		The same holds at $y$ w.r.t. $\nu_2$.
		
		Moreover, since by definition of $G(\nu_1,\nu_2,\eps)$ one has that
		\[
		\|P_\pi(\nu_1)\|>c_1(\eps),\quad\|P_{\pi}(\nu_2)\|>c_1(\eps),
		\]
		then there exists $\bar c_1(\eps)>0$ such that for \ae $x^{\perp}_{\pi}$ it holds
		\begin{align*}
			\liminf_\rho\per((E_{x,\rho})_{x_\pi^\perp}, x_\pi^\perp+B^2)\geq\per((H_{\nu_1})_{x_\pi^\perp}, x_\pi^\perp+B^2)\geq \bar c_1(\eps)>0,\\
			\liminf_\rho\per((E_{y,\rho})_{x_\pi^\perp}, x_\pi^\perp+B^2)\geq\per((H_{\nu_2})_{x_\pi^\perp}, x_\pi^\perp+B^2)\geq \bar c_1(\eps)>0.
		\end{align*}
		Define
		\begin{equation*}
			f_{x,\rho}(x_\pi^\perp)=\per((E_{x,\rho})_{x_\pi^\perp}, x_\pi^\perp+B^2),\quad f_{y,\rho}(x_\pi^\perp)=\per((E_{y,\rho})_{x_\pi^\perp}, x_\pi^\perp+B^2).
		\end{equation*}
		
		In particular, $\forall\,0<\rho<\rho(\eps,x,y)$,
		\begin{equation}\label{eq:fstar}
			\mathcal H^{d-2}\Bigl(\Bigl\{x_\pi^\perp\in B^{d-2}:\,f_{x,\rho}(x_\pi^\perp)\geq \frac{\bar c_1(\eps)}{2}, f_{y,\rho}(x_\pi^\perp)\geq \frac{\bar c_1(\eps)}{2}\Bigl\}\Bigr)\geq \frac12\mathcal H^{d-2}(B^{d-2}).
		\end{equation}
		Let now
		\begin{equation*}
			g_{x,\rho}(x_\pi^\perp)=\mathcal H^1(((\Gamma_{1,1,\delta})_{x,\rho})_{x_\pi^\perp}\cap x_\pi^\perp+B^2),\quad 	g_{y,\rho}(x_\pi^\perp)=\mathcal H^1(((\Gamma_{2,1,\delta})_{y,\rho})_{x_\pi^\perp}\cap x_\pi^\perp+B^2).
		\end{equation*}
		We would like to substitute in \eqref{eq:fstar} $f_{x,\rho}$ with $g_{x,\rho}$, $f_{y,\rho}$ with $g_{y,\rho}$ and the factor $2$ by the factor $4$. We can do that since $x,y$ are Lebesgue points of $\nu_E$ w.r.t. $\mathcal H^{d-1}\llcorner \partial^*E$, thus as $\rho\to 0$
		\begin{equation*}
			\liminf_\rho	\int_{B^{d-2}}(g_{x,\rho}-f_{x,\rho})\dx_\pi^\perp\geq0,\quad 	\liminf_\rho\int_{B^{d-2}}(g_{y,\rho}-f_{y,\rho})\dx_\pi^\perp\geq0.
		\end{equation*}
		Since $v\in\pi$, $P_{\pi^\perp}(x-y)=0$ and thus such a lower bound translates into
		\begin{align*}
			\mathcal H^{d-2}\Bigl(\Bigl\{x_\pi^\perp\in\pi^\perp:\,\mathcal H^1((\Gamma_{1,\rho,\delta})_{x_\pi^\perp})>\frac{\bar c_1(\eps)}{4}\rho,\,\mathcal H^1((\Gamma_{2,\rho,\delta})_{x_\pi^\perp})>\frac{\bar c_1(\eps)}{4}\rho \Bigr\}\Bigr)\geq \frac14 \rho^{d-2}
		\end{align*}
		for all $0<\rho<\rho(\eps,\delta, x,y)$.
		Choosing now any $\pi'\in G(2,\R^d)$ whose distance from $\pi$ is sufficiently small depending on $\rho(\eps,\delta, x,y)$ we can obtain the bound
		\begin{align*}
			\mathcal H^{d-2}\Bigl(\Bigl\{x_{\pi'}^\perp\in{\pi'}^\perp:\,\mathcal H^1((\Gamma_{1,\rho,\delta})_{x_{\pi'}^\perp})>\frac{\bar c_1(\eps)}{8}\rho,\,\mathcal H^1((\Gamma_{2,\rho,\delta})_{x_{\pi'}^\perp})>\frac{\bar c_1(\eps)}{8}\rho \Bigr\}\Bigr)\geq \frac18 \rho^{d-2}
		\end{align*}
		for any sufficiently small $\rho>0$, thus concluding the proof of the lemma.
	\end{proof}
	
	\begin{proof}[Proof of Theorem~\ref{thm:rigidity} in case $\Omega=[0,L)^d$ and $E$ is $[0,L)^d$-periodic: ]
		
		Assume there exist $x,y$ Lebesgue points of $\nu_E$ w.r.t. $\mathcal H^{d-1}\llcorner \partial^*E$ such that $\nu_E(x)\neq\pm\nu_E(y)$. Let $\nu_1=\nu_E(x)$, $\nu_2=\nu_E(y)$ and  $\eps<\min\{\|\nu_1-\nu_2\|, \|\nu_1+\nu_2\|\}$.
		
		Let $v=\frac{x-y}{\|x-y\|}\in\pi$. By Lemma~\ref{lemma:pigeqeps} there exists  $\pi\in G(\nu_1,\nu_2,\eps)$ such that $v\in\pi$. Moreover since the set $G_\eps$ defined in \eqref{eq:geps} is open, there exist $\delta>0$ and $\sigma>0$ such that $\tilde\nu_1,\tilde \nu_2\in\S^{d-1}$ such that $\|\tilde\nu_1-\nu_1\|\leq\delta$, $\|\tilde\nu_2-\nu_2\|\leq\delta$ and for all $\tilde\pi\in G(2,\R^d)$ such that $\|\pi-\tilde\pi\|\leq\sigma$ then
		\begin{equation*}
			(\tilde\nu_1,\tilde\nu_2,\tilde\pi)\in G_{\eps}.
		\end{equation*}
		Thus $\|\tilde\nu_1-\tilde\nu_2\|>\eps$, $\|\tilde\nu_1+\tilde\nu_2\|>\eps$and $\tilde\pi\in G(\tilde\nu_1,\tilde\nu_2,\eps)$, implying
		\begin{equation}\label{eq:596}
			\Bigl\|\frac{P_{\tilde\pi}(\tilde\nu_1)}{\|P_{\tilde\pi}(\tilde\nu_1)\|}-\frac{P_{\tilde\pi}(\tilde\nu_2)}{\|P_{\tilde\pi}(\tilde\nu_2)\|}\Bigr\|> c_0(\eps)>0,\quad 			\Bigl\|\frac{P_{\tilde\pi}(\tilde\nu_1)}{\|P_{\tilde\pi}(\tilde\nu_1)\|}-\frac{P_{\tilde\pi}(\tilde\nu_2)}{\|P_{\tilde\pi}(\tilde\nu_2)\|}\Bigr\|> c_0(\eps)>0.
		\end{equation}
		By Lemma~\ref{lemma:measpos}, eventually choosing $\sigma>0$ smaller we can ensure that all $\tilde\pi$ such that $\|\tilde\pi-\pi\|\leq\sigma$ belong to the set $G_{\eps,\delta,\rho}$ defined in \eqref{eq:measpos}, for $\delta$ as above. In particular, it holds
		\begin{equation*}
			\mathcal H^{d-2}\Bigl(\Bigl\{x_{\tilde\pi}^\perp\in{\tilde\pi}^\perp:\,\mathcal H^{1}((	\Gamma_{1,\rho,\delta})_{x_{\tilde\pi}^\perp})>0,\,\mathcal H^{1}((	\Gamma_{2,\rho,\delta})_{x_{\tilde\pi}^\perp})>0\Bigr\}\Bigr)>0.
		\end{equation*}

		Hence, there exists a set of positive $\mathcal H^{d-2}$-measure of two-dimensional slices parallel to $\tilde \pi$ containing portions of $\Gamma_{1,\rho,\delta}$ and $\Gamma_{2,\rho,\delta}$, both of  $\mathcal H^1$-positive measure. By Remark~\ref{rmk:normalSlice}, for $\mathcal H^1$-a.e. $(z_1)_{\tilde\pi}\in(\Gamma_{1,\rho,\delta})_{x_{\tilde\pi}^\perp}$ and $\mathcal H^1$-a.e. $(z_2)_{\tilde\pi}\in(\Gamma_{2,\rho,\delta})_{x_{\tilde\pi}^\perp}$ one has that
		\begin{equation}\label{eq:nurem}
			\nu_{E_{x_{\tilde\pi}^\perp}}((z_1)_{\tilde\pi})=\frac{P_{\tilde\pi}(\nu_E(x_{\tilde\pi}^\perp+(z_1)_{\tilde\pi} ))}{\|P_{\tilde\pi}(\nu_E(x_{\tilde\pi}^\perp+(z_1)_{\tilde\pi} ))\|},\quad	\nu_{E_{x_{\tilde\pi}^\perp}}((z_2)_{\tilde\pi})=\frac{P_{\tilde\pi}(\nu_E(x_{\tilde\pi}^\perp+(z_2)_{\tilde\pi} ))}{\|P_{\tilde\pi}(\nu_E(x_{\tilde\pi}^\perp+(z_2)_{\tilde\pi} ))\|}.
		\end{equation}
		Moreover, by definition of $\Gamma_{i,\rho,\delta}$, $i=1,2$,  $\|\nu_E(x_{\tilde\pi}^\perp+(z_1)_{\tilde\pi} )-\nu_1\|<\delta$ and $\|\nu_E(x_{\tilde\pi}^\perp+(z_2)_{\tilde\pi} )-\nu_2\|<\delta$.
		Thus, by \eqref{eq:nurem} and \eqref{eq:596},
		\begin{equation*}
			\bigl\|	\nu_{E_{x_{\tilde\pi}^\perp}}((z_1)_{\tilde\pi})-\nu_{E_{x_{\tilde\pi}^\perp}}((z_2)_{\tilde\pi})\bigr\|>c_0(\eps)>0
		\end{equation*}
		on a set of $(z_1)_{\tilde\pi}$ and $(z_2)_{\tilde\pi}$ of $\mathcal H^1$-positive measure in $\partial^*E_{x_{\tilde\pi}^\perp}$, which being the set of such $\tilde\pi$ open and the two-dimensional slices of $\mathcal H^{d-2}$-positive measure contradicts \eqref{eq:557}.
		
	\end{proof}
	To prove Theorem~\ref{thm:rigidity} in case $\Omega$ is a general bounded open set we need some further preliminary results.
	
	First we state a consequence of Lemma~\ref{lemma:pigeqeps} and Lemma~\ref{lemma:measpos}, given by the continuity of the exterior normal.
	
	We recall the following definition
	\begin{equation*}
		\Omega_r:=\{x\in\Omega:\, \mathrm{dist}(x,\partial\Omega)>r\}, \quad r>0.
	\end{equation*}
	
	\begin{corollary}\label{cor:continuity}
		For all $r>0$, the measure theoretic exterior normal $\nu_E$ is continuous in $\Omega_r$, namely $\forall\,\eps>0$ $\exists\,\rho>0$ such that for all $x,y$ Lebesgue points of $\nu_E$ w.r.t. $\mathcal H^{d-1}\llcorner \partial^*E$ with $\|x-y\|<\rho$ one has that $\|\nu_E(x)-\nu_E(y)\|<\eps$.
	\end{corollary}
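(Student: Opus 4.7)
My plan is to argue by contradiction, adapting the 2D-slicing argument used just above in the proof of Theorem~\ref{thm:rigidity} for the periodic case. If continuity of $\nu_E$ failed, I would extract (after a subsequence) sequences of Lebesgue points $x_n, y_n \in \partial^*E \cap \Omega_r$ with $\|x_n - y_n\| \to 0$, $x_n, y_n \to x_0$, $\nu_E(x_n) \to \nu_1$, $\nu_E(y_n) \to \nu_2$ and $\|\nu_1 - \nu_2\| \geq \eps_0 > 0$. The aim is to contradict the disjointness of the affine lines in $\partial^* E_{x_\pi^\perp} \cap \Omega_{x_\pi^\perp}$ furnished by Proposition~\ref{prop:nusliced} in a suitable 2D slice.

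First I would treat the generic case $\|\nu_1 + \nu_2\| > 0$. Setting $\eps := \tfrac12 \min\{\|\nu_1-\nu_2\|, \|\nu_1+\nu_2\|\}$, $\delta < \eps/2$, $\rho = \|x_n - y_n\|$ and applying Lemmas~\ref{lemma:pigeqeps} and~\ref{lemma:measpos} to $(x_n, y_n)$ for large $n$, I obtain a nonempty open $G_{\eps,\rho,\delta} \subset G(2, \R^d)$ and, for each $\pi \in G_{\eps,\rho,\delta}$, a positive $\mathcal H^{d-2}$-measure set of $x_\pi^\perp$ on which both $(\Gamma_{i,\rho,\delta})_{x_\pi^\perp}$ have positive $\mathcal H^1$-measure. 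By the slicing identity in Proposition~\ref{prop:slicing} and finite total energy, for $\mu_{2,d} \otimes \mathcal H^{d-2}$-a.e.\ such $(\pi, x_\pi^\perp)$ the 2D slice $E_{x_\pi^\perp}$ has finite 2D energy, so Proposition~\ref{prop:nusliced} yields that $\partial^* E_{x_\pi^\perp} \cap \Omega_{x_\pi^\perp}$ is a disjoint union of finitely many affine lines in $\pi$. By Remark~\ref{rmk:normalSlice} together with the defining inequalities of $G(\nu_1, \nu_2, \eps)$, the positive-measure portions near $x_{n,\pi}$ and $y_{n,\pi}$ lie on two distinct lines $L_1, L_2$ whose directions differ (modulo sign) by an angle $\alpha(\eps) > 0$. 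A standard planar estimate then shows that $L_1 \cap L_2$ lies within distance $O(\|x_n - y_n\|/\sin \alpha(\eps))$ of $x_{n,\pi}$, so for $n$ large this intersection point lies inside $\Omega_{x_\pi^\perp}$ (because $x_n \in \Omega_r$ forces $x_{n,\pi}$ to be at distance $\geq r$ from $\partial \Omega_{x_\pi^\perp}$), contradicting $L_1 \cap L_2 \cap \Omega_{x_\pi^\perp} = \emptyset$.

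The main obstacle is the degenerate case $\nu_2 = -\nu_1$, where Lemma~\ref{lemma:pigeqeps} cannot produce $G(\nu_1,\nu_2,\eps) \neq \emptyset$ (the condition on the sum of normalized projections degenerates). I would handle it separately via a quantitative slab energy bound. The blow-up/Lebesgue-point argument underlying Lemma~\ref{lemma:measpos} actually requires only $\|P_\pi(\nu_i)\| > 0$, which holds on an open dense subset of $G(2, \R^d)$ when $\nu_2 = -\nu_1$; this still produces, in $\mu_{2,d} \otimes \mathcal H^{d-2}$-a.e.\ slice, positive $\mathcal H^1$-measure boundary portions near $x_{n,\pi}$ and $y_{n,\pi}$ with anti-parallel projected normals, i.e., a slab of thickness $h \leq 2\rho + \|x_n - y_n\|$ in the 2D slice. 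A direct computation on $\overline F^{\mathrm{2D}}_{0,p,\pi}$ yields the pointwise lower bound $e(z) \gtrsim h^{-(p-3)}$ on slab interfaces, so the slab contributes 2D energy at least $\gtrsim \rho\, h^{-(p-3)}$. Combining with Chebyshev applied to the slicing inequality $\int \overline F^{\mathrm{2D}}_{0,p,\pi}(E_{x_\pi^\perp}, \Omega_{x_\pi^\perp}) \dx_\pi^\perp \d\mu_{2,d}(\pi) \lesssim \overline \Fcal_{0,p,d}(E, \Omega)$ restricts attention to a positive-measure set of slices where the 2D energy is bounded by a fixed $M_0$, on which $h \gtrsim (\rho/M_0)^{1/(p-3)}$. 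With $\rho = \|x_n - y_n\|$ and $p \geq d+3 \geq 5$, this forces $\|x_n - y_n\|$ bounded below, contradicting $\|x_n - y_n\| \to 0$ and completing the argument.
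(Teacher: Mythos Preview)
Your argument for the generic case ($\|\nu_1+\nu_2\|>0$) is exactly the paper's: contradict the disjointness of the boundary lines in a positive-measure family of two-dimensional slices by forcing two lines with distinct (non-antipodal) normals to meet inside $\Omega_{x_\pi^\perp}$ once $\rho/c_0(\eps)<r$. The paper stops there and does not single out the antipodal situation, so your extra case analysis is going beyond what the paper writes.

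The gap in your antipodal treatment is the Chebyshev step. You want to intersect the ``good'' slices coming from (the variant of) Lemma~\ref{lemma:measpos} with the set $\{\overline F^{\mathrm{2D}}_{0,p,\pi}\leq M_0\}$ for a \emph{fixed} $M_0$; but the proof of Lemma~\ref{lemma:measpos} only furnishes, for each $n$, a set of $x_\pi^\perp$ of $\mathcal H^{d-2}$-measure $\gtrsim\rho_n^{d-2}$ (with $\rho_n=\|x_n-y_n\|\to0$), living inside balls of size $\rho_n$. Since the ``bad'' set $\{\overline F^{\mathrm{2D}}_{0,p,\pi}>M_0\}$ has measure $\lesssim M/M_0$ independent of $n$, for $\rho_n$ small it can swallow all the good slices, and no fixed $M_0$ yields a uniform lower bound for $\rho_n$. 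The remedy is to skip Chebyshev and integrate: on every good slice the thin pair of nearly parallel lines (distance $h\leq 3\rho_n$, angle $\lesssim\delta$) contributes two-dimensional energy $\gtrsim\rho_n\,h^{-(p-3)}\gtrsim\rho_n^{-(p-4)}$; integrating over the good slices and using Proposition~\ref{prop:slicing} gives
\[
\overline\Fcal_{0,p,d}(E,\Omega)\ \gtrsim\ (\text{measure of good slices})\cdot\rho_n^{-(p-4)}\ \gtrsim\ \sigma_0\,\rho_n^{d-2}\cdot\rho_n^{-(p-4)}\ =\ \sigma_0\,\rho_n^{\,d+2-p}\ \longrightarrow\ \infty
\]
for $p\geq d+3$, which is the contradiction you want. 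Here $\sigma_0>0$ is the $\mu_{2,d}$-measure of planes with $\|P_\pi(\nu_1)\|$ bounded below, which you should fix in advance (``open dense'' is not enough for a quantitative lower bound). With this modification your antipodal case goes through and complements the paper's line-intersection argument.
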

	
	\begin{proof}Assume $\exists\,r>0, \eps>0$ such that for all $\rho>0$ there exist $x,y\in\Omega_r$  Lebesgue points of $\nu_E$ w.r.t. $\mathcal H^{d-1}\llcorner \partial^*E$ with $\|x-y\|<\rho$ and $\|\nu_E(x)-\nu_E(y)\|>\eps$. We will show that if $\rho$ is sufficiently small w.r.t. $\eps$ and $r$ this leads to a contradiction.
		
		Reasoning as in the proof of Theorem~\ref{thm:rigidity} for $[0,L)^d$ periodic sets, one can show that there exist an open set of planes $\pi\in G(2,\R^d)$ and $\delta>0$ such that for all $\nu_1,\nu_2\in\S^{d-1}$ such that $\|\nu_E(x)-\nu_1\|<\delta$, $\|\nu_E(y)-\nu_2\|<\delta$, then $\pi\in G(\nu_1,\nu_2,\eps)$ and
		\begin{equation*}
			\mathcal H^{d-2}\Bigl(\Bigl\{x_{\pi}^\perp\in{\pi}^\perp:\,\mathcal H^{1}((	\Gamma_{1,\rho,\delta})_{x_{\pi}^\perp})>0,\,\mathcal H^{1}((	\Gamma_{2,\rho,\delta})_{x_{\pi}^\perp})>0\Bigr\}\Bigr)>0.
		\end{equation*}
		In particular, for $\mathcal H^1$-a.e. $(z_1)_{\pi}\in(\Gamma_{1,\rho,\delta})_{x_{\pi}^\perp}$ and $\mathcal H^1$-a.e. $(z_2)_{\pi}\in(\Gamma_{2,\rho,\delta})_{x_{\pi}^\perp}$ one has that
		\begin{equation*}
			\bigl\|	\nu_{E_{x_{\pi}^\perp}}((z_1)_{\pi})-\nu_{E_{x_{\pi}^\perp}}((z_2)_{\pi})\bigr\|>c_0(\eps)>0.
		\end{equation*}
		Let now $\ell((z_1)_\pi)$, $\ell((z_2)_\pi)\subset x_\pi^\perp+\pi$ be the lines of the boundary of  $E_{x_{\pi}^\perp}$ passing respectively through $(z_1)_\pi$ and $(z_2)_\pi$ and orthogonal respectively to $\nu_{E_{x_{\pi}^\perp}}((z_1)_{\pi})$ and $\nu_{E_{x_{\pi}^\perp}}((z_2)_{\pi})$.  Then, by the above and due to the fact that $\|(z_1)_\pi-(z_2)_\pi)\|\leq3\rho$, one has that  $\ell((z_1)_\pi)\cap \ell((z_2)_\pi)=\{w_\pi\}$ with $\min\{\|w_\pi-(z_1)_{\pi}\|, \,\|w_\pi-(z_2)_{\pi}\|\}\sim\frac{\rho}{c_0(\eps)}$.
		Choosing now $\rho>0$ sufficiently small such that $2 \frac{\rho}{c_0(\eps)}<r$, the two lines of the boundary of $E_{x_\pi^\perp}$ have to meet inside $\Omega_{x_\pi^\perp}$ for a set of positive measure of two-dimensional slices and two-dimensional planes, thus contradicting Proposition~\ref{prop:nusliced}.

	\end{proof}
	
	Given a vector $\nu\in\S^{d-1}$ and $\rho>0$, let
	\[
	C(x,\rho,\nu):=\bigl\{y\in\R^d:\,\|P_{\nu^\perp}(y-x)\|<\rho, \,\langle y-x,\nu\rangle<\rho\bigr\}.
	\]
	
	As a consequence of the previous corollary, for every $\eps>0$, $r>0$ there exists $\rho>0$ such that for all $\nu\in\S^{d-1}$ and $x,y\in\Omega_r$ Lebesgue points of $\nu_E$ w.r.t. $\mathcal H^{d-1}\llcorner\partial^*E$ with $y\in C(x,\rho,\nu)$, then $\|\nu_E(x)-\nu_E(y)\|<\eps$.

	Let us introduce for simplicity of notation the following set
	\[
	\partial^*E_L=\{x\in\partial^*E:\, x \text{ is a Lebesgue point of $\nu_E$ w.r.t. $\mathcal H^{d-1}\llcorner\partial^*E$ }\}.
	\]
	Moreover, given an hyperplane $\nu^\perp$, we denote by $B_{\nu^\perp,\rho}(x_\nu^\perp)$ the $d-1$ dimensional ball of radius $\rho$ and center $x_\nu^\perp$ inside $\nu^\perp$.

	\begin{lemma}\label{lemma:graph}
		For every $r>0$, $\ell>0$, $0<\eps\ll1$ there exists $\rho>0$ such that for all $x\in\partial^*E_L$ and for a.e. $\nu\in\S^{d-1}$ such that $\|\nu-\nu_E(x)\|<\eps$ there exists $f_{x,\rho,\nu}:B_{\nu^\perp, \rho}(x_{\nu}^\perp)\to\R\nu$ such that
		\begin{equation}\label{eq:egraphf}
			\mathcal H^{d-1}\bigl((C(x,\rho,\nu)\cap \partial^*E)\,\triangle\,\mathrm{graph}f_{x,\rho,\nu}\bigr)=0
		\end{equation}
		and moreover $f_{x,\rho,\nu}$ is $\ell$-Lipschitz.
		
	\end{lemma}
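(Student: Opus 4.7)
The plan is to fix $\rho$ via the uniform continuity statement displayed just above Lemma~\ref{lemma:graph} (the consequence of Corollary~\ref{cor:continuity}), so that $\nu_E$ is uniformly close to $\nu_E(x)$ at every Lebesgue point of $\partial^*E$ inside the cylinder $C(x,\rho,\nu)$. Once this is secured, the construction of $f_{x,\rho,\nu}$ reduces to a slicing argument showing that each line $x_\nu^\perp+\R\nu$ meets $\partial^*E\cap C(x,\rho,\nu)$ in at most one point, with the Lipschitz control obtained by repeating the argument along every direction $\nu'$ sufficiently aligned with $\nu$.

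Concretely, given $\ell>0$ and $\varepsilon<\ell/\sqrt{1+\ell^2}$, first choose $\varepsilon'=\varepsilon'(\ell,\varepsilon)>0$ so small that $\varepsilon+\varepsilon'<\ell/\sqrt{1+\ell^2}$, and let $\rho>0$ be the radius furnished by the consequence of Corollary~\ref{cor:continuity} applied with parameter $\varepsilon'$. Applying it with cylinder axis $\nu$ and centre $x$ yields, as Step~1, $\|\nu_E(y)-\nu_E(x)\|<\varepsilon'$ for every Lebesgue point $y\in\partial^*E_L\cap C(x,\rho,\nu)$, whence $\langle\nu_E(y),\nu\rangle>1-(\varepsilon+\varepsilon')^2/2>0$. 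Via the signed slicing formula~\eqref{eq:slicing_sign}, at every Lebesgue boundary point of the $1$D slice $\partial^*E_{x_\nu^\perp}\cap C(x,\rho,\nu)_{x_\nu^\perp}$ the $1$D signed normal equals $\mathrm{sign}(\langle\nu_E,\nu\rangle)=+1$; since in a $1$D set of locally finite perimeter consecutive boundary points have alternating signs, each such slice contains at most one point for $\hd$-a.e.\ $x_\nu^\perp\in B_{\nu^\perp,\rho}(x_\nu^\perp)$ (Step~2). One then defines $f_{x,\rho,\nu}$ on the set $A\subset B_{\nu^\perp,\rho}(x_\nu^\perp)$ of slices admitting a unique boundary point by $f_{x,\rho,\nu}(x_\nu^\perp)=s$ with $x_\nu^\perp+s\nu\in\partial^*E$.

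For the Lipschitz bound I would use a transverse slicing argument. Suppose, towards a contradiction, that two Lebesgue points $y_1,y_2\in\partial^*E_L\cap C(x,\rho,\nu)$ satisfy $|\langle y_1-y_2,\nu\rangle|>\ell\|P_{\nu^\perp}(y_1-y_2)\|$, and set $\nu'=(y_1-y_2)/\|y_1-y_2\|$, so that $|\langle\nu',\nu\rangle|>\ell/\sqrt{1+\ell^2}>\varepsilon+\varepsilon'$. By Step~1, every Lebesgue point of $\partial^*E$ in $C(x,\rho,\nu)$ then satisfies $\langle\nu_E,\nu'\rangle>0$. Using the blow-up of $\partial^*E$ at $y_1$ and $y_2$ (De Giorgi's structure theorem, together with the near-equality $\nu_E(y_1)\approx\nu_E(y_2)\approx\nu$), one shows, in the spirit of Lemma~\ref{lemma:measpos}, that for a set of positive $\mathcal{H}^{d-2}$-measure of perturbations $\delta$ in a $(d-2)$-plane transverse to both $\nu'$ and $\nu_E(y_1)$, the $1$D slice of $E$ in direction $\nu'$ through $(y_2)_{(\nu')^\perp}+\delta$ contains two distinct boundary points, one near $y_2+\delta$ and one near $y_1+\delta$, both Lebesgue points of $\nu_E$. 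Their $1$D signed normals are both $+1$, contradicting alternation in a $1$D set of locally finite perimeter. Hence $f_{x,\rho,\nu}$ is $\ell$-Lipschitz on $A$ and extends to an $\ell$-Lipschitz function on $B_{\nu^\perp,\rho}(x_\nu^\perp)$ by McShane's theorem.

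The main obstacle is the implementation of the transverse slicing step: from the hypothetical pair $y_1,y_2$ one must manufacture a positive-measure family of $1$D slices in direction $\nu'$ simultaneously containing boundary points near $y_1+\delta$ and $y_2+\delta$, which calls for a coupled blow-up argument at both points reminiscent of Lemma~\ref{lemma:measpos}. A subsidiary point is verifying the symmetric-difference identity~\eqref{eq:egraphf}: one must check that $A$ has full measure in $B_{\nu^\perp,\rho}(x_\nu^\perp)$, so that the McShane extension off $A$ contributes no stray boundary mass. This in turn follows by combining the blow-up of $\partial^*E$ at the centre $x$, which ensures that $A$ contains a neighbourhood of $x_\nu^\perp$, with the already-established Lipschitz structure, which allows propagating fullness across the entire base.
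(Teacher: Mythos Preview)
Your handling of single-valuedness and the Lipschitz bound is essentially the paper's argument, only phrased less efficiently. The paper does not fix a violating pair $y_1,y_2$ and then thicken via blow-up; instead it observes directly that for \emph{every} direction $\nu'$ in the open cone $\{\langle\nu_E(x),\nu'\rangle>\ell\,\|P_{\nu_E(x)^\perp}(\nu')\|\}$, the continuity of $\nu_E$ (Corollary~\ref{cor:continuity}) forces $\langle\nu_E(y),\nu'\rangle>0$ at every Lebesgue point $y\in C(x,\rho,\nu)$, so by sign-alternation the set $A_2((\nu')^\perp,\rho)$ of slices with two or more boundary points has $\mathcal H^{d-1}$-measure zero for a.e.\ such $\nu'$. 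This gives single-valuedness (take $\nu'=\nu$) and the $\ell$-Lipschitz bound (a violation would produce some $\nu'$ in the cone with $|A_2((\nu')^\perp,\rho)|>0$) simultaneously, without your coupled blow-up detour.

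The genuine gap is in your treatment of the empty-slice set, i.e.\ in verifying~\eqref{eq:egraphf}. Your claim that ``blow-up at $x$ ensures that $A$ contains a neighbourhood of $x_\nu^\perp$'' is false: De Giorgi's blow-up only yields $|A_0\cap B_{\nu^\perp,r}(x_\nu^\perp)|=o(r^{d-1})$ as $r\to 0$, not that $A_0$ is locally empty. And ``propagating fullness via the Lipschitz structure'' is not an argument: an $\ell$-Lipschitz function defined on a subset $A_1$ of the base says nothing about whether $A_1$ has full measure. Nothing you have established rules out, for instance, $\partial^*E\cap C(x,\rho,\nu)$ being a flat disc over only half of $B_{\nu^\perp,\rho}(x_\nu^\perp)$. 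The paper closes this gap by invoking the two-dimensional rigidity of Proposition~\ref{prop:nusliced}: for a.e.\ plane $\pi=\mathrm{span}\{\nu,\theta\}$ and a.e.\ $x_\pi^\perp$, the slice $\partial^*E_{x_\pi^\perp}$ is a finite union of straight lines. If $|A_0(\nu^\perp,\rho)|>0$, its projection onto $\pi^\perp$ has positive $\mathcal H^{d-2}$-measure, so on a positive-measure family of $2$D slices there is a set $\hat A_0$ of positive $\mathcal H^1$-measure over which the vertical fibres are empty. The boundary lines---whose normals are $\varepsilon$-close to $\nu$ by continuity---cannot cross $\hat A_0\times(-\rho,\rho)$, which forces them within distance $\rho\varepsilon$ of the top or bottom of the cylinder; shrinking to $C(x,\rho(1-\varepsilon),\nu)$ then makes \emph{all} slices empty, contradicting $x\in\partial^*E_L$. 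This use of Proposition~\ref{prop:nusliced} is the missing ingredient in your proposal.
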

	
	\begin{remark}
		The fact that we can parametrize the boundary of sets of finite energy with Lipschitz graphs is necessary in our approach to show flatness of the boundary as in Lemma~\ref{lemma:d+3}. The fact that, as stated in the previous Lemma, the Lipschitz constants of such functions can be chosen to be arbitrarily small is not necessary indeed for our approach, but we include it in the proof for completeness. Therefore, the reader interested in the proof of the rigidity theorem can skip the details related to the arbitrary choice of $\ell$ in what follows.
	\end{remark}
	
	\begin{proof}
		
		Assume w.l.o.g. that $x=0$, fix $\nu\in\S^{d-1}$ %$\langle\nu_E(0),\nu\rangle>\ell\|P_{\nu_E(0)^\perp}(\nu)\|$
		and define
		\begin{align}
			A_0(\nu^\perp,\rho)=\{y_\nu^\perp\in B_{\nu^\perp,\rho}(0):\,\#\partial^*E_{y_\nu^\perp}\cap(-\rho,\rho)=0\},\label{eq:a0}\\
			A_1(\nu^\perp,\rho)=\{y_\nu^\perp\in B_{\nu^\perp,\rho}(0):\,\#\partial^*E_{y_\nu^\perp}\cap(-\rho,\rho)=1\},\label{eq:a1}\\
			A_2(\nu^\perp,\rho)=\{y_\nu^\perp\in B_{\nu^\perp,\rho}(0):\,\#\partial^*E_{y_\nu^\perp}\cap(-\rho,\rho)\geq2\}.\label{eq:a2}
		\end{align}

		We will show that  there exists $\rho>0$ (dependent only on $\ell$ and $\eps$) such that
		\begin{equation}\label{eq:a22}
			\mathcal H^{d-1}(A_2(\nu^\perp,\rho))=0\quad\text{ for a.e. $\nu\in\S^{d-1}$ s.t. $\langle\nu_E(0),\nu\rangle>\ell\|P_{\nu_E(0)^\perp}(\nu)\|$},
		\end{equation}
		\begin{equation}\label{eq:a02}
			\mathcal H^{d-1}(A_0(\nu^\perp,\rho))=0\quad\text{for a.e. $\nu\in\S^{d-1}$ s.t. $\|\nu-\nu_E(0)\|<\eps$}.
		\end{equation}
		In particular, for  a.e. $\nu\in\S^{d-1}$ s.t. $\|\nu-\nu_E(0)\|<\eps$, this implies immediately the existence of a set of full measure $\tilde A(\nu^\perp,\rho)\subset B_{\nu^\perp,\rho}(0)$ and of an $\ell$-Lipschitz  function $f_{x,\nu,\rho}:\tilde A(\nu^\perp,\rho)\to\R\nu$ such that \eqref{eq:egraphf} holds.
		Indeed, since the set of $\nu\in\S^{d-1}$ for which both \eqref{eq:a22} and \eqref{eq:a02}  hold is, up to a null set,  open,  locally uniformly in $\Omega_r$, the boundary of a set of finite energy $E$ can be parametrized by a Lipschitz graph. The additional fact that the Lipschitz constant $\ell$ of such a graph can be chosen to be arbitrarily small provided we restrict to sufficiently small neighborhoods of size $\rho$ (not necessary to the proof of the rigidity Theorem) can be obtained by the fact that the estimate \eqref{eq:a22} holds for a larger set of directions outside the $\ell$-cone with base plane $\nu_E(0)^\perp$.
		
		Let us first show that there exists $\rho>0$ such that \eqref{eq:a22} holds. Let $\eps\ll1$ such that
		\begin{equation}\label{eq:elleps}
			\ell(1-\eps)-\eps>0
		\end{equation}
		and let $\rho>0$ such that for all $y\in\partial^*E_L\cap C(0,\rho,\nu)$ one has that $\|\nu_E(0)-\nu_E(y)\|<\eps$.
		Then, for all such $y$ and $\nu$ as above one has that
		\begin{equation}\label{eq:sprodell}
			\langle\nu,\nu_E(y)\rangle\geq \ell\|P_{\nu_E(x)^\perp}(\nu)\|(1-\eps)-\eps\|P_{\nu_E(x)^\perp}(\nu)\|\geq0.
		\end{equation}
		Now observe that for a.e. $y_\nu^\perp$ such that $\#\partial^*E_{y_\nu^\perp}\cap (-\rho,\rho)\geq 2$ then there exists $y=y_{\nu}^\perp+s\nu\in\partial^*E_L$, $s\in(-\rho,\rho)$ such that $\langle\nu_E(y),\nu\rangle<0$, thus contradicting \eqref{eq:sprodell}.

		Let us then show that there exists $\rho>0$ such that  \eqref{eq:a02} holds.
		Assume instead that  $\mathcal H^{d-1}(A_0(\nu^\perp,\rho))>0$ for a set of $\nu\in\S^{d-1}$ of positive measure as above.
		Given $\nu$ belonging to this set, let $\theta\in\S^{d-1}\cap\nu^\perp$ be such that the plane $\pi(\nu,\theta)=\mathrm{span}\{\nu,\theta\}$ satisfies
		\begin{equation}\label{eq:2dslice}
			\overline{\mathcal F}_{0,p,\pi(\nu,\theta)}^{\mathrm{2D}} (E_{x_{\pi(\theta,\nu)}^\perp},\Omega_{x_{\pi(\theta,\nu)}^\perp})<+\infty,\quad \text{for a.e. $x_{\pi(\theta,\nu)}^\perp\in\pi(\nu,\theta)^\perp$.}
		\end{equation}
		By Proposition~\ref{prop:nusliced}, a.e. pair $(\nu,\theta)$ satisfies the above property for a set of finite energy $E$.
		In particular, for every two-dimensional slice as in \eqref{eq:2dslice} one has that $\partial^*E_{x_{\pi(\theta,\nu)}^\perp}\cap\Omega_{x_{\pi(\theta,\nu)}^\perp}$ is flat and composed of finitely many non-intersecting lines.
		Observe that, since $\mathcal H^{d-1}(A_0(\nu^\perp,\rho))>0$, for a.e. $\theta$ as above
		\[
		\mathcal H^{d-2}(P_{\pi(\nu,\theta)^\perp}(A_0(\nu^\perp,\rho)))>0.
		\]
		Hence, for all $x_{\pi(\theta,\nu)}^\perp\in P_{\pi(\nu,\theta)^\perp}(A_0(\nu^\perp,\rho))$, one has that
		\[
		\mathcal H^1((A_0(\nu^\perp,\rho))_{x_{\pi(\theta,\nu)}^\perp})>0,
		\]
		which means that the set
		\begin{equation*}
			\hat A_0(E_{x_{\pi(\theta,\nu)}^\perp}):=\{s\in\R:\, \|x_{\pi(\theta,\nu)}^\perp+s\theta\|<\rho,\, \#\partial^*E_{x_{\pi(\theta,\nu)}^\perp+s\theta}\cap(-\rho,\rho)=0\}
		\end{equation*}
		has positive $\mathcal H^1$-measure.
		In particular, the lines constituting the boundary of $E_{x_{\pi(\theta,\nu)}^\perp}$ cannot cross the set $	\hat A_0(E_{x_{\pi(\theta,\nu)}^\perp})\times(-\rho,\rho)\nu\subset (C(0,\rho,\nu))_{x_{\pi(\theta,\nu)}^\perp}$.
		Since by continuity of the normal and by the choice of $C(0,\rho,\nu)$ one has that the normals $\nu_i$ to such lines $\ell_i$ satisfy $\|\nu_i-\nu\|\leq\eps$, then by an easy geometric argument one can deduce that
		\begin{equation*}
			\mathrm{dist}\bigl(\ell_i, (x_{\pi(\theta,\nu)}^\perp+(-\rho,\rho)\theta)\times \{\pm\rho\nu\}\bigr)\leq \rho\eps.
		\end{equation*}
		In particular, choosing $C(0,\rho-\rho\eps,\nu)$ instead of $C(0,\rho,\nu)$ one has that
		\begin{equation*}
			\mathcal H^{d-1}(A_0(\nu^\perp,\rho-\rho\eps)\cap B_{\nu^\perp,\rho-\rho\eps}(0))=\mathcal H^{d-1}(B_{\nu^\perp,\rho-\rho\eps}(0)),
		\end{equation*}
		which is against the assumption $0\in\partial^*E_L$.
	\end{proof}

	Then we can finally prove Theorem~\ref{thm:rigidity} for a general $\Omega$.
	
	\begin{proof}[Proof of Theorem~\ref{thm:rigidity} for a general bounded open set $\Omega$: ] Thanks to the fact that, by Lemma~\ref{lemma:graph}, for every   $r>0$ one has that  locally uniformly in $\Omega_r$ the boundary of a set of finite energy $E$ can be parametrized by a Lipschitz graph, one can apply Lemma~\ref{lemma:d+3} to the estimate of Proposition~\ref{prop:brezis} and conclude the proof of the Theorem.
	\end{proof}

	\section{Proof of Theorem~\ref{thm:main}}
	\label{sec:positive_tau}
	
	The goal of this section is to prove Theorem~\ref{thm:main}, thus showing that for sufficiently small $\tau>0$ minimizers of $\Fcal_{\tau,p,d}$ are periodic stripes.
	
	By Theorem~\ref{thm:rigidity} and recalling the definition of $h^*_L$ given in~\eqref{eq:hL}, we know that the following holds true.
	
	\begin{corollary}\label{cor:gammaconv}  Let $L>0$, $p\geq d+3$. Then, for every $\sigma>0$ there exists $\tau(\sigma)>0$ such that for all $0<\tau<\tau(\sigma)$ any  $[0,L)^d$-periodic global minimizer $E_\tau$ of $\Fcal_{\tau,p,d}(\cdot, [0,L)^d)$ satisfies
		\begin{equation}\label{eq:l1small}
			\|\chi_{E_\tau}-\chi_{S_\theta}\|_{L^1[0,L)^d}\leq \sigma,
		\end{equation}
		for some $\theta\in \S^{d-1}$ and some  $[0,L)^d$-periodic set $S_\theta$ made of stripes with boundaries orthogonal to $\theta\in\S^{d-1}$, and of constant distance one from the other given by $h^*_L>0$.
	\end{corollary}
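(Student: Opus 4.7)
The plan is to argue by contradiction, combining the $\Gamma$-convergence Theorem~\ref{thm:gammaconv}, the rigidity result of Theorem~\ref{thm:rigidity} in the periodic setting, and the one-dimensional periodicity given by Proposition~\ref{prop:almost_reflection_positivity}. Suppose, negating the statement, that there exist $\sigma>0$ and a sequence $\tau_n\downarrow0$ with $[0,L)^d$-periodic minimizers $E_{\tau_n}$ of $\Fcal_{\tau_n,p,d}(\cdot,[0,L)^d)$ such that
\begin{equation*}
\|\chi_{E_{\tau_n}}-\chi_{S_\theta}\|_{L^1([0,L)^d)}>\sigma
\end{equation*}
for every $\theta\in\S^{d-1}$ and every $[0,L)^d$-periodic simple union of stripes $S_\theta$ with boundaries orthogonal to $\theta$ and constant distance $h^*_L$.

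First I would establish a uniform upper bound on $\Fcal_{\tau_n,p,d}(E_{\tau_n},[0,L)^d)$. This follows by testing minimality against a fixed $[0,L)^d$-periodic simple union of stripes $S_*$: since $\Fcal_{\tau,p,d}(S_*,[0,L)^d)=\Fcal^{\mathrm{1D}}_{\tau,p,d}(\widehat S_*,[0,L))$ and $\Fcal^{\mathrm{1D}}_{\tau,p,d}(\widehat S_*,[0,L))$ converges as $\tau\to0$ to a finite limit by monotone convergence of $K_\tau$ (condition~\eqref{eq:kass2}) together with the cancellation between the surface and nonlocal terms encoded in the one-dimensional estimate of Proposition~\ref{prop:1dbound}, this yields equiboundedness. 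Invoking Corollary~\ref{cor:fboundsp}(ii) then gives $\sup_n\per(E_{\tau_n},[0,L)^d)<+\infty$, and standard compactness for $[0,L)^d$-periodic sets of finite perimeter extracts a subsequence (not relabelled) with $E_{\tau_n}\to E_0$ in $L^1([0,L)^d)$ for some $[0,L)^d$-periodic set $E_0$ of locally finite perimeter.

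Next I would apply Theorem~\ref{thm:gammaconv}: by the $\Gamma$-$\liminf$ inequality along $\tau_n\downarrow0$ and by minimality of $E_{\tau_n}$, the limit $E_0$ is a global minimizer of $\Fcal_{0,p,d}(\cdot,[0,L)^d)$, and in particular $\Fcal_{0,p,d}(E_0,[0,L)^d)<+\infty$. By the lower bound
\begin{equation*}
L^d\Fcal_{0,p,d}(E,[0,L)^d)\gtrsim -\per(E,[0,L)^d)+L^d\overline\Fcal_{0,p,d}(E,[0,L)^d)
\end{equation*}
recorded just after~\eqref{eq:forintgeom0}, we also get $\overline\Fcal_{0,p,d}(E_0,[0,L)^d)<+\infty$. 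Thus the periodic part of Theorem~\ref{thm:rigidity} applies: up to a rigid motion, $E_0=\widehat E_0\times\R^{d-1}$ for some $\widehat E_0\subset\R$ which is a periodic union of stripes, and its boundary is orthogonal to some $\theta\in\S^{d-1}$.

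Finally, using the identification $\Fcal_{0,p,d}(E_0,[0,L)^d)=\Fcal^{\mathrm{1D}}_{0,p,d}(\widehat E_0,[0,L))$ and the minimality of $E_0$ among $[0,L)^d$-periodic competitors, $\widehat E_0$ minimizes $\Fcal^{\mathrm{1D}}_{0,p,d}(\cdot,[0,L))$. Proposition~\ref{prop:almost_reflection_positivity} (applied as $\tau\to0$, where $\widehat K_0$ is reflection positive and the mutual distances of the boundary points of the one-dimensional minimizer are bounded below uniformly by Proposition~\ref{prop:1dbound}) together with the definition~\eqref{eq:hL} of $h^*_L$ then forces $\widehat E_0$ to be the simple periodic stripe set of constant width $h^*_L$. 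Hence $E_0=S_\theta$, and $\|\chi_{E_{\tau_n}}-\chi_{S_\theta}\|_{L^1([0,L)^d)}\to0$, contradicting the standing assumption. The main technical obstacle is the passage to $\tau=0$ in the one-dimensional periodicity statement, since $\widehat K_0$ fails to be integrable at the origin; this is handled exactly as in the proof of Proposition~\ref{prop:almost_reflection_positivity}, by using that boundary points of minimizers are separated by a uniform $\eta_0>0$ so that the singular part of $\widehat K_\tau$ near the origin never contributes.
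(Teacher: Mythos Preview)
Your argument is correct and follows essentially the route the paper indicates: the paper states that the corollary holds ``By Theorem~\ref{thm:rigidity} and recalling the definition of $h^*_L$'' without writing out a formal proof, and your contradiction argument via uniform energy bounds, compactness, $\Gamma$-convergence (Theorem~\ref{thm:gammaconv}), the periodic rigidity of Theorem~\ref{thm:rigidity}, and the one-dimensional optimization leading to $h^*_L$ is exactly how one fills in those details. The only addition worth noting is that you make explicit the role of the $\Gamma$-convergence Theorem~\ref{thm:gammaconv} in identifying the limit $E_0$ as a \emph{minimizer} of $\Fcal_{0,p,d}$ (and hence as the optimal stripe of width $h^*_L$, not merely some stripe), which the paper's one-line justification leaves implicit.
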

	
	In order to prove Theorem~\ref{thm:main}, we will show that among sets  $E_\tau$ satisfying~\eqref{eq:l1small} as above for sufficiently small $\sigma$, there are periodic stripes which have lower energy.
	
	We will need a series of preliminary lemmas.

	Let us now introduce some preliminary notation.

	For any set  $E$ of locally  finite perimeter, $\theta\in\S^{d-1}$ and $\delta>0$, define
	\begin{equation*}
		% \label{eq:33}
		\begin{split}
			\etautheta(E_{x^{\perp}_{\theta}}, x_{\theta})&:= \begin{cases}
				\frac{1}{\max\{\tau, |x^{+}_{\theta} - x_{\theta}|^{p-d-1}\}}, & \text{if } |x_{\theta} - x_{\theta}^{+}| < \delta, \\
				0, & \text{otherwise}.
			\end{cases}\\
			\etau (E, x) &:= \int_{\S^{d-1}}  |\Scal{\nu_{E}(x)}{\theta}|\etautheta(E_{x^{\perp}_{\theta}}, x_{\theta})\d\theta.
		\end{split}
	\end{equation*}
	
	Notice that for sufficiently small $\tau$, by Proposition~\ref{prop:1dbound},  it holds $\max\{0, r_{\tau}(E_{x_{\theta}^{\perp}}, x_{\theta})\}\gtrsim \etautheta(E_{x^{\perp}_{\theta}}, x_{\theta})$.
	Moreover, if $\delta_{1} < \delta_{2}$, $e_{\tau, \delta_{1}, \theta}<  e_{\tau, \delta_{2}, \theta} $.
	
	The proof of the next lemma is very similar to the proof of Lemma~\ref{lemma:epointwisenu} for $\tau=0$.

	\begin{lemma}
		\label{lemma:small_density}
		There exists a dimensional constant $C_3>0$ such that the following holds. For every set $E$ of locally finite perimeter, $x\in\partial^*E$, $\alpha, r>0$, $r<\delta$  and $0<\tau<r$, whenever
		$|\big(E \Delta H_{\nu_{E}(x)}(x)\big)\cap B_r(x)| >\alpha r^{d}$ it holds $\etau(E, x) > C_3\alpha^{2}/r^{p-d-1}$.
		
	\end{lemma}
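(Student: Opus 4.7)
The plan is to mirror the strategy used in Lemma~\ref{lemma:epointwisenu}, with an additional preliminary step that converts the volume hypothesis $|(E\Delta H_{\nu_E(x)}(x))\cap B_r(x)|>\alpha r^d$ into a lower bound on the measure of the set of directions
\[
\Omega(x,r):=\{\theta\in\S^{d-1}:\,\r_\theta(x)<r\}.
\]
The remainder of the argument is then the same concentration-on-the-sphere computation used to produce~\eqref{eq:exy}, applied now to $e_{\tau,\delta}$ instead of $e$.

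The first step is to establish $|\Omega(x,r)|\gtrsim\alpha$. For any $\theta\in\S^{d-1}$ with $\r_\theta(x)\geq r$, I would use the blow-up properties~\eqref{eq:stimeblowup} together with the fact that one-dimensional slices of sets of finite perimeter have well-defined alternating phases between consecutive boundary points: if $\langle\nu_E(x),\theta\rangle<0$ then $x+s\theta\in E$ for all $s\in(0,\r_\theta(x))$, while if $\langle\nu_E(x),\theta\rangle>0$ then $x+s\theta\notin E$ on the same range. Either way, along such a ray $E$ and $H_{\nu_E(x)}(x)$ agree in $B_r(x)$. Writing the volume in polar coordinates centred at $x$,
\[
\alpha r^d<|(E\Delta H_{\nu_E(x)}(x))\cap B_r(x)|\leq\int_{\Omega(x,r)}\int_0^r s^{d-1}\,ds\,d\theta=\frac{r^d}{d}|\Omega(x,r)|,
\]
which yields $|\Omega(x,r)|>d\alpha$.

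The second step is the same split used in Lemma~\ref{lemma:epointwisenu}: using the standard estimate $|\{\theta\in\S^{d-1}:|\langle\nu_E(x),\theta\rangle|<t\}|\lesssim t$, one can choose $t=|\Omega(x,r)|/(2\bar c_d)$ and obtain a subset $\Omega'\subset\Omega(x,r)$ of measure at least $|\Omega(x,r)|/2$ on which $|\langle\nu_E(x),\theta\rangle|\geq t$. Since $r<\delta$ ensures that $e_{\tau,\delta,\theta}$ is not truncated to zero on $\Omega(x,r)$, and since $\r_\theta(x)<r$ on this set, one estimates
\[
e_{\tau,\delta}(E,x)\geq\int_{\Omega'}\frac{|\langle\nu_E(x),\theta\rangle|}{\max\{\tau,\r_\theta(x)^{p-d-1}\}}\,d\theta\gtrsim\frac{|\Omega(x,r)|^2}{\max\{\tau,r^{p-d-1}\}}\gtrsim\frac{\alpha^2}{\max\{\tau,r^{p-d-1}\}}.
\]
Under the hypothesis that $\tau$ is sufficiently small relative to the scale $r$ (so that $\tau\leq r^{p-d-1}$, which is what the assumption $\tau<r$ becomes once read through the small-$\tau$ rescaling), the denominator reduces to $r^{p-d-1}$, yielding the claim with an explicit dimensional constant $C_3$.

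The only subtle point I expect is Step 1: one must be careful in ruling out pathologies of the 1D slice and in justifying that $\r_\theta(x)\geq r$ truly forces the ray to remain in the correct phase throughout $B_r(x)$. This is essentially the content of~\eqref{eq:stimeblowup} combined with the fact that between consecutive points of $\partial^*E_{x_\theta^\perp}$ the slice is either fully inside or fully outside $E$ up to negligible sets; once this is noted the polar integration is immediate and the rest of the proof is a direct transcription of the concentration argument already carried out for~\eqref{eq:exy}.
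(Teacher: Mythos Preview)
Your proposal is correct and follows essentially the same approach as the paper's proof: both define $\Omega(x,r)$, bound $|\Omega(x,r)|\gtrsim\alpha$ via polar integration and the blow-up properties~\eqref{eq:stimeblowup}, then run the concentration argument from Lemma~\ref{lemma:epointwisenu} to extract the factor $|\Omega(x,r)|^2/r^{p-d-1}$. You are in fact more explicit than the paper on Step~1, and your remark about $\max\{\tau,r^{p-d-1}\}$ versus $r^{p-d-1}$ flags a detail the paper's proof silently elides.
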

	
	\begin{proof}
		
		Define the set
		\[
		\Omega(x,r)=\{\theta\in\S^{d-1}: \,|x_\theta-x_\theta^+|<r\}.
		\]
		By the blow up properties~\eqref{eq:stimeblowup} at points of the reduced boundary and the fact that $|\big(E \Delta H_{\nu_{E}(x)}(x)\big)\cap B_r(x)| >\alpha r^{d}$, one has that
		$|\Omega(x,r)|\geq\alpha$. In particular, there exist $c_1,c_2>0$ such that
		the set $S_\alpha(x)=\{\theta\in\S^{d-1}:\,|\Scal{\nu_E(x)}{\theta}|\geq c_1\alpha\}$ satisfies $|\Omega(x,r)\cap S_\alpha(x)|\geq c_2\alpha$.
		Hence,
		
		\begin{align*}
			e_{\tau,\delta}(E,x)&\geq e_{\tau,r}(E,x)\gtrsim\int_{\Omega(x,r)\cap S_\alpha(x)}\frac{\alpha}{r^{p-d-1}}\d\theta\gtrsim \frac{\alpha^2}{r^{p-d-1}},
		\end{align*}
		thus proving the desired claim.
		
	\end{proof}
	
	Below we consider sets $E$  which are close to a halfspace in a given rectangle and list some geometric/measure theoretic conditions at points $x\in\partial^*E$ implying that the function $e_{\tau,\delta}(E,x)$ is large (see Figure~\ref{fig:x1}).
	
	Given $L,\delta>0$ we denote by $R_{L, \delta} := [0, L)^{d-1}\times (-\delta, \delta)$, by  $H_{L, \delta} := [0, L)^{d-1}\times (-\delta, 0)$.
	
	\begin{rmk}
		\label{rmk:after_small_density_lemma}
		Let $\delta>0$, $L>0$, $\sigma>0$, $\tau<\min\{\delta,\sigma^{1/d}\}$. Denote by $R_{L, \delta} := [0, L)^{d-1}\times (-\delta, \delta)$, by  $H_{L, \delta} := [0, L)^{d-1}\times (-\delta, 0)$ and by $\sigma = \big|(E \Delta H_{L, \delta}) \cap  R_{L, \delta}\big|$. Let $x,y\in\partial^*E\cap R_{L,\delta}$.
		Then the following hold:
		\begin{enumerate}[label= (\roman*)]
			\item
			If $\sigma <|x_d|^{d}/4$ then $\etau(E,x) > c/|x_d|^{p-d-1}$.
			\item
			If $\|\nu_{E}(x) - e_{d}\|  > \frac{\sigma}{4\delta^{d}}$,
			then $\etau(E, x) \geq \|\nu_{E}(x) - e_{d}\|/\delta^{p-d-1}$.
			\item If $\|x-y\|<\delta$, then $\etau(x) + \etau(y) \gtrsim \frac{\|\nu_{E}(x) - \nu_{E}(y)\|^{2}}{\delta^{p-d-1}}$.
		\end{enumerate}
		To prove the first two claims it is sufficient to notice that in the ball $B_{r}(x)$ with $r = |x_d|$ in the first case and $r=\delta$ in the second case we have that, if $x\in \{\chi_{H_{L,\delta}}=0\}$, $|E \cap H_{ \nu_{E}(x)}(x) \cap  B_{r}(x)| < 1/4 r^{d}$ and if  $x\in \{\chi_{H_{L,\delta}}=1\}$, $|(\R^d\setminus E) \cap (\R^d\setminus H_{ \nu_{E}(x)}(x)) \cap  B_{r}(x)| < 1/4 r^{d}$.
		Thus, by using Lemma~\ref{lemma:small_density}, we have the desired claim.
		The last statement follows from directly from Lemma~\ref{lemma:small_density}.
	\end{rmk}
	
	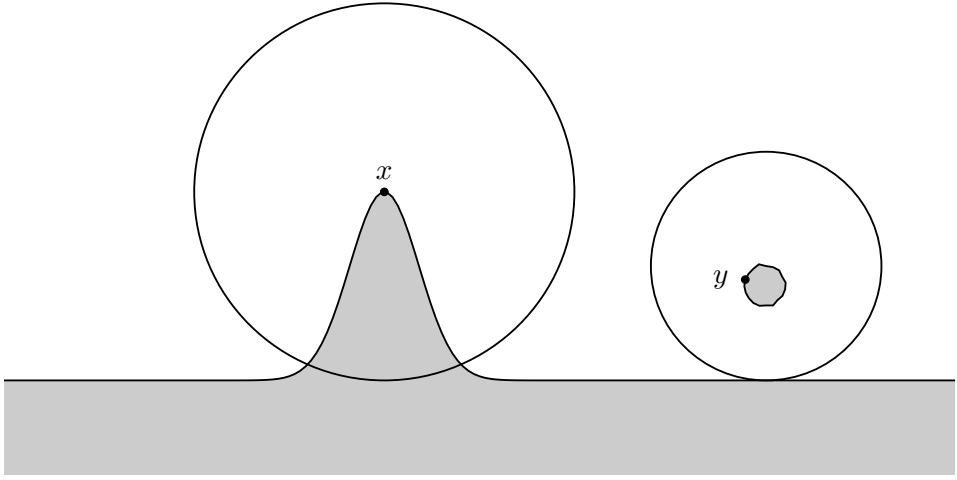
\begin{figure}
		\centering
		\begin{tikzpicture}[scale=2.5]
			\clip (-2,-0.5) rectangle (3,2.05);
			\tikzset{arrow/.style={-latex}}
			\draw[line width=.7pt,fill=black!20!white,smooth] (-100,-100) -- (-100,0) -- (-10,0) -- (-9.97139,0) -- (-9.94278,0) -- (-9.91416,0) -- (-9.88555,0) -- (-9.85694,0) -- (-9.82833,0) -- (-9.79971,0) -- (-9.7711,0) -- (-9.74249,0) -- (-9.71388,0) -- (-9.68526,0) -- (-9.65665,0) -- (-9.62804,0) -- (-9.59943,0) -- (-9.57082,0) -- (-9.5422,0) -- (-9.51359,0) -- (-9.48498,0) -- (-9.45637,0) -- (-9.42775,0) -- (-9.39914,0) -- (-9.37053,0) -- (-9.34192,0) -- (-9.3133,0) -- (-9.28469,0) -- (-9.25608,0) -- (-9.22747,0) -- (-9.19886,0) -- (-9.17024,0) -- (-9.14163,0) -- (-9.11302,0) -- (-9.08441,0) -- (-9.05579,0) -- (-9.02718,0) -- (-8.99857,0) -- (-8.96996,0) -- (-8.94134,0) -- (-8.91273,0) -- (-8.88412,0) -- (-8.85551,0) -- (-8.8269,0) -- (-8.79828,0) -- (-8.76967,0) -- (-8.74106,0) -- (-8.71245,0) -- (-8.68383,0) -- (-8.65522,0) -- (-8.62661,0) -- (-8.598,0) -- (-8.56938,0) -- (-8.54077,0) -- (-8.51216,0) -- (-8.48355,0) -- (-8.45494,0) -- (-8.42632,0) -- (-8.39771,0) -- (-8.3691,0) -- (-8.34049,0) -- (-8.31187,0) -- (-8.28326,0) -- (-8.25465,0) -- (-8.22604,0) -- (-8.19742,0) -- (-8.16881,0) -- (-8.1402,0) -- (-8.11159,0) -- (-8.08298,0) -- (-8.05436,0) -- (-8.02575,0) -- (-7.99714,0) -- (-7.96853,0) -- (-7.93991,0) -- (-7.9113,0) -- (-7.88269,0) -- (-7.85408,0) -- (-7.82546,0) -- (-7.79685,0) -- (-7.76824,0) -- (-7.73963,0) -- (-7.71102,0) -- (-7.6824,0) -- (-7.65379,0) -- (-7.62518,0) -- (-7.59657,0) -- (-7.56795,0) -- (-7.53934,0) -- (-7.51073,0) -- (-7.48212,0) -- (-7.45351,0) -- (-7.42489,0) -- (-7.39628,0) -- (-7.36767,0) -- (-7.33906,0) -- (-7.31044,0) -- (-7.28183,0) -- (-7.25322,0) -- (-7.22461,0) -- (-7.19599,0) -- (-7.16738,0) -- (-7.13877,0) -- (-7.11016,0) -- (-7.08155,0) -- (-7.05293,0) -- (-7.02432,0) -- (-6.99571,0) -- (-6.9671,0) -- (-6.93848,0) -- (-6.90987,0) -- (-6.88126,0) -- (-6.85265,0) -- (-6.82403,0) -- (-6.79542,0) -- (-6.76681,0) -- (-6.7382,0) -- (-6.70959,0) -- (-6.68097,0) -- (-6.65236,0) -- (-6.62375,0) -- (-6.59514,0) -- (-6.56652,0) -- (-6.53791,0) -- (-6.5093,0) -- (-6.48069,0) -- (-6.45207,0) -- (-6.42346,0) -- (-6.39485,0) -- (-6.36624,0) -- (-6.33763,0) -- (-6.30901,0) -- (-6.2804,0) -- (-6.25179,0) -- (-6.22318,0) -- (-6.19456,0) -- (-6.16595,0) -- (-6.13734,0) -- (-6.10873,0) -- (-6.08011,0) -- (-6.0515,0) -- (-6.02289,0) -- (-5.99428,0) -- (-5.96567,0) -- (-5.93705,0) -- (-5.90844,0) -- (-5.87983,0) -- (-5.85122,0) -- (-5.8226,0) -- (-5.79399,0) -- (-5.76538,0) -- (-5.73677,0) -- (-5.70815,0) -- (-5.67954,0) -- (-5.65093,0) -- (-5.62232,0) -- (-5.59371,0) -- (-5.56509,0) -- (-5.53648,0) -- (-5.50787,0) -- (-5.47926,0) -- (-5.45064,0) -- (-5.42203,0) -- (-5.39342,0) -- (-5.36481,0) -- (-5.33619,0) -- (-5.30758,0) -- (-5.27897,0) -- (-5.25036,0) -- (-5.22175,0) -- (-5.19313,0) -- (-5.16452,0) -- (-5.13591,0) -- (-5.1073,0) -- (-5.07868,0) -- (-5.05007,0) -- (-5.02146,0) -- (-4.99285,0) -- (-4.96423,0) -- (-4.93562,0) -- (-4.90701,0) -- (-4.8784,0) -- (-4.84979,0) -- (-4.82117,0) -- (-4.79256,0) -- (-4.76395,0) -- (-4.73534,0) -- (-4.70672,0) -- (-4.67811,0) -- (-4.6495,0) -- (-4.62089,0) -- (-4.59227,0) -- (-4.56366,0) -- (-4.53505,0) -- (-4.50644,0) -- (-4.47783,0) -- (-4.44921,0) -- (-4.4206,0) -- (-4.39199,0) -- (-4.36338,0) -- (-4.33476,0) -- (-4.30615,0) -- (-4.27754,0) -- (-4.24893,0) -- (-4.22031,0) -- (-4.1917,0) -- (-4.16309,0) -- (-4.13448,0) -- (-4.10587,0) -- (-4.07725,0) -- (-4.04864,0) -- (-4.02003,0) -- (-3.99142,0) -- (-3.9628,0) -- (-3.93419,0) -- (-3.90558,0) -- (-3.87697,0) -- (-3.84835,0) -- (-3.81974,0) -- (-3.79113,0) -- (-3.76252,0) -- (-3.73391,0) -- (-3.70529,0) -- (-3.67668,0) -- (-3.64807,0) -- (-3.61946,0) -- (-3.59084,0) -- (-3.56223,0) -- (-3.53362,0) -- (-3.50501,0) -- (-3.47639,0) -- (-3.44778,0) -- (-3.41917,0) -- (-3.39056,0) -- (-3.36195,0) -- (-3.33333,0) -- (-3.30472,0) -- (-3.27611,0) -- (-3.2475,0) -- (-3.21888,0) -- (-3.19027,0) -- (-3.16166,0) -- (-3.13305,0) -- (-3.10443,0) -- (-3.07582,0) -- (-3.04721,0) -- (-3.0186,0) -- (-2.98999,0) -- (-2.96137,0) -- (-2.93276,0) -- (-2.90415,0) -- (-2.87554,0) -- (-2.84692,0) -- (-2.81831,0) -- (-2.7897,0) -- (-2.76109,0) -- (-2.73247,0) -- (-2.70386,0) -- (-2.67525,0) -- (-2.64664,0) -- (-2.61803,0) -- (-2.58941,0) -- (-2.5608,0) -- (-2.53219,0) -- (-2.50358,0) -- (-2.47496,0) -- (-2.44635,0) -- (-2.41774,0) -- (-2.38913,0) -- (-2.36052,0) -- (-2.3319,0) -- (-2.30329,0) -- (-2.27468,0) -- (-2.24607,0) -- (-2.21745,0) -- (-2.18884,0) -- (-2.16023,0) -- (-2.13162,0) -- (-2.103,0) -- (-2.07439,0) -- (-2.04578,0) -- (-2.01717,0) -- (-1.98856,0) -- (-1.95994,0) -- (-1.93133,0) -- (-1.90272,0) -- (-1.87411,0) -- (-1.84549,0) -- (-1.81688,0) -- (-1.78827,0) -- (-1.75966,0) -- (-1.73104,0) -- (-1.70243,0) -- (-1.67382,0) -- (-1.64521,0) -- (-1.6166,0) -- (-1.58798,0) -- (-1.55937,0) -- (-1.53076,0) -- (-1.50215,0) -- (-1.47353,0) -- (-1.44492,0) -- (-1.41631,0) -- (-1.3877,0) -- (-1.35908,0) -- (-1.33047,0) -- (-1.30186,0) -- (-1.27325,0) -- (-1.24464,0) -- (-1.21602,0) -- (-1.18741,0) -- (-1.1588,0) -- (-1.13019,0) -- (-1.10157,0) -- (-1.07296,0) -- (-1.04435,0) -- (-1.01574,0) -- (-0.98712,0) -- (-0.95851,0) -- (-0.9299,0) -- (-0.90129,0) -- (-0.87268,0.00001) -- (-0.84406,0.00001) -- (-0.81545,0.00003) -- (-0.78684,0.00006) -- (-0.75823,0.00013) -- (-0.72961,0.00026) -- (-0.701,0.00049) -- (-0.67239,0.00092) -- (-0.64378,0.00168) -- (-0.61516,0.00297) -- (-0.58655,0.0051) -- (-0.55794,0.00851) -- (-0.52933,0.01382) -- (-0.50072,0.02185) -- (-0.4721,0.03361) -- (-0.44349,0.05034) -- (-0.41488,0.07343) -- (-0.38627,0.10432) -- (-0.35765,0.14441) -- (-0.32904,0.19481) -- (-0.30043,0.25614) -- (-0.27182,0.32833) -- (-0.2432,0.41036) -- (-0.21459,0.50014) -- (-0.18598,0.59451) -- (-0.15737,0.6893) -- (-0.12876,0.77962) -- (-0.10014,0.86025) -- (-0.07153,0.9261) -- (-0.04292,0.97275) -- (-0.01431,0.99693) -- (0.01431,0.99693) -- (0.04292,0.97275) -- (0.07153,0.9261) -- (0.10014,0.86025) -- (0.12876,0.77962) -- (0.15737,0.6893) -- (0.18598,0.59451) -- (0.21459,0.50014) -- (0.2432,0.41036) -- (0.27182,0.32833) -- (0.30043,0.25614) -- (0.32904,0.19481) -- (0.35765,0.14441) -- (0.38627,0.10432) -- (0.41488,0.07343) -- (0.44349,0.05034) -- (0.4721,0.03361) -- (0.50072,0.02185) -- (0.52933,0.01382) -- (0.55794,0.00851) -- (0.58655,0.0051) -- (0.61516,0.00297) -- (0.64378,0.00168) -- (0.67239,0.00092) -- (0.701,0.00049) -- (0.72961,0.00026) -- (0.75823,0.00013) -- (0.78684,0.00006) -- (0.81545,0.00003) -- (0.84406,0.00001) -- (0.87268,0.00001) -- (0.90129,0) -- (0.9299,0) -- (0.95851,0) -- (0.98712,0) -- (1.01574,0) -- (1.04435,0) -- (1.07296,0) -- (1.10157,0) -- (1.13019,0) -- (1.1588,0) -- (1.18741,0) -- (1.21602,0) -- (1.24464,0) -- (1.27325,0) -- (1.30186,0) -- (1.33047,0) -- (1.35908,0) -- (1.3877,0) -- (1.41631,0) -- (1.44492,0) -- (1.47353,0) -- (1.50215,0) -- (1.53076,0) -- (1.55937,0) -- (1.58798,0) -- (1.6166,0) -- (1.64521,0) -- (1.67382,0) -- (1.70243,0) -- (1.73104,0) -- (1.75966,0) -- (1.78827,0) -- (1.81688,0) -- (1.84549,0) -- (1.87411,0) -- (1.90272,0) -- (1.93133,0) -- (1.95994,0) -- (1.98856,0) -- (2.01717,0) -- (2.04578,0) -- (2.07439,0) -- (2.103,0) -- (2.13162,0) -- (2.16023,0) -- (2.18884,0) -- (2.21745,0) -- (2.24607,0) -- (2.27468,0) -- (2.30329,0) -- (2.3319,0) -- (2.36052,0) -- (2.38913,0) -- (2.41774,0) -- (2.44635,0) -- (2.47496,0) -- (2.50358,0) -- (2.53219,0) -- (2.5608,0) -- (2.58941,0) -- (2.61803,0) -- (2.64664,0) -- (2.67525,0) -- (2.70386,0) -- (2.73247,0) -- (2.76109,0) -- (2.7897,0) -- (2.81831,0) -- (2.84692,0) -- (2.87554,0) -- (2.90415,0) -- (2.93276,0) -- (2.96137,0) -- (2.98999,0) -- (3.0186,0) -- (3.04721,0) -- (3.07582,0) -- (3.10443,0) -- (3.13305,0) -- (3.16166,0) -- (3.19027,0) -- (3.21888,0) -- (3.2475,0) -- (3.27611,0) -- (3.30472,0) -- (3.33333,0) -- (3.36195,0) -- (3.39056,0) -- (3.41917,0) -- (3.44778,0) -- (3.47639,0) -- (3.50501,0) -- (3.53362,0) -- (3.56223,0) -- (3.59084,0) -- (3.61946,0) -- (3.64807,0) -- (3.67668,0) -- (3.70529,0) -- (3.73391,0) -- (3.76252,0) -- (3.79113,0) -- (3.81974,0) -- (3.84835,0) -- (3.87697,0) -- (3.90558,0) -- (3.93419,0) -- (3.9628,0) -- (3.99142,0) -- (4.02003,0) -- (4.04864,0) -- (4.07725,0) -- (4.10587,0) -- (4.13448,0) -- (4.16309,0) -- (4.1917,0) -- (4.22031,0) -- (4.24893,0) -- (4.27754,0) -- (4.30615,0) -- (4.33476,0) -- (4.36338,0) -- (4.39199,0) -- (4.4206,0) -- (4.44921,0) -- (4.47783,0) -- (4.50644,0) -- (4.53505,0) -- (4.56366,0) -- (4.59227,0) -- (4.62089,0) -- (4.6495,0) -- (4.67811,0) -- (4.70672,0) -- (4.73534,0) -- (4.76395,0) -- (4.79256,0) -- (4.82117,0) -- (4.84979,0) -- (4.8784,0) -- (4.90701,0) -- (4.93562,0) -- (4.96423,0) -- (4.99285,0) -- (5.02146,0) -- (5.05007,0) -- (5.07868,0) -- (5.1073,0) -- (5.13591,0) -- (5.16452,0) -- (5.19313,0) -- (5.22175,0) -- (5.25036,0) -- (5.27897,0) -- (5.30758,0) -- (5.33619,0) -- (5.36481,0) -- (5.39342,0) -- (5.42203,0) -- (5.45064,0) -- (5.47926,0) -- (5.50787,0) -- (5.53648,0) -- (5.56509,0) -- (5.59371,0) -- (5.62232,0) -- (5.65093,0) -- (5.67954,0) -- (5.70815,0) -- (5.73677,0) -- (5.76538,0) -- (5.79399,0) -- (5.8226,0) -- (5.85122,0) -- (5.87983,0) -- (5.90844,0) -- (5.93705,0) -- (5.96567,0) -- (5.99428,0) -- (6.02289,0) -- (6.0515,0) -- (6.08011,0) -- (6.10873,0) -- (6.13734,0) -- (6.16595,0) -- (6.19456,0) -- (6.22318,0) -- (6.25179,0) -- (6.2804,0) -- (6.30901,0) -- (6.33763,0) -- (6.36624,0) -- (6.39485,0) -- (6.42346,0) -- (6.45207,0) -- (6.48069,0) -- (6.5093,0) -- (6.53791,0) -- (6.56652,0) -- (6.59514,0) -- (6.62375,0) -- (6.65236,0) -- (6.68097,0) -- (6.70959,0) -- (6.7382,0) -- (6.76681,0) -- (6.79542,0) -- (6.82403,0) -- (6.85265,0) -- (6.88126,0) -- (6.90987,0) -- (6.93848,0) -- (6.9671,0) -- (6.99571,0) -- (7.02432,0) -- (7.05293,0) -- (7.08155,0) -- (7.11016,0) -- (7.13877,0) -- (7.16738,0) -- (7.19599,0) -- (7.22461,0) -- (7.25322,0) -- (7.28183,0) -- (7.31044,0) -- (7.33906,0) -- (7.36767,0) -- (7.39628,0) -- (7.42489,0) -- (7.45351,0) -- (7.48212,0) -- (7.51073,0) -- (7.53934,0) -- (7.56795,0) -- (7.59657,0) -- (7.62518,0) -- (7.65379,0) -- (7.6824,0) -- (7.71102,0) -- (7.73963,0) -- (7.76824,0) -- (7.79685,0) -- (7.82546,0) -- (7.85408,0) -- (7.88269,0) -- (7.9113,0) -- (7.93991,0) -- (7.96853,0) -- (7.99714,0) -- (8.02575,0) -- (8.05436,0) -- (8.08298,0) -- (8.11159,0) -- (8.1402,0) -- (8.16881,0) -- (8.19742,0) -- (8.22604,0) -- (8.25465,0) -- (8.28326,0) -- (8.31187,0) -- (8.34049,0) -- (8.3691,0) -- (8.39771,0) -- (8.42632,0) -- (8.45494,0) -- (8.48355,0) -- (8.51216,0) -- (8.54077,0) -- (8.56938,0) -- (8.598,0) -- (8.62661,0) -- (8.65522,0) -- (8.68383,0) -- (8.71245,0) -- (8.74106,0) -- (8.76967,0) -- (8.79828,0) -- (8.8269,0) -- (8.85551,0) -- (8.88412,0) -- (8.91273,0) -- (8.94134,0) -- (8.96996,0) -- (8.99857,0) -- (9.02718,0) -- (9.05579,0) -- (9.08441,0) -- (9.11302,0) -- (9.14163,0) -- (9.17024,0) -- (9.19886,0) -- (9.22747,0) -- (9.25608,0) -- (9.28469,0) -- (9.3133,0) -- (9.34192,0) -- (9.37053,0) -- (9.39914,0) -- (9.42775,0) -- (9.45637,0) -- (9.48498,0) -- (9.51359,0) -- (9.5422,0) -- (9.57082,0) -- (9.59943,0) -- (9.62804,0) -- (9.65665,0) -- (9.68526,0) -- (9.71388,0) -- (9.74249,0) -- (9.7711,0) -- (9.79971,0) -- (9.82833,0) -- (9.85694,0) -- (9.88555,0) -- (9.91416,0) -- (9.94278,0) -- (9.97139,0) -- (10,0) -- (100,0) -- (100,-100);
			\draw[line width=.7pt] (0,1) circle[radius=1.0];
			\draw[fill] (0,1) circle[radius=0.02];
			\draw (0,1.1) node {$x$};
			\draw[smooth,fill=black!20!white,line width=.7pt] (1.89378,0.5) -- (1.90113,0.46606) -- (1.91788,0.43608) -- (1.94156,0.41055) -- (1.97356,0.39561) -- (2.00848,0.39767) -- (2.04497,0.39748) -- (2.06839,0.42571) -- (2.09474,0.44873) -- (2.10829,0.48193) -- (2.11143,0.51859) -- (2.09287,0.55026) -- (2.07713,0.58379) -- (2.04426,0.6009) -- (2.00882,0.6064) -- (1.97066,0.61585) -- (1.94043,0.59117) -- (1.91609,0.56531) -- (1.89897,0.53468) --cycle;
			\draw[line width=.7pt] (2.00882,0.6064) circle[radius=0.6063959419710523];
			\draw[fill] (1.89897,0.53468) circle[radius=0.02];
			\draw (1.77,0.53468) node {$y$};
		\end{tikzpicture}
		\caption{
			In the above figure $x$ corresponds to \textit{(i)} in Remark~\ref{rmk:after_small_density_lemma}, and $y$ corresponds to \textit{(ii)} in Remark~\ref{rmk:after_small_density_lemma}.
		}
		\label{fig:x1}
	\end{figure}

	The inequality of the following lemma will be used in the proof of Proposition~\ref{prop:stability_bound}.
	
	\begin{lemma}
		\label{lemma:mean_control}
		There exists a constant $C_4>0$ such that the following holds. Let $\rho>0$, $L>0$ and  ${Q}_\rho(x)$ the $d$-dimensional cube of centre $x$ and side length $\rho>0$, for all $x\in[0,L)^d$.
		Let $f \in L^{2}_{\loc}(\R^{d}; \R^{d})$ be a $[0,L]^{d}$-periodic function. Assume that $\int_{[0,L)^d}f \dx=0$.
		Then,
		\begin{equation}
			\label{eq:o25}
			\frac{C_4}{\rho^{2}}\int_{[0,L)^d} \fint_{{Q}_\rho(x)}\Big\|f(y) - \fint_{{Q}_\rho(x)} f(z)\dz \Big\|^{2} \dy \dx \geq
			\int_{[0,L)^d} \|f(x)\|^{2} \dx.
		\end{equation}
	\end{lemma}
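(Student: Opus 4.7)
The plan is to prove~\eqref{eq:o25} via Fourier analysis on the torus $[0,L)^d$, after first reducing the inequality to an equivalent form involving only the ``averaged'' function $F_\rho(x) := \fint_{Q_\rho(x)} f(z)\dz$. Using the variance identity $\fint_{Q_\rho(x)} \|f(y)\|^2 \dy = \fint_{Q_\rho(x)} \|f(y) - F_\rho(x)\|^2 \dy + \|F_\rho(x)\|^2$, and integrating in $x\in[0,L)^d$ while using Fubini and $[0,L]^d$-periodicity of $f$ (which yields $\int_{[0,L)^d}\fint_{Q_\rho(x)} \|f(y)\|^2 \dy\dx = \int_{[0,L)^d} \|f\|^2\dx$), I obtain
\begin{equation*}
\int_{[0,L)^d} \|f\|^2\dx = \int_{[0,L)^d}\fint_{Q_\rho(x)} \|f(y) - F_\rho(x)\|^2\dy\dx + \int_{[0,L)^d}\|F_\rho(x)\|^2\dx.
\end{equation*}
Therefore~\eqref{eq:o25} is equivalent to
$\big(\tfrac{C_4}{\rho^2}-1\big)\int \fint \|f - F_\rho\|^2 \geq \int \|F_\rho\|^2$.

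Next I would expand $f$ componentwise in Fourier series on $[0,L)^d$, $f(x) = \sum_{k\in\Z^d}\hat f(k)\, e^{2\pi i k\cdot x/L}$, noting that the mean-zero hypothesis gives $\hat f(0)=0$. A direct computation yields $\widehat{F_\rho}(k) = \phi_\rho(k)\hat f(k)$ with the multiplier
\begin{equation*}
\phi_\rho(k) := \fint_{Q_\rho(0)} e^{2\pi i k\cdot w/L}\dw = \prod_{j=1}^{d}\frac{\sin(\pi k_j\rho/L)}{\pi k_j\rho/L}.
\end{equation*}
By Parseval, $\int \|F_\rho\|^2 = L^d\sum_{k\neq 0}\phi_\rho(k)^2\|\hat f(k)\|^2$, and the identity displayed above combined with $\int\|f\|^2 = L^d\sum_{k\neq 0}\|\hat f(k)\|^2$ gives $\int\fint\|f-F_\rho\|^2 = L^d\sum_{k\neq 0}(1-\phi_\rho(k)^2)\|\hat f(k)\|^2$. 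Thus~\eqref{eq:o25} reduces, term by term, to the scalar inequality
\begin{equation*}
\tfrac{C_4}{\rho^2}(1-\phi_\rho(k)^2) \geq 1 \qquad \text{for every } k\in\Z^d\setminus\{0\}.
\end{equation*}

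The final step is to verify this mode-by-mode bound. Given $k\neq 0$, pick an index $j$ with $|k_j|\geq 1$; since $|\mathrm{sinc}|\leq 1$, we have $\phi_\rho(k)^2\leq \mathrm{sinc}^2(\pi k_j\rho/L)$. Setting $t := \pi k_j\rho/L$, I would use the elementary bounds $\mathrm{sinc}^2(t)\leq 1 - c_1 t^2$ for $|t|\leq \pi$ (following from the Taylor expansion $\sin t = t - t^3/6 + O(t^5)$) and $\mathrm{sinc}^2(t)\leq 1/t^2$ for $|t|\geq\pi$. Since $|t|\geq \pi\rho/L$, this yields $1-\phi_\rho(k)^2 \gtrsim \min(\rho^2/L^2,\,1)$, and the required inequality holds provided $C_4 \geq c_d L^2$ for a dimensional constant $c_d$.

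The main subtlety is the dependence of $C_4$ on $L$: the Fourier estimate shows that $C_4$ must scale like $L^2$, reflecting the Poincaré-type nature of the statement (indeed, rescaling $x\mapsto x/L$ converts the inequality into one on $[0,1)^d$ with a dimensionless constant $\tilde C_4 = C_4/L^2$). Thus the claim as written should be read with $C_4=c_d L^2$, or alternatively with $\rho$ bounded by a fixed multiple of $L$, which is the regime in which the lemma is applied. No other step presents difficulties: the reduction in the first paragraph is a direct variance identity, the Fourier diagonalization is standard, and the scalar estimate on $1-\phi_\rho(k)^2$ is elementary once one isolates one large coordinate of $k$.
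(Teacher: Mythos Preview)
Your proof is correct and takes a genuinely different route from the paper's. The paper argues by contradiction and compactness: assuming the inequality fails along a sequence with $\rho_n\downarrow 0$ and $\|f_n\|_{L^2}=1$, it mollifies (showing the left-hand side does not increase under convolution), passes to a smooth limit $f_0$, observes that $\rho_n^{-2}\int\fint\|f_n-\fint f_n\|^2\to\int\|\nabla f_0\|^2$, and reaches a contradiction via the Poincar\'e inequality on the torus. Your argument instead diagonalizes the problem explicitly by Fourier series and reduces to the elementary scalar bound $1-\mathrm{sinc}^2(t)\gtrsim\min(t^2,1)$.

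Your approach is more constructive: it gives the constant and exposes the scaling $C_4\sim L^2$ transparently, whereas the paper's compactness argument hides this (the contradiction is run at fixed $L$ with $\rho_n\to 0$, which is precisely the regime of application, so the same dependence is implicit there). The paper's route, on the other hand, connects the estimate to a recognizable Poincar\'e inequality and would generalize more readily to averaging kernels that are not exact cubes. Your observation that the lemma as literally stated requires $C_4=c_dL^2$ (equivalently, $\rho\lesssim L$) is correct and applies equally to the paper's formulation; this is harmless since the lemma is only invoked with $\gamma\ll 1\ll L$ fixed.
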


	\begin{proof}

		Assume that the statement of the lemma is false. Thus, there exist $\rho_{n} \downarrow 0$ and $f_{n}\in L^{2}_{\loc}(\R^{d}; \R^{d})$ a sequence of $[0,L]^{d}$-periodic functions such that
		\begin{equation}\label{eq:1overn}
			\frac{1}{\rho_{n}^{2}}\int_{[0,L)^d} \fint_{{Q}_{\rho_{n}}(x)}\Big\|f_{n}(y) - \fint_{{Q}_{\rho_{n}}(x)} f_{n}(z)\dz \Big\|^{2} \dy \dx \leq \frac{1}{n}
			\int_{[0,L)^d} \|f_{n}(x)\|^{2} \dx.
		\end{equation}
		
		\WithoutLoss, we can assume that $\int_{[0,L)^d}\|f_{n}(x)\|^{2} \dx= 1$. We can also assume that $f_{n} \to f_{0}$ weakly in $L^{2}([0,L)^d;\R^d)$.
		Let $\varepsilon>0$ and let $\varphi_{\varepsilon}$ be a convolution kernel. Then $f_{n}*\varphi_{\varepsilon} \to f_{0}* \varphi_{\varepsilon}$ strongly in $L^{2}([0,L)^d;\R^d)$.
		In particular
		\begin{equation}\label{eq:strongc}
			\int_{[0,L)^d}\|f_{n} * \varphi_{\varepsilon} (x)\|^{2}\dx \to \int_{([0,L)^d)} \|f_{0}*\varphi_{\varepsilon}(x)\|^{2}\dx.
		\end{equation}
		
		We claim that
		\begin{align}\label{eq:fstarineq}
			\int_{[0,L)^d} \fint_{{Q}_{\rho_{n}}(x)}&\Big\|f_{n}*\varphi_\varepsilon(y) - \fint_{{Q}_{\rho_{n}}(x)} f_{n}*\varphi_\varepsilon(z)\dz \Big\|^{2} \dy \dx \leq\notag\\
			&\leq \int_{[0,L)^d} \fint_{{Q}_{\rho_{n}}(x)}\Big\|f_{n}(y) - \fint_{{Q}_{\rho_{n}}(x)} f_{n}(z)\dz \Big\|^{2} \dy \dx.
		\end{align}
		To prove~\eqref{eq:fstarineq}, denote first by $\psi_{\rho} = \frac{1}{\rho^{d}}\chi_{Q_{\rho}(0)}$.
		For every function $h \in L^{1}_{\loc}$ one has that $\fint_{Q_{\rho}(x)} h(y) \dy= h*\psi_{\rho}(x)$.
		By  Jensen inequality and  Fubini Theorem, we have that
		\begin{align}
			\int_{[0,L)^{d}}\fint_{Q_{\rho_n}(x)} &\Big\|f_{n}*\varphi_{\varepsilon}(y) - \fint_{Q_{\rho_n}(x)}f_{n}*\varphi_{\varepsilon}(z) \dz\Big\|^{2}\dy\dx=\notag\\
			&=	\int_{[0,L)^{d}} \fint_{Q_{\rho_n}(x)} \Big\|f_{n}*\varphi_{\varepsilon}(y) - f_{n}*\varphi_{\varepsilon}*\psi_{\rho_n}(x) \Big\|^{2}\dy\dx\notag\\
			&=	\int_{[0,L)^{d}} \fint_{Q_{\rho_n}(x)}\Bigl\|\int_{\R^d}\Bigl(f_n(y-z)-f_n*\psi_{\rho_n}(x-z)\Bigr)\varphi_\varepsilon(z)\dz\Bigr\|^2\dy\dx\notag\\
			&\leq
			\int_{[0,L)^{d}}\int_{\R^d}\Biggl[\fint_{Q_{\rho_n}(x)} \Big\|(f_{n}(y-z) - f_{n}*\psi_{\rho_n}(x-z) \Big\|^{2}\dy\Biggr]\varphi_{\varepsilon}(z) \dz\dx\notag\\
			& =	\int_{[0,L)^{d}}\int_{\R^d} h_n(x-z)\varphi_\varepsilon (z)\dz\dx,
		\end{align}
		where
		\begin{equation}
			h_n(x):=\fint_{Q_{\rho_n}(0)}\|f_n(x+t)-f_n*\psi_{\rho_n}(x)\|^2\dt=\fint_{{Q}_{\rho_{n}}(x)}\Big\|f_{n}(y) - \fint_{{Q}_{\rho_{n}}(x)} f_{n}(z)\dz \Big\|^{2} \dy.
		\end{equation}
		By $[0,L)^d$-periodicity,
		\begin{equation}
			\int_{[0,L)^{d}}\int_{\R^d} h_n(x-z)\varphi_\varepsilon (z)\dz\dx=	\int_{[0,L)^{d}}h_n(x)\dx,
		\end{equation}
		thus the claim~\eqref{eq:fstarineq} is proved.
		Thanks to~\eqref{eq:fstarineq} and~\eqref{eq:strongc}, eventually convolving with a kernel $\varphi_{\varepsilon}$ we can in addition assume that $f_{n} \to f_{0}$ strongly  in $L^{2}$ and $f_{n}$ and $f_{0}$ are uniformly $C^{2}$.
		Under these regularity conditions  it is not difficult to see
		\begin{equation*}
			\frac{1}{\rho_{n}^{2}} \int_{[0,L)^d}  \fint_{Q_{\rho_{n}}(x)}\Bigl\|f_{n}(y)-\fint_{Q_{\rho_{n}}(x)} f_{n}(z)\dz\Bigr\|^2\dy \to \int_{[0,L)^d} \|\nabla f_{0}(x)\|^{2}\dx,
		\end{equation*}
		thus by using Poincaré inequality,~\eqref{eq:1overn} and the fact that $\|f_0\|_{L^2([0,L)^d;\R^d)}=1$,  we have the desired contradiction.
	\end{proof}

	\begin{proposition}
		\label{prop:stability_bound}
		Let $L,\delta, C, M,M_1>0$. Then, there exist $\sigma_0,\tau_2>0$ such that for every $0<\sigma<\sigma_0$, $0<\tau<\tau_2$, for every $[0,L)^d$-periodic set $E$ of locally finite perimeter with $\Fcal_{\tau,p,d}(E,[0,L)^d) < M$ and  $\sigma = |E \Delta H_{L,\delta}|$ it holds
		\begin{equation}
			\label{eq:34}
			\int_{\partial^{*} E \cap R_{L,\delta}}   \etau(E, x) \d\hausd^{d-1} (x)\geq M_1  \bigg( \per(E; R_{L,\delta}) - \Big\| \int_{\partial^{*}E \cap R_{L,\delta }} \nu_{E} (x)\d\hausd^{d-1}(x) \Big\|  \bigg).
		\end{equation}
	\end{proposition}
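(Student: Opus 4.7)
The plan starts from the trial-vector inequality
\begin{equation*}
\per(E; R_{L,\delta}) - \Big\|\int_{\partial^{*}E \cap R_{L,\delta}} \nu_{E}\d\hausd^{d-1}\Big\| \leq \frac{1}{2}\int_{\partial^{*} E \cap R_{L,\delta}} \|\nu_E(x) - e_d\|^{2}\d\hausd^{d-1}(x),
\end{equation*}
obtained from $\|\int \nu_E\|=\sup_{\|w\|=1}\int\Scal{\nu_E}{w}\d\hausd^{d-1}$ by choosing $w=e_d$. Since $\sigma=|E\triangle H_{L,\delta}|$ is small, the right-hand side measures how far $\nu_E$ is from the reference normal $e_d$, and it suffices to prove
\begin{equation*}
\int_{\partial^{*} E \cap R_{L,\delta}}\|\nu_E-e_d\|^2\d\hausd^{d-1}\leq \frac{2}{M_1}\int_{\partial^{*} E \cap R_{L,\delta}} \etau\d\hausd^{d-1}.
\end{equation*}

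I would split $\partial^{*}E\cap R_{L,\delta}$ into three regions, governed by thresholds $r_{*}$ and $\eta_{*}$ that will be tuned to $\sigma$: the \emph{far region} $A_{\mathrm{far}}=\{|x_d|>r_{*}\}$ with $r_{*}\sim\sigma^{1/d}$, the \emph{twisted region} $A_{\mathrm{tw}}=\{|x_d|\leq r_{*},\, \|\nu_E-e_d\|>\eta_{*}\}$, and the \emph{flat region} $A_{\mathrm{fl}}=\{|x_d|\leq r_{*},\,\|\nu_E-e_d\|\leq \eta_{*}\}$. On $A_{\mathrm{far}}$, Remark~\ref{rmk:after_small_density_lemma}(i) gives $\etau(E,x)\gtrsim |x_d|^{-(p-d-1)}$; combined with a layer-cake bound on $\sigma\geq \int_{|x_d|>r_*}\chi_{E\triangle H_{L,\delta}}\dx$ to control the surface measure of $A_{\mathrm{far}}$, its contribution is absorbable into a small fraction of $\int\etau$ once $\sigma$ is small. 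On $A_{\mathrm{tw}}$, rather than applying Remark~\ref{rmk:after_small_density_lemma}(ii) directly (which only gives a prefactor $\delta^{p-d-1}$ independent of $M_1$), I would re-apply Lemma~\ref{lemma:small_density} in a ball $B_{r_x}(x)$ of radius $r_x\ll\delta$ chosen so that $E$ is very close to the halfspace $H_{e_d}(x)$ inside it (which is possible since $|x_d|\leq r_*$ is small and $\sigma$ is small): this yields $\etau(x)\gtrsim\|\nu_E(x)-e_d\|^2/r_x^{p-d-1}$, whose prefactor can be made arbitrarily large by taking $r_x$ small.

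The main obstacle is the flat region $A_{\mathrm{fl}}$, where $\|\nu_E-e_d\|$ is small but need not be pointwise comparable to $\etau$. Here the plan is to invoke Lemma~\ref{lemma:mean_control} at a scale $\rho\ll\delta$, applied to the mollified vector field
\begin{equation*}
f_\rho(z):=\rho^{-(d-1)}\int_{\partial^{*}E\cap Q_\rho(z)\cap A_{\mathrm{fl}}}(\nu_E-e_d)\d\hausd^{d-1},
\end{equation*}
extended $[0,L)^d$-periodically. The global $L^2$-norm of $f_\rho$ over $[0,L)^d$ controls $\int_{A_{\mathrm{fl}}}\|\nu_E-e_d\|^2$ up to errors that vanish with $\sigma$ and $\rho$; meanwhile the local variance of $f_\rho$ on each cube $Q_\rho$ is controlled by a strengthening of Remark~\ref{rmk:after_small_density_lemma}(iii) in which $\delta^{p-d-1}$ is replaced by $\|x-y\|^{p-d-1}$ (obtained by applying Lemma~\ref{lemma:small_density} at radius $r=\|x-y\|$), giving an inequality of the form $\rho^{p-d-1}\int_{Q_\rho}\etau$ for the local variance integral. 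Lemma~\ref{lemma:mean_control} then introduces the factor $C_4/\rho^2$, producing an overall bound proportional to $\rho^{p-d-3}\int\etau$; since $p\geq d+3$ the exponent is nonnegative, and the prefactor can be driven below $1/M_1$ by taking $\rho$ small (the threshold case $p=d+3$ requires either an iteration on scales or additional smallness extracted from $\eta_{*}$).

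Balancing $r_{*},\eta_{*},\rho$ as appropriate functions of $\sigma$ so that each of the three contributions is $\leq (3M_1)^{-1}\int \etau$, and using Corollary~\ref{cor:fboundsp} to bound $\per(E;R_{L,\delta})$ uniformly in $\tau<\tau_2$, will yield the desired inequality. The principal technical difficulty is the passage between the $(d-1)$-dimensional surface integrals on $\partial^{*}E$ and the $d$-dimensional ambient integrals required by Lemma~\ref{lemma:mean_control}: the mollifier must be chosen so that the discrepancy between $\|f_\rho\|^2_{L^2([0,L)^d)}$ and $\int_{A_{\mathrm{fl}}}\|\nu_E-e_d\|^2\d\hausd^{d-1}$ is dominated by perimeter density terms that can be absorbed by $\int\etau$, and one must carefully handle boundary effects coming from cubes meeting $\partial R_{L,\delta}$ or overlapping the regions $A_{\mathrm{far}}$ and $A_{\mathrm{tw}}$.
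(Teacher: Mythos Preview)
Your plan shares the overall architecture of the paper's proof: split $\partial^*E\cap R_{L,\delta}$ according to $|x_d|$ and $\|\nu_E-e_d\|$, handle the bad regions pointwise via Remark~\ref{rmk:after_small_density_lemma}, and treat the flat region using Lemma~\ref{lemma:mean_control}. The gap is in the last step.

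Your single-scale application of Lemma~\ref{lemma:mean_control} yields a bound of order $\rho^{\,p-d-3}\int e_{\tau,\delta}$. For $p>d+3$ the exponent is positive and $\rho\to0$ drives the prefactor below $1/M_1$, but the proposition must cover the borderline $p=d+3$, where $\rho^{0}=1$ and no gain is obtained. You flag this and mention ``iteration on scales'' as a remedy; this is not a patch but the core of the paper's argument. The paper applies Lemma~\ref{lemma:mean_control} at each of $N\sim\log(\delta/\varepsilon)$ dyadic scales $\gamma_i=2^i\varepsilon$, $i=1,\dots,N$, and \emph{sums}: the left-hand side picks up a factor $N$, while on the right-hand side the dyadic sum over $i$ is controlled via the pointwise bound $\sum_i\|\nu_E(x)-\nu_E(y)\|^2/\gamma_i^{p-d-1}\lesssim e_{\tau,\delta}(x)+e_{\tau,\delta}(y)$ (a geometric series dominated by its largest term) together with a uniform perimeter density estimate $\hausd^{d-1}(\partial^*E\cap Q_{\gamma_i})\lesssim\gamma_i^{d-1}$ to reduce the double surface integral to a single one. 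Choosing $\varepsilon$ so that $N\gtrsim M_1$ closes the inequality uniformly in $p\geq d+3$.

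A second structural difference: the paper does not mollify the surface measure into an ambient $L^2$ field. Instead it works on the base $[0,L]^{d-1}$ with the $(d-1)$-dimensional function $h(x_d^\perp)=\nu_d^\perp/\|\nu_d\|$, defined where the slice in direction $e_d$ meets $\partial^*E$ exactly once and zero elsewhere; the set where the slice meets zero or $\geq 2$ boundary points is shown to be small using the same excess-type arguments. This bypasses the surface-to-ambient passage you identify as a difficulty. The mean $\tilde h=\fint h$ is controlled separately from the periodicity identity $\int_{\partial^*E\cap R_{L,\delta}}\nu_d^\perp=0$, which forces $\tilde h$ to be supported by the bad set $A_1\cup A_2$ and hence small. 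Your $f_\rho$ does not have zero mean either, so an analogous side argument would be needed in any case.
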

	
	\begin{proof}

		From the uniform bound $\Fcal_{\tau, p,d}(E,[0,L)^d) < M$, by Corollary~\ref{cor:fboundsp} we have that for every $\tilde \tau>0$  there exists a constant $\tilde C$ such that $\per(E; [0,L)^d) < \tilde C$ whenever $0<\tau <\tilde \tau$.
		By $[0,L)^d$-periodicity,
		\begin{equation}\label{eq:nued}
			\int_{\partial^{*}E \cap R_{L, \delta}}   \nu_{E}(x)  \d\hausd^{d-1}(x) = e_{d} \Big\|
			\int_{\partial^{*}E \cap R_{L, \delta}}   \nu_{E}(x)
			\d\hausd^{d-1}(x)
			\Big\|
		\end{equation}
		Thus
		\begin{equation*}
			\Big\|
			\int_{\partial^{*}E \cap R_{L,\delta}} \nu_{E}(x) \d\hausd^{d-1}(x)
			\Big\| =
			\Big|
			\int_{\partial^{*}E \cap R_{L,\delta}} \Scal{\nu_{E}(x)}{e_{d}} \d\hausd^{d-1}(x)
			\Big|
		\end{equation*}
		For simplicity of notation let us denote by $\nu_{d}(x) := e_{d}\Scal{\nu_{E}(x)}{e_{d}}$ and by $\nu^{\perp}_{d}(x) = \nu_{E}(x)- \nu_{d}(x)$

		Let now fix $\varepsilon>0$.  The range of admissible values for  $\varepsilon$ will be apparent from the proof, but it is helpful to anticipate that $\varepsilon$ will be chosen in such a way that  $\tau_2 < \varepsilon \ll \delta$ and $\sigma_{0} \ll \varepsilon^{d}$.
		Moreover, denote by
		\begin{equation*}
			\begin{split}
				A_{1} &:= \insieme{x \in \partial^{*}E:\ \|\nu_{E}(x) - e_{d}\| > \delta},\\
				A_{2} &:= \insieme{x =(x^{\perp}_{d}, x_{d}) \in \partial^{*}E:\  |x_{d}| > \varepsilon}
			\end{split}
		\end{equation*}
		and by
		\begin{equation*}
			\begin{split}
				\Omega_{0} &:= \insieme{x^{\perp}_{d} \in [0, L]^{d-1}: \per^{\mathrm{1D}}(E_{x^{\perp}_{d}}, (-\delta,\delta)) = 0} \qquad \Omega_{0}^{\delta} := \Omega_{0} \times (-\delta, \delta)\\
				\Omega_{1} &:= \insieme{x^{\perp}_{d} \in [0, L]^{d-1}: \per^{\mathrm{1D}}(E_{x^{\perp}_{d}},(-\delta,\delta)) = 1} \qquad \Omega_{1}^{\delta} := \Omega_{1} \times (-\delta, \delta)\\
				\Omega_{2} &:= \insieme{x^{\perp}_{d} \in [0, L]^{d-1}: \per^{\mathrm{1D}}(E_{x^{\perp}_{d}},(-\delta,\delta)) \geq 2} \qquad \Omega_{2}^{\delta} := \Omega_{2} \times (-\delta, \delta)\\
			\end{split}
		\end{equation*}

		In particular, we have
		\begin{align}
			&\int_{\partial^{*}E \cap R_{L,\delta}} \|\nu_{E}(x)\|\d\hausd^{d-1}(x) - \Big\|
			\int_{\partial^{*}E \cap R_{L,\delta}} \nu_{d} (x)\d\hausd^{d-1}(x)
			\Big\| \leq \notag\\
			&
			\leq\int_{(A_{1} \cup A_{2})^{C} \cap R_{L,\delta }}\|\nu_{E}(x)\|\d\hausd^{d-1}(x)
			-\Big\|
			\int_{(A_{1} \cup A_{2})^{C} \cap R_{L,\delta }} \nu_{d}(x) \d\hausd^{d-1}(x)
			\Big\|\notag\\
			&+ 2 \hausd^{d-1}((A_{1} \cup A_{2}) \cap R_{L,\delta})
		\end{align}

		Using Remark~\ref{rmk:after_small_density_lemma}, we have that for every $x \in (A_{1} \cup A_{2})\cap  R_{L,\delta}$, it holds $\etau(E, x)\gtrsim 1/\varepsilon^{p-d-1}$.
		Thus,
		\begin{equation}\label{eq:ehausd}
			\int_{\partial^{*}E\cap R_{L,\delta}} \etau (E, x) \d\hausd^{d-1}(x) > \frac{c}{\varepsilon^{p-d-1}} \hausd^{d-1}((A_{1}\cup A_{2})\cap R_{L,\delta})
		\end{equation}
		for some constant $c$.
		In particular from the above  if $\hausd^{d-1}((A_{1}\cup  A_{2})\cap R_{L,\delta}) > \frac{\varepsilon^{p-d-1}M}{c}\per(E; R_{L,\delta})$, then~\eqref{eq:34} is trivially satisfied. Thus, we can assume \withoutloss that
		\begin{equation}
			\label{eq:38}
			\hausd^{d-1}((A_{1}\cup  A_{2})\cap R_{L,\delta}) < \frac{\varepsilon^{p-d-1}M}{c} \per(E; R_{L,\delta })
		\end{equation}
		and given the uniform bound on $\per(E; R_{L,\delta})$ we can assume \withoutloss that $\hausd^{d-1}((A_{1}\cup A_{2})\cap R_{L,\delta}) \ll 1$.

		Moreover,
		\begin{align}
			\int_{\partial^{*}E\cap R_{L,\delta}} \|\nu^{\perp}_{d}(x)\| \d\hausd^{d-1}(x)&=
			\int_{A_{1}\cap R_{L,\delta}} \|\nu^{\perp}_{d}(x)\|  \d\hausd^{d-1}(x)+  \int_{A_{1}^{C}\cap R_{L,\delta}} \|\nu^{\perp}_{d}(x)\| \d\hausd^{d-1}(x) \notag\\
			&\leq
			\int_{A_{1}\cap R_{L,\delta}} \|\nu_{E}(x)\|  \d\hausd^{d-1}(x)+  \delta \hausd^{d-1}(A_{1}^{C} \cap R_{L,\delta})
			\notag\\ & \lesssim \hausd^{d-1}(A_{1}\cap R_{L,\delta }) + \delta \hausd^{d-1}(A^{C}_{1}\cap R_{L,\delta}) \\
			& \lesssim \big(
			\varepsilon^{p-d-1}M+ \delta
			\big)\per(E; R_{L,\delta}) \ll 1
		\end{align}
		for $\varepsilon, \delta$ sufficiently small.
		
		Using the triangle inequality we have that
		\begin{align}
			\int_{\partial^{*}E \cap \Omega_{1}^{\delta}}\|\nu_{E}(x)\|\d\hausd^{d-1}(x)&+
			\int_{\partial^{*}E \cap \Omega_{2}^{\delta}}\|\nu_{E}(x)\|\d\hausd^{d-1}(x)\notag\\
			&
			-
			\Big\|\int_{\partial^{*}E \cap \Omega_{1}^{\delta}}\nu_{E}(x)\d\hausd^{d-1}(x)+
			\int_{\partial^{*}E \cap \Omega_{2}^{\delta}}\nu_{E}(x)\d\hausd^{d-1}(x)
			\Big\|\notag\\
			&\leq
			\int_{\partial^{*}E \cap \Omega_{1}^{\delta}}\|\nu_{E}(x)\|\d\hausd^{d-1}(x)
			-\Big\|
			\int_{\partial^{*}E \cap \Omega_{1}^{\delta}}\nu_{d}(x)\d\hausd^{d-1}(x)
			\Big\|\notag
			\\
			& +2\int_{\partial^{*}E \cap \Omega_{2}^{\delta}}\|\nu_{E}(x)\|\d\hausd^{d-1}(x).
			\label{eq:stimaproppart1}
		\end{align}
		
		We now show that
		
		\begin{equation}
			\label{eq:36}
			\int_{\partial^{*}E \cap \Omega_{2}^{\delta}} \etau(E, x) \d\mathcal H^{d-1}(x)\gtrsim \frac{1}{\varepsilon^{p-d-1}} \int_{\partial^{*} E\cap \Omega^{\delta}_{2} } \|\nu_{E}(x)\|\d\hausd^{d-1}(x).
		\end{equation}

		Notice that whenever  $x^{\perp}_{d} \in \Omega_{2}$ and $s, s^{+} \in \partial  E_{x^{\perp}_{d}}$, then
		\begin{equation*}
			\sgn(
			\Scal{\nu_{E}(x^{\perp}_{d},s)}
			{e_{d}}
			)
			\neq
			\sgn(
			\Scal{\nu_{E}(x^{\perp}_{d},s^{+})}
			{e_{d}}
			)
		\end{equation*}
		Thus, either $(x^{\perp}_{d}, s)\in A_{1}$ or $(x^{\perp}_{d}, s^{+})\in A_{1}$.
		In particular, we have that either $e_{\tau,\delta,e_d}(E_{x^{\perp}_{d}}, s) > c/\varepsilon^{p-d-1}$ or $e_{\tau,\delta,e_d}(E_{x^{\perp}_{d}}, s^+)  > c/\varepsilon^{p-d-1}$.
		Hence, we have that
		\begin{align}
			\int_{\partial^{*}E \cap \Omega_{2}^{\delta}} \etau(E, x) \d\hausd^{d-1}(x)
			&\geq
			\int_{\partial^{*}E \cap \Omega_{2}^{\delta}} |\Scal{\nu_{E}(x)}{e_{d}}|\etau(E, x) \d\hausd^{d-1}(x)\notag\\
			&
			\gtrsim
			\int_{\Omega_{2}} \sum_{s \in \partial E_{x^{\perp}_{d}}\cap(-\delta,\delta)} e_{\tau,\delta,e_d}(E_{x^{\perp}_{d}}, s) \dx^{\perp}_{d} \notag\\
			& \gtrsim   \frac{1}{\varepsilon^{p-d-1}}\int_{\Omega_{2}} \frac{\per^{\mathrm{1D}}(E_{x^{\perp}_{d}},(-\delta,\delta)) -1}{2} \dx^{\perp}_{d} \notag\\
			&
			\gtrsim               \frac{1}{\varepsilon^{p-d-1}} \int_{\Omega_{2}}
			\per^{\mathrm{1D}}(E_{x^{\perp}_{d}},(-\delta,\delta)) \dx^{\perp}_{d}. \label{eq:35}
		\end{align}
		
		On the other hand,
		\begin{equation}\label{eq:nueomega2}
			\begin{split}
				\int_{\partial^{*} E \cap  \Omega^{\delta}_{2}} \|\nu_{E}(x)\| \d\hausd^{d-1}(x)  =
				\int_{\partial^{*} E \cap  \Omega^{\delta}_{2} \cap A_{1}}\|\nu_{E}(x)\|\d\hausd^{d-1}(x)
				+     \int_{\partial^{*} E \cap  \Omega^{\delta}_{2} \cap A^{C}_{1}} \|\nu_{E}(x)\| \d\hausd^{d-1}(x) .
			\end{split}
		\end{equation}
		Given that in $A_{1}^{C}$ we have that $\|\nu_{E}(x)- e_{d}\|<\delta$, we have that
		\begin{equation}\label{eq:nueper}
			\int_{\Omega^{\delta}_{2} \cap \partial^{*} E \cap A^{C}_{1}} \|\nu_{E}(x)\|\d\hausd^{d-1}(x)   \leq 2
			\int_{\Omega^{\delta}_{2} \cap  \partial^{*} E \cap A^{C}_{1}} \|\nu_{d}(x)\|\d\hausd^{d-1}(x)   \leq \int_{\Omega_{2}} \per^{\mathrm{1D}}(E_{x^{\perp}_{d}}, (-\delta,\delta)) \dx^{\perp}_{d}.
		\end{equation}
		Thus, combining~\eqref{eq:nueomega2} with~\eqref{eq:38},~\eqref{eq:nueper} and ~\eqref{eq:35}, we have that~\eqref{eq:36} holds.
		
		Thus, by~\eqref{eq:stimaproppart1} and~\eqref{eq:36}, the statement of the lemma is proved provided we show that
		\begin{equation}
			\int_{\partial^{*}E \cap \Omega_{1}^{\delta}}\|\nu_{E}(x)\|\d\hausd^{d-1}(x)
			-\Big\|
			\int_{\partial^{*}E \cap \Omega_{1}^{\delta}}\nu_{d}(x)\d\hausd^{d-1}(x)
			\Big\|\lesssim \varepsilon^{p-d-1} \int_{\partial^{*}E \cap R_{L,\delta}} \etau(E, x)\d\hausd^{d-1}(x)\label{eq:final}.
		\end{equation}
		
		To this aim, let us define
		\begin{equation*}
			\begin{split}
				\Omega^{-}  := \insieme{x^{\perp}_{d} \in \Omega_{1}:\ \Scal{\nu_{E}(x)}{e_{d}} \leq 0 \text{ for } x = (x^{\perp}_{d}, x_{d}) \in \partial^{*} E\cap R_{L,\delta}}
			\end{split}
		\end{equation*}
		and   $\tilde \Omega :=P_{e_d^\perp}(A_{1} \cup A_{2})$. Given that $\tilde \Omega\supset  \Omega^{-}$ and
		using the triangular inequality we have that
		\begin{align}
			\int_{\partial^{*}E \cap \Omega_{1}^{\delta}}\|\nu_{E}(x)\|\d\hausd^{d-1}(x)
			&-\Big\|
			\int_{\partial^{*}E \cap \Omega_{1}^{\delta}}\nu_{d}(x)\d\hausd^{d-1}(x)
			\Big\|\notag\\& \leq
			\int_{\Omega_{1}\setminus \tilde\Omega} \sqrt{1 + \|\nu^{\perp}_{d}(x)\|^2/\|\nu_{d}(x)\|^{2}}  \dx_d^\perp- \Big| \int_{\Omega_{1} \setminus \tilde\Omega} \sgn(\Scal{\nu_{d}(x)}{e_d})\dx_d^\perp\Big|
			+ 2|\tilde\Omega|
			\notag\\ &\leq
			\int_{\Omega_{1}\setminus \tilde\Omega} \sqrt{1 + \|\nu^{\perp}_{d}(x)\|^2/\|\nu_{d}(x)\|^{2}}  \dx_d^\perp- |\Omega_{1} \setminus \tilde \Omega|
			+ 2|\tilde\Omega|
			\notag \\
			&\leq
			\int_{\Omega_{1}\setminus \tilde \Omega} \|\nu^{\perp}_{d}(x)\|^2/\|\nu_{d}(x)\|^{2}\dx_d^\perp
			+ 2|\tilde \Omega|,
			\label{eq:stimaproppart2}
		\end{align}
		where in the last inequality we used the fact that $\sqrt{1+z^2}-1\leq z^2$.
		
		Putting together estimates~\eqref{eq:stimaproppart1} and~\eqref{eq:stimaproppart2} we have
		\begin{align}
			&\int_{\partial^{*}E \cap R_{L,\delta}} \|\nu_{E}(x)\|\d\hausd^{d-1}(x) - \Big\|
			\int_{\partial^{*}E \cap R_{L,\delta}} \nu_{d} (x)\d\hausd^{d-1}(x)
			\Big\|\leq\notag\\
			&\leq 2 \int_{(\Omega_{1}\setminus \tilde\Omega)\times (-\delta, \delta)} \|\nu^{\perp}_{d}(x)\|^2/\|\nu_{d}(x)\|^{2}\d\hausd^{d-1}(x) + 2 |\tilde\Omega| + 2 \int_{\partial^{*}E \cap \Omega_{2}^{\delta}} \|\nu_{E}(x)\|\d\hausd^{d-1}(x)\notag\\
			&\leq 2 \int_{(\Omega_{1}\setminus \tilde\Omega)\times (-\delta, \delta)} \|\nu^{\perp}_{d}(x)\|^2/\|\nu_{d}(x)\|^{2}\d\hausd^{d-1}(x) + \varepsilon^{p-d-1} \int_{\partial^{*}E \cap R_{L,\delta}} \etau(E, x)\d\hausd^{d-1}(x).\label{eq:nuperpnud}
		\end{align}

		To estimate the first term in~\eqref{eq:nuperpnud}, define
		\begin{equation*}
			h(x^{\perp}_{d}) := \begin{cases}
				\nu^{\perp}_{d}(x^{\perp}_{d}, x_{d})/\|\nu_{d}(x^{\perp}_{d}, x_{d})\|, & \text{if } x^{\perp}_{d} \in \Omega_{1}\setminus \tilde \Omega \\
				0, & \text{otherwise}.
			\end{cases}
		\end{equation*}
		Notice that from the slicing formula~\eqref{eq:slicing} and from the fact that $\Omega_1\setminus\tilde\Omega\subset P_{e_d^\perp}(A_2^C)$, for every Borel set $A \subset  \Omega_{1}\setminus \tilde\Omega$ one has that
		\begin{equation*}
			\int_{A} h(x^{\perp}_{d})\dx_d^\perp = \int_{\partial^*E\cap (A \times  (-\varepsilon, \varepsilon))} \nu^{\perp}_{d}(x)\d\hausd^{d-1}(x).
		\end{equation*}
		Indeed, in $A_2^C$ it holds $|x_d|<\varepsilon$.

		For $\gamma>\varepsilon$, $z_{d}^\perp\in[0,L)^{d-1}$, let $Q_\gamma^\perp(z_d^\perp)$ be the $(d-1)$-dimensional cube of side length $\gamma$ and center $z_{d}^\perp$.
		Using Lemma~\ref{lemma:mean_control} and the fact that $p-d-1\geq2$, one has that for $\varepsilon<\gamma\ll1$
		\begin{equation}\label{eq:lemmamean}
			\frac{1}{\gamma^{p-d-1}} \int_{[0,L]^{d-1}} \fint_{Q^{\perp}_{\gamma}(z^{\perp}_{d})}
			\Big\|h(y^{\perp}_{d}) - \fint_{Q^{\perp}_{\gamma}(z^{\perp}_{d})} h(x^{\perp}_{d})\dx_d^\perp \Big\|^{2} \dy_d^\perp \gtrsim \int_{[0,L]^{d-1}} \|h(x^{\perp}_{d}) -\tilde h\|^{2} \dx_d^\perp
		\end{equation}
		with $\tilde h= \int_{[0,L]^{d-1}} h(x^{\perp}_{d}) \dx^{\perp}_{d}$.
		
		Notice that
		\begin{align}\label{eq:hest}
			\int_{(\Omega_{1}\setminus \tilde\Omega)\times (-\delta, \delta)} \|\nu^{\perp}_{d}(x)\|^2/\|\nu_{d}(x)\|^{2}\d\hausd^{d-1}(x) \lesssim \int_{[0,L]^{d-1}} \|h(x^{\perp}_{d}) -\tilde h\|^{2} \dx_d^\perp+L^{d-1}\|\tilde h\|^2.
		\end{align}
		
		Let us estimate the first term in   	\eqref{eq:hest}. Letting $N = \lfloor \log_{2}(\delta/\varepsilon) \rfloor -1$, $\gamma_{i} = 2^{i}\varepsilon$,  $i=1,\dots,N$, and recalling~\eqref{eq:lemmamean}, it holds
		\begin{align}
			N	\int_{[0,L]^{d-1}} &\|h(x^{\perp}_{d}) -\tilde h\|^{2} \dx_d^\perp\lesssim\notag\\
			&\lesssim \sum_{i=1}^N\frac{1}{\gamma_i^{p-d-1}}\int_{[0,L)^{d-1}} \fint_{Q^\perp_{\gamma_i}(z_d^\perp)} \Big\|h(x^{\perp}_{d}) -\fint_{Q^\perp_{\gamma_i}(z_d^\perp)} h   (y_d^\perp)\dy_d^\perp\Big\|^{2}\dx^{\perp}_{d}\dz_d^\perp.\label{eq:621}
		\end{align}
		
		Let $Q_{\gamma}(z_d^\perp)$ be the $d$-dimensional cube given by $Q_\gamma^\perp(z_d^\perp)\times (0,\gamma)$. Using the fact that $\gamma_i>\varepsilon>x_d$ for all $(x_d^\perp, x_d)\in \Omega_1\setminus \tilde \Omega$, the definition of $h$ and Jensen inequality, we have that
		
		\begin{align}
			\fint_{Q^\perp_{\gamma_i}(z_d^\perp)} &\Big\|h(x^{\perp}_{d}) -\fint_{Q^\perp_{\gamma_i}(z_d^\perp)} h   (y_d^\perp)\dy_d^\perp\Big\|^{2}\dx^{\perp}_{d}\lesssim\notag\\&\lesssim \fint_{\partial^{*}E \cap Q_{\gamma_i} (z_d^\perp)}  \Bigl\|\nu^{\perp}_{d}(x)- \fint_{\partial^{*}E \cap Q_{\gamma_i} (z_d^\perp)}  \nu^{\perp}_{d}(y)\d\hausd^{d-1}(y)\Bigr\|^{2}\d\hausd^{d-1}(x)\notag\\
			&\lesssim
			\fint_{\partial^{*}E \cap Q_{\gamma_i}(z_d^\perp)}  \fint_{\partial^{*}E \cap Q_{\gamma_i}(z_d^\perp)}\|\nu^{\perp}_{d}(x)-\nu^{\perp}_{d}(y)\|^{2}\d\hausd^{d-1}(x)\d\hausd^{d-1}(y)\notag\\
			&\lesssim
			\fint_{\partial^{*}E \cap Q_{\gamma_i}(z_d^\perp)}  \fint_{\partial^{*}E \cap Q_{\gamma_i}(z_d^\perp)}\|\nu_E(x)-\nu_E(y)\|^{2}\d\hausd^{d-1}(x)\d\hausd^{d-1}(y).\label{eq:622}
		\end{align}
		
		Thus, recalling~\eqref{eq:621}, we would like to estimate the following quantity
		\begin{equation}
			\int_{[0,L]^{d-1}} \sum_{i=1}^{N}\frac{1}{(\gamma_{i}^{d-1})^{2}}\iint_{Q_{\gamma_{i}}(z^{\perp}_{d})\cap\partial^*E}  \frac{\|\nu_{E}(x) -\nu_{E}(y)\|^{2}}{\gamma_{i}^{p-d-1}} \d\hausd^{d-1}(x)\d\hausd^{d-1}(y)\dz_d^\perp.
		\end{equation}
		To this aim, we will use the following facts:
		\begin{enumerate}
			\item \label{item:fact1} For any $x,y$ such that $\|x-y\|<\gamma_i$ and $\max\{|x_d|,|y_d|\}<\gamma_i$
			\begin{equation}
				\gamma_i^{d-1}\lesssim\bigl|\{z_d^\perp:\,x,y\in Q_\gamma(z_d^\perp)\}\bigr|\lesssim \gamma_i^{d-1}.
			\end{equation}
			\item \label{item:fact2} By (iii) in Remark~\ref{rmk:after_small_density_lemma}, and the fact that  the largest term in a geometric series bounds the sum,
			\begin{equation}
				\etau(x) + \etau(y) \gtrsim \sum_{i=1}^N\frac{\|\nu_{E}(x) - \nu_{E}(y)\|^{2}}{\gamma_i^{p-d-1}} \chi_{(\|x-y\|,+\infty)}(\gamma_i)\quad\text{for $\tau<\varepsilon$}.
			\end{equation}
			\item  \label{item:fact3} There exists $\tilde \tau>0$ such that for every $0<\tau \leq \tilde\tau$ and for almost every $x^{\perp}_{d} \in [0,L]^{d-1}$ we have that
			\begin{equation}\label{eq:hausd}
				\hausd^{d-1}(\partial^{*} E \cap  Q_{\gamma_{i}}(x^{\perp}_{d})) \leq 2 \gamma^{d-1}_{i}.
			\end{equation}
			Indeed, if~\eqref{eq:hausd} were false then there would exist $\tau_{n}\downarrow 0$ and $\sigma_{n}\downarrow 0$, $C_n\to+\infty$  and a set of $x_d^\perp$ of positive measure such that $	\hausd^{d-1}(\partial^{*} E_n \cap  Q_{\gamma_{i}}(x^{\perp}_{d})) >2 \gamma^{d-1}_{i}$,  $\chi_{E_{n}} \to \chi_{H_{L,\delta}}$  in $L^{1}$ and $\sup_n\int_{\partial^*E_n\cap R_{L,\delta}} \etau(E_{n}, x) \d\mathcal H^{d-1}(x)< C < +\infty$.
			By the rigidity result of Theorem~\ref{thm:rigidity} we have that $\per(E_{n}; R_{L,\delta })\to \per(H_{L,\delta}; R_{L,\delta})$. Moreover, for any $\gamma_i$ we have that $\per(E_{n}; Q_{\gamma_{i}}(z^{\perp}_{d})) \to \per(H_{L,\delta}; Q_{\gamma_{i}}(z^{\perp}_{d}))=\gamma_i^{d-1}$, thus getting a contradiction.

			As a consequence of~\eqref{eq:hausd},
			\begin{align}
				&	\iint_{\{x,y\in\partial^*E\cap R_{L,\delta}:\, \|x-y\| < \gamma_{i},\,\max\{\|x_d\|,\|y_d\|\}<\gamma_i\}} \etau(E, x) + \etau(E, y) \d\hausd^{d-1}(x)\d\hausd^{d-1}(y)=\notag\\
				&= 2 \iint_{\{x,y\in\partial^*E\cap R_{L,\delta}: \,\|x-y\| < \gamma_{i},\,\max\{\|x_d\|,\|y_d\|\}<\gamma_i\}} \etau(E, x)\d\hausd^{d-1}(x)\d\hausd^{d-1}(y)  \notag\\
				&\leq 4\gamma_{i}^{d-1} \int _{\partial^*E\cap R_{L,\delta}}\etau(E,x )\d\hausd^{d-1}(x).
			\end{align}
		\end{enumerate}

		Using Fubini Theorem and the above facts \ref{item:fact1}--\ref{item:fact2}, we have that
		\begin{align}
			&\int_{[0,L]^{d-1}}\sum_{i}^{N} \frac{1}{(\gamma_{i}^{d-1})^{2}}\iint_{Q_{\gamma_{i}}(z^{\perp}_{d})\cap\partial^*E}  \frac{\|\nu_{E}(x) -\nu_{E}(y)\|^{2}}{\gamma_{i}^{p-d-1}} \d\hausd^{d-1}(x)\d\hausd^{d-1}(y)\dz_d^\perp=\notag\\
			&=\sum_{i=1}^{N}
			\frac{1}{(\gamma_{i}^{d-1})^{2}}
			\iint_{\partial^*E}\int_{[0,L]^{d-1}}
			\chi_{Q_{\gamma_{i}}(z^{\perp}_{d})}(x)
			\chi_{Q_{\gamma_{i}}(z^{\perp}_{d})}(y)
			\frac{|\nu_{E}(x)- \nu_{E}(y)|^{2}}{\gamma_{i}^{p-d-1}}\dz_d^\perp\d\hausd^{d-1}(x)\d\hausd^{d-1}(y)\notag
			\\
			&\overset{\text{1.}}{\lesssim }
			\sum_{i=1}^{N}
			\frac{1}{\gamma_{i}^{d-1}}
			\iint_{\partial^*E\cap\{x,y\in R_{L,\delta}:\,\|x-y\| \leq \gamma_{i}, \,\max\{\|x_d\|, \|y_d\|\}<\gamma_i\}}
			\frac{\|\nu_{E}(x)- \nu_{E}(y)\|^{2}}{\gamma_{i}^{p-d-1}}\d\hausd^{d-1}(x)\d\hausd^{d-1}(y)\notag\\
			&\lesssim
			\sum_{j}\frac{1}{\gamma_{j}^{d-1}}\iint_{\partial^*E\cap\{x,y\in R_{L,\delta}:\, \gamma_{j}<\|x-y\| \leq 2\gamma_{j},\,\max\{\|x_d\|, \|y_d\|\}<2\gamma_j\}}\sum_{i>j} \frac{\|\nu_{E}(x)- \nu_{E}(y)\|^{2}}{\gamma_{i}^{p-d-1}}\d\hausd^{d-1}(x)\d\hausd^{d-1}(y)\notag
			\\&\overset{\text{2.}}{\lesssim }
			\sum_{j}\frac{1}{\gamma_{j}^{d-1}}\iint_{\partial^*E\cap\{x,y\in R_{L,\delta}:\, \gamma_{j}<\|x-y\| \leq 2\gamma_{j},\,\max\{\|x_d\|, \|y_d\|\}<2\gamma_j\}}\etau(E, x) + \etau(E, y)\d\hausd^{d-1}(x)\d\hausd^{d-1}(y).\label{eq:nuj}
			%\\ &\lesssim
			% \int_{\partial^*E\cap R_{L,\delta}}\etau(E, x)\d\hausd^{d-1}(x).\label{eq:623}
		\end{align}
		Finally using fact~\ref{item:fact3}, we have that we can bound \eqref{eq:nuj} from above, up to a multiplicative constant,  by
		\begin{equation}
			\label{eq:eq623}
			\int_{\partial^*E\cap R_{L,\delta}}\etau(E, x)\d\hausd^{d-1}(x).
		\end{equation}

		Combining~\eqref{eq:621},~\eqref{eq:622} and~\eqref{eq:nuj}, we have that
		\begin{align}
			N	\int_{[0,L]^{d-1}} &\|h(x^{\perp}_{d}) -\tilde h\|^{2} \dx_d^\perp\lesssim
			\int_{\partial^*E\cap R_{L,\delta}}\etau(E, x)\d\hausd^{d-1}(x).
		\end{align}
		
		Recall now the second term in~\eqref{eq:hest} and the definition of $\tilde h$.
		
		Given that, by~\eqref{eq:nued}, $\int_{\partial^*E\cap R_{L,\delta}}\nu_{d}^\perp(x)\d\hausd^{d-1}(x) = 0$,  and that~\eqref{eq:ehausd} holds, we have that
		\begin{align}
			\Big\|\int_{[0,L]^{d-1}} h(z_d^\perp)\dz_d^\perp\Big\|^{2}& =
			\Big\|\int_{\partial^*E\cap R_{L,\delta}\setminus (A_1\cup A_2)} \nu^{\perp}_{d}(x)\d\hausd^{d-1}(x)\Big\|^{2} \notag\\
			&=\Big\|\int_{\partial^*E\cap R_{L,\delta}\cap (A_1\cup A_2)} \nu^{\perp}_{d}(x)\d\hausd^{d-1}(x)\Big\|^{2}\notag\\
			&  \leq \bigl(\hausd^{d-1}(A_{1}\cup A_{2}) \bigr)^2\notag\\
			&\lesssim \varepsilon^{p-d-1} \int_{\partial^*E\cap R_{L,\delta}}\etau(E, x)\d\hausd^{d-1}(x).
			% \leq  \frac{N}{\varepsilon^{2(p-d-1)}} \hausd^{d-1}(A_{1}\cup A_{2}) \leq \frac{N}{\varepsilon^{p-d-1}}\int_{[0,L]^{d-1}} \etau(E, x).
		\end{align}
		
		To conclude the proof it is sufficient to take $\varepsilon$ such that {$N \gtrsim M_1$ and $\varepsilon^{-(p-d-1)}\gtrsim M_1$.}
		%or equivalently $N= \log_{2}(\delta/\varepsilon) > 2M^{2}$ and such that $\frac{N}{\varepsilon^{p-d-1}} < \frac{1}{2}$ or equivalently $\frac{N2^{-N(p-d-1)}}{\delta^{p-d-1}} < \frac{1}{2}$.
		
	\end{proof}
	
	\begin{remark}\label{rmk:stripesotherdirections}
		Notice that the statement and the proof of the above Proposition~\ref{prop:stability_bound} are invariant under rotations of an angle $\theta$ such that stripes with boundary orthogonal to $\theta$ are $[0,L)^d$-periodic. Thus, the assumption that the reference rectangle has height parallel to $e_d$ is not a restriction.
	\end{remark}

	We can now give a proof of Theorem~\ref{thm:main}.
	
	\begin{proof}[Proof of Theorem~\ref{thm:main}: ]
		Let $L>0$, $M_1\gg1$ and $\sigma,\delta,\varepsilon,\tau>0$ sufficiently small to be fixed later. By the rigidity estimate and $\Gamma$-convergence result (see Corollary~\ref{cor:gammaconv}), we know that if $0<\tau<\tau(\sigma)$, then minimizers $E_\tau$ of $\Fcal_{\tau,p,d}(\cdot, [0,L)^d)$ satisfy
		\begin{equation}
			\bigl\|\chi_{E_\tau}-\chi_{S_\theta}\bigr\|_{L^1([0,L)^d)}\leq\sigma,
		\end{equation}
		where $\theta\in\S^{d-1}$ and $S_\theta$ is a $[0,L)^d$ periodic set made of stripes with boundaries orthogonal to $\theta\in\S^{d-1}$ and of constant distance one from the other given by $h^*_L>0$.
		Let $E=E_{\tau}$ be a minimizer of $\Fcal_{\tau, L}$  and fix $\delta, \varepsilon >0$. Thanks also to Remark~\ref{rmk:stripesotherdirections}, we can assume \withoutLoss that $\theta=e_d$ and denote by $S=S_{e_d}$. In particular, up to a translation in direction $e_d$,
		\[
		\partial S\cap [0,L)^d=\bigcup_{i=0}^{L/h^*_L}[0,L)^{d-1}\times \{ih^*_L\},
		\]
		where $h^*_L$ was introduced in Section~\ref{sec:1D}.
		Let us denote by $R_{i}=[0,L)^{d-1}\times [ih^*_L-\delta/2,ih^*_L+\delta/2]$ and $\tilde R_i=[0,L)^{d-1}\times [ih^*_L-\delta,ih^*_L+\delta]$, for $i=0,\dots,L/h^*_L$.
		
		Then, define the sets (see Figure~\ref{fig:2})
		\begin{align*}
			\mathcal B_{1} &:= \insieme{x\in \partial^{*}E:\ x \not \in \bigcup_{i} R_{i}}\\
			\mathcal B_{2, i} &:= \insieme{(x, \theta)\in \partial^{*}E \times  \S^{d-1}:\ x \in R_{i} \text{ and }x_\theta^\perp+x_{\theta}^{+}\theta \in \tilde R_{i}}
		\end{align*}
		and  for any $\theta \in \S^{d-1}$ and $x^{\perp}_{\theta} \in \theta^{\perp}$ let
		\begin{align*}
			\mathcal B_{1, \theta, x^{\perp}_{\theta}}& := \{s\in\partial^*E_{x_\theta^\perp}:\, x^{\perp}_{\theta} + s \theta \in \mathcal{B}_{1}\} \\
			\mathcal B_{2, i, \theta} &:= \insieme{ x\in\partial^*E: (x, \theta) \in \mathcal{B}_{2,i}}\\
			\mathcal B_{2, i, \theta, x^{\perp}_{\theta}} &:= \insieme{s\in\partial^*E_{x_\theta^\perp}:\, x^{\perp}_{\theta} + s \theta \in \mathcal{B}_{2, i, \theta}}.
		\end{align*}
		
		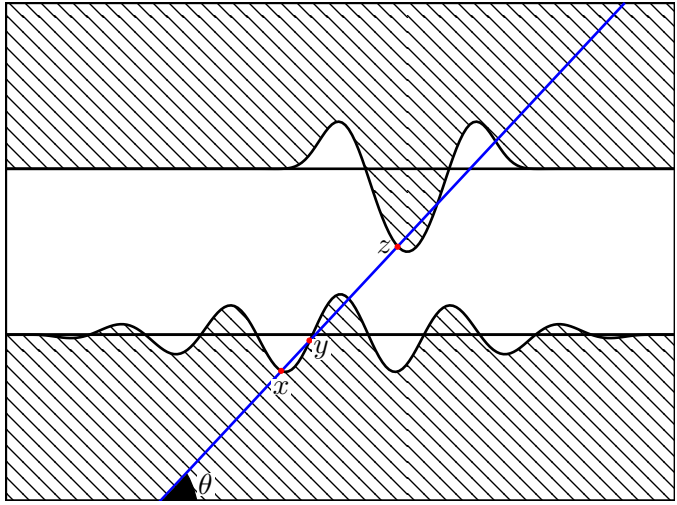
\begin{figure}
			\centering
			\begin{tikzpicture}[scale=2.2]
				\clip (0,1) rectangle (4,4);
				\draw[color=black!20!white,smooth,pattern=north west hatch,hatch distance=6pt, hatch thickness=.5pt] (-2,2) -- (0,2) -- (0.0201,2) -- (0.0402,2) -- (0.0603,2) -- (0.0804,2) -- (0.1005,2) -- (0.1206,2.00003) -- (0.1407,2.00007) -- (0.1608,2.00005) -- (0.1809,1.99976) -- (0.20101,1.99903) -- (0.22111,1.99771) -- (0.24121,1.99579) -- (0.26131,1.99333) -- (0.28141,1.99051) -- (0.30151,1.98755) -- (0.32161,1.9847) -- (0.34171,1.98225) -- (0.36181,1.98046) -- (0.38191,1.97959) -- (0.40201,1.97983) -- (0.42211,1.98136) -- (0.44221,1.98425) -- (0.46231,1.98852) -- (0.48241,1.99412) -- (0.50251,2.00091) -- (0.52261,2.00867) -- (0.54271,2.01714) -- (0.56281,2.02595) -- (0.58291,2.03473) -- (0.60302,2.04305) -- (0.62312,2.05047) -- (0.64322,2.05656) -- (0.66332,2.0609) -- (0.68342,2.06314) -- (0.70352,2.06298) -- (0.72362,2.06022) -- (0.74372,2.05474) -- (0.76382,2.04654) -- (0.78392,2.03574) -- (0.80402,2.0226) -- (0.82412,2.00747) -- (0.84422,1.99084) -- (0.86432,1.97327) -- (0.88442,1.95543) -- (0.90452,1.93803) -- (0.92462,1.92182) -- (0.94472,1.90752) -- (0.96482,1.89585) -- (0.98492,1.88745) -- (1.00503,1.88287) -- (1.02513,1.88252) -- (1.04523,1.88668) -- (1.06533,1.89545) -- (1.08543,1.90873) -- (1.10553,1.92626) -- (1.12563,1.94758) -- (1.14573,1.97204) -- (1.16583,1.99885) -- (1.18593,2.02706) -- (1.20603,2.05564) -- (1.22613,2.0835) -- (1.24623,2.1095) -- (1.26633,2.13255) -- (1.28643,2.15163) -- (1.30653,2.16582) -- (1.32663,2.17438) -- (1.34673,2.17673) -- (1.36683,2.17256) -- (1.38693,2.16178) -- (1.40704,2.14456) -- (1.42714,2.12134) -- (1.44724,2.09281) -- (1.46734,2.05989) -- (1.48744,2.02371) -- (1.50754,1.98555) -- (1.52764,1.94681) -- (1.54774,1.90896) -- (1.56784,1.87348) -- (1.58794,1.84176) -- (1.60804,1.81512) -- (1.62814,1.79468) -- (1.64824,1.78135) -- (1.66834,1.77581) -- (1.68844,1.7784) -- (1.70854,1.78919) -- (1.72864,1.8079) -- (1.74874,1.83397) -- (1.76884,1.86652) -- (1.78894,1.90441) -- (1.80905,1.94628) -- (1.82915,1.99061) -- (1.84925,2.03574) -- (1.86935,2.08) -- (1.88945,2.12169) -- (1.90955,2.15925) -- (1.92965,2.19123) -- (1.94975,2.2164) -- (1.96985,2.23379) -- (1.98995,2.24272) -- (2.01005,2.24272) -- (2.03015,2.23379) -- (2.05025,2.2164) -- (2.07035,2.19123) -- (2.09045,2.15925) -- (2.11055,2.12169) -- (2.13065,2.08) -- (2.15075,2.03574) -- (2.17085,1.99061) -- (2.19095,1.94628) -- (2.21106,1.90441) -- (2.23116,1.86652) -- (2.25126,1.83397) -- (2.27136,1.8079) -- (2.29146,1.78919) -- (2.31156,1.7784) -- (2.33166,1.77581) -- (2.35176,1.78135) -- (2.37186,1.79468) -- (2.39196,1.81512) -- (2.41206,1.84176) -- (2.43216,1.87348) -- (2.45226,1.90896) -- (2.47236,1.94681) -- (2.49246,1.98555) -- (2.51256,2.02371) -- (2.53266,2.05989) -- (2.55276,2.09281) -- (2.57286,2.12134) -- (2.59296,2.14456) -- (2.61307,2.16178) -- (2.63317,2.17256) -- (2.65327,2.17673) -- (2.67337,2.17438) -- (2.69347,2.16582) -- (2.71357,2.15163) -- (2.73367,2.13255) -- (2.75377,2.1095) -- (2.77387,2.0835) -- (2.79397,2.05564) -- (2.81407,2.02706) -- (2.83417,1.99885) -- (2.85427,1.97204) -- (2.87437,1.94758) -- (2.89447,1.92626) -- (2.91457,1.90873) -- (2.93467,1.89545) -- (2.95477,1.88668) -- (2.97487,1.88252) -- (2.99497,1.88287) -- (3.01508,1.88745) -- (3.03518,1.89585) -- (3.05528,1.90752) -- (3.07538,1.92182) -- (3.09548,1.93803) -- (3.11558,1.95543) -- (3.13568,1.97327) -- (3.15578,1.99084) -- (3.17588,2.00747) -- (3.19598,2.0226) -- (3.21608,2.03574) -- (3.23618,2.04654) -- (3.25628,2.05474) -- (3.27638,2.06022) -- (3.29648,2.06298) -- (3.31658,2.06314) -- (3.33668,2.0609) -- (3.35678,2.05656) -- (3.37688,2.05047) -- (3.39698,2.04305) -- (3.41709,2.03473) -- (3.43719,2.02595) -- (3.45729,2.01714) -- (3.47739,2.00867) -- (3.49749,2.00091) -- (3.51759,1.99412) -- (3.53769,1.98852) -- (3.55779,1.98425) -- (3.57789,1.98136) -- (3.59799,1.97983) -- (3.61809,1.97959) -- (3.63819,1.98046) -- (3.65829,1.98225) -- (3.67839,1.9847) -- (3.69849,1.98755) -- (3.71859,1.99051) -- (3.73869,1.99333) -- (3.75879,1.99579) -- (3.77889,1.99771) -- (3.79899,1.99903) -- (3.8191,1.99976) -- (3.8392,2.00005) -- (3.8593,2.00007) -- (3.8794,2.00003) -- (3.8995,2) -- (3.9196,2) -- (3.9397,2) -- (3.9598,2) -- (3.9799,2) -- (4,2) -- (8,2) -- (8,-2) -- (-2,-2) --cycle;
				\draw[smooth,line width=1pt] (-2,2) -- (0,2) -- (0.0201,2) -- (0.0402,2) -- (0.0603,2) -- (0.0804,2) -- (0.1005,2) -- (0.1206,2.00003) -- (0.1407,2.00007) -- (0.1608,2.00005) -- (0.1809,1.99976) -- (0.20101,1.99903) -- (0.22111,1.99771) -- (0.24121,1.99579) -- (0.26131,1.99333) -- (0.28141,1.99051) -- (0.30151,1.98755) -- (0.32161,1.9847) -- (0.34171,1.98225) -- (0.36181,1.98046) -- (0.38191,1.97959) -- (0.40201,1.97983) -- (0.42211,1.98136) -- (0.44221,1.98425) -- (0.46231,1.98852) -- (0.48241,1.99412) -- (0.50251,2.00091) -- (0.52261,2.00867) -- (0.54271,2.01714) -- (0.56281,2.02595) -- (0.58291,2.03473) -- (0.60302,2.04305) -- (0.62312,2.05047) -- (0.64322,2.05656) -- (0.66332,2.0609) -- (0.68342,2.06314) -- (0.70352,2.06298) -- (0.72362,2.06022) -- (0.74372,2.05474) -- (0.76382,2.04654) -- (0.78392,2.03574) -- (0.80402,2.0226) -- (0.82412,2.00747) -- (0.84422,1.99084) -- (0.86432,1.97327) -- (0.88442,1.95543) -- (0.90452,1.93803) -- (0.92462,1.92182) -- (0.94472,1.90752) -- (0.96482,1.89585) -- (0.98492,1.88745) -- (1.00503,1.88287) -- (1.02513,1.88252) -- (1.04523,1.88668) -- (1.06533,1.89545) -- (1.08543,1.90873) -- (1.10553,1.92626) -- (1.12563,1.94758) -- (1.14573,1.97204) -- (1.16583,1.99885) -- (1.18593,2.02706) -- (1.20603,2.05564) -- (1.22613,2.0835) -- (1.24623,2.1095) -- (1.26633,2.13255) -- (1.28643,2.15163) -- (1.30653,2.16582) -- (1.32663,2.17438) -- (1.34673,2.17673) -- (1.36683,2.17256) -- (1.38693,2.16178) -- (1.40704,2.14456) -- (1.42714,2.12134) -- (1.44724,2.09281) -- (1.46734,2.05989) -- (1.48744,2.02371) -- (1.50754,1.98555) -- (1.52764,1.94681) -- (1.54774,1.90896) -- (1.56784,1.87348) -- (1.58794,1.84176) -- (1.60804,1.81512) -- (1.62814,1.79468) -- (1.64824,1.78135) -- (1.66834,1.77581) -- (1.68844,1.7784) -- (1.70854,1.78919) -- (1.72864,1.8079) -- (1.74874,1.83397) -- (1.76884,1.86652) -- (1.78894,1.90441) -- (1.80905,1.94628) -- (1.82915,1.99061) -- (1.84925,2.03574) -- (1.86935,2.08) -- (1.88945,2.12169) -- (1.90955,2.15925) -- (1.92965,2.19123) -- (1.94975,2.2164) -- (1.96985,2.23379) -- (1.98995,2.24272) -- (2.01005,2.24272) -- (2.03015,2.23379) -- (2.05025,2.2164) -- (2.07035,2.19123) -- (2.09045,2.15925) -- (2.11055,2.12169) -- (2.13065,2.08) -- (2.15075,2.03574) -- (2.17085,1.99061) -- (2.19095,1.94628) -- (2.21106,1.90441) -- (2.23116,1.86652) -- (2.25126,1.83397) -- (2.27136,1.8079) -- (2.29146,1.78919) -- (2.31156,1.7784) -- (2.33166,1.77581) -- (2.35176,1.78135) -- (2.37186,1.79468) -- (2.39196,1.81512) -- (2.41206,1.84176) -- (2.43216,1.87348) -- (2.45226,1.90896) -- (2.47236,1.94681) -- (2.49246,1.98555) -- (2.51256,2.02371) -- (2.53266,2.05989) -- (2.55276,2.09281) -- (2.57286,2.12134) -- (2.59296,2.14456) -- (2.61307,2.16178) -- (2.63317,2.17256) -- (2.65327,2.17673) -- (2.67337,2.17438) -- (2.69347,2.16582) -- (2.71357,2.15163) -- (2.73367,2.13255) -- (2.75377,2.1095) -- (2.77387,2.0835) -- (2.79397,2.05564) -- (2.81407,2.02706) -- (2.83417,1.99885) -- (2.85427,1.97204) -- (2.87437,1.94758) -- (2.89447,1.92626) -- (2.91457,1.90873) -- (2.93467,1.89545) -- (2.95477,1.88668) -- (2.97487,1.88252) -- (2.99497,1.88287) -- (3.01508,1.88745) -- (3.03518,1.89585) -- (3.05528,1.90752) -- (3.07538,1.92182) -- (3.09548,1.93803) -- (3.11558,1.95543) -- (3.13568,1.97327) -- (3.15578,1.99084) -- (3.17588,2.00747) -- (3.19598,2.0226) -- (3.21608,2.03574) -- (3.23618,2.04654) -- (3.25628,2.05474) -- (3.27638,2.06022) -- (3.29648,2.06298) -- (3.31658,2.06314) -- (3.33668,2.0609) -- (3.35678,2.05656) -- (3.37688,2.05047) -- (3.39698,2.04305) -- (3.41709,2.03473) -- (3.43719,2.02595) -- (3.45729,2.01714) -- (3.47739,2.00867) -- (3.49749,2.00091) -- (3.51759,1.99412) -- (3.53769,1.98852) -- (3.55779,1.98425) -- (3.57789,1.98136) -- (3.59799,1.97983) -- (3.61809,1.97959) -- (3.63819,1.98046) -- (3.65829,1.98225) -- (3.67839,1.9847) -- (3.69849,1.98755) -- (3.71859,1.99051) -- (3.73869,1.99333) -- (3.75879,1.99579) -- (3.77889,1.99771) -- (3.79899,1.99903) -- (3.8191,1.99976) -- (3.8392,2.00005) -- (3.8593,2.00007) -- (3.8794,2.00003) -- (3.8995,2) -- (3.9196,2) -- (3.9397,2) -- (3.9598,2) -- (3.9799,2) -- (4,2) -- (8,2) -- (8,-2) -- (-2,-2) --cycle;
				\draw[color=black!20!white,smooth,pattern=north west hatch,hatch distance=6pt, hatch thickness=.5pt] (-2,3) -- (0,3) -- (0.0201,3) -- (0.0402,3) -- (0.0603,3) -- (0.0804,3) -- (0.1005,3) -- (0.1206,3) -- (0.1407,3) -- (0.1608,3) -- (0.1809,3) -- (0.20101,3) -- (0.22111,3) -- (0.24121,3) -- (0.26131,3) -- (0.28141,3) -- (0.30151,3) -- (0.32161,3) -- (0.34171,3) -- (0.36181,3) -- (0.38191,3) -- (0.40201,3) -- (0.42211,3) -- (0.44221,3) -- (0.46231,3) -- (0.48241,3) -- (0.50251,3) -- (0.52261,3) -- (0.54271,3) -- (0.56281,3) -- (0.58291,3) -- (0.60302,3) -- (0.62312,3) -- (0.64322,3) -- (0.66332,3) -- (0.68342,3) -- (0.70352,3) -- (0.72362,3) -- (0.74372,3) -- (0.76382,3) -- (0.78392,3) -- (0.80402,3) -- (0.82412,3) -- (0.84422,3) -- (0.86432,3) -- (0.88442,3) -- (0.90452,3) -- (0.92462,3) -- (0.94472,3) -- (0.96482,3) -- (0.98492,3) -- (1.00503,3) -- (1.02513,3) -- (1.04523,3) -- (1.06533,3) -- (1.08543,3) -- (1.10553,3) -- (1.12563,3) -- (1.14573,3) -- (1.16583,3) -- (1.18593,3) -- (1.20603,3) -- (1.22613,3) -- (1.24623,3) -- (1.26633,3) -- (1.28643,3) -- (1.30653,3) -- (1.32663,3) -- (1.34673,3) -- (1.36683,3) -- (1.38693,3) -- (1.40704,3) -- (1.42714,3) -- (1.44724,3) -- (1.46734,2.99999) -- (1.48744,2.99997) -- (1.50754,2.99995) -- (1.52764,2.9999) -- (1.54774,2.99981) -- (1.56784,2.9997) -- (1.58794,2.99957) -- (1.60804,2.99947) -- (1.62814,2.99952) -- (1.64824,2.99994) -- (1.66834,3.00104) -- (1.68844,3.0033) -- (1.70854,3.00733) -- (1.72864,3.0138) -- (1.74874,3.02342) -- (1.76884,3.03683) -- (1.78894,3.05441) -- (1.80905,3.07626) -- (1.82915,3.10204) -- (1.84925,3.13094) -- (1.86935,3.16171) -- (1.88945,3.19271) -- (1.90955,3.22202) -- (1.92965,3.24763) -- (1.94975,3.26762) -- (1.96985,3.28027) -- (1.98995,3.28423) -- (2.01005,3.27864) -- (2.03015,3.26313) -- (2.05025,3.23783) -- (2.07035,3.20335) -- (2.09045,3.1607) -- (2.11055,3.11118) -- (2.13065,3.05632) -- (2.15075,2.99776) -- (2.17085,2.93715) -- (2.19095,2.87611) -- (2.21106,2.81617) -- (2.23116,2.75871) -- (2.25126,2.70495) -- (2.27136,2.65595) -- (2.29146,2.61259) -- (2.31156,2.5756) -- (2.33166,2.54553) -- (2.35176,2.52283) -- (2.37186,2.5078) -- (2.39196,2.50064) -- (2.41206,2.50144) -- (2.43216,2.51018) -- (2.45226,2.52676) -- (2.47236,2.55097) -- (2.49246,2.58246) -- (2.51256,2.62078) -- (2.53266,2.66533) -- (2.55276,2.71535) -- (2.57286,2.76994) -- (2.59296,2.828) -- (2.61307,2.88828) -- (2.63317,2.94935) -- (2.65327,3.00969) -- (2.67337,3.06764) -- (2.69347,3.12156) -- (2.71357,3.16982) -- (2.73367,3.21094) -- (2.75377,3.24365) -- (2.77387,3.26703) -- (2.79397,3.28055) -- (2.81407,3.28418) -- (2.83417,3.27839) -- (2.85427,3.26416) -- (2.87437,3.2429) -- (2.89447,3.21639) -- (2.91457,3.18658) -- (2.93467,3.15548) -- (2.95477,3.12496) -- (2.97487,3.0966) -- (2.99497,3.07156) -- (3.01508,3.05055) -- (3.03518,3.03382) -- (3.05528,3.02122) -- (3.07538,3.01227) -- (3.09548,3.00635) -- (3.11558,3.00273) -- (3.13568,3.00074) -- (3.15578,2.99981) -- (3.17588,2.99949) -- (3.19598,2.99948) -- (3.21608,2.99959) -- (3.23618,2.99973) -- (3.25628,2.99983) -- (3.27638,2.99991) -- (3.29648,2.99995) -- (3.31658,2.99998) -- (3.33668,2.99999) -- (3.35678,3) -- (3.37688,3) -- (3.39698,3) -- (3.41709,3) -- (3.43719,3) -- (3.45729,3) -- (3.47739,3) -- (3.49749,3) -- (3.51759,3) -- (3.53769,3) -- (3.55779,3) -- (3.57789,3) -- (3.59799,3) -- (3.61809,3) -- (3.63819,3) -- (3.65829,3) -- (3.67839,3) -- (3.69849,3) -- (3.71859,3) -- (3.73869,3) -- (3.75879,3) -- (3.77889,3) -- (3.79899,3) -- (3.8191,3) -- (3.8392,3) -- (3.8593,3) -- (3.8794,3) -- (3.8995,3) -- (3.9196,3) -- (3.9397,3) -- (3.9598,3) -- (3.9799,3) -- (4,3) -- (8,2) -- (8,12) -- (-2,12) --cycle;
				
				\draw[smooth,line width=1pt] (-2,3) -- (0,3) -- (0.0201,3) -- (0.0402,3) -- (0.0603,3) -- (0.0804,3) -- (0.1005,3) -- (0.1206,3) -- (0.1407,3) -- (0.1608,3) -- (0.1809,3) -- (0.20101,3) -- (0.22111,3) -- (0.24121,3) -- (0.26131,3) -- (0.28141,3) -- (0.30151,3) -- (0.32161,3) -- (0.34171,3) -- (0.36181,3) -- (0.38191,3) -- (0.40201,3) -- (0.42211,3) -- (0.44221,3) -- (0.46231,3) -- (0.48241,3) -- (0.50251,3) -- (0.52261,3) -- (0.54271,3) -- (0.56281,3) -- (0.58291,3) -- (0.60302,3) -- (0.62312,3) -- (0.64322,3) -- (0.66332,3) -- (0.68342,3) -- (0.70352,3) -- (0.72362,3) -- (0.74372,3) -- (0.76382,3) -- (0.78392,3) -- (0.80402,3) -- (0.82412,3) -- (0.84422,3) -- (0.86432,3) -- (0.88442,3) -- (0.90452,3) -- (0.92462,3) -- (0.94472,3) -- (0.96482,3) -- (0.98492,3) -- (1.00503,3) -- (1.02513,3) -- (1.04523,3) -- (1.06533,3) -- (1.08543,3) -- (1.10553,3) -- (1.12563,3) -- (1.14573,3) -- (1.16583,3) -- (1.18593,3) -- (1.20603,3) -- (1.22613,3) -- (1.24623,3) -- (1.26633,3) -- (1.28643,3) -- (1.30653,3) -- (1.32663,3) -- (1.34673,3) -- (1.36683,3) -- (1.38693,3) -- (1.40704,3) -- (1.42714,3) -- (1.44724,3) -- (1.46734,2.99999) -- (1.48744,2.99997) -- (1.50754,2.99995) -- (1.52764,2.9999) -- (1.54774,2.99981) -- (1.56784,2.9997) -- (1.58794,2.99957) -- (1.60804,2.99947) -- (1.62814,2.99952) -- (1.64824,2.99994) -- (1.66834,3.00104) -- (1.68844,3.0033) -- (1.70854,3.00733) -- (1.72864,3.0138) -- (1.74874,3.02342) -- (1.76884,3.03683) -- (1.78894,3.05441) -- (1.80905,3.07626) -- (1.82915,3.10204) -- (1.84925,3.13094) -- (1.86935,3.16171) -- (1.88945,3.19271) -- (1.90955,3.22202) -- (1.92965,3.24763) -- (1.94975,3.26762) -- (1.96985,3.28027) -- (1.98995,3.28423) -- (2.01005,3.27864) -- (2.03015,3.26313) -- (2.05025,3.23783) -- (2.07035,3.20335) -- (2.09045,3.1607) -- (2.11055,3.11118) -- (2.13065,3.05632) -- (2.15075,2.99776) -- (2.17085,2.93715) -- (2.19095,2.87611) -- (2.21106,2.81617) -- (2.23116,2.75871) -- (2.25126,2.70495) -- (2.27136,2.65595) -- (2.29146,2.61259) -- (2.31156,2.5756) -- (2.33166,2.54553) -- (2.35176,2.52283) -- (2.37186,2.5078) -- (2.39196,2.50064) -- (2.41206,2.50144) -- (2.43216,2.51018) -- (2.45226,2.52676) -- (2.47236,2.55097) -- (2.49246,2.58246) -- (2.51256,2.62078) -- (2.53266,2.66533) -- (2.55276,2.71535) -- (2.57286,2.76994) -- (2.59296,2.828) -- (2.61307,2.88828) -- (2.63317,2.94935) -- (2.65327,3.00969) -- (2.67337,3.06764) -- (2.69347,3.12156) -- (2.71357,3.16982) -- (2.73367,3.21094) -- (2.75377,3.24365) -- (2.77387,3.26703) -- (2.79397,3.28055) -- (2.81407,3.28418) -- (2.83417,3.27839) -- (2.85427,3.26416) -- (2.87437,3.2429) -- (2.89447,3.21639) -- (2.91457,3.18658) -- (2.93467,3.15548) -- (2.95477,3.12496) -- (2.97487,3.0966) -- (2.99497,3.07156) -- (3.01508,3.05055) -- (3.03518,3.03382) -- (3.05528,3.02122) -- (3.07538,3.01227) -- (3.09548,3.00635) -- (3.11558,3.00273) -- (3.13568,3.00074) -- (3.15578,2.99981) -- (3.17588,2.99949) -- (3.19598,2.99948) -- (3.21608,2.99959) -- (3.23618,2.99973) -- (3.25628,2.99983) -- (3.27638,2.99991) -- (3.29648,2.99995) -- (3.31658,2.99998) -- (3.33668,2.99999) -- (3.35678,3) -- (3.37688,3) -- (3.39698,3) -- (3.41709,3) -- (3.43719,3) -- (3.45729,3) -- (3.47739,3) -- (3.49749,3) -- (3.51759,3) -- (3.53769,3) -- (3.55779,3) -- (3.57789,3) -- (3.59799,3) -- (3.61809,3) -- (3.63819,3) -- (3.65829,3) -- (3.67839,3) -- (3.69849,3) -- (3.71859,3) -- (3.73869,3) -- (3.75879,3) -- (3.77889,3) -- (3.79899,3) -- (3.8191,3) -- (3.8392,3) -- (3.8593,3) -- (3.8794,3) -- (3.8995,3) -- (3.9196,3) -- (3.9397,3) -- (3.9598,3) -- (3.9799,3) -- (4,3) -- (8,2) -- (8,12) -- (-2,12) --cycle;
				\draw[color=black,line width=1pt] (0,0) rectangle (4,4);
				\draw[line width=1pt] (0,1) -- (4,1);
				\draw[line width=1pt] (0,2) -- (4,2);
				\draw[line width=1pt] (0,2) -- (4,2);
				\draw[line width=1pt] (0,3) -- (4,3);
				\draw[line width=1pt,color=blue] (0,0) -- (3.7,4);
				\draw ;
				\draw (1.646,1.682) node[fill=white,inner sep=.5pt] {$x$};
				\draw[fill,color=red] (1.646,1.782) circle[radius=0.015];
				\draw[fill,color=red] (1.815,1.965) circle[radius=0.015];
				\draw (1.885,1.91) node[fill=white,inner sep=.5pt] {$y$};
				\draw (2.262,2.53) node {$z$};
				\draw[fill,color=red] (2.342,2.53) circle[radius=0.015];
				\path[fill=black,opacity=0.4] (0.934,1) -- (1.082,1.16) arc[start angle=30,end angle=0,radius=5mm] --cycle;
				\path[line width=.5pt,draw] (1.082,1.16) arc[start angle=30,end angle=0,radius=5mm];
				\draw (1.194,1.1) node[fill=white,inner sep=.5pt] {$\theta$};
			\end{tikzpicture}
			\caption{In the above example we have that $(x, \theta) \in \mathcal{B}_{2,i}$, $(y, -\theta) \in \mathcal{B}_{2,i}$ and $z \in \mathcal{B}_{1}$.}
			\label{fig:2}
		\end{figure}
		
		Using the integral geometric  formula~\eqref{eq:forintgeom} we have
		\begin{align}
			L^{d}\Fcal_{\tau, p,d}(E,[0,L)^d) &=
			\int_{\S^{d-1}} \int_{\theta^{\perp}} \sum_{s \in \partial^*E_{x^{\perp}_{\theta}} \cap  ([0,L)^d)_{x_\theta^\perp}} r_{\tau}(E_{x^{\perp}_{\theta}}, s)\dx_\theta^\perp\d\theta
			\notag\\
			&=
			\int_{\S^{d-1}} \int_{\theta^{\perp}} \sum_{s \in \partial^*E_{x^{\perp}_{\theta} }\setminus \big(\mathcal B_{1, \theta,x^{\perp}_{\theta}} \cup \bigcup_{i} \mathcal B_{2, i, \theta, x^{\perp}_{\theta}}\big) \cap ([0,L)^d)_{x_\theta^\perp} } r_{\tau}(E_{x^{\perp}_{\theta}}, s)\dx_\theta^\perp\d\theta\notag\\
			&+ \int_{\S^{d-1}} \int_{\theta^{\perp}} \sum_{s\in\mathcal B_{1,\theta,x_\theta^\perp} \cup \bigcup_{i} \mathcal B_{2, i, \theta, x^{\perp}_{\theta}} \cap ([0,L)^d)_{x_\theta^\perp}} r_{\tau }(E_{x^{\perp}_{\theta}}, s)\dx_\theta^\perp\d\theta.\label{eq:fdec}
		\end{align}
		
		We claim that, provided, $\delta, \sigma$ and $\tau$ are sufficiently small,
		\begin{align}
			\int_{\S^{d-1}} \int_{\theta^{\perp}} &\sum_{s\in\mathcal B_{1,\theta,x_\theta^\perp} \cup \bigcup_{i} \mathcal B_{2, i, \theta, x^{\perp}_{\theta}} \cap ([0,L)^d)_{x_\theta^\perp}} r_{\tau }(E_{x^{\perp}_{\theta}}, s)\dx_\theta^\perp\d\theta\geq\notag\\
			& \geq M \int_{\S^{d-1}} \int_{\theta^{\perp}} \#\Big\{s \in \big(\mathcal B_{1,\theta, x^{\perp}_{\theta}} \cup \bigcup_{i} \mathcal B_{2, i, \theta, x^{\perp}_{\theta}}\big) \cap  ([0,L)^d)_{x_\theta^\perp}\Big\}\dx_\theta^\perp\d\theta.\label{eq:b1b2m}
		\end{align}
		
		Indeed, on the one hand, if $\delta$ is such that $\delta^{-(p-d-1)}\gtrsim M_1$, $\sigma<\frac14(\delta/2)^d$ and $\tau<\tau(\sigma)$ as in Corollary~\ref{cor:gammaconv}, one has that whenever $x\in\mathcal B_1$,  $\sigma<\frac14(\mathrm{dist}(x,\partial S))^d$ thus one can apply  point (i) of Remark~\ref{rmk:after_small_density_lemma} to get
		
		\begin{align}
			\int_{\S^{d-1}} \int_{\theta^{\perp}} &\sum_{s\in\mathcal B_{1,\theta,x_\theta^\perp} \cap ([0,L)^d)_{x_\theta^\perp}} r_{\tau }(E_{x^{\perp}_{\theta}}, s)\dx_\theta^\perp\d\theta \geq M_1 \int_{\S^{d-1}} \int_{\theta^{\perp}} \#\Big\{s \in \mathcal B_{1,\theta, x^{\perp}_{\theta}}  \cap  ([0,L)^d)_{x_\theta^\perp}\Big\}\dx_\theta^\perp\d\theta.\label{eq:b1m}
		\end{align}

		Notice that, by Proposition~\ref{prop:stability_bound}, whenever $\sigma<sigma_0$ and $\tau<\tau_2$ are sufficiently small,
		\begin{align}
			\int_{\partial^{*} E \cap R_i}   e_{\tau,\delta/2}(E, x) \d\hausd^{d-1} (x)&\geq M_1 \bigg( \per(E; R_i) - \Big| \int_{\partial^{*}E \cap R_i} \nu_{E} (x)\d\hausd^{d-1}(x) \Big|  \bigg). \label{eq:prop67}
		\end{align}
		Since
		\[
		\insieme{x\in\partial^*E\cap R_i:\,\etautheta(E_{x_\theta^\perp},x_\theta)>0}\subset \mathcal B_{2,i,\theta},
		\]
		the bound~\eqref{eq:prop67} gives
		\begin{align}
			\int_{\S^{d-1}}\int_{\mathcal B_{2,i,\theta}}  &|\Scal{\nu_{E}(x)}{\theta}|\etautheta(E_{x^{\perp}_{\theta}}, x_{\theta})\d\mathcal H^{d-1}(x)\d\theta\geq\notag\\
			&\geq\int_{\S^{d-1}}\int_{\partial^*E\cap R_i}|\Scal{\nu_{E}(x)}{\theta}|\etautheta(E_{x^{\perp}_{\theta}}, x_{\theta})\d\hausd^{d-1}(x)\d\theta\notag\\
			&=	\int_{\partial^{*} E \cap R_i}   e_{\tau,\delta/2}(E, x) \d\hausd^{d-1} (x)\notag\\
			&\geq M_1  \bigg( \per(E; R_i) - \Big\| \int_{\partial^{*}E \cap R_i} \nu_{E} (x)\d\hausd^{d-1}(x) \Big\| \notag\\
			&\geq
			M_1
			\int_{\S^{d-1}}\int_{\theta^{\perp}}\Big(\per^{\mathrm{1D}}(E_{x^{\perp}_{\theta}}, (R_i)_{x_\theta^\perp})) - \mathrm{mod}_{2}\big(\per^{\mathrm{1D}}(E_{x^{\perp}_{\theta}}, (R_i)_{x_\theta^\perp})\big) \
			\Big)\dx_\theta^\perp\d\theta\notag\\
			&\geq 2M_1
			\int_{\S^{d-1}}\int_{\theta^{\perp}}
			\#\mathcal B_{2,i,\theta,x^{\perp}_{\theta}} \dx_\theta^\perp\d\theta.
		\end{align}
		Hence, since by Proposition~\ref{prop:1dbound}
		\[
		r_\tau(E_{x_\theta^\perp},s)\geq -\gamma_0+\gamma_1 e_{\tau,\delta,\theta}(E_{x_\theta^\perp},s),
		\]
		and $M_1\gg1$,
		\begin{align}
			\int_{\S^{d-1}} \int_{\theta^{\perp}} &\sum_{s\in\bigcup_i\mathcal B_{2,i,\theta,x_\theta^\perp} \cap ([0,L)^d)_{x_\theta^\perp}} r_{\tau }(E_{x^{\perp}_{\theta}}, s)\dx_\theta^\perp\d\theta \geq M_1 \int_{\S^{d-1}} \int_{\theta^{\perp}} \#\Big\{s \in \bigcup_i\mathcal B_{2,i,\theta, x^{\perp}_{\theta}}  \cap  ([0,L)^d)_{x_\theta^\perp}\Big\}\dx_\theta^\perp\d\theta,\label{eq:b2m}
		\end{align}
		and thus~\eqref{eq:b1b2m} is proved.
		
		In particular,~\eqref{eq:fdec} becomes
		
		\begin{align}
			\Fcal_{\tau,p,d}(E,[0,L)^d)&\geq \frac{1}{L^d}\Biggl[
			\int_{\S^{d-1}} \int_{\theta^{\perp}} \sum_{s \in \partial^*E_{x^{\perp}_{\theta} }\setminus \big(\mathcal B_{1,\theta, x^{\perp}_{\theta}} \cup \bigcup_{i} \mathcal B_{2, i, \theta, x^{\perp}_{\theta}}\big) \cap ([0,L)^d)_{x_\theta^\perp}} r_{\tau}(E_{x^{\perp}_{\theta}}, s)\dx_\theta^\perp\d\theta\notag\\
			&+ M_1 \int_{\S^{d-1}} \int_{\theta^{\perp}} \#\Big\{s \in \big(\mathcal B_{1,\theta, x^{\perp}_{\theta}} \cup \bigcup_{i} \mathcal B_{2, i, \theta, x^{\perp}_{\theta}}\big) \cap  ([0,L)^d)_{x_\theta^\perp}\Big\}\dx_\theta^\perp\d\theta\Biggr].\label{eq:fineqb1b2}
		\end{align}
		
		Now notice that, due to the  $[0,L)^d$-periodicity of $E$,  both the \rhs and \lhs of~\eqref{eq:fineqb1b2} are invariant if we substitute $[0,L)^d$ with $[-kL,kL)^d$,    for any $k\in\N$.
		
		Let us fix $\ell\gg\eta_0$, $0<\varepsilon\ll1$ and define the set
		\begin{align}
			\mathcal G_{k,\ell,\varepsilon}:=\{(\theta,x_\theta^\perp)\in \S^{d-1}\times ([-kL,kL)^d)_\theta^\perp:\,|([-kL,kL)^d)_{x_\theta^\perp}|\geq\ell, \,|\Scal{\nu_S}{\theta}|\geq \varepsilon\}.
		\end{align}
		It is immediate to see that
		\begin{equation}
			\int_{\S^{d-1}}\int_{([-kL,kL)^d)_\theta^\perp}\bigl[1-\chi_{\mathcal G_{k,\ell,\varepsilon}}(\theta,x_\theta^\perp)\bigr]\dx_\theta^\perp\d\theta\lesssim \ell^{d-1}+(kL)^{d-1}\varepsilon.
		\end{equation}
		
		Let now $(\theta,x_\theta^\perp)\in\mathcal G_{k,\ell}$.
		By using Lemma~\ref{lemma:1d_replacement} with $E=E_{x_\theta^\perp}$, $A_0\cup A_1=\mathcal{B}_{1,\theta,x_\theta^\perp} \cup \bigcup_i \mathcal{B}_{2, i,\theta, x^{\perp}_{\theta}}$, and $F_{x_\theta^\perp}$ such that $\partial^*F_{x_\theta^\perp}\cap ([0,L)^d)_{x_\theta^\perp}=\partial^*E_{x_\theta^\perp}\setminus(A_0\cup A_1)$,  we have
		\begin{align*}
			\sum_{s \in \partial E_{x^{\perp}_{\theta}} \cap ([-kL,kL)^d)_{x_\theta^\perp} \setminus \Bigl(\mathcal{B}_{1,\theta,x_\theta^\perp} \cup \bigcup_i \mathcal{B}_{2, i,{\theta},x^{\perp}_{\theta}}\Bigr)}  r_{\tau}(E_{x^{\perp}_{\theta}}, s)&\geq -
			M_0\#\Bigl\{\Bigl(\mathcal{B}_{1,\theta,x_\theta^\perp} \cup \bigcup_i \mathcal{B}_{2, i,\theta, x^{\perp}_{\theta}}\Bigr) \cap ([-kL,kL)^d)_{x_\theta^\perp} \Bigr\}\\
			& + \sum_{s \in \partial F_{x^{\perp}_{\theta}} \cap ([-kL,kL)^d)_{x_\theta^\perp} }  r_{\tau}(F_{x^{\perp}_{\theta}}, s).
		\end{align*}
		
		We now want to apply Lemma~\ref{lemma:1D-optimization} to $F=F_{x_\theta^\perp}$, $I=((0,L)^d)_{x_\theta^\perp}$. Given $\{k_1,\dots,k_m\}=\partial^*F_{x_\theta^\perp}\cap I$, notice that $\forall\,i=1,\dots,m$ there exists $s_i\in\partial S_{x_\theta^\perp}$ such that $|s_i-k_i|<\frac{\delta}{2\cos\gamma}\ll h^*_L/\cos\gamma$, with $\gamma=\Scal{\theta}{e_d}$ and thus $|k_i-k_j|>\eta_0$. We then choose $k_0,k_{m+1},k_{m+2}$ in the following way. Let
		\begin{align*}
			k_0&=\sup\big\{s\in\partial S_{x_\theta^\perp}\setminus I:\,s<k_1-\delta/\cos\gamma\big\}\notag\\
			\tilde k_{m+1}&=\inf\big\{s\in\partial S_{x_\theta^\perp}:\,s>k_m+\delta/\cos\gamma\big\}\\
			\tilde k_{m+2}&=\inf\big\{s\in\partial S_{x_\theta^\perp}:\,s>\tilde k_{m+1}\big\}\\
			\tilde k_{m+3}&=\inf\big\{s\in\partial S_{x_\theta^\perp}:\,s>\tilde k_{m+2}\big\}.
		\end{align*}
		We now distinguish four cases:
		\begin{align}
			&\#\partial ^*F_{x_\theta^\perp}\cap I\in2\N \quad\text{and}\quad\#\partial S_{x_\theta^\perp}\cap[k_0,\tilde k_{m+1}]\in 2\N\label{eq:Caso1}\\
			&\#\partial ^*F_{x_\theta^\perp}\cap I\in2\N \quad\text{and}\quad\#\partial S_{x_\theta^\perp}\cap[k_0,\tilde k_{m+1}]\in 2\N+1\label{eq:Caso2}\\
			&\#\partial ^*F_{x_\theta^\perp}\cap I\in2\N +1\quad\text{and}\quad\#\partial S_{x_\theta^\perp}\cap[k_0,\tilde k_{m+1}]\in 2\N+1\label{eq:Caso3}\\
			&\#\partial ^*F_{x_\theta^\perp}\cap I\in2\N+1 \quad\text{and}\quad\#\partial S_{x_\theta^\perp}\cap[k_0,\tilde k_{m+1}]\in 2\N\label{eq:Caso4}.
		\end{align}
		If~\eqref{eq:Caso1} holds, then choose
		\[
		k_{m+1}=\tilde k_{m+1},\quad k_{m+2}=\tilde k_{m+2};
		\]
		if~\eqref{eq:Caso2} holds, set
		\[
		k_{m+1}=\tilde k_{m+1},\quad k_{m+2}=\tilde k_{m+3};
		\]
		if~\eqref{eq:Caso3} holds, let
		\[
		k_{m+1}=\tilde k_{m+1};
		\]
		and finally if~\eqref{eq:Caso4} holds, then choose
		\[
		k_{m+1}=\tilde k_{m+2}.
		\]
		
		In this way, on $\tilde I=[k_0,k_{\mathrm{max}}]$ as in Lemma~\ref{lemma:1D-optimization}, both the extension $\tilde F$ of $F=F_{x_\theta^\perp}$ and the simple periodic set  $S_{x_\theta^\perp}$ have an odd number of boundary points, thus they are both $|\tilde I|$-periodic.
		
		Define also $\tilde S_{x_\theta^\perp}$ as the simple periodic $|\tilde I_{x_\theta^\perp}|$-periodic set on $x_\theta^\perp+\R\theta$ with the same number of boundary points of $\tilde F$ on $\tilde I$, as in Lemma~\ref{lemma:1D-optimization}.
		
		By  Lemma~\ref{lemma:1D-optimization},
		\begin{align*}
			\sum_{s \in \partial F_{x^{\perp}_{\theta}} \cap ([-kL,kL)^d)_{x_\theta^\perp} }  r_{\tau}(F_{x^{\perp}_{\theta}}, s)&\geq \sum_{s\in\partial^*\tilde F\cap \tilde I_{x_\theta^\perp}}r_{\tau}(\tilde F, s)-C\notag\\
			&\geq  \sum_{s\in\partial^*\tilde S_{x_\theta^\perp}\cap \tilde I_{x_\theta^\perp}}r_{\tau}(\tilde S_{x^{\perp}_{\theta}}, s)-C\notag\\
			&=|\tilde I_{x_\theta^\perp}|\overline F^{\mathrm{1D}}_{\tau,p,d}(\tilde S_{x_\theta^\perp}, \tilde I_{x_\theta^\perp})-C.
		\end{align*}
		
		Moreover, since $\per(\tilde S_{x_\theta^\perp},\tilde I_{x_\theta^\perp})\leq \per( S_{x_\theta^\perp},\tilde I_{x_\theta^\perp})$, the distance $\tilde h_{x_\theta^\perp}$ between boundary points in $\tilde S_{x_\theta^\perp}$ is greater than or equal to the distance $h_{x_\theta^\perp}$  between boundary points in $S_{x_\theta^\perp}$, which is in turn greater than or equal to $h^*_L/\cos\gamma\geq h^*_L$. Then notice that for $h>h^*_L$ the energy of simple periodic sets with boundaries at distance $h$ is a strictly increasing function of $h$. This can be seen by the classical  explicit computation of such an energy for $\tau=0$  (see \eg~\cite{gr,dr_arma}), which is equal to $h^{-1}+ch^{-(p-d)}$, and the fact that, as seen in Proposition~\ref{prop:almost_reflection_positivity},  for boundary points at distance greater than some given constant $\eta_0$ and $\tau$ sufficiently small,  the energies $\Fcal_{\tau,p,d}$ and $\Fcal_{0,p,d}$ coincide on simple periodic sets.
		
		Thus,
		\begin{align}
			|\tilde I_{x_\theta^\perp}|\overline F^{\mathrm{1D}}_{\tau,p,d}(\tilde S_{x_\theta^\perp}, \tilde I_{x_\theta^\perp})\geq 	|\tilde I_{x_\theta^\perp}|\overline F^{\mathrm{1D}}_{\tau,p,d}( S_{x_\theta^\perp}, \tilde I_{x_\theta^\perp})
		\end{align}
		where equality holds if and only if $\per^{\mathrm{1D}}(\tilde S_{x_\theta^\perp}, \tilde I_{x_\theta^\perp})= \per^{\mathrm{1D}}(S_{x_\theta^\perp}, \tilde I_{x_\theta^\perp})$ and thus the two stripes coincide.

		From the above (denoting by $\mathcal B_1$ and $\mathcal B_{2,i}$ the corresponding sets on $[-kL,kL)^{d-1}$ instead of $[0,L)^d$), and recalling that from Proposition~\ref{prop:1dbound} $r_\tau(E,s)\geq-\gamma_0$,  one has that
		\begin{align}
			\Fcal_{\tau,p,d}(E,[-kL,kL)^d)&\geq \frac{1}{(kL)^d}\Biggl[
			\int_{\S^{d-1}} \int_{\theta^{\perp}} \sum_{s \in \partial^*E_{x^{\perp}_{\theta} }\setminus \big(\mathcal B_{1,\theta, x^{\perp}_{\theta}} \cup \bigcup_{i} \mathcal B_{2, i, \theta, x^{\perp}_{\theta}}\big) \cap ([-kL,kL)^d)_{x_\theta^\perp}} r_{\tau}(E_{x^{\perp}_{\theta}}, s)\dx_\theta^\perp\d\theta\notag\\
			&+ M_1 \int_{\S^{d-1}} \int_{\theta^{\perp}} \#\Big\{s \in \big(\mathcal B_{1,\theta, x^{\perp}_{\theta}} \cup \bigcup_{i} \mathcal B_{2, i, \theta, x^{\perp}_{\theta}}\big) \cap  ([-kL,kL)^d)_{x_\theta^\perp}\Big\}\dx_\theta^\perp\d\theta\Biggr]\notag\\
			&\geq -\frac{c\gamma_0\ell^{d-1}+c\gamma_0\varepsilon(kL)^{d-1}}{(kL)^d}\notag\\
			&+\int_{\mathcal G_{k,\ell,\varepsilon}}\frac{(M_1-M_0)}{(kL)^d}\#\Bigl\{\Bigl(\mathcal{B}_{1,\theta,x_\theta^\perp} \cup \bigcup_i \mathcal{B}_{2, i,\theta, x^{\perp}_{\theta}}\Bigr) \cap ([-kL,kL)^d)_{x_\theta^\perp} \Bigr\}\dx_\theta^\perp\d\theta\notag\\
			&+\frac{1}{(kL)^d}\int_{\mathcal G_{k,\ell,\varepsilon}}| \tilde I_{x_\theta^\perp}|\overline F^{\mathrm{1D}}_{\tau,p,d}(S_{x_\theta^\perp}, \tilde I_{x_\theta^\perp})\dx_\theta^\perp\d\theta\notag\\
			&-\frac{C}{kL}.
		\end{align}
		
		Sending $k\to+\infty$ and then $\varepsilon\to0$, since $	\Fcal_{\tau,p,d}(E,[-kL,kL)^d)=	\Fcal_{\tau,p,d}(E,[0,L)^d)$ and $|\mathcal G_{k,\ell, \varepsilon}|/(kL)^d\to |\{\theta:\,|\Scal{\nu_S}{\theta}|\geq\varepsilon\}$ as $k\to+\infty$, one obtains
		\begin{align}
			\Fcal_{\tau,p,d}(E,[0,L)^d)\geq \Fcal_{\tau,p,d}(S,[0,L)^d),
		\end{align}
		with equality if and only if $\mathcal{B}_{1,\theta,x_\theta^\perp} \cup \bigcup_i \mathcal{B}_{2, i,\theta, x^{\perp}_{\theta}}=\emptyset$ and $E_{x_{\theta}^\perp}=S_{x_\theta^\perp}$ for all $\theta,x_\theta^\perp$, namely $E=S$.

	\end{proof}

	\appendix
	\section*{Appendix}
  \label{sec:appendix}
	
  \renewcommand{\theequation}{A.\arabic{equation}}
	Given Lemmas~\ref{lemma:upper_bound_perimeter},~\ref{lemma:per_low_bound} and~\ref{lemma:excf}, the proof of  the Lipschitz regularity of the boundary as in Theorem~\ref{thm:regularity} follows from uniform upper and lower bounds on perimeter and volume and power law decay of the excess, similarly to the classical De Giorgi's regularity proof for quasi-minimizers.
	
	We give here a self-contained proof of this fact, without exploiting directly any quasi-minimality property.

	In the following,  the  upper and lower bounds for the family of sets we consider are assumed to hold locally with uniform constants in the whole $\R^d$. Thus, also the regularity results we prove will hold on the whole $\R^d$. However, since the proofs depend only on the local behaviour of the sets, this extends trivially to sets $E$ whose upper and lower bounds hold on the balls contained inside any given open and bounded set $\Omega$ (as is the case for the sets of $\overline \Fcal_{0,p,d}(\cdot,\Omega)$-equibounded energy).   The choice to work on the whole $\R^d$ is done for simplicity of notation and in order to facilitate the reader.
	
	\begin{proposition} \label{lemma:regularity} Let $d\geq 2$, $\mathscr{F}$ a class of  subsets of $\R^d$ of locally finite perimeter for which there exist constants  $C_0,C_1,C_2,\bar C_1>0$ and $\alpha,R_3>0$  such that for every $E\in\mathscr{F}$, for every $x\in\partial ^*E$ and for every $0<r<R_3$ the following holds:
		\begin{align}
			\per(E,B_r(x))&\leq C_0 r^{d-1}\label{eq:per1}\\
			\per (E,B_r(x))&\geq C_1 r^{d-1}\label{eq:per2}\\
			\min\bigl\{|E\cap B_r(x)|,| B_r(x)\setminus E|\bigr\}&\geq\bar C_1 r^d\label{eq:vol1}\\
			Exc(E,x,r)&\leq C_2r^{\alpha}\label{eq:exc1}
		\end{align}
		Then, there exist $0< R_4<R_3$, $0<c_0<c_1<1$ such that the following holds. For every $x\in\partial ^*E$, $0<r< R_4$ let
		\begin{align}
			\theta(x,r)=\frac{\nu(x,r)}{|\nu(x,r)|},\quad \nu(x,r)=\frac{\int_{\partial^* E\cap B_r(x)}\nu_E(y)\d\mathcal H^{d-1}(y)}{\int_{\partial^* E\cap B_r(x)}|\nu_E|(y)\d\mathcal H^{d-1}(y)},\label{eq:nuxr}\end{align}
		and let
		\begin{align}
			Cyl(x, cr,\theta(x,r)):=\{z\in\R^d:\,\|z_{\theta(x,r)}^\perp-x_{\theta(x,r)}^\perp\|\leq cr, \,|z_{\theta(x,r)}-x_{\theta(x,r)}|\leq cr\},
		\end{align}
		where $c$ is such that $	Cyl(x, cr,\theta(x,r))\subset B_r(x)$, $c_0<c<c_1$.
		Then there exists an affine halfspace $\tilde H$  with exterior normal $\nu_{\tilde H}=\theta(x,r)$ such that
		{ \begin{equation}
				\label{eq:l1exc}
				\Biggl(\fint_{	Cyl(x, cr,\theta(x,r))}|\chi_E(z)-\chi_{\tilde H}(z)|\dz\Biggr)^2\lesssim Exc (E, x,r)^{1/2}.
		\end{equation} }
		
	\end{proposition}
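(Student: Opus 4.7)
My plan is to prove this by the classical slicing/Lipschitz approximation argument à la De Giorgi, adapted to the uniform bounds~\eqref{eq:per1}--\eqref{eq:exc1} without invoking quasi-minimality. Write $\theta=\theta(x,r)$, and assume \withoutLoss that $x=0$. The starting point is the identity
\begin{equation*}
r^{d-1}Exc(E,0,r)=\int_{\partial^{*}E\cap B_r}(1-\langle\nu_E,\theta\rangle)\d\mathcal H^{d-1}=\frac{1}{2}\int_{\partial^{*}E\cap B_r}\|\nu_E-\theta\|^{2}\d\mathcal H^{d-1},
\end{equation*}
which follows immediately from the definition of $\theta$ in~\eqref{eq:nuxr} and the fact that $\theta$ is a unit vector. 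Thus the excess gives a quantitative $L^{2}$ control of the deviation of $\nu_E$ from the averaged direction.

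Next, I would slice $Cyl(0,cr,\theta)$ with affine hyperplanes parallel to $\theta^{\perp}$. For each base point $z_{\theta}^{\perp}$, call the slice \emph{good} if $\partial^{*}E_{z_{\theta}^{\perp}}\cap I$ (where $I=(-cr,cr)$ is the axis segment) consists of a single point with the correct orientation $\langle\nu_E,\theta\rangle>0$, in which case $E_{z_{\theta}^{\perp}}\cap I=(-cr,t(z_{\theta}^{\perp}))$. The remaining slices I call \emph{bad}. Using the slicing formulas~\eqref{eq:slicing}--\eqref{eq:slicing_sign} together with the observation that any bad slice forces at least one boundary point where $\langle \nu_E,\theta\rangle\le 0$ (and thus where the excess integrand $1-\langle\nu_E,\theta\rangle\ge 1$), one gets that the $\mathcal H^{d-1}$-measure of the projection of the bad set onto the base is $\lesssim r^{d-1}Exc(E,0,r)$.

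With the halfspace $\tilde H=\{z:z_\theta<\bar t\}$, the Fubini decomposition gives
\begin{equation*}
\int_{Cyl}|\chi_{E}-\chi_{\tilde H}|\dz\le\int_{\mathrm{good}}|t(z_{\theta}^{\perp})-\bar t|\dz_{\theta}^{\perp}+2cr\cdot|\mathrm{bad}|.
\end{equation*}
The bad contribution is $\lesssim r^{d}Exc$. For the good one, the key point is that on the good base set the boundary $\partial E$ is locally the graph of $t(\cdot)$; by the relation $|\nabla t|^{2}\lesssim 1-\langle\nu_E,\theta\rangle$ valid for graphs, one gets (after McShane/Whitney Lipschitz extension of $t$ to the whole base to legitimately apply Poincaré–Wirtinger)
\begin{equation*}
\int_{\mathrm{good}}|\nabla t|^{2}\dz_{\theta}^{\perp}\lesssim r^{d-1}Exc,\qquad \int_{\mathrm{good}}|t-\bar t|^{2}\lesssim r^{d+1}Exc,
\end{equation*}
choosing $\bar t$ as the mean over the good set. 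Cauchy--Schwarz with $|\mathrm{good}|\lesssim r^{d-1}$ yields $\int_{\mathrm{good}}|t-\bar t|\dz_{\theta}^{\perp}\lesssim r^{d}Exc^{1/2}$. The choice $c_0<c<c_1$ guarantees $Cyl\subset B_r$ with a fixed fraction of its base reachable, so the various uniform density bounds~\eqref{eq:per1}--\eqref{eq:vol1} ensure that the good portion is genuinely of order $r^{d-1}$ whenever $Exc$ is small. Putting everything together,
\begin{equation*}
\fint_{Cyl}|\chi_{E}-\chi_{\tilde H}|\dz\lesssim Exc^{1/2}+Exc\lesssim Exc^{1/2},
\end{equation*}
using that $Exc\le C_{2}r^{\alpha}$ is small for $r<R_{4}\ll R_{3}$. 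Squaring gives the claim (with room to spare, since $Exc^{2}\le Exc^{1/2}$).

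The main obstacle I anticipate is making the Lipschitz approximation / McShane extension argument rigorous: on the good set, $t$ is only defined pointwise, and to invoke Poincaré--Wirtinger one must extend $t$ as a Lipschitz function on the entire base with a Lipschitz constant comparable to $\sup |\nabla t|$. Standard arguments in the theory of sets of finite perimeter (see Tamanini's work on quasi-minimizers, or~\cite{Maggi}) handle this via the maximal function truncation, and the upper perimeter bound~\eqref{eq:per1} together with the excess estimate suffice to carry them out in our setting without invoking minimality. A secondary technical point is to control the $\sigma=0$ bad slices (entirely on the wrong side of $\tilde H$) by the volume density bound~\eqref{eq:vol1}, which rules out large such regions near the axis when $Exc$ is small.
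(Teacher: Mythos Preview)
Your strategy is exactly the one the paper follows: slice the cylinder in the direction $\theta$, classify base points according to the one--dimensional trace of $\partial^*E$ (the paper calls these $\tilde\Omega_0,\tilde\Omega_1^\pm,\tilde\Omega_2$), bound the measure of the ``bad'' part of the base, perform a Lipschitz extension of the height function via the maximal function / Whitney argument, and finish with Poincar\'e. So the architecture is right.

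There are, however, two quantitative overstatements that together explain why the statement carries the exponent $Exc^{1/2}$ on the right rather than the $Exc$ your argument would yield.

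First, your bound $|\text{bad}|\lesssim r^{d-1}Exc$ is correct for the slices with at least one boundary point of wrong sign (your observation that such a slice contributes $\ge 2$ to the sliced excess integrand is valid, and in fact sharper than the paper's estimate~(A.23) for $\tilde\Omega_1^-$). But it is \emph{false} for the $\tilde\Omega_0$ slices (no boundary point in the cylinder): these carry no excess contribution at all, and one can only get $|\tilde\Omega_0|\lesssim r^{d-1}Exc^{1/2}$, via the relative isoperimetric inequality on horizontal sections combined with $\per_\theta^\perp(E,B_r)\lesssim r^{d-1}Exc^{1/2}$ and the two--sided density bound~\eqref{eq:vol1}. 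You flag this as a ``secondary technical point'', but it is precisely what forces the weaker exponent.

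Second, the inequality ``$|\nabla t|^2\lesssim 1-\langle\nu_E,\theta\rangle$'' is only valid where $|\nabla t|\le 1$; for steep pieces one has merely $1-\langle\nu_E,\theta\rangle\sim 1$. The excess therefore controls $\int(\sqrt{1+|\nabla t|^2}-1)$, not $\int|\nabla t|^2$. To recover an $L^2$ gradient bound you must first restrict to the set $A_\lambda$ where the maximal function of $|Dv|$ is bounded (so that the height function is $\lambda$--Lipschitz there) and extend; the extension error is $\lesssim\lambda\,|\hat B_{cr}\setminus A_\lambda|$, and $|\tilde\Omega_1^+\setminus A_\lambda|\lesssim |Dv|(\hat B_{cr})/\lambda\lesssim r^{d-1}Exc^{1/2}/\lambda$, again only $Exc^{1/2}$.

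Once both corrections are made, the bound on $\int_{\hat B_{cr}}(\sqrt{1+|\nabla\tilde v|^2}-1)$ is $\lesssim r^{d-1}Exc^{1/2}$ (not $Exc$), and Jensen plus Poincar\'e give $(\fint_{Cyl}|\chi_E-\chi_{\tilde H}|)^2\lesssim Exc^{1/2}$ --- exactly the claim. Your asserted stronger conclusion $\fint\lesssim Exc^{1/2}$ does not follow from this route.
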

	
	\begin{proof}
		\textbf{Step 0: Preliminary excess estimates}
		Let $\nu(x,r)$ and $\theta=\theta(x,r)\in\S^{d-1}$ be as in~\eqref{eq:nuxr}.
		We also introduce the orthogonal decomposition
		\[
		\nu_E(y)=\nu_\theta(y)\theta+\nu_{\theta}^\perp(y), \quad\nu_\theta\in\R
		\]
		and define
		\begin{align*}
			\per_\theta(E,B_r(x))=\int_{\partial^* E\cap B_r(x)}|\nu_\theta(y)|\d\mathcal H^{d-1}(y),\\
			\per_\theta^\perp(E,B_r(x))=\int_{\partial ^*E\cap B_r(x)}\|\nu_\theta^\perp(y)\|\d\mathcal H^{d-1}(y).
		\end{align*}

		The goal of this step is to show that
		\begin{equation}
			\label{eq:pthetaperp}
			\per_\theta^\perp(E, B_r(x))\lesssim r^{d-1}(Exc(E,x,r))^{1/2}\lesssim r^{d-1+\alpha/2}.
		\end{equation}
		
		The excess in the ball $B_r(x)$ can be rewritten as follows
		\begin{align}
			r^{d-1}	Exc(E,x,r)=\int_{\partial ^*E\cap B_r(x)}\sqrt{\nu_\theta^2(y)+\|\nu_\theta^\perp(y)\|^2}\d\mathcal H^{d-1}(y)-\int_{\partial^* E\cap B_r(x)}\nu_\theta(y)\d\mathcal H^{d-1}(y).\label{eq:excineq1}
		\end{align}

		By Jensen's inequality applied to the convex function $[0,+\infty)\times[0,+\infty)\ni(x,y)\mapsto \sqrt{x^2+y^2}$,~\eqref{eq:excineq1} gives
		\begin{align*}
			r^{d-1}	Exc(E,x,r)\geq \sqrt{\per_\theta(E,B_r(x))^2+\per_\theta^\perp(E, B_r(x))^2}-\per_\theta(E, B_r(x)).
		\end{align*}
		Now notice that, by definition of excess and the estimates~\eqref{eq:per2} and~\eqref{eq:exc1},
		\begin{equation*}
			\|\nu(x,r)\|-1\lesssim r^\alpha.
		\end{equation*} This fact, together with the definition of $\theta$, $\nu_\theta$, implies that for $0<r\leq R_4<R_3$ and $x\in\partial^*E$,
		
		\begin{align}
			\per_\theta(E, B_r(x))&\geq\Biggl|\int_{\partial ^*E\cap B_r(x)}\nu_\theta(y)\d\mathcal H^{d-1}(y)\Biggr|\notag\\
			&\geq\Biggl \|\int_{\partial ^*E\cap B_r(x)}\nu_E(y)\d\mathcal H^{d-1}(y)\Biggr\|\notag\\
			&\geq \|\nu(x,r)\|\per(E, B_r(x))\notag\\
			&\gtrsim r^{d-1}.\label{eq:pertheta}
		\end{align}
		Hence, expanding the function $y\mapsto\sqrt{1+y^2}-1$ up to second order  and  using~\eqref{eq:pertheta} and the fact that $\per_\theta^\perp(E,B_r(x))\leq \per(E, B_r(x))\leq C_0 r^{d-1}$, one obtains
		\begin{align*}
			r^{d-1}	Exc(E,x,r)&\geq \sqrt{\per_\theta(E,B_r(x))^2+\per_\theta^\perp(E, B_r(x))^2}-\per_\theta(E, B_r(x))\notag\\
			&=\per_\theta(E, B_r(x))\Biggl(\sqrt{ 1+\frac{\per_\theta^\perp(E, B_r(x))^2}{\per_\theta(E,B_r(x))^2}}-1\Biggr)\notag\\
			&\gtrsim r^{d-1}\frac{\per_\theta^\perp(E, B_r(x))^2}{\per_\theta(E,B_r(x))^2}.
		\end{align*}
		In particular, by~\eqref{eq:per1} and~\eqref{eq:exc1}, the estimate~\eqref{eq:pthetaperp} holds.

		\textbf{Step 1: Estimate of the excess for polyhedral sets}
		
		Assume from now on that $E$ is polyhedral and that $\mathcal H^{d-1}(\{z\in\partial^*E\cap B_r(x):\,\nu_\theta(z)=0\})=0$. Indeed, if we show that~\eqref{eq:l1exc} holds for polyhedral sets with sides whose normal is not orthogonal to $\theta$, then it holds by approximation for the sets of the given family $\mathscr{F}$.
		
		Define $\hat B:=(B_r(x))_\theta^\perp$ and let $N(z_\theta^\perp):=\per^{\mathrm{1D}}(E_{z_\theta^\perp}, (B_r(x))_{z_\theta^\perp})$, $f(z_\theta^\perp+s\theta)=\frac{|\nu_\theta^\perp(z_\theta^\perp+s\theta)|}{|\nu_\theta(z_\theta^\perp+s\theta)|}$,
		$\Omega_0=\{z_\theta^\perp\in\hat B: \,N(z_\theta^\perp)=0\}$.
		
		One has that
		\begin{align*}
			r^{d-1}Exc(E, x,r)	&=\int_{\partial ^*E\cap B_r(x)}\sqrt{\nu_\theta^2(y)+\|\nu_\theta^\perp(y)\|^2}\d\mathcal H^{d-1}(y)-\int_{\partial ^*E\cap B_r(x)}\nu_\theta(y)\d\mathcal H^{d-1}(y)	\notag\\
			&=\int_{\hat B}\Biggl[\sum_{\bigl\{s:\,z_\theta^\perp+s\theta\in\partial^* E_{x_\theta^\perp}\cap (B_r(x))_{z_\theta^\perp}\bigr\}}\sqrt{1+f^2(z_\theta^\perp+s\theta)}\notag\\
			&-\sum_{\bigl\{s:\,z_\theta^\perp+s\theta\in\partial ^*E_{x_\theta^\perp}\cap (B_r(x))_{z_\theta^\perp}\bigr\}}\mathrm{sign}(\nu_\theta(z_\theta^\perp+s\theta))\Biggr]\dz_\theta^\perp\notag\\
			&\geq \int_{\hat B\setminus \Omega_0}\Biggl[\Biggl(\sum_{\bigl\{s:\,z_\theta^\perp+s\theta\in\partial^* E_{x_\theta^\perp}\cap (B_r(x))_{z_\theta^\perp}\bigr\}}\sqrt{1+f^2(z_\theta^\perp+s\theta)}\Biggr)-1\Biggr]\dz_\theta^\perp\notag\\
			&=\int_{\hat B\setminus \Omega_0}\Biggl[\Biggl(\sum_{i=1}^{N(z_\theta^\perp)}\sqrt{1+\|\nabla h_i(z_\theta^\perp)\|^2}\Biggr)-1\Biggr]\dz_\theta^\perp,
		\end{align*}
		where $h_i:A_i\subset \hat B\setminus \Omega_0\to\R$ parametrizes the leaves of the boundary of $E$ in an open set $A_i$, and whose gradient is locally constant by the fact that $E$ is polyhedral.
		Defining $h(z_\theta^\perp)=\sum_{i=1}^{N(z_\theta^\perp)}h_i(z_\theta^\perp)$ and applying Jensen's inequality to the convex function $y\mapsto\sqrt{1+\|y\|^2}$, one has that
		\begin{align*}
			r^{d-1}Exc(E, x,r)	&\geq \int_{\hat B\setminus \Omega_0}\Bigl[\sqrt{N(z_\theta^\perp)^2+\|\nabla h(z_\theta^\perp)\|^2}-1\Bigr]\dz_\theta^\perp.
		\end{align*}
		Notice that, given a cylinder $Cyl(x,cr,\theta)$ as in the statement of the lemma, setting $\hat B_{cr}=\{z_\theta^\perp:\, \|z_\theta^\perp-x_\theta^\perp\|\leq cr\}$, $\tilde N(z_\theta^\perp):=\per^{\mathrm{1D}}(E_{z_\theta^\perp}, (x_\theta-cr,x_\theta+cr))$,
		$\tilde \Omega_0=\{z_\theta^\perp\in\hat B_{cr}: \,\tilde N(z_\theta^\perp)=0\}$, one has that
		\begin{align}\label{eq:exccyl}
			r^{d-1}Exc(E, x,r))	&\geq \int_{\hat B_{cr}\setminus \tilde \Omega_0}\Bigl[\sqrt{\tilde N(z_\theta^\perp)^2+\|\nabla h(z_\theta^\perp)\|^2}-1\Bigr]\dz_\theta^\perp.
		\end{align}
		
		\textbf{Step 2: Closeness of $\partial E$ to a single  graph inside the cylinder}
		
		We first decompose $\hat B_{cr}$ as follows: $\hat B_{cr}=\tilde\Omega \cup\tilde\Omega_0\cup\tilde\Omega_1\cup\tilde\Omega_2$, where
		$\tilde\Omega_0=\mathrm{int}\{z_\theta^\perp\in\hat B_{cr}: \,\tilde N(z_\theta^\perp)=0\}$, $\tilde\Omega_1=\mathrm{int}\{z_\theta^\perp\in\hat B_{cr}:\, \tilde N(z_\theta^\perp)=1\}$, 		$\tilde\Omega_2=\mathrm{int}\{z_\theta^\perp\in\hat B_{cr}:\, \tilde N(z_\theta^\perp)\geq2\}$. By the fact that  $E$ is polyhedral, $\mathcal H^{d-1}(\tilde\Omega)=0$ and $\tilde\Omega_0,\tilde\Omega_1,\tilde\Omega_2$ are sets of finite perimeter in $\hat B_{cr}$.
		
		Moreover, let us  decompose $\tilde \Omega _1$ as follows:
		\begin{align}
			\tilde\Omega_1&=\tilde\Omega_1^+\cup\tilde \Omega_1^-,\notag\\
			\tilde \Omega_1^+&=\{z_\theta^\perp\in\tilde\Omega_1:\,|E_{z_\theta^\perp}\cap [x_\theta-cr, h(z_\theta^\perp)]|=|E_{z_\theta^\perp}\cap [x_\theta-cr,x_\theta+cr ]|\},\notag\\
			\tilde \Omega_1^-&=\{z_\theta^\perp\in\tilde\Omega_1:\,|E_{z_\theta^\perp}\cap [h(z_\theta^\perp), x_\theta+cr]|=|E_{z_\theta^\perp}\cap [x_\theta-cr,x_\theta+cr ]|\}.\label{eq:omega1+}
		\end{align}
		The goal of this step is to show the following:\,there exist $\tilde C>0$, $R_4>0$ such that for all $E\in\mathscr{F}$, for all $x\in\partial ^*E$, either
		\begin{equation}\label{eq:bom+}
			|\hat B_{cr}\setminus \tilde \Omega_1^+|\leq\tilde  Cr^{d-1}Exc(E,x,r)^{1/2},			\quad\forall\,0<r< R_4
		\end{equation}
		or
		\begin{equation}\label{eq:bom-}
			|\hat B_{cr}\setminus \tilde \Omega_1^-|\leq\tilde  Cr^{d-1}Exc(E,x,r)^{1/2},			\quad\forall\,0<r<R_4	.
		\end{equation}

		The estimates~\eqref{eq:bom+} or~\eqref{eq:bom-} will be an immediate consequence of the following facts: there exists $\tilde C>0,  0<R_4\ll1$ s.t. $\forall\,x\in\partial E$, $0<r<R_4$
		\begin{align}
			|\tilde \Omega_2|&\lesssim	\tilde Cr^{d-1}Exc(E,x,r)\label{eq:omega2}\\
			|\tilde \Omega_0|&\lesssim \tilde Cr^{d-1}Exc(E, x,r)^{1/2}\label{eq:omega0}\\
			\min\{|\tilde \Omega_1^+|,|\tilde\Omega_1^-|\}&\lesssim\tilde C r^{d-1}(Exc(E, x,r))^{1/2}\label{eq:omega+}.
		\end{align}
		
		\textbf{Proof of~\eqref{eq:omega2}: }
		
		The  upper bound~\eqref{eq:omega2} holds by~\eqref{eq:exccyl} and the bound~\eqref{eq:exc1}.
		
		\textbf{Proof of~\eqref{eq:omega0}: }
		
		We now aim at proving~\eqref{eq:omega0}.
		
		Let
		\begin{align*}
			\tilde\Omega_0&=\tilde\Omega_0^+\cup\tilde\Omega_0^-,\notag\\
			\tilde\Omega_0^+&=\{z_\theta^\perp\in \tilde \Omega_0:\,|E_{z_\theta^\perp}\cap[x_\theta-cr,x_\theta+cr]|=2cr\},\notag\\
			\tilde\Omega_0^-&=\{z_\theta^\perp\in \tilde \Omega_0:\,|E_{z_\theta^\perp}\cap[x_\theta-cr,x_\theta+cr]|=0\},
		\end{align*}
		and assume \withoutLoss
		that $|\tilde \Omega_0^+|\geq |\tilde \Omega_0^-|$.
		
		For every $t\in[x_\theta-cr,x_\theta+cr]$, let $E_t=E\cap Cyl(x, cr, \theta)\cap \{z_\theta=t\}$. Then, by~\eqref{eq:exc1},~\eqref{eq:pthetaperp} and disintegration in direction $\theta$, one has that
		\begin{align*}
			r^{d-1+\alpha/2}\gtrsim r^{d-1}Exc(E,x,r)^{1/2}&\gtrsim\per_\theta^\perp(E, Cyl(x,cr,\theta))\notag\\
			&=\int_{x_\theta-cr}^{x_\theta+cr}\per(E_t, \hat B_{cr}+t\theta)\dt.
		\end{align*}
		In particular, letting
		\begin{equation*}
			A:=\{t\in[x_\theta-cr,x_\theta+cr]:\,\per (E_t, \hat B_{cr}+t\theta)\leq C r^{d-2} Exc(E, x,r)^{1/2}\},
		\end{equation*}
		one has that
		\begin{equation}\label{eq:a>c}
			|A|\gtrsim cr\Bigl(1-\frac{1}{C}\Bigr).
		\end{equation}
		
		On the other hand, notice that by definition
		\begin{equation}\label{eq:o0et}
			|\tilde \Omega_0^+|\leq |E_t|,\quad \text{ for \ae $t\in [x_\theta-cr,x_\theta+cr]$}.
		\end{equation}
		Since, by the isoperimetric inequality,
		\begin{align*}
			\min\bigl\{|E_t|, |(\hat B_{cr}+t\theta)\setminus E_t|\bigr\}^{\frac{d-2}{d-1}}&\lesssim	\per(E_t, \hat B_{cr}+t\theta), \quad \text{ for \ae $t\in[x_\theta-cr, x_\theta+cr]$, $d\geq 3$,}\notag\\
			\min\bigl\{|E_t|, |(\hat B_{cr}+t\theta)\setminus E_t|\bigr\}&\lesssim r	\per(E_t, \hat B_{cr}+t\theta), \quad \text{ for \ae $t\in[x_\theta-cr, x_\theta+cr]$, $d=2$,}
		\end{align*}
		in order to show~\eqref{eq:omega0} it is sufficient to show that
		\begin{equation}\label{eq:a>0}
			\bigl|\bigl\{t\in\,A: \min\bigl\{|E_t|, |(\hat B_{cr}+t\theta)\setminus E_t|\bigr\}=|E_t| \bigr\}\bigr|>0.
		\end{equation}
		Indeed, if~\eqref{eq:a>0} holds, choosing $t\in A$ one has that
		\begin{align*}
			|\tilde\Omega_0|&\leq2|\tilde\Omega_0^+|\leq 2|E_t|\lesssim\per(E_t, \hat B_{cr}+t\theta)^{\frac{d-1}{d-2}}\lesssim r^{d-1}Exc(E,x,r)^{1/2}\quad\text{ if $d\geq3$,}\notag\\
			|\tilde\Omega_0|&\leq2|\tilde\Omega_0^+|\leq 2|E_t|\lesssim r^{d-1}\per(E_t, \hat B_{cr}+t\theta)\lesssim r^{d-1}Exc(E,x,r)^{1/2}\quad\text{ if $d=2$.}
		\end{align*}
		
		Let us assume then that the converse to~\eqref{eq:a>0} holds. Then, using also~\eqref{eq:a>c}, one has that if $d\geq3$
		\begin{align*}
			\frac{1}{|A|}\int_A|(\hat B_{cr}+t\theta)\setminus E_t|^{\frac{d-2}{d-1}}\dt&\leq \frac{1}{|A|}\int_A\per (E_t, \hat B_{cr}+t\theta)\dt\notag\\
			&\lesssim \frac{r^{d-1}Exc(E,x,r)^{1/2}}{|A|}\lesssim r^{d-2}Exc(E,x,r)^{1/2}.
		\end{align*}
		As a consequence,
		\begin{align}
			|Cyl(x,cr,\theta)\setminus E|&\leq (cr)^{d-1}|[x_{\theta}-cr, x_\theta+cr]\setminus A|+\int_A|(\hat B_{cr}+t\theta)\setminus E_t|\dt\notag\\
			&\lesssim\frac{1}{C}r^d+\int _A\per(E_t, \hat B_{cr}+t\theta)^{\frac{d-1}{d-2}}\dt\notag\\
			&\lesssim\frac{1}{C}r^d+\int _A r^{d-1+\bar\alpha}\notag\\
			&\lesssim \Bigl[\frac{1}{C}+CExc(E,x,r)^{1/2}\Bigr]r^{d}.\label{eq:d3}
		\end{align}
		If $d=2$, one analogously obtains
		\begin{align}
			|Cyl(x,cr,\theta)\setminus E|&\lesssim \frac{1}{C}r^d+Cr^{d-1}Exc(E,x,r)^{1/2}. \label{eq:d2}
		\end{align}
		Both~\eqref{eq:d3} and~\eqref{eq:d2}, by choosing $C$ sufficiently large  and then $r$ sufficiently small, contradict the density estimate~\eqref{eq:vol1}. Thus,~\eqref{eq:omega0} is proved.
		
		\textbf{Proof of~\eqref{eq:omega+}: }
		
		Assume by contradiction that for some $x\in\partial ^*E$, $r$ sufficiently small,
		\begin{equation}\label{eq:omega+false}
			\min\bigl\{|\tilde \Omega_1^+|,|\tilde\Omega_1^-|\bigr\}\geq Cr^{d-1}Exc(E,x,r)^{1/2},
		\end{equation}
		where $C$ is sufficiently large, to be fixed later. \WithoutLoss, we can assume that
		\begin{equation}
			\label{eq:-false}
			\min\bigl\{|\tilde \Omega_1^+|,|\tilde\Omega_1^-|\bigr\}=|\tilde\Omega_1^-|.
		\end{equation}
		
		Fix $0<\varepsilon\ll1$ and decompose further $\tilde \Omega_1^\pm$ as follows
		\begin{align*}
			\tilde\Omega_1^+&=\tilde\Omega_{1,\varepsilon}^+\cup\tilde\Omega_{1,1-\varepsilon}^+\cup\tilde\Omega_{1,(\varepsilon,1-\varepsilon)}^+,\notag\\
			\tilde\Omega_1^-&=\tilde\Omega_{1,\varepsilon}^-\cup\tilde\Omega_{1,1-\varepsilon}^-\cup\tilde\Omega_{1,(\varepsilon,1-\varepsilon)}^-,\notag\\
			\tilde\Omega_{1,\varepsilon}^\pm&:=\{z_\theta^\perp\in\tilde\Omega_1^\pm:\,|h(z_\theta^\perp)-(x_\theta-cr)|\leq\varepsilon cr\},\notag\\
			\tilde\Omega_{1,1-\varepsilon}^\pm&:=\{z_\theta^\perp\in\tilde\Omega_1^\pm:\,|h(z_\theta^\perp)-(x_\theta+cr)|\leq\varepsilon cr\},\notag\\
			\tilde\Omega_{1,\varepsilon}^\pm&:=\{z_\theta^\perp\in\tilde\Omega_1^\pm:\,\mathrm{dist}(h(z_\theta^\perp),\{x_\theta\pm cr\})\geq\varepsilon cr\}.
		\end{align*}
		One has that, by~\eqref{eq:omega+false} and~\eqref{eq:-false},
		\begin{equation}\label{eq:max1}
			\max\bigl\{|\tilde\Omega_{1,\varepsilon}^-|,|\tilde\Omega_{1,1-\varepsilon}^-|,|\tilde\Omega_{1,(\varepsilon,1-\varepsilon)}^-|\bigr\}\geq \frac{C}{3}r^{d-1}Exc(E,x,r)^{1/2}.
		\end{equation}
		On the other hand, since by~\eqref{eq:omega2} and~\eqref{eq:omega0} $|\tilde\Omega_1|\gtrsim r^{d-1}$,
		\begin{equation}\label{eq:max2}
			\max\bigl\{|\tilde\Omega_{1,\varepsilon}^+|,|\tilde\Omega_{1,1-\varepsilon}^+|,|\tilde\Omega_{1,(\varepsilon,1-\varepsilon)}^+|\bigr\}\geq c r^{d-1}
		\end{equation}
		for some geometric constant $c$ independent of $x,r$.
		
		Notice that
		\begin{align}
			&z_\theta^\perp+[x_\theta-cr+\varepsilon cr, x_\theta+cr]\theta\subset \R^d\setminus E,&\text{ for $z_\theta^\perp\in \tilde\Omega_{1,\varepsilon}^+$},\label{eq:aeps}\\
			&z_\theta^\perp+[x_\theta-cr, x_\theta+cr-\varepsilon cr]\theta\subset \R^d\setminus E,&\text{ for $z_\theta^\perp\in\tilde\Omega_{1,1-\varepsilon}^-$},\label{eq:aeps2}\\
			&z_\theta^\perp+[x_\theta-cr, x_\theta+cr-\varepsilon cr]\theta\subset E,\quad &\text{ for $z_\theta^\perp\in \tilde\Omega_{1,1-\varepsilon}^+$},\label{eq:beps}\\
			&z_\theta^\perp+[x_\theta-cr+\varepsilon cr, x_\theta+cr]\theta\subset E,\quad & \text{ for $z_\theta^\perp\in \tilde\Omega_{1,\varepsilon}^-$},\label{eq:beps2}\\
			&	z_\theta^\perp+[x_\theta-cr, x_\theta-cr+\varepsilon cr]\theta\subset E,\quad &\text{ for $z_\theta^\perp\in \tilde\Omega_{1,(\varepsilon,1-\varepsilon)}^+$},\label{eq:ceps+}\\
			&z_\theta^\perp+[x_\theta-cr, x_\theta-cr+\varepsilon cr]\theta\subset \R^d\setminus E,\quad& \text{ for $z_\theta^\perp\in \tilde\Omega_{1,(\varepsilon,1-\varepsilon)}^-$},\label{eq:ceps-l}\\
			&	z_\theta^\perp+[x_\theta+cr-\varepsilon cr, x_\theta+cr]\theta\subset \R^d\setminus E,\quad &\text{ for $z_\theta^\perp\in \tilde\Omega_{1,(\varepsilon,1-\varepsilon)}^+$},\label{eq:ceps+u}\\
			&	z_\theta^\perp+[x_\theta+cr-\varepsilon cr, x_\theta+cr]\theta\subset E,\quad &\text{ for $z_\theta^\perp\in \tilde\Omega_{1,(\varepsilon,1-\varepsilon)}^-$},\label{eq:ceps-}
		\end{align}
		For simplicity of notation, let \[\omega^-:=\max\{|\tilde\Omega_{1,\varepsilon}^-|,|\tilde\Omega_{1,1-\varepsilon}^-|,|\tilde\Omega_{1,(\varepsilon,1-\varepsilon)}^-|\},\quad \omega^+:=\max\{|\tilde\Omega_{1,\varepsilon}^+|,|\tilde\Omega_{1,1-\varepsilon}^+|,|\tilde\Omega_{1,(\varepsilon,1-\varepsilon)}^+|\}.
		\]
		
		Now we want to show that if $C$ is sufficiently large (depending only on the uniform  constants appearing in the density, perimeter and excess bounds in the statement of the lemma), we find a contradiction in the following cases:
		\begin{align}
			&	\omega^-=|\tilde\Omega_{1,\varepsilon}^-|,\quad 	\omega^+=|\tilde\Omega_{1,(\varepsilon,1-\varepsilon)}^+|,\label{eq:model1}\\
			&	\omega^-=|\tilde\Omega_{1,\varepsilon}^-|,\quad 	\omega^+=|\tilde\Omega_{1,\varepsilon}^+|,\label{eq:model0}\\
			&	\omega^-=|\tilde\Omega_{1,(\varepsilon,1-\varepsilon)}^-|,\quad 	\omega^+=|\tilde\Omega_{1,\varepsilon}^+|,\\
			&		\omega^-=|\tilde\Omega_{1,(\varepsilon,1-\varepsilon)}^-|,\quad 	\omega^+=|\tilde\Omega_{1,(\varepsilon,1-\varepsilon)}^+|,\\
			&	\omega^-=|\tilde\Omega_{1,(\varepsilon,1-\varepsilon)}^-|,\quad 	\omega^+=|\tilde\Omega_{1,1-\varepsilon}^+|,\\
			&	\omega^-=|\tilde\Omega_{1,1-\varepsilon}^-|,\quad 	\omega^+=|\tilde\Omega_{1,(\varepsilon,1-\varepsilon)}^+|,\label{eq:model6}\\
			&	\omega^-=|\tilde\Omega_{1,1-\varepsilon}^-|,\quad 	\omega^+=|\tilde\Omega_{1,1-\varepsilon}^+|.\label{eq:model7}
		\end{align}
		Since the proof is analogous in all the above cases, we consider only one of them, \eg~\eqref{eq:model1}.
		
		If~\eqref{eq:model1} holds, then recalling~\eqref{eq:beps2} and~\eqref{eq:ceps-}, for $t\in [x_\theta+cr-\varepsilon cr, x_\theta+cr]$ one has that $|E_t|\geq |\tilde\Omega_{1,\varepsilon}^-|=\omega^-$ and $|(\hat B_{cr}+t\theta)\setminus E_t| \geq |\tilde\Omega_{1,(\varepsilon,1-\varepsilon)}^+|=\omega^+$. Thus, by~\eqref{eq:max1} and~\eqref{eq:max2},
		\begin{align*}
			\Bigl(\frac{C}{3}\Bigr)\varepsilon c r^{d-1}Exc(E,x,r)^{1/2}\lesssim	&\int_{[x_\theta+cr-\varepsilon cr, x_\theta+cr]}\min\{|E_t|,|(\hat B_{cr}+t\theta)\setminus E_t| \}^{\frac{d-2}{d-1}}\dt\lesssim \notag\\
			&\lesssim	\int_{[x_\theta+cr-\varepsilon cr, x_\theta+cr]}\per(E_t,\hat B_{cr}+t\theta )\dt\notag\\
			&\lesssim \per_\theta^\perp (E, Cyl(x,cr,\theta))\notag\\
			&\lesssim\bar C r^{d-1}Exc(E,x,r)^{1/2},
		\end{align*}
		that for $\varepsilon$ fixed and  $C$ sufficiently large leads to a contradiction.
		
		The only two cases which are not included in~\eqref{eq:model0}-\eqref{eq:model7} are the following
		\begin{align}
			&	\omega^-=|\tilde\Omega_{1,\varepsilon}^-|,\quad 	\omega^+=|\tilde\Omega_{1,1-\varepsilon}^+|,\label{eq:caso1}\\
			&	\omega^-=|\tilde\Omega_{1,1-\varepsilon}^-|,\quad 	\omega^+=|\tilde\Omega_{1,\varepsilon}^+|\label{eq:caso2}
		\end{align}
		together with respectively
		\begin{align}
			\max\bigl\{|\tilde \Omega^-_{1,1-\varepsilon}|,|\tilde \Omega^-_{1,(\varepsilon,1-\varepsilon)}|,|\tilde \Omega^+_{1,\varepsilon}|, |\tilde \Omega^+_{1,(\varepsilon,1-\varepsilon)}|\bigr\}&\leq \frac{C}{3}r^{d-1}Exc(E,x,r)^{1/2}\label{eq:max1omega}\\
			\max\bigl\{|\tilde \Omega^-_{1,\varepsilon}|,|\tilde \Omega^-_{1,(\varepsilon,1-\varepsilon)}|,|\tilde \Omega^+_{1,1-\varepsilon}|, |\tilde \Omega^+_{1,(\varepsilon,1-\varepsilon)}|\bigr\}&\leq \frac{C}{3}r^{d-1}Exc(E,x,r)^{1/2}\label{eq:max2omega}.
		\end{align}
		
		Notice that, by~\eqref{eq:aeps}-\eqref{eq:beps2} and by the density estimate~\eqref{eq:vol1},
		\begin{align}
			|\tilde\Omega_{1,\varepsilon}^-|+|\tilde\Omega_{1,1-\varepsilon}^+|&\lesssim \frac{| E\cap Cyl(x,cr,\theta)|}{(1-\varepsilon)cr}\leq  (cr)^{d-1}\frac{(1-\bar C_1)}{1-\varepsilon},\label{eq:stob1}\\
			|\tilde\Omega_{1,1-\varepsilon}^-|+|\tilde\Omega_{1,\varepsilon}^+|&\lesssim \frac{| Cyl(x,cr,\theta)\setminus E|}{(1-\varepsilon)cr}\leq (cr)^{d-1}\frac{(1-\bar C_1)}{1-\varepsilon}.\label{eq:stob2}
		\end{align}
		Choose $\varepsilon>0$ s.t. $(1-\bar C_1)(1-\varepsilon)<1-2\delta$ for some small $\delta>0$. Choosing then $r>0$ sufficiently small, due to~\eqref{eq:max1omega},~\eqref{eq:max2omega}, the bounds~\eqref{eq:exc1},~\eqref{eq:omega2} and~\eqref{eq:omega0} it holds
		\begin{align}
			|\tilde\Omega_{1,\varepsilon}^-|+|\tilde\Omega_{1,1-\varepsilon}^+|&\geq  (cr)^{d-1}(1-\delta)>(cr)^{d-1}(1-2\delta)\label{eq:stoc1}\\
			|\tilde\Omega_{1,1-\varepsilon}^-|+|\tilde\Omega_{1,\varepsilon}^+|&\geq (cr)^{d-1}(1-\delta)>(cr)^{d-1}(1-2\delta),\label{eq:stoc2}
		\end{align}
		thus contradicting~\eqref{eq:stob1} and~\eqref{eq:stob2}.

		\textbf{Step 3: Lipschitz extension of $u$ on $\hat B_{cr}$.}
		
		By step 1 (assuming that~\eqref{eq:bom+} holds), one has that
		\begin{equation}\label{eq:exc3}
			r^{d-1}Exc(E, x,r)\geq \int_{\tilde\Omega_1^+}\Bigl[\sqrt{1+\|\nabla h(z_\theta^\perp)\|^2}-1\Bigr]dz_\theta^\perp,
		\end{equation}
		where $\tilde \Omega_1^+$ was defined in~\eqref{eq:omega1+}.
		
		The aim of this step is to show that there exists a Lipschitz function $\tilde v$ on $\hat B_{cr}$ with Lipschitz constant independent of $x,r$ such that $|\{z_\theta^\perp\in\tilde\Omega_1^+:\, \tilde v(z_\theta^\perp)\neq h(z_\theta^\perp)\}|\lesssim r^{d-1}Exc(E,x,r)^{1/2}$ and
		
		\begin{align}\label{eq:exclip}
			Exc(E,x,r)^{1/2}\gtrsim \int_{\hat B_{cr}}\Bigl[\sqrt{1+|\nabla \tilde v(z_\theta^\perp)|^2}-1\Bigr]\dz_\theta^\perp.
		\end{align}

		Define first  the BV function
		\begin{equation}\label{eq:v}
			v:\hat B_{cr}\to\R,\quad v(z_\theta^\perp)=|E_{z_\theta^\perp}\cap[x_\theta-cr, x_\theta+cr]|,
		\end{equation}
		and notice that
		\begin{equation*}
			h=v\quad\text{ \ae on }\tilde\Omega_1^+.
		\end{equation*}
		
		Given $\lambda>1$, define now the set
		\begin{align*}
			A_\lambda:=\Biggl\{z_\theta^\perp\in\tilde\Omega_1^+:\,\sup_{\varepsilon>0, B_\varepsilon(y_\theta^\perp)\ni z_\theta^\perp, B_\varepsilon(y_\theta^\perp)\subset \hat B_{cr}}\frac{|Dv|(B_\varepsilon(y_\theta^\perp))}{\varepsilon^{d-1}}\leq \lambda\Biggr\}.
		\end{align*}
		By classical estimates on maximal functions, $A_\lambda\subset \tilde\Omega_1^+$ is a closed set and
		\begin{equation}\label{eq:omegamenoa}
			|\tilde\Omega_1^+\setminus A_\lambda|\leq\frac{|Dv|(\hat B_{cr})}{\lambda}
		\end{equation}
		\textbf{Clam 1: } It holds
		\begin{equation}\label{eq:dvper}
			|Dv|(\hat B_{cr})\leq \per_\theta^\perp(E, Cyl(x,cr,\theta)),
		\end{equation}
		and thus by~\eqref{eq:omegamenoa} and~\eqref{eq:pthetaperp}
		\begin{equation}\label{eq:omegaa}
			|\tilde\Omega_1^+\setminus A_\lambda|\leq \frac{ r^{d-1}(Exc(E, x,r))^{1/2}}{\lambda}\lesssim\frac{1}{\lambda}r^{d-1+\alpha/2}.
		\end{equation}
		Let us now prove Clam 1. Given a monotone increasing sequence of functions $\phi_n\in C^1([x_\theta-cr, x_\theta+cr];[0,1])$, $\phi_n\uparrow 1$, one has that
		\begin{align*}
			|Dv|(\hat B_{cr})&=\sup_{\psi\in C^1_c(\hat B_{cr}, \R^d), |\psi|\leq1}\Biggl|\int_{\hat B_{cr}}\psi (z_\theta^\perp)\cdot\mathrm{d}Dv(z_\theta^\perp) \Biggr|\notag\\
			&=\sup_{\psi\in C^1_c(\hat B_{cr}, \R^d), |\psi|\leq1}\Biggl|\int_{\hat B_{cr}}\div_{z_\theta^\perp}\psi(z_\theta^\perp)v(z_\theta^\perp)\dz_\theta^\perp\Biggr|\notag\\
			&=\sup_{\psi\in C^1_c(\hat B_{cr}, \R^d), |\psi|\leq1}\Biggl|\int_{\hat B_{cr}}\div_{z_\theta^\perp}\psi(z_\theta^\perp)\Biggl(\sup_n\int\phi_n(z_\theta)\chi_{E_{z_\theta^\perp}}(z_\theta)\dz_\theta\Biggr)\dz_\theta^\perp\Biggr|\notag\\
			&\leq\sup_{\psi\in C^1_c(\hat B_{cr}, \R^d), |\psi|\leq1}\lim_n\Biggl|\int_{\hat B_{cr}}\int_{[x_\theta-cr, x_\theta+cr]}\div_{z_\theta^\perp}\psi(z_\theta^\perp)\phi_n(z_\theta)\chi_{E_{z_\theta^\perp}}(z_\theta)\dz_\theta\dz_\theta^\perp\Biggr|\notag\\
			&\leq \sup_{\psi\in C^1_c(\hat B_{cr}, \R^d), |\psi|\leq1, \phi\in C^1([x_\theta-cr, x_\theta+cr]), |\phi|\leq 1}\Biggl|\int_{\hat B_{cr}}\int_{[x_\theta-cr, x_\theta+cr]}\div_{z_\theta^\perp}\psi(z_\theta^\perp)\phi(z_\theta)\chi_{E}(z)\dz_\theta\dz_\theta^\perp\Biggr|\notag\\
			&= \sup_{\psi\in C^1_c(\hat B_{cr}, \R^d), |\psi|\leq1, \phi\in C^1_c([x_\theta-cr, x_\theta+cr]), |\phi|\leq 1}\Biggl|\int_{\partial^*E\cap Cyl(x,cr,\theta)}(\psi\phi)(y)\cdot\nu_\theta^\perp(y)\d\mathcal H^{d-1}(y)\Biggr|\notag\\
			&\leq \sup_{T\in C^1_c(Cyl(x,cr,\theta);\R^d), |T|\leq1}\Biggl|\int_{\partial^*E\cap Cyl(x,cr,\theta)}T\cdot\nu_\theta^\perp(y)\d\mathcal H^{d-1}(y)\Biggr|\notag\\
			&=\int_{\partial^*E\cap Cyl(x,cr,\theta)}|\nu_\theta^\perp(y)|\d\mathcal H^{d-1}(y)\notag\\
			&=\per_\theta^\perp(E, Cyl(x,cr,\theta)).
		\end{align*}
		
		\textbf{Claim 2: }The function  $v$ has a representative $\tilde v$ which is Lipschitz with constant of order $\lambda$ on $A_\lambda$. To this aim, for $r_0>0$ and $k\in\N$, we define the sets
		\begin{equation*}
			\hat B_{cr,k}:=\{z_\theta^\perp\in\hat B_{cr}:\,\mathrm{dist}(z_\theta^\perp,\partial\hat B_{cr})\geq r_0 2^{-k}\}, \quad B_k(z_\theta^\perp):=B_{r_0 2^{-k}}(z_\theta^\perp),
		\end{equation*}
		and the functions
		\begin{equation*}
			v_k:\hat B_{cr,k}\to\R, \quad v_k(z_\theta^\perp)=\fint_{B_{k}(z_\theta^\perp)}v(y_\theta^\perp)\dy_\theta^\perp.
		\end{equation*}
		We first claim that $\{v_k\chi_{A_\lambda}\}$ is a Cauchy sequence with respect to uniform convergence on open sets $W\subset\subset\hat B_{cr}$.
		Given $W\subset\subset\hat B_{cr}$, and $k\in\N$ such that $v_m$ is well-defined on $W$ for all $m\geq k$, for all $z_\theta^\perp\in W\cap A_\lambda$, $m\geq k$ one has that
		\begin{align}
			|v_m(z_\theta^\perp)-v_{m+1}(z_\theta^\perp)|&=\Biggl|\fint_{B_m(z_\theta^\perp)}v(y_\theta^\perp)\dy_\theta^\perp-\fint_{B_{m+1}(z_\theta^\perp)}v(y_\theta^\perp)\dy_\theta^\perp\Biggr|\notag\\
			&\leq\frac{1}{r_{0} 2^{-(m+1)}}\int_{B_{m+1}(z_\theta^\perp)}\Biggl|v(y_\theta^\perp)-\fint_{B_m(z_\theta^\perp)}v(w_\theta^\perp)\dw_\theta^\perp\Biggr|\dy_\theta^\perp\notag\\
			&\leq \frac{r_0 2^{-m}}{r_{0} 2^{-(m+1)}}\fint_{B_{m}(z_\theta^\perp)}\Biggl|v(y_\theta^\perp)-\fint_{B_m(z_\theta^\perp)}v(w_\theta^\perp)\dw_\theta^\perp\Biggr|\dy_\theta^\perp\notag\\
			&\leq 2\frac{|Dv|(B_m(z_\theta^\perp))}{(r_0 2^{-m})^{d-2}},\label{eq:245}
		\end{align}
		where in the last equality we used Poincar\'e inequality in BV\@.
		
		Using now the fact that $z_\theta^\perp\in A_\lambda$,
		\begin{align}\label{eq:vm}
			|v_m(z_\theta^\perp)-v_{m+1}(z_\theta^\perp)|&\leq 2\frac{|Dv|(B_m(z_\theta^\perp))}{(r_0 2^{-m})^{d-2}}\leq 2\lambda r_0 2^{-m},
		\end{align}
		thus proving that $\{v_m\}$ is Cauchy in $C^0(W\cap A_\lambda)$.
		
		Define then in $\hat B_{cr}$ the function (representative of $v$)
		\begin{equation*}
			\tilde v(z_\theta^\perp)=\lim_k v_k.
		\end{equation*}
		We now show that $\tilde v$ is Lipschitz on $A_\lambda$  with Lipschitz constant of order $\lambda$.
		
		Let $y_\theta^\perp,z_\theta^\perp\in A_\lambda$, $|y_\theta^\perp-z_\theta^\perp|\in [r_0 2^{-k-1}, r_0 2^{-k}]$ for some $r_0>0$, $k\in\N$ and such that $B_{k-1}(y_\theta^\perp)\subset \hat B_{cr}$. One has that
		\begin{align}
			|\tilde v(y_\theta^\perp)-\tilde v(z_\theta^\perp)|&\leq\Biggl|\tilde v(y_\theta^\perp)-\fint_{B_{k-1}(y_\theta^\perp)}v(w_\theta^\perp)\dw_\theta^\perp\Biggr|\notag\\
			&+\Biggl|\tilde v(z_\theta^\perp)-\fint_{B_{k}(z_\theta^\perp)}v(w_\theta^\perp)\dw_\theta^\perp\Biggr|\notag\\
			&+\Biggl|\fint_{B_{k-1}(y_\theta^\perp)}v(w_\theta^\perp)\dw_\theta^\perp-\fint_{B_{k}(z_\theta^\perp)}v(w_\theta^\perp)\dw_\theta^\perp\Biggr|.\label{eq:lipmedia}
		\end{align}
		By the estimate~\eqref{eq:vm}, one has that
		\begin{align*}
			\Biggl|\tilde v(y_\theta^\perp)-\fint_{B_{k-1}(y_\theta^\perp)}v(w_\theta^\perp)\dw_\theta^\perp\Biggr|&=\Biggl|\tilde v(y_\theta^\perp)-v_{k-1}(y_\theta^\perp)\Biggr|\leq\lambda r_{0} 2^{-k+1}\leq 4\lambda|y_\theta^\perp-z_\theta^\perp|
		\end{align*}
		and an analogous estimate holds for the second term in~\eqref{eq:lipmedia}.
		The last term in~\eqref{eq:lipmedia} can be estimated as follows:
		\begin{align}\Biggl|\fint_{B_{k-1}(y_\theta^\perp)}v(w_\theta^\perp)\dw_\theta^\perp-\fint_{B_{k}(z_\theta^\perp)}v(w_\theta^\perp)\dw_\theta^\perp\Biggr|&\leq2\fint_{B_{k-1}(y_\theta^\perp)}\Biggl|v(w_\theta^\perp)-\fint_{B_{k-1}(y_\theta^\perp)}v(t_\theta^\perp)\dt_\theta^\perp\Biggr|\dw_\theta^\perp\notag\\
			&\leq 2\frac{|Dv|(B_{k-1}(y_\theta^\perp))}{(r_o2^{-k+1})^{d-2}}\notag\\
			&\leq 2\lambda r_0 2^{-k+1}\notag\\
			&\leq 8\lambda|y_\theta^\perp-z_\theta^\perp|,\label{eq:250}
		\end{align}
		where in the first inequality we reasoned as in~\eqref{eq:245}.
		Thus, putting together~\eqref{eq:245},~\eqref{eq:lipmedia} and~\eqref{eq:250},  $\tilde v$ is Lipschitz on $A_\lambda$ with Lipschitz constant smaller than $16\lambda$.
		
		By Whitney's Theorem we can now extend $\tilde v$ to a $16\lambda$-Lipschitz function $\tilde v$ of $\hat B_{cr}$.
		
		Finally, recalling the bounds~\eqref{eq:exc3},~\eqref{eq:omegaa},~\eqref{eq:omega2},~\eqref{eq:omega0},~\eqref{eq:exc1} and the fact that $h=\tilde v$ \ae on $A_\lambda$, the estimate~\eqref{eq:exclip} holds.  Indeed,
		
		\begin{align*}
			r^{d-1}Exc(E,x,r)&\gtrsim\int_{\tilde\Omega_1^+}\Bigl[\sqrt{1+\|\nabla h(z_\theta^\perp)\|^2}-1\Bigr]\dz_\theta^\perp\notag\\
			&\gtrsim \int_{A_\lambda}\Bigl[\sqrt{1+\|\nabla \tilde v(z_\theta^\perp)\|^2}-1\Bigr]\dz_\theta^\perp\notag\\
			&\gtrsim \int_{\hat B_{cr}}\Bigl[\sqrt{1+\|\nabla \tilde v(z_\theta^\perp)\|^2}-1\Bigr]\dz_\theta^\perp-\sqrt{1+(16\lambda)^2}(|\tilde \Omega_0|+|\tilde\Omega_2|+|\tilde \Omega_1^-|+ |\tilde \Omega_1^+\setminus A_\lambda|)\notag\\
			&\gtrsim \int_{\hat B_{cr}}\Bigl[\sqrt{1+\|\nabla \tilde v(z_\theta^\perp)\|^2}-1\Bigr]\dz_\theta^\perp-C\lambda r^{d-1}Exc(E,x,r)^{1/2}. \notag
		\end{align*}
		
		\textbf{Step 4: Conclusion}
		Applying to~\eqref{eq:exclip} Jensen's and Poincar\'e inequality, is $r$ is sufficiently small so that $Exc(E,x,r)\ll1$, one has that
		\begin{align*}
			r^{d-1}&Exc(E, x,r)^{1/2}\gtrsim\int_{\hat B_{cr}}\Bigl[\sqrt{1+\|\nabla \tilde v(z_\theta^\perp)\|^2}-1\Bigr]\dz_\theta^\perp\notag\\
			&\gtrsim r^{d-1}\Biggl[\sqrt{1+\Biggl(\fint_{\hat B_{cr}}\|\nabla \tilde v(w_\theta^\perp)\|\dw_\theta^\perp\Biggr)^2}-1\Biggr]\notag\\
			&\gtrsim r^{d-1}\min\Biggl\{\fint_{\hat B_{cr}}\|\nabla \tilde v(w_\theta^\perp)\|\dw_\theta^\perp, \Biggl(\fint_{\hat B_{cr}}\|\nabla \tilde v(w_\theta^\perp)\|\dw_\theta^\perp\Biggr)^2\Biggr \}\notag\\
			&\gtrsim r^{d-1}\min\Biggl\{\frac1r\fint_{\hat B_{cr}}\Biggl|\tilde v(y_\theta^\perp)-\fint_{\hat B_{cr}}\tilde v(w_\theta^\perp)\dw_\theta^\perp\Biggr|\dy_\theta^\perp, \frac{1}{r^2}\Biggl(\fint_{\hat B_{cr}}\Biggl|\tilde v(y_\theta^\perp)-\fint_{\hat B_{cr}}\tilde v(w_\theta^\perp)\dw_\theta^\perp\Biggr|\dy_\theta^\perp\Biggr)^2\Biggr\}.
		\end{align*}
		In particular, by the bound~\eqref{eq:exc1}, if $0<r<\bar R\ll1$ then
		\begin{equation*}
			Exc(E,x,r)^{1/2}\gtrsim \frac{1}{r^2}\Biggl(\fint_{\hat B_{cr}}\Biggl|\tilde v(y_\theta^\perp)-\fint_{\hat B_{cr}}\tilde v(w_\theta^\perp)\dw_\theta^\perp\Biggr|\dy_\theta^\perp\Biggr)^2.
		\end{equation*}
		Now observe that, since $|\hat B_{cr}\setminus A_\lambda|\lesssim r^{d-1}Exc(E,x,r)^{1/2}$ and since $|\tilde v|\lesssim r$, then
		\begin{align*}
			\frac{1}{r^2|\hat B_{cr}|^2}\Biggl(\int_{\hat B_{cr}\setminus A_\lambda }\Biggl|\tilde v(y_\theta^\perp)-\fint_{\hat B_{cr}}\tilde v(w_\theta^\perp)\dw_\theta^\perp\Biggr|\dy_\theta^\perp\Biggr)^2\lesssim Exc(E, x,r)\lesssim r^\alpha.
		\end{align*}
		On the other hand, for \ae $y_\theta^\perp\in A_\lambda$, $\tilde v(y_\theta^\perp)=v(y_\theta^\perp)=|E_{y_\theta^\perp}\cap [x_\theta-cr, x_\theta+cr]|$. Thus, defining $\tilde h=\fint_{\hat B_{cr}}\tilde v(w_\theta^\perp)\dw_\theta^\perp$ and  $\tilde H$ as the halfspace $\tilde H= \{z:\,z_\theta\leq \tilde h\}$, it holds
		\begin{align*}
			\frac{1}{r^2|\hat B_{cr}|^2}&\Biggl(\int_{A_\lambda }\Biggl|\tilde v(y_\theta^\perp)-\fint_{\hat B_{cr}}\tilde v(w_\theta^\perp)\dw_\theta^\perp\Biggr|\dy_\theta^\perp\Biggr)^2\sim\notag\\
			&\sim 	\frac{1}{|\hat B_{cr}|^2}	\Biggl(\int_{A_\lambda }\Biggl|\fint_{[x_\theta-cr, x_\theta+cr]}\chi_{E_{y_\theta^\perp}}(y_\theta)-\chi_{\tilde H}(y_\theta)\dy_\theta\Biggr|\dy_\theta^\perp\Biggr)^2.
		\end{align*}
		Now we use the fact that $A_\lambda\subset \tilde \Omega_1^+$ and thus for all $y_\theta^\perp\in A_\lambda$
		\begin{equation*}
			\Biggl|\fint_{[x_\theta-cr, x_\theta+cr]}\chi_{E_{y_\theta^\perp}}(y_\theta)-\chi_{\tilde H}(y_\theta)\dy_\theta\Biggr|=\fint_{[x_\theta-cr, x_\theta+cr]}\Biggl|\chi_{E_{y_\theta^\perp}}(y_\theta)-\chi_{\tilde H}(y_\theta)\Biggr|\dy_\theta,
		\end{equation*}
		thus getting
		\begin{align*}
			Exc(E, x,r)^{1/2}\gtrsim \Biggl(\fint_{Cyl(x, cr, \theta)}|\chi_E(z)-\chi_{\tilde H}(z)|\dz\Biggr)^2
		\end{align*}
		as desired.
		
		As a consequence of the above proposition, one can control the uniform $L^1$ distance of $E$ from the halfspace orthogonal to the measure theoretic exterior normal. More precisely, one has the following

		\begin{corollary}\label{cor:eh}
			Let $\mathscr{F}$ be a family of sets of locally finite perimeter as in Proposition~\ref{lemma:regularity}.
			
			For every $\varepsilon>0$ there exists $R_5(\varepsilon)>0$ such that for all $E\in\mathscr{F}$, for all $x\in\partial^*E$ and for all $0<r<R_5(\varepsilon)$
			\begin{equation}\label{eq:hnucyl}
				\fint_{Cyl(x,r,\nu_E(x))}\Bigl|\chi_E(z)-\chi_{H_{\nu_E(x)}}(z)\Bigr|\dz\leq \varepsilon,
			\end{equation}
			where $Cyl(x,r,\nu_E(x))=\big\{z\in\R^d:\,\|z_{\nu_E(x)}^\perp-x_{\nu_E(x)}^\perp\|<r, |z_{\nu_E(x)}-x_{\nu_E(x)}|<r\big\}$.
		\end{corollary}
		\begin{proof}
			First we prove that there exists $\bar R_5(\varepsilon)>0$ such that for all $E\in\mathscr{F}$, for all $x\in\partial^*E$ and for all $0<r<\bar R_5(\varepsilon)$
			\begin{equation}\label{eq:tildeh1}
				\mathrm{dist}(x,\tilde H)\leq \varepsilon r,
			\end{equation}
			where $\tilde H$ is the affine halfspace of Proposition~\ref{lemma:regularity}.
			Indeed, assume $\mathrm{dist}(x,\tilde H)\geq \varepsilon r$. Then, $B_{\varepsilon r}(x)\subset \R^d\setminus \tilde H$, hence by Proposition~\ref{lemma:regularity} and the uniform bound on the excess~\eqref{eq:fexc},
			\begin{align}|E\cap B_{\varepsilon r}(x)|\leq \|\chi_E-\chi_{\tilde H}\|_{L^1(Cyl(x,cr,\theta))}\lesssim r^{d}Exc(E,x,r)^{1/4}.\label{eq:cor1e}
			\end{align}
			On the other hand, by the volume density estimate~\eqref{eq:vol1}
			\begin{align}
				|E\cap B_{\varepsilon r}(x)|&\geq \bar C_1(\varepsilon r)^d,\label{eq:cor2e}
			\end{align}
			hence by~\eqref{eq:cor1e} and~\eqref{eq:cor2e}
			\begin{equation*}
				\bar C_1\varepsilon^d r^d \lesssim r^{d}Exc(E,x,r)^{1/4},
			\end{equation*}
			that for $r<\bar R_5(\varepsilon)\ll1$  leads to a contradiction.
			
			Let now $\tilde H(x)$ be the affine halfspace with boundary parallel to $\partial\tilde H$ (\ie orthogonal to $\theta(x,r)$) and passing through the point $x$. By~\eqref{eq:l1exc} and~\eqref{eq:tildeh1}, one has that for $r<\bar R_2(\varepsilon)\ll1$
			\begin{equation}\label{eq:hcyl}
				\fint_{Cyl(x,r,\theta(x,r))}\Bigl|\chi_E(z)-\chi_{\tilde H(x)}(z)\Bigr|\dz\leq \varepsilon.
			\end{equation}
			Now recall (see \eg~\cite{Tam}) that for the uniform lower bound on the perimeter~\eqref{eq:per2} and the uniform power law decay of the excess~\eqref{eq:exc1}, for every $\varepsilon>0$ there exists $\tilde R_5(\varepsilon)$ such that for every $E\in\mathscr{F}$, for every $x\in\partial^* E$ and for all $0<r<\tilde R_5(\varepsilon)$ it holds
			\begin{equation*}
				\|\theta(x,r)-\nu_E(x)\|\leq \varepsilon.
			\end{equation*}
			Hence, up to reducing further $\bar R_5(\varepsilon),\tilde R_5(\varepsilon)$ into $0<R_5(\varepsilon)\ll1$, we can substitute $H_{\nu_E(x)}(x)$ to $\tilde H(x)$ and $\nu_E(x)$ to $\theta(x,r)$ in~\eqref{eq:hcyl}, thus getting~\eqref{eq:hnucyl}.
		\end{proof}
		
		The above uniform  control of the $L^1$-distance of the set $E$ to the halfspace determined by the measure theoretic  exterior  normal  gives Lipschitz regularity of $\partial^* E$. More precisely, let now  $0<\ell<1$, $x\in\partial^*E$ and define the cone  $K_\ell(x,\nu_E(x))=\{z\in\R^d:\,\Scal{z-x}{\nu_E(x)}<\ell\}$.  One has the following
		
		\begin{lemma}\label{lemma:lipcone}
			Let $\mathscr{F}$ be a family of sets as in Proposition~\ref{lemma:regularity}.
			
			For every $0<\ell\ll1$, there exists $R_\ell>0$ s.t. $\forall\,E\in\mathscr{F}$, $\forall\,x\in\partial^*E$, $\forall\,0<r<R_\ell$ it holds
			\begin{equation}\label{eq:lemmalip}
				\partial^*  E\cap B_r(x)\subset B_r(x)\cap (x+K_\ell(x,\nu_E(x))).
			\end{equation}
		\end{lemma}
		\begin{proof}
			Let $0<\varepsilon=\varepsilon(\ell)\ll1$ sufficiently small to be fixed later. Let $ R_5(\varepsilon)$ as in Corollary~\ref{cor:eh} and let $R_\ell$ such that $R_\ell(1+\ell)< R_5(\varepsilon)$.
			
			Let now $0<r<R_\ell$ and assume $\exists\,y\in\partial^*E\cap (B_r(x)\setminus (x+K_\ell(x,\nu_E(x))))$. In particular,
			\begin{equation}\label{eq:bsubs}
				B_{\|y-x\|\ell}(y)\subset B_{\|y-x\|(1+\ell)}(x)\cap \R^d\setminus H_{\nu_E(x)}(x).
			\end{equation}
			Applying Corollary~\ref{cor:eh} on $B_{\|y-x\|\ell}(y)$, since $\|y-x\|\ell\leq R_\ell \ell<R_5(\varepsilon)$,
			\begin{equation}\label{eq:conec1}
				|E\cap 		B_{\|y-x\|\ell}(y)|\geq\Biggl(\frac12-\varepsilon\Biggr)(\|y-x\|\ell)^d.
			\end{equation}
			Applying the inclusion~\eqref{eq:bsubs} and Corollary~\ref{cor:eh} on 		$B_{\|y-x\|(1+\ell)}(x)$, since $\|y-x\|(1+\ell)\leq R_\ell(1+\ell)<R_5(\varepsilon)$,
			\begin{align}
				|E\cap 		B_{\|y-x\|\ell}(y)|&\leq |E\cap B_{\|y-x\|(1+\ell)}(x)\cap \R^d\setminus H_{\nu_E(x)}(x)|\notag\\
				&\leq\|\chi_E-\chi_{H_{\nu_E(x)}(x)}\|_{L^1(B_{\|y-x\|(1+\ell)}(x))}\notag\\
				&\leq \varepsilon \|y-x\|^d(1+\ell)^d\label{eq:conec2}
			\end{align}
			Hence,~\eqref{eq:conec1} and~\eqref{eq:conec2} lead to a contradiction provided $\varepsilon=\varepsilon(\ell)\ll1$.
		\end{proof}
		
		Moreover, as a consequence of Proposition~\ref{lemma:regularity} and Corollary~\ref{cor:eh}, the following holds
		\begin{corollary}
			\label{cor:cont}
			Let $\mathscr{F}$ be a family of sets as in Proposition~\ref{lemma:regularity}. Then, there exists $R_6>0$,  $\tilde \beta:(0,R_6)\to(0,+\infty)$, $\lim_{s\downarrow 0}\tilde \beta(s)=0$ such that for all $E\in\mathscr{F}$ and  for all $x,y\in\partial^*E$ with $\|x-y\|<R_6$ it holds
			\begin{equation*}
				\|\nu_E(x)-\nu_E(y)\|\leq \tilde \beta (\|x-y\|).
			\end{equation*}
		\end{corollary}
		We omit the proof of the corollary since it is a direct consequence of the flatness condition of Corollary~\ref{cor:eh}.

		If $\partial E=\partial ^*E$,  Lemma~\ref{lemma:lipcone} and Corollary~\ref{cor:cont} immediately imply that $\partial E$ is locally given by a single Lipschitz graph, on balls of uniform radii with respect to  $z\in\partial E$, $E\in\mathscr{F}$.  Indeed, one has the following
		\begin{corollary}\label{cor:lip}
			Let $\mathscr{F}$ be a family of sets as in Lemma~\ref{lemma:regularity}. Then, for all $\ell>0$ there exists $\hat R_\ell>0$ such that for all $E\in\mathscr{F}$, for all $0<r<\hat R_\ell $, $B_r(z)\subset\R^{d}$
			\begin{equation*}
				\partial ^*E\cap B_r(z)\subset y+K_\ell(x,\nu_E(x)),\quad \forall\,x,y\in\partial^* E\cap B_r(z).
			\end{equation*}
			In particular, for any $x\in \partial^* E\cap B_r(z)$ there exists a Lipschitz function $\phi_{r,z, x}:\Omega\subset(B_r(z))_{\nu_E(x)}^\perp\to \R$   such that $\partial^* E\cap B_r(z)$ is the intersection of $B_r(z)$ with the graph of  $\phi_{r,z, x}$.
			
			Moreover, if $\partial^*E=\partial E$ and above we choose $x=z\in\partial E$, one has that $\Omega\supset (B_{c_\ell r}(z))_{\nu_E(x)}^\perp$ for some $c_\ell>0$ depending only on  $\ell$.
		\end{corollary}
		
		Let now
		\[
		\mathscr{F}=\{E\subset\R^d: \,\text{ $E$ is of locally finite perimeter in $\R^d$  and $\overline\Fcal_{0,p,d}(E,\Omega)\leq M$}\}.
		\]
		Thanks to Lemmas~\ref{lemma:per_low_bound} and~\ref{lemma:excf}, $\partial^*E=\partial E$ for all $E\in\mathscr{F}$ (see \eg~\cite{Tam}).
		
		Moreover, thanks to Lemmas~\ref{lemma:upper_bound_perimeter},~\ref{lemma:per_low_bound},~\ref{lemma:excf}, the class $\mathscr{F}$ satisfies the assumptions of Proposition~\ref{lemma:regularity} with $R_3<\min\{R_0,R_1,R_2\}$ and $\alpha$ as in Lemma~\ref{lemma:excf}. Thus, the regularity results contained in this Appendix can be used to conclude the proof of Theorem~\ref{thm:regularity} at the end of Section~\ref{subs:5.4}.
		
	\end{proof}
	\printbibliographyMio{}
\end{document}